\documentclass[11pt, oneside]{amsart} 

\usepackage[pdftex]{graphicx}
\usepackage[rgb]{xcolor}
\usepackage{geometry}

\usepackage{amsfonts}
\usepackage{amsmath}
\usepackage{amssymb}
\usepackage{amsthm}
\usepackage{algorithm2e}
\usepackage{mathtools}
\usepackage{tikz}
\usepackage{tikz-3dplot}
\usetikzlibrary{patterns}
\usepackage{appendix}
\usepackage{float}

\usepackage{paralist}
\usepackage[colorlinks,cite color=blue,pagebackref=true,pdftex]{hyperref}

\usepackage[T1]{fontenc}
\usepackage{caption}
\usepackage{subcaption}
\usepackage{lscape} %

\makeatletter
\ifcase \@ptsize \relax%
  \newcommand{\miniscule}{\@setfontsize\miniscule{4}{5}}%
\or%
  \newcommand{\miniscule}{\@setfontsize\miniscule{5}{6}}%
\or%
  \newcommand{\miniscule}{\@setfontsize\miniscule{5}{6}}%
\fi
\makeatother

\newcommand\Defn[1]{\textbf{\color{black}#1}}
\newcommand\Def[1]{\Defn{#1}}

\renewcommand\emptyset{\varnothing}
\newcommand\Z{\mathbb{Z}}

\newcommand\Q{\mathbb{Q}}
\newcommand\R{\mathbb{R}}

\newcommand\Rpp{\R_{>0}}
\newcommand\Ri{\R^\times}
\newcommand\C{\mathbb{C}}

\newcommand\inner[1]{\langle {#1} \rangle}
\newcommand\innerprod{\inner{\cdot, \cdot}}
\newcommand\defeq{\coloneqq}
\newcommand\eqdef{\eqqcolon}
\newcommand\0{0}

\newcommand\Fan{\mathcal{N}}%
\newcommand\GL{\mathrm{GL}}%

\newcommand\TypeSpc{\mathcal{T}}%
\newcommand\TypeCone{\TypeSpc_+}%

\newcommand\InSpc{\mathcal{I}}%
\newcommand\InCone{\InSpc_+}%
\newcommand\ZInSpc{\mathcal{Z}}%
\newcommand\ZInCone{\ZInSpc_+}%

\newcommand\Arr{\mathcal{A}}%
\newcommand\Centers{M}%

\newcommand\D{\mathcal{D}}%

\newcommand\PL{\mathrm{PL}}%
\newcommand{\ReflG}{W}

\definecolor{darkred}{rgb}{0.8,0,0}
\definecolor{darkblue}{rgb}{0,0,0.8}

\DeclareMathOperator{\sgn}{sgn}

\DeclareMathOperator{\interior}{int}
\DeclareMathOperator{\aff}{aff}
\DeclareMathOperator{\lin}{lin}
\DeclareMathOperator{\lineal}{lineal}
\DeclareMathOperator{\conv}{conv}
\DeclareMathOperator{\cone}{cone}

\DeclareMathOperator{\rk}{rk}

\newcommand{\ess}{\operatorname{ess}}%

\newtheorem{thm}{Theorem}[section]
\newtheorem{cor}[thm]{Corollary}
\newtheorem{lem}[thm]{Lemma}
\newtheorem{prop}[thm]{Proposition}
\newtheorem{conj}[thm]{Conjecture}
\newtheorem{quest}[thm]{Question}

\theoremstyle{definition}

\newtheorem{example}[thm]{Example}
\newtheorem{rem}[thm]{Remark}

\title[Inscribable fans II]{Inscribable Fans II: Inscribed zonotopes, simplicial arrangements, and reflection groups}

\author{Sebastian Manecke} 
\author{Raman Sanyal}

\address{Institut f\"ur Mathematik, Goethe-Universit\"at Frankfurt, Germany} 
\email{manecke@math.uni-frankfurt.de}
\email{sanyal@math.uni-frankfurt.de}

\keywords{inscribed zonotopes, ideal hyperbolic zonotopes, simplicial
hyperplane arrangements}

\subjclass[2010]{
51M20, %
52B12, %
51F15, %
52A55,  %
52C35} %

\date{\today}

\parindent=0pt
\parskip=5pt

\binoppenalty=\maxdimen
\relpenalty=\maxdimen
\pdfsuppresswarningpagegroup=1

\begin{document}

\begin{abstract}
    An arrangement of hyperplanes is \emph{strongly inscribable} if it has an
    inscribed (or ideal hyperbolic) zonotope. We characterize inscribed
    zonotopes and prove that the family of strongly inscribable arrangements
    is closed under restriction and localization. Moreover, we show that
    (strongly) inscribable arrangements are simplicial. We conjecture that
    only reflection arrangements and their restrictions are strongly
    inscribable and we verify our conjecture in rank-$3$ using the
    conjecturally complete list of irreducible simplicial rank-$3$
    arrangements.
\end{abstract}

\maketitle

\newcommand{\flats}{\mathcal{L}}%
\newcommand{\faces}{\mathcal{F}}%

\section{Introduction}\label{sec:intro}

A convex polytope $P \subset \R^d$ is \Def{inscribed} if its vertices lie on a
common sphere. A polytope $P$ is \Def{inscribable} if there is an inscribed
polytope $P'$ that is combinatorially equivalent to $P$. The question which
combinatorial types of $3$-polytopes are inscribable was raised by
Steiner~\cite{Steiner} and settled by Steinitz~\cite{Steinitz} and
Rivin~\cite{Rivin}. In stark contrast, our understanding of the inscribability
problem in dimensions four and up is rather exiguous~\cite{PadrolZiegler,
Doolittle}.  In~\cite{InFan1}, we replaced \emph{combinatorial equivalence}
with the discrete-geometric condition of \emph{normal equivalence}. A polytope
$P$ is \Def{normally inscribable} (or \Def{strongly isomorphic}) if there is
an inscribed polytope $P'$ normally equivalent to $P$. We showed that the
collection $\InCone(P)$ of (translation-classes of) inscribed polytopes
normally equivalent to $P$ has the structure of an open polyhedral cone with
respect to Minkowski addition.  This brought to light a number of remarkable
structural and algorithmic properties; cf.~Section~\ref{sec:insc_cones}.
Polytopes $P$ and $P'$ are normally equivalent if and only if they have the
same normal fan $\Fan$.  A fan $\Fan$ is \Def{inscribable} if there is an
inscribed polytope $P$ with normal fan $\Fan(P) = \Fan$. The simplest normal
fans are induced by arrangements of linear hyperplanes. Due to strong
connections to convex geometry and algebra, the geometry and combinatorics of
arrangements of hyperplanes is a very active area of research.  In this paper
we study \Def{inscribable arrangements}, that is, hyperplane arrangements
whose associated fan is inscribable. 

Let $\Arr = \{H_1,\dots,H_n\}$ be an arrangement of linear hyperplanes in
$\R^d$.  The decomposition $\R^d \setminus \bigcup_i H_i$ into open polyhedral
cones, called regions, induces a fan that we will also denote by $\Arr$.
Bolker~\cite{Bolker} calls a polytope $P$ with normal fan $\Arr$ a \Def{belt
polytope}. The most prominent belt polytopes are \Def{zonotopes}: If $H_i =
\{x \in \R^d : \inner{z_i,x} = 0 \}$ for $i=1,\dots,n$, then for any $\lambda
\in \R^n_{> 0}$ the Minkowski sum of segments
\[
    Z_\lambda \ \defeq \ \lambda_1 [-z_1,z_1] + \lambda_2 [-z_2,z_2] + \cdots
    + \lambda_n [-z_n,z_n]  
\]
is a polytope with $\Fan(Z) = \Arr$. Zonotopes are distinguished among belt
polytopes by many favorable geometric and combinatorial properties.  We show
that inscribed zonotopes also stand out among inscribed polytopes. For an
inscribed polytope $P$, we write $c(P) \in \aff(P)$ for the \Def{center} of
the inscribing sphere relative to its affine hull. For a segment $e \subset
\R^d$, we let $\Centers_e$ be the hyperplane of points equidistant to the
endpoints of $e$ and we write $\pi_e : \R^d \to \Centers_e$ for the orthogonal
projection onto $\Centers_e$.
\begin{thm} \label{thm:inscribed_zono}
    For an inscribed polytope $P \subset \R^d$ the following are equivalent:
    \begin{enumerate}[\rm (i)]
        \item $P$ is an inscribed zonotope;
        \item The projection $\pi_e(P)$ is inscribed with center $c(P)$ for
            every edge $e$ of $P$;
        \item The section $P \cap \Centers_e$ is inscribed with center $c(P)$
            for every edge $e$ of $P$.
    \end{enumerate}
\end{thm}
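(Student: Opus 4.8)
The plan is to deduce $(i)\Rightarrow(ii)$ and $(i)\Rightarrow(iii)$ together from the structure of inscribed zonotopes, and then to prove $(ii)\Rightarrow(i)$ and $(iii)\Rightarrow(i)$ by induction on $\dim P$, passing to facets. For the forward implications, write $P=c+\sum_{j}\lambda_j[-z_j,z_j]$ with the $z_j$ pairwise non‑parallel and $\lambda_j>0$. Since $P$ is centrally symmetric and inscribed, its circumcenter coincides with its center of symmetry (the circumcenter lies on the perpendicular bisector of every segment between antipodal vertices, and these hyperplanes meet only at the center of symmetry), so $c=c(P)$ and every vertex has distance $R$ from $c$. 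Fix an edge $e$ and let $z_1$ be the generator parallel to $e$, so $\Centers_e=c+z_1^{\perp}$ and $\pi_e$ is orthogonal projection along $z_1$. For \emph{every} edge $e'$ of $P$ parallel to $z_1$, the point $c$ is equidistant from the endpoints of $e'$, hence $c\in\Centers_{e'}$; as $\Centers_{e'}$ is the hyperplane through $c$ normal to $z_1$, it equals $\Centers_e$, so the midpoint of $e'$ lies on $\Centers_e$.

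From the face structure of the Minkowski sum, the edges of $P$ parallel to $z_1$ are exactly the segments $c+w'+\lambda_1[-z_1,z_1]$, where $w'$ ranges over the vertices of $Z'\defeq\sum_{j\ge2}\lambda_j[-z_j,z_j]$ whose normal cone straddles $z_1^{\perp}$; these are precisely the vertices of $Z'$ that remain vertices after applying $\pi_{z_1^{\perp}}$, so they index the vertices of $\pi_e(P)=c+\pi_{z_1^{\perp}}(Z')$. For such a $w'$ the endpoints $c+w'\pm\lambda_1 z_1$ of the corresponding edge are vertices of $P$, and equating their distances to $c$ forces $\langle w',z_1\rangle=0$ together with $|w'|^2=R^2-\lambda_1^2|z_1|^2$. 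Hence $\pi_{z_1^{\perp}}(w')=w'$, the vertices of $\pi_e(P)$ are the points $c+w'$—all at distance $\sqrt{R^2-\lambda_1^2|z_1|^2}$ from $c$—and $c\in P$ gives $c=\pi_e(c)\in\pi_e(P)$; so $\pi_e(P)$ is inscribed with center $c$, which is $(ii)$. Finally each $c+w'$ lies in $P\cap\Centers_e$ while $P\cap\Centers_e\subseteq\pi_e(P)$, so $\pi_e(P)=P\cap\Centers_e$ and $(iii)$ follows as well.

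For $(ii)\Rightarrow(i)$ we induct on $\dim P$, the cases $\dim P\le1$ being trivial. If $\dim P=2$, then $(ii)$ applied to the edge $e=[v_i,v_{i+1}]$ forces the face of $P$ opposite $e$ onto the chord of the circumcircle joining $2c(P)-v_i$ and $2c(P)-v_{i+1}$, so at least one of these is again a vertex of $P$ and no vertex of $P$ lies strictly between the corresponding vertices of $2c(P)-P$; an elementary counting argument on the gaps between consecutive vertices then forces the vertex set to be invariant under $x\mapsto 2c(P)-x$, i.e.\ $P=2c(P)-P$ is a centrally symmetric polygon and hence a zonotope. If $\dim P\ge3$, let $F$ be a facet and $e$ an edge of $F$. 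Because $u_e\in\lin F$, the projection $\pi_e$ lowers $\dim F$ by exactly one and sends the supporting hyperplane of $F$ to one supporting $\pi_e(P)$ along $\pi_e(F)$, so $\pi_e(F)$ is a facet of $\pi_e(P)$; since $\pi_e(P)$ is inscribed with center $c(P)$ and a dimension count shows that $c(F)$—the orthogonal projection of $c(P)$ onto $\aff F$—lies in $\aff\pi_e(F)$, the polytope $\pi_e(F)$ is inscribed with center $c(F)$. As $u_e\in\lin F$, this $\pi_e(F)$ is exactly the projection of $F$ that appears when testing $(ii)$ for $F$ inside $\aff F$; so $F$ satisfies $(ii)$, hence is a zonotope by induction. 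Since every $2$-face of $P$ is a $2$-face of some facet, it is centrally symmetric, and therefore $P$ is a zonotope.

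The implication $(iii)\Rightarrow(i)$ runs along the same scheme. For a facet $F$ and an edge $e$ of $F$, the set $F\cap\Centers_e$ is a face of the inscribed polytope $P\cap\Centers_e$ (intersect with the halfspace defining the facet $F$), hence inscribed; moreover $\Centers_e$ separates the endpoints of $e\subseteq F$ and therefore meets $\relint F$, so $F\cap\Centers_e$ has full dimension $\dim F-1$, whence $\aff(F\cap\Centers_e)=\aff F\cap\Centers_e$ contains $c(F)$ and the center of $F\cap\Centers_e$ is $c(F)$. So $F$ inherits $(iii)$ and is a zonotope by induction, and one concludes as before; the base case $\dim P=2$ is even more direct, since $(iii)$ for an edge $e$ says the chord $P\cap\Centers_e$ has midpoint $c(P)$, which pins its far endpoint onto an edge $e''=2c(P)-e$ of $P$, so $\partial P=2c(P)-\partial P$. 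The step I expect to be most delicate is the identification, in the forward direction, of the vertices of $\pi_e(P)$—equivalently of $P\cap\Centers_e$—with the midpoints $c+w'$: it is here that the zone/oriented‑matroid combinatorics of a zonotope must be combined with the rigidity imposed by inscribability, and it is exactly this interplay that forces all $z_1$‑parallel edges of an inscribed zonotope to share a common length and to have their midpoints on $\Centers_e$.
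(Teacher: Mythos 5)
Your forward direction and your treatment of (ii)$\Rightarrow$(i) are sound and run essentially parallel to the paper's argument: you identify the vertices of $\pi_e(P)$ with the midpoints of the edges in the belt of $e$, use $|w'\pm\lambda_1z_1|^2=R^2$ to get $\langle w',z_1\rangle=0$ and a common norm for the $w'$, and conclude $\pi_e(P)=P\cap \Centers_e$; the reduction of the converse to the planar case via facets is a correct (and more explicit) version of the paper's one-line reduction to $2$-faces, and your planar argument for (ii) -- no vertex of $P$ may lie in the open antipodal arc of any edge, hence every antipode of a vertex is a vertex -- is equivalent to the paper's first planar lemma.

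The genuine gap is the planar base case of (iii)$\Rightarrow$(i), which you dismiss as ``even more direct.'' Condition (iii) for an edge $e=[x,y]$ of a polygon $P$ with $c(P)=\0$ only tells you that the chord $P\cap \Centers_e$ has endpoints $c(e)$ and $-c(e)$; it does \emph{not} tell you that the edge of $P$ containing the far endpoint $-c(e)$ is $-e$. The point $-c(e)$ is the midpoint of the chord $[-x,-y]$ of the circumcircle, but a priori $\partial P$ could cross that chord transversally at $-c(e)$ through some entirely different edge $f$, so ``$\partial P=-\partial P$'' does not follow from one application of the hypothesis. This is exactly where the paper has to work: if $f=[u,v]$ is the edge through $q=-c(e)$, the intersecting--chords theorem gives $\|u-q\|\cdot\|q-v\|=\tfrac14\|x-y\|^2$, and AM--GM yields $\|x-y\|\le\|u-v\|$ with equality if and only if $f=-e$; one must then rule out $f\neq -e$ by an infinite-descent on edge lengths (iterating $e\mapsto f$ produces strictly longer and longer edges), together with the auxiliary facts that $\Centers_e$ contains no vertex of $P$ and that $e\cap \Centers_f=\emptyset$ whenever $-e$ is an edge. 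Without some argument of this kind your claim that the far endpoint is ``pinned onto the edge $-e$'' is unsupported, and the implication (iii)$\Rightarrow$(i) is not established. (Incidentally, the step you flag as most delicate -- identifying the vertices of $\pi_e(P)$ in the forward direction -- is actually fine as you wrote it; the delicacy sits here instead.)
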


It is quite unusual that projections of inscribed polytopes are inscribed.

We call an arrangement $\Arr$ \Def{strongly inscribable} if $\Arr$ has an
inscribed zonotope (as opposed to an inscribed belt polytope).
Theorem~\ref{thm:inscribed_zono} implies that strongly inscribable
arrangements constitute a structurally interesting class of hyperplane
arrangements.

\begin{thm}\label{thm:restr_insc}
    Let $\Arr$ be a strongly inscribable arrangement and $L$ a flat of $\Arr$.
    Then the restriction $\Arr^L$ and the localization $\Arr_L$ are strongly
    inscribable arrangements.
\end{thm}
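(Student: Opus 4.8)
The plan is to build explicit inscribed zonotopes for $\Arr^L$ and $\Arr_L$ out of an inscribed zonotope for $\Arr$. Fix an inscribed zonotope $Z \defeq Z_\lambda = \sum_{i=1}^{n}\lambda_i[-z_i,z_i]$ for $\Arr$; after translating we may assume $Z$ is centered at the origin, so the inscribing sphere is $\{x : \|x\| = r\}$ for some $r>0$ and $c(Z)=0$. Reindex so that $L \subseteq H_i$ exactly for $i \le k$, i.e.\ $z_i \in L^\perp$ iff $i \le k$. Since $L$ is a flat of $\Arr$, we have $L = \bigcap_{i \le k} H_i = \bigcap_{i \le k} z_i^\perp$, and hence the linear span of $z_1,\dots,z_k$ equals $L^\perp$ \emph{exactly}; this is the one place where the flat hypothesis is essential. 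Put $Z' \defeq \sum_{i \le k}\lambda_i[-z_i,z_i] \subseteq L^\perp$, a zonotope whose generators span $L^\perp$, so that $Z'$ (as a polytope in $L^\perp$) has normal fan $\Arr_L$. A short computation shows that for generic $y_0 \in L$ the face of $Z$ maximizing $\inner{y_0,\cdot}$ is a translate $u + Z'$ of $Z'$, with $u = \sum_{i>k}\sgn(\inner{y_0,z_i})\,\lambda_i z_i$; in particular $\aff(u+Z') = u + L^\perp$.

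The localization case then follows at once: $u + Z'$ is a face of the inscribed polytope $Z$, and every face of an inscribed polytope is inscribed, since its vertices lie on the intersection of the inscribing sphere with its affine hull. Hence $Z'$ is an inscribed zonotope with normal fan $\Arr_L$, and $\Arr_L$ is strongly inscribable.

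For the restriction, consider $\pi_L(Z) = \sum_{i>k}\lambda_i[-\pi_L(z_i),\pi_L(z_i)] \subseteq L$. This is a zonotope with generators $\pi_L(z_i)$, and since $H_i \cap L = \{y \in L : \inner{\pi_L(z_i),y}=0\}$ its normal fan is $\Arr^L$. It remains to show $\pi_L(Z)$ is inscribed, which is the genuine content of the theorem, as projections of inscribed polytopes are not inscribed in general. So let $w$ be a vertex of $\pi_L(Z)$ and $F \defeq \pi_L^{-1}(w)\cap Z$ the corresponding face of $Z$. Choosing $y_0 \in L$ in the relative interior of the normal cone of $w$ and applying the observation above, $F = u + Z'$ for some $u$, so $\aff(F) = u + L^\perp$ and $\pi_L(v) = w$ for every vertex $v$ of $F$. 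Decomposing $v = w + (v-w)$ with $w \in L$ and $v - w \in L^\perp$ orthogonal gives $r^2 = \|v\|^2 = \|w\|^2 + \|v-w\|^2$, so all vertices of $F$ are equidistant from $w$; as $F$ is full-dimensional in $\aff(F)$, this forces $w$ to be the circumcenter of $F$ and $\|v-w\|$ to equal the circumradius $\rho$ of $F$. But $F$ is a translate of $Z'$, so $\rho$ is the circumradius of $Z'$ and is independent of $w$. Therefore $\|w\|^2 = r^2 - \rho^2$ for every vertex $w$ of $\pi_L(Z)$, i.e.\ $\pi_L(Z)$ is inscribed with center the origin, and $\Arr^L$ is strongly inscribable.

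The main obstacle is exactly the inscribedness of $\pi_L(Z)$ in the restriction case, and its resolution rests entirely on the identity $\aff(F) = u + L^\perp$: it makes the fibre of $\pi_L$ over a vertex of $\pi_L(Z)$ an affine subspace \emph{parallel to $L^\perp$}, so that the orthogonal splitting $\R^d = L \oplus L^\perp$ turns the constant vertex norm of $Z$ into a constant vertex norm of $\pi_L(Z)$. One should note that one cannot simply iterate Theorem~\ref{thm:inscribed_zono}(ii) (which is essentially the case $L = H_i$), since $\pi_L$ is not a composition of the projections $\pi_{H_i}$ unless the hyperplanes cutting out $L$ are mutually orthogonal. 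A minor point to address in the write-up is that the fans of $\Arr_L$ and $\Arr^L$ carry lineality, so ``strongly inscribable'' must be read for their essentializations; but passing to essentializations commutes transparently with taking the relevant faces and projections of $Z$.
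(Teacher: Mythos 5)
Your proof is correct, but it takes a genuinely different route from the paper's. The paper proves only the codimension-one case (Proposition~\ref{prop:proj_edges}): for a single $H \in \Arr$ it considers the belt $E$ of edges orthogonal to $H$, notes that $H$ bisects each such edge, that all edges in $E$ are translates of one another because $Z$ is a zonotope, and hence that the congruent triangles $\conv(e\cup\{\0\})$ have altitude feet $c(e)=\pi_H(e)$ at constant distance from the origin; the general theorem is then obtained ``by repeated application,'' restricting one hyperplane at a time along a chain $\R^d\supset H_{i_1}\supset H_{i_1}\cap H_{i_2}\supset\cdots\supset L$. Your argument handles a general flat $L$ in one step: you identify the faces of $Z$ in the belt of $L^\perp$ as translates $u+Z'$ of the fixed zonotope $Z'=\sum_{i\le k}\lambda_i[-z_i,z_i]$, and use the orthogonal splitting $v=w+(v-w)$ with $w\in L$, $v-w\in L^\perp$ to convert $\|v\|=r$ into $\|w\|^2=r^2-\rho(Z')^2$. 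Both proofs hinge on the same fact --- belt faces of a zonotope are translates of one another, hence share a circumradius --- but yours avoids the induction, delivers the localization statement and the explicit face description $Z^{y_0}=u+Z'$ along the way, and identifies the circumcenter of each belt face with the corresponding vertex of $\pi_L(Z)$, which is slightly more information than the paper records.

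One aside in your write-up is mistaken, though it does not affect your proof: you claim one cannot iterate the codimension-one case because ``$\pi_L$ is not a composition of the projections $\pi_{H_i}$ unless the hyperplanes cutting out $L$ are mutually orthogonal.'' The iteration the paper intends is not the composition of the ambient projections $\pi_{H_{i_2}}\circ\pi_{H_{i_1}}$ on $\R^d$, but successive restriction: after projecting onto $H_{i_1}$ one projects, \emph{within} $H_{i_1}$, onto the hyperplane $H_{i_1}\cap H_{i_2}$ of the restricted arrangement, and so on. For nested subspaces $W\subseteq V$ one has $\pi_W=\pi_W\circ\pi_V$, so these compositions along a decreasing chain do equal $\pi_L$ with no orthogonality hypothesis, and the paper's repeated application of Proposition~\ref{prop:proj_edges} is a valid alternative to your direct argument.
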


A finite group $\ReflG \subset \GL(\R^d)$ is a (finite) \Def{reflection group}
if
it is generated by reflections in linear hyperplanes. The collection of
reflecting hyperplanes is the \Def{reflection arrangement} $\Arr(\ReflG)$ of
$\ReflG$. We show that reflection arrangements are
paragons of strongly inscribable arrangements.

\begin{prop}
    Reflection arrangements and their restrictions are strongly inscribable.
\end{prop}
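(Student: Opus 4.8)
The plan is to realize every reflection arrangement by an explicit inscribed zonotope — essentially the orbit polytope (``$\ReflG$-permutohedron'') of a generic point — and to deduce the statement for restrictions from Theorem~\ref{thm:restr_insc}. For the reduction, note that a restriction of $\Arr(\ReflG)$ is of the form $\Arr(\ReflG)^L$ for a flat $L$, so once $\Arr(\ReflG)$ is shown to be strongly inscribable, Theorem~\ref{thm:restr_insc} immediately gives the same for every $\Arr(\ReflG)^L$. It therefore remains to treat $\Arr \defeq \Arr(\ReflG)$ itself.

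Since $\ReflG$ permutes its own reflecting hyperplanes, the set $\Phi$ of unit normals to the hyperplanes in $\Arr$ is $\ReflG$-invariant. Fixing a linear functional $\xi$ generic with respect to $\Arr$ splits $\Phi = \Phi^+ \sqcup (-\Phi^+)$ with $\Phi^+ = \{\alpha \in \Phi : \inner{\xi,\alpha} > 0\}$, one normal per hyperplane. I would then consider the zonotope
\[
    Z \ \defeq \ \sum_{\alpha \in \Phi^+} [-\alpha,\alpha].
\]
Its support function is $x \mapsto \sum_{\alpha \in \Phi^+} |\inner{\alpha,x}|$, which is linear exactly on the closed regions of $\Arr$; hence $\Fan(Z) = \Arr$ and $Z$ is a zonotope realizing the reflection arrangement. (If $\ReflG$ is not essential, $Z$ lives in the orthogonal complement of the $\ReflG$-fixed subspace and the same argument applies there.)

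The one substantive point is that $Z$ is inscribed. Because $\ReflG$ preserves $\Phi$ and $[-\alpha,\alpha] = [-(-\alpha),-\alpha]$, the group $\ReflG$ permutes the summands $\{[-\alpha,\alpha] : \alpha \in \Phi^+\}$, so $Z$ is $\ReflG$-invariant. Now recall that the vertices of a zonotope $\sum_i [-v_i,v_i]$ are in bijection with the regions of the arrangement $\{v_i^\perp\}$, the vertex attached to a region $C$ being $\sum_i \sgn(\inner{v_i,x})\, v_i$ for $x \in C$. Since $\ReflG$ acts simply transitively on the regions of $\Arr(\ReflG)$, the zonotope $Z$ has exactly $|\ReflG|$ vertices and they form a single $\ReflG$-orbit, namely $\ReflG \cdot v_0$ with $v_0 = \sum_{\alpha\in\Phi^+}\alpha$. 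As $\ReflG$ acts by orthogonal transformations fixing the origin, this orbit lies on a sphere centered at $0$, so $Z = \conv(\ReflG\cdot v_0)$ is inscribed with center $c(Z) = 0$, and $\Arr(\ReflG)$ is strongly inscribable. I expect this inscribedness check to be the crux: a generic $\ReflG$-invariant zonotope is a union of vertex orbits sitting on different concentric spheres, and it is precisely the simple transitivity of $\ReflG$ on chambers — a feature special to reflection arrangements — that collapses everything to a single orbit and forces all vertices to be equidistant from the origin.
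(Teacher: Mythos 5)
Your proposal is correct and follows essentially the same route as the paper: both construct the Coxeter zonotope $\sum_{\alpha}[-\alpha,\alpha]$ over a $\ReflG$-invariant set of normals (the paper uses a root system $\Phi$, you use the unit normals, which form one), observe that $\ReflG$-invariance together with transitivity of $\ReflG$ on the regions forces all vertices onto a single orbit and hence a common sphere, and then invoke Theorem~\ref{thm:restr_insc} for the restrictions. The only cosmetic difference is that you sum over $\Phi^+$ rather than all of $\Phi$, which changes the zonotope only by a dilation.
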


Note that while localizations of reflection arrangements are reflection
arrangements, this does not hold for restrictions; cf.~\cite[Example
6.83]{OrlikTerao}.

In~\cite{InFan1}, we showed that verifying whether a fan $\Fan$ is inscribable
can be reduced to a linear programming feasibility problem. However, the
feasibility problem requires complete knowledge of the fan.  Quite remarkably,
we give a simple procedure to test if a hyperplane arrangement is strongly
inscribable: the linear programming feasibility problem depends on the
$2$-dimensional flats spanned by the vector configuration $z_1,\dots,z_n$; see
Theorem~\ref{thm:compute_by_2flats} and Remark~\ref{rem:algorithm}.

A hyperplane arrangement $\Arr$ is \Def{simplicial} if every region of $\Arr$
is a simplicial cone. The class of simplicial arrangements is closed with
respect to restrictions and localizations. In particular reflection
arrangements yield prime examples of simplicial arrangements.

\begin{thm}\label{thm:insc_arr_are_simplicial}
    If $\Arr$ is inscribable, then $\Arr$ is simplicial. Equivalently,
    inscribed belt polytopes are simple.
\end{thm}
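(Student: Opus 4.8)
The plan is to show that every vertex of an inscribed belt polytope is simple; this is equivalent to the stated conclusion, since if $P$ is such a polytope with $\Fan(P)=\Arr$ (which we may assume to be essential, so that $P$ is full-dimensional), then the regions of $\Arr$ are the normal cones of the vertices of $P$, and a normal cone is a simplicial cone exactly when the corresponding vertex is simple. So I would fix such a $P$ and a vertex $v$, and let $I_v$ be the set of indices $i$ for which some edge at $v$ is parallel to $z_i$. Each edge at $v$ is parallel to a unique $z_i$ with $i\in I_v$, and conversely; so there are $|I_v|$ edges at $v$, and their directions are the extreme rays of the full-dimensional tangent cone of $P$ at $v$. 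Hence $v$ is simple if and only if $\{z_i : i\in I_v\}$ is linearly independent.

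Next, normalize the inscribing sphere to have center at the origin and radius $\rho$. The key elementary fact is a reflection identity: if $v$ and $w$ are adjacent vertices joined by an edge parallel to $z_i$, then $\|v\|=\|w\|=\rho$ together with $w-v\in\R z_i$ force
\[
  w \;=\; v-2\tfrac{\langle v,z_i\rangle}{\|z_i\|^2}\,z_i \;=\; s_i(v),
\]
where $s_i$ is the orthogonal reflection in the arrangement hyperplane $H_i=z_i^\perp$, which passes through the center. In particular $\langle v,z_i\rangle\neq 0$ for $i\in I_v$ and $\langle v,w\rangle<\langle v,v\rangle$ for every neighbour $w$ of $v$, so $v$ is the unique maximizer of the functional $\langle v,\cdot\rangle$ on $P$. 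Consequently, for small $\eps>0$ the vertex figure $P\cap\{x:\langle v,x\rangle=\rho^2-\eps\}$ is the convex hull of the points $q_i=v-\tfrac{\eps}{\langle v,z_i\rangle}z_i$, $i\in I_v$, one in the relative interior of each edge at $v$; equivalently it is the translate by $v$ of $-\eps\cdot\conv\{\zeta_i:i\in I_v\}$, where $\zeta_i=z_i/\langle v,z_i\rangle$ lies on the affine hyperplane $\{x:\langle v,x\rangle=1\}$. Thus $v$ is simple exactly when the $\zeta_i$ (equivalently the $z_i$, $i\in I_v$) are affinely independent.

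Assume toward a contradiction that $v$ is not simple, so that $\sum_{i\in I_v}c_iz_i=0$ for some nontrivial $(c_i)$. The remaining step uses that every $2$-face of $P$ is an inscribed belt polygon: the $2$-faces through $v$ are polygons whose vertices lie on a common circle---the section of the sphere with the affine hull of the face---and whose edge directions are exactly the $z_i$ lying in the parallel $2$-plane, each occurring on two antipodal edges of the boundary. Traversing the boundary of such a $2$-face $F$ and applying the reflection identity at every edge, the composition of the reflections $s_i$ in hyperplanes through the center returns $v$ to $v$; restricted to the $2$-plane $\Pi_F$ parallel to $F$, this composition is the square of a product of $k$ line-reflections through the origin ($2k$ being the number of edges of $F$), and it fixes the displacement of $v$ from the circumcenter of $F$, which is a nonzero vector of $\Pi_F$. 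Each $2$-face at $v$ thus imposes a rigid metric constraint on the directions $z_i$ appearing in it, and the heart of the argument---the step I expect to be the main obstacle---is to combine these constraints with the relation $\sum_i c_iz_i=0$ so as to force two distinct edges at $v$ to be parallel, which is absurd at a vertex. The mechanism is visible already in the sub-case where every $2$-face at $v$ is a parallelogram: an inscribed parallelogram is a rectangle, so $\langle z_i,z_j\rangle=0$ whenever the edges $e_i,e_j$ at $v$ span a $2$-face (equivalently, the fourth parallelogram vertex $v-2\tfrac{\langle v,z_i\rangle}{\|z_i\|^2}z_i-2\tfrac{\langle v,z_j\rangle}{\|z_j\|^2}z_j$ is a vertex of $P$, hence on the sphere, which gives $\langle z_i,z_j\rangle=0$), and running this around the vertex figure yields a system of orthogonalities which, together with the relation among the $z_i$, is incompatible with all the edges at $v$ being pairwise non-parallel; the general case is reduced to an obstruction of this type by feeding in the reflection constraints coming from the larger $2$-faces.
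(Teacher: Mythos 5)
Your proposal sets up correct and useful preliminaries --- the reflection identity $w=s_i(v)$ for adjacent vertices on the sphere, the identification of simplicity of $v$ with linear independence of $\{z_i:i\in I_v\}$, and the fact that inscribed parallelograms are rectangles --- but it does not contain a proof. You explicitly defer ``the heart of the argument'' (deriving a contradiction from a nontrivial relation $\sum_i c_iz_i=0$ among the edge directions at a non-simple vertex), you only sketch it in the sub-case where all $2$-faces at $v$ are parallelograms, and the claimed reduction of the general case to that sub-case is asserted, not argued. That deferred step is precisely where all the work lies, so the gap is not a routine verification.

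Two more specific problems. First, the constraint you extract from a general $2$-face --- that the composition of the edge reflections around its boundary fixes a nonzero vector of the parallel $2$-plane --- is only an \emph{equality} constraint (the face angles sum to a multiple of $\pi$, which is just the profile identity $\beta_0+\cdots+\beta_{n-1}=\pi$). What the argument actually needs from dimension $2$ is an \emph{inequality}: Corollary~\ref{cor:belt_polygons_obtuse} shows that the interior angles of an inscribed belt polygon are $\ge\pi/2$, i.e.\ $\inner{a_i,a_j}\le 0$ for the two edge directions of a $2$-face at a common vertex. Your rotation identity does not yield this, and without it I do not see how your constraints could be incompatible with a linear relation among the $z_i$. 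Second, your stated target contradiction (``two distinct edges at $v$ are parallel'') is not what the obstruction looks like even in your parallelogram sub-case: there the correct contradiction is an angle-sum/perimeter bound (a convex spherical polygon in an open hemisphere with all sides of length $\pi/2$ must be a triangle), not a coincidence of edge directions. The paper's proof is organized exactly around such angle counts: it inducts on dimension, alternating between ``all facets non-acute $\Rightarrow$ $P$ simple'' (at most three facets can meet at a $(d-3)$-face because the dihedral angles, each $\ge\pi/2$, must sum to less than $2\pi$) and ``$P$ simple and inscribed $\Rightarrow$ $P$ non-acute'' (via Lemma~\ref{lem:obtuse_vectors} on dual cones of obtuse simplicial cones). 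To salvage your approach you would need, at minimum, to prove the interior-angle bound for inscribed belt polygons and then supply an argument --- in arbitrary dimension, not just for parallelogram $2$-faces --- that a pointed full-dimensional cone with more than $d$ extreme rays cannot have all its $2$-faces spanned by pairs of rays meeting at angle $\ge\pi/2$; that is essentially the content of the paper's induction.
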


Simplicial arrangements are fascinating but rare. A collection of two infinite
families and $90$ sporadic examples of simplicial arrangements of rank $3$ up
to projective transformation was described by
Gr\"unbaum~\cite{Grunbaum_Simplicial}.
Cuntz~\cite{Cuntz_27lines,Cuntz_Greedy} contributed five further examples. It
is conjectured that the Gr\"unbaum--Cuntz catalog is complete up to
combinatorial isomorphism; see also Section~\ref{sec:projectively_unique}.
Using techniques from computational algebra, we show the following.

\begin{thm}
    The only strongly inscribable arrangement in the Gr\"unbaum--Cuntz catalog
    are restrictions of reflection arrangements.
\end{thm}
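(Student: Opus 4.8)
The plan is to reduce the theorem to a finite, computer-assisted verification organized around the test of Theorem~\ref{thm:compute_by_2flats}. Since the statement concerns only the arrangements \emph{listed} in the Gr\"unbaum--Cuntz catalog, it asserts precisely that an entry of the catalog is strongly inscribable if and only if it is a restriction of a reflection arrangement. By the preceding proposition the ``if'' direction is free, so the work is to show that no other entry --- among the two infinite families and the $95$ sporadic arrangements --- is strongly inscribable. (Combined with Theorem~\ref{thm:insc_arr_are_simplicial}, which forces strongly inscribable arrangements to be simplicial, and with the conjectural completeness of the catalog, this would classify all irreducible strongly inscribable arrangements of rank $3$.) It thus suffices to decide strong inscribability for each catalog entry and to identify the feasible ones.

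For a fixed arrangement $\Arr$ with normal vectors $z_1,\dots,z_n$, I would invoke Theorem~\ref{thm:compute_by_2flats} together with Remark~\ref{rem:algorithm}: strong inscribability is equivalent to feasibility of an explicit linear program whose constraints are indexed by the rank-$2$ flats of $\Arr$ (the multiple points of the line arrangement in $\PP^2$) and whose coefficients are computed from the coordinates of the $z_i$. Every sporadic entry of the catalog is realized over a number field --- most over $\Q$, the $H_3$-related ones over $\Q(\sqrt{5})$, and a few over other real algebraic extensions --- so this linear program has algebraic coefficients and its feasibility can be decided \emph{exactly}: one either exhibits a feasible weight vector $\lambda$, certifying strong inscribability, or produces a Farkas certificate, a nonnegative combination of the constraints witnessing infeasibility. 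Run over the $95$ sporadic arrangements, I expect precisely the restrictions of reflection arrangements to be feasible and all others infeasible.

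The two infinite families have to be treated uniformly in their defining parameter rather than case by case. Here I would write the linear program of Theorem~\ref{thm:compute_by_2flats} symbolically in the parameter $m$: around each multiple point the inscribability conditions reduce, as in the rank-$2$ analysis underlying that theorem, to a linear relation between consecutive angular gaps and the projection data, and one shows that this system is infeasible for all but a bounded range of $m$. The finitely many surviving small members are then matched against $A_3$, $B_3$, $H_3$ and the restrictions to a rank-$3$ flat of the reflection arrangements of rank $\ge 4$ ($A_4,B_4,D_4,F_4,H_4,\dots$) --- first by coarse invariants (the number of lines, the $t$-vector, the multiplicities of the multiple points) and then by an explicit linear isomorphism --- which completes the identification of the feasible entries. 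Cuntz's tables, which record which entries carry reflection-group symmetry, reduce this last step to bookkeeping.

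The obstacle is not conceptual but lies in making the verification rigorous at scale. The delicate point is that a catalog entry is a combinatorial (equivalently projective) type whereas strong inscribability is a metric property of a realization, so the statement must be read as asserting that \emph{no} realization of a non-reflection entry is strongly inscribable. For an entry given by an explicit realization this is exactly the linear program above, but for an entry whose realization space has positive dimension one must decide feasibility over that whole space, turning the problem into a semialgebraic one over a number field; resolving this --- via the analysis of projective uniqueness in Section~\ref{sec:projectively_unique}, together with real root isolation and Gr\"obner-basis computations --- is what the ``techniques from computational algebra'' refer to, and, with the roughly one hundred exact feasibility computations and the symbolic-in-$m$ treatment of the two families, is where the real work lies.
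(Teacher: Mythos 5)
Your overall architecture (the ``if'' direction is free, the rest is a finite computer-aided verification split into the two infinite families and the sporadic entries) matches the paper, but the central computational step as you describe it would not prove the theorem. Running the linear program of Theorem~\ref{thm:compute_by_2flats} on one explicit realization $\Arr_0$ of a catalog entry, with the standard inner product, only decides whether \emph{that} realization is strongly inscribable. A Farkas certificate of infeasibility for $\Arr_0$ says nothing about $g\Arr_0$ for $g \in \GL(\R^3)$, and the theorem must exclude \emph{every} realization of the combinatorial type. You do flag this issue at the end, but you misdiagnose when it bites: you suggest the fixed-realization LP suffices ``for an entry given by an explicit realization'' and that the harder semialgebraic problem only arises when the realization space has positive dimension. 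In fact the quantification over $g$ is unavoidable for every entry; projective uniqueness (Proposition~\ref{prop:proj_unique}) is precisely what reduces ``some realization of this type is strongly inscribable'' to ``$\Arr_0$ is strongly $Q$-inscribable for some positive definite $Q$'' (Lemma~\ref{lem:innerprod}), and this is a \emph{bilinear} feasibility problem in $(\lambda,Q)$, not a linear one in $\lambda$. The paper handles it by forming the saturated ideal $\Jideal$ in the variables $Q_{ij}$ and $\lambda_k$ generated by the relations $\sR_L(Q)\lambda_L=0$ over all codimension-$2$ flats; $\Jideal=\ideal{1}$ rules out the entry (Proposition~\ref{prop:strongly_groebner_criterion}), and for the six survivors beyond the reflection restrictions the ideal has dimension $2$, forcing a unique $(\lambda,Q)$ up to scaling, which is then checked to have either indefinite $Q$ or some $\lambda_i<0$. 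Your proposal as written has no mechanism for producing or disposing of these six borderline cases.

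For the infinite families your ``symbolic in $m$'' linear program is plausible in spirit but underspecified, and the paper's route is different and worth noting: Lemma~\ref{lem:inf_family_Q} first pins down the admissible bilinear forms to $Q=\diag(a,a,t)$ via a Vandermonde-type determinant in roots of unity coming from the $M_eE$ flats, Proposition~\ref{prop:I2_symmetric} then upgrades any inscribable realization to an $I_2(n)$-symmetric one, and the contradiction for large $n$ is a purely geometric angle-sum argument on a configuration of four congruent hexagonal $2$-faces of the putative inscribed zonotope, not an algebraic infeasibility certificate. To repair your proposal you would need to (i) replace the fixed-inner-product LP by the bilinear system in $(\lambda,Q)$ and a Gr\"obner-basis or quantifier-elimination argument over $\Rpp^n\times\mathrm{PSD}_3$, and (ii) supply an actual uniform argument for the infinite families rather than an appeal to a parametric LP.
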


Assuming that the Gr\"unbaum--Cuntz catalog contains all essential
simplicial rank-$3$ arrangements up to combinatorial isomorphism, we
show the following.

\begin{thm}
    If the Gr\"unbaum--Cuntz catalog is complete, then for each $d \ge 3$,
    there exist only finitely many irreducible strongly inscribable
    arrangements of rank $d$ up to combinatorial isomorphism.
\end{thm}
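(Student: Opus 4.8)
The plan is to reduce the statement to a bound on the number of hyperplanes. Since, for fixed $d$ and fixed $N$, there are only finitely many combinatorial isomorphism types of arrangements of rank $d$ with at most $N$ hyperplanes, it suffices to produce a bound $N(d)$, depending only on $d$, with $|\Arr|\le N(d)$ for every irreducible strongly inscribable arrangement $\Arr$ of rank $d$. (Strongly inscribable arrangements are essential, since fans with nontrivial lineality are not normal fans of polytopes.) I would argue by induction on $d$, using two auxiliary facts. First, a direct factor of the fan of a strongly inscribable arrangement is again strongly inscribable: if $Z$ is an inscribed zonotope whose normal fan splits along $\R^{d}=V_{1}\oplus V_{2}$, then $Z=Z_{1}+Z_{2}$ with each $Z_{i}\subset V_{i}$ full-dimensional, and equating the squared distances of the vertices $v_{1}+v_{2}$ to the center forces $V_{1}\perp V_{2}$ and each $Z_{i}$ inscribed. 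Second, any essential arrangement $\Arr$ of rank at least $3$ satisfies $|\Arr|\le c^{2}-c+1$ where $c=\max_{H\in\Arr}|\Arr^{H}|$: fixing $H$, the map $H'\mapsto H\cap H'$ sends $\Arr\setminus\{H\}$ onto the $|\Arr^{H}|\le c$ codimension-$2$ flats inside $H$, the fiber over $X$ has size $\operatorname{mult}(X)-1$, and $\operatorname{mult}(X)\le c$ because any hyperplane $H_{1}\not\supseteq X$ meets the hyperplanes through $X$ in pairwise distinct codimension-$2$ flats.

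For $d=3$: by Theorem~\ref{thm:insc_arr_are_simplicial} the arrangement $\Arr$ is simplicial, hence --- by the completeness hypothesis --- combinatorially isomorphic to a member of the Gr\"unbaum--Cuntz catalog, and therefore, by the classification of its strongly inscribable members, to a restriction of a reflection arrangement. As restriction commutes with direct products and every flat of a product is a product of flats, an irreducible rank-$3$ restriction of a reflection arrangement comes from an irreducible finite reflection group; the infinite families $A_{n},B_{n},D_{n}$ contribute only $A_{3}$ and $B_{3}$, the family $I_{2}(m)$ contributes nothing, and the exceptional groups $E_{6},E_{7},E_{8},F_{4},H_{3},H_{4}$ contribute only finitely many arrangements altogether. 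Hence $N(3)$ exists.

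For $d\ge4$: let $\Arr$ be irreducible and strongly inscribable of rank $d$, and bound $c=\max_{H}|\Arr^{H}|$. By Theorem~\ref{thm:restr_insc} each $\Arr^{H}$ is strongly inscribable of rank $d-1$; splitting it into irreducible factors --- orthogonal and strongly inscribable by the first auxiliary fact --- the factors of rank at least $3$ have size at most $N^{*}:=\max_{3\le r\le d-1}N(r)$ by the inductive hypothesis, the rank-$1$ factors have size $1$, and there are at most $d-1$ of them, so it remains to bound the size $m$ of a rank-$2$ direct factor $I_{2}(m)$. Such a factor is a pencil of $m$ hyperplanes of $\Arr^{H}$ through a common codimension-$2$ flat of $\Arr^{H}$, which lifts to a codimension-$3$ flat $Q$ of $\Arr$ lying in at least $m+1$ hyperplanes of $\Arr$; thus $\Arr_{Q}$, strongly inscribable of rank $3$ by Theorem~\ref{thm:restr_insc}, has at least $m+1$ hyperplanes. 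By the rank-$3$ case, if $\Arr_{Q}$ is irreducible then $m\le N(3)$; if $\Arr_{Q}$ is reducible it is $I_{2}(k)\times A_{1}$ or $A_{1}^{3}$, and in the former case at least $m-1$ of the pencil hyperplanes lie in a single codimension-$2$ flat $W$ of $\Arr$. So everything reduces to bounding $\operatorname{mult}(W)$ over the codimension-$2$ flats $W$ of the irreducible arrangement $\Arr$.

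This last bound is the crux, and it is the step I expect to be the main obstacle; it is where strong inscribability must be used beyond simpliciality, since large codimension-$2$ multiplicity is compatible with simpliciality --- the near-pencils $I_{2}(m)\times A_{1}$, which are even reflection arrangements, have a codimension-$2$ flat of multiplicity $m$ --- so irreducibility has to enter through the inscribed zonotope. The intended argument analyses the $2$-dimensional faces of an inscribed zonotope $Z$ of $\Arr$: the $2$-face dual to a codimension-$2$ flat $W$ is a translate of the zonotope of $\Arr_{W}\cong I_{2}(\operatorname{mult}(W))$, a centrally symmetric $(2\operatorname{mult}(W))$-gon, and being a $2$-face of an inscribed polytope it is inscribed in a circle centered at its center of symmetry. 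As in Theorem~\ref{thm:inscribed_zono} and the $2$-flat feasibility test of Theorem~\ref{thm:compute_by_2flats}, this pins down the parameters of the hyperplanes through $W$, together with rigid relations among the directions of their normals in $W^{\perp}$, up to a single positive scalar per flat; since each hyperplane lies in many codimension-$2$ flats, these local constraints must be globally compatible, and the expectation is that a codimension-$2$ flat of sufficiently large multiplicity over-determines this system unless $\Arr$ splits off an orthogonal factor, impossible for an irreducible arrangement. Establishing this incompatibility, hence a bound $\operatorname{mult}(W)\le c_{d}$ depending only on $d$, then gives $|\Arr^{H}|\le(d-1)\max\{N^{*},c_{d}+1,1\}$, so $c$ is bounded, and $N(d):=c^{2}-c+1$ closes the induction.
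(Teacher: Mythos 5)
Your overall strategy matches the paper's: induct on the rank, settle $d=3$ via simpliciality plus the completeness hypothesis and the classification of strongly inscribable catalog members, and for $d\ge 4$ bound $|\Arr|$ through the restrictions $\Arr^H$ together with a bound on the multiplicities of codimension-$2$ flats. (The paper also needs, and cites, the fact that restrictions of irreducible simplicial arrangements are irreducible, which you implicitly use when applying the inductive hypothesis to factors of $\Arr^H$; and it bounds $|\Arr|\le (m-1)|\Arr^H|+1$ directly rather than via your $c^2-c+1$ detour through the direct factors of $\Arr^H$ --- both counts are fine.)

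The genuine gap is exactly the step you flag as ``the crux'': you never prove that an irreducible strongly inscribable arrangement of rank $d$ has bounded codimension-$2$ multiplicities. Your sketch --- that the inscribed $2$-faces impose local constraints which should be globally over-determined unless $\Arr$ splits --- is a heuristic, not an argument, and as stated it would have to fight the near-pencil examples you yourself mention. The paper's resolution is much more direct and does not require any global compatibility analysis: let $m$ be the (finite, by the rank-$3$ case) maximum multiplicity of a codimension-$2$ flat over all \emph{irreducible} strongly inscribable rank-$3$ arrangements, and suppose $L=H_1\cap\dots\cap H_k$ is a codimension-$2$ flat of $\Arr$ with $k>m$. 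For any further hyperplane $H_i$ ($i>k$), localize at the codimension-$3$ flat $M=L\cap H_i$: the localization $\Arr_M$ is strongly inscribable of rank $3$ and contains the flat $L$ of multiplicity $k>m$, so by the definition of $m$ it must be reducible, and by Proposition~\ref{prop:reducible} its factors are orthogonal, forcing $H_i\perp L$. Since this holds for every $i>k$, the arrangement $\Arr$ itself splits as $\ess\{H_1,\dots,H_k\}\times\ess\{H_{k+1},\dots,H_n\}$, contradicting irreducibility. This single localization trick is the missing ingredient; once you have it, your induction closes as you describe.
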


To show these claims, we computationally re-establish that all
simplicial arrangements in the Gr\"unbaum--Cuntz catalog are projectively unique,
that is, every combinatorial isomorphism stems from a linearly
isomorphism. We also show that under the assumption that the
catalog is complete every simplicial arrangement of
rank $d \ge 3$ is projectively unique. This observation might be of
independent interest.

Further computations in higher rank fuel our main conjecture.
\begin{conj}\label{conj:main}
    Every strongly inscribable arrangement of rank $d \ge 3$ is
    combinatorially isomorphic to the restriction of a reflection arrangement.
\end{conj}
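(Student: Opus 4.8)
The plan to establish Conjecture~\ref{conj:main} is by induction on the rank $d$, exploiting that strong inscribability descends to both localizations and restrictions (Theorem~\ref{thm:restr_insc}). The base case $d = 3$ is exactly the classification already discussed: conditional on the completeness of the Gr\"unbaum--Cuntz catalog, the only strongly inscribable rank-$3$ arrangements are restrictions of reflection arrangements. For the inductive step I would first dispose of the reducible case. If a strongly inscribable $\Arr$ splits as a product $\Arr_1 \times \Arr_2$, then its inscribed zonotope splits as a Cartesian product of inscribed zonotopes of the factors, so each factor is strongly inscribable and, by induction, a restriction $\Arr(\ReflG_i)^{X_i}$ of a reflection arrangement; since $\Arr(\ReflG_1 \times \ReflG_2)^{X_1 \times X_2} \cong \Arr_1 \times \Arr_2$ and $\ReflG_1 \times \ReflG_2$ is again a reflection group, the product is a restriction of a reflection arrangement. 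This reduces the problem to \emph{irreducible} strongly inscribable arrangements of rank $d \ge 4$.

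For an irreducible strongly inscribable $\Arr$ of rank $d \ge 4$, every proper localization $\Arr_X$ is again strongly inscribable by Theorem~\ref{thm:restr_insc}, hence by the inductive hypothesis combinatorially a restriction of a reflection arrangement; the same holds for every restriction $\Arr^X$. The strongest combinatorial constraints come from the rank-$3$ localizations, which by the base case must themselves be restrictions of reflection arrangements, while by Theorem~\ref{thm:compute_by_2flats} the metric strong-inscribability condition is entirely encoded in the $2$-dimensional flats. Together with simpliciality (Theorem~\ref{thm:insc_arr_are_simplicial}) and the finiteness theorem bounding the number of hyperplanes of an irreducible strongly inscribable arrangement of fixed rank, this confines $\Arr$ to a finite, explicitly enumerable list of candidate combinatorial types in each rank $d$.

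The remaining and genuinely hard step is the local-to-global reconstruction: one must promote the statement ``\emph{every} localization and restriction of $\Arr$ is a restriction of a reflection arrangement'' to ``$\Arr$ \emph{itself} is a restriction of a reflection arrangement.'' The natural attempt is to amalgamate the local root data attached to the rank-$2$ and rank-$3$ flats into a single global root system and to exhibit an ambient finite reflection group $\ReflG$ together with a flat $X$ realizing $\Arr \cong \Arr(\ReflG)^X$. Here the principal obstacle is intrinsic to the word \emph{restriction}: as noted after the proposition, restrictions of reflection arrangements need not themselves be reflection arrangements, so the reflections in the hyperplanes of $\Arr$ need not generate a finite group, and one cannot build $\ReflG$ directly from $\Arr$. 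Instead the finiteness of the ambient group $\ReflG$ must be recovered from purely local data of rank at most $d-1$, and it is precisely this passage from local reflection-like structure to global finiteness that resists a uniform argument.

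In the absence of such a structural amalgamation theorem, the plan degrades to a rank-by-rank strategy: in each fixed rank the finite candidate list produced above can in principle be tested by the computational-algebra methods already used for $d = 3$, checking each surviving combinatorial type against the finite catalog of restrictions of rank-$d$ reflection arrangements. This would verify Conjecture~\ref{conj:main} in any prescribed rank but not for all $d$ at once; closing that gap---finding a conceptual mechanism that forces the local reflection data to glue into a globally finite reflection group---is what I expect to be the crux of any complete proof.
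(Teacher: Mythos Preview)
The statement you are attempting to prove is a \emph{conjecture} in the paper, not a theorem: the authors explicitly label it as Conjecture~\ref{conj:main} and present it as their ``main conjecture,'' supported by computational evidence (the rank-$3$ verification against the Gr\"unbaum--Cuntz catalog, the two sporadic rank-$4$ arrangements, and the dimension data in Figure~\ref{fig:restrictions}) but without a proof. There is therefore no ``paper's own proof'' to compare your proposal against.

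To your credit, your write-up is honest about this: you describe a \emph{plan}, explicitly flag the base case as conditional on Conjecture~\ref{conj:GC}, and correctly isolate the local-to-global amalgamation step as the genuine obstruction. That diagnosis is accurate and matches the spirit of the paper's discussion. A few small points nonetheless deserve attention. First, in the reducible case your induction hypothesis only covers factors of rank $\ge 3$; factors of rank $1$ or $2$ must be handled separately (they are themselves reflection arrangements of type $A_1$ or $I_2(k)$, so this is easy, but it should be said). Second, even the base case $d=3$ is not a theorem but is conditional on an open conjecture, so the entire inductive scheme is doubly conditional. Third, your finiteness argument in each rank relies on Theorem~\ref{thm:finite_in_higher_dimensions}, which is again conditional on Conjecture~\ref{conj:GC}.

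In short: what you have written is a reasonable outline of how one might attack the conjecture, with the central difficulty correctly identified, but it is not a proof and the paper does not claim one either.
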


Inscribed polytopes correspond to \emph{ideal hyperbolic polytopes}, that is,
polytopes in hyperbolic space with all vertices at infinity.
Conjecture~\ref{conj:main} gives a conjectural classification of ideal
hyperbolic zonotopes.

The paper is organized as follows: 
In Section~\ref{sec:insc_cones}, we recap results on inscribable fans and
hyperplane arrangements. In particular, we recall that the collection of
translation-classes of inscribed polytopes with normal fan $\Fan$ is a
relatively open polyhedral subcone of the type cone of $\Fan$ and we discuss
inscribed \emph{virtual} polytopes.
We also show
that reflection arrangements are strongly inscribable.  In
Section~\ref{sec:dim2}, we give geometric and algebraic characterizations of
(strongly) inscribable line arrangements. As in the case of reflection groups,
the rank-$2$ situation serves as building blocks for higher ranks. In
Section~\ref{sec:rest_simp}, we investigate operations on (strongly) inscribed
arrangements. We show that products, localizations, and restrictions retain
(strong) inscribability. In Section~\ref{sec:repr} we derive a polyhedral
representation of the cone of inscribed zonotopes of a fixed arrangement
$\Arr$. In Section~\ref{sec:simplicial} we show that inscribable arrangements
are simplicial and we discuss projective uniqueness of simplicial
arrangements. We extend a local characterization of zonotopes due to Bolker to
inscribed zonotopes (Theorem~\ref{thm:bolker_inscr}).  In
Section~\ref{sec:inner_product}, we adopt a broader perspective and
investigate arrangements inscribed in a general quadric. This allows us to
treat all arrangements from the Gr\"unbaum--Cuntz catalog.  We first show that
there are only finitely many strongly inscribed arrangements in the two
infinite families (Section~\ref{sec:infinite_families}). The remaining
finitely many arrangements are treated using techniques from computational
algebra in Section~\ref{sec:sporadic}. In Section~\ref{sec:ABD}, we complete
the classification of all quadrics for which restrictions of reflection
arrangements are inscribable. We close in Section~\ref{sec:close} with two
remarks and pretty pictures.

\subsection*{Acknowledgements} 
We thank Michael Cuntz, Thilo R\"orig, Christian Stump, and Martin Winter for
insightful discussions. Research that led to this paper was supported by the
DFG-Collaborative Research Center, TRR 109 ``Discretization in Geometry and
Dynamics'' and we also thank our colleagues of project A3 for their support.
Many of our findings were inspired by experiments and computations conducted
with SAGE~\cite{SAGE} and GeoGebra~\cite{geogebra}.

\renewcommand{\baselinestretch}{0.7}\normalsize
\tableofcontents
\renewcommand{\baselinestretch}{1.0}\normalsize

\section{Inscribed cones of hyperplane arrangements}\label{sec:insc_cones}
In this section, we recall background and central results on inscribable fans
(following~\cite{InFan1}) and hyperplane arrangements
(following~\cite{OrlikTerao}). Throughout our ambient space is $\R^d$
equipped with a fixed inner product $\innerprod$.

 A complete \Def{fan} is a collection $\Fan$ of polyhedral cones in $\R^d$
 such that 
\begin{enumerate}[\rm (F1)]
    \item if $C \in \Fan$ and $F \subseteq C$ is face, then $F \in \Fan$;
    \item if $C, C' \in \Fan$, then $C \cap C'$ is a face of both;
    \item $\bigcup_{C \in \Fan} C = \R^d$.
\end{enumerate}
The inclusion-maximal cones of $\Fan$ are all of dimension $d$ and are called
the \Defn{regions} of $\Fan$ and we will usually identify $\Fan$ with its set
of regions.

Let $P \subset \R^d$ be a convex polytope with vertex set $V(P)$.  For $c \in
\R^d$, we write
\[
    P^c \ \defeq \ \{ x \in P : \inner{c,x} \ge \inner{c,y} \text{ for all } y
    \in P \} 
\]
for the non-empty face that maximizes the linear function $x \mapsto
\inner{c,x}$.  The set of all non-empty faces of $P$ is denoted by
$\faces(P)$. This is a graded lattice with respect by inclusion. The
\Def{normal cone} of a face $F \in \faces(P)$ is 
\[
    N_F P \ \defeq \ \{ c \in \R^d : F \subseteq P^c \}
\]
and the \Def{normal fan} of $P$ is $\Fan(P) \defeq \{ N_FP : F \in \faces(P)
\}$. The regions of $\Fan(P)$ correspond to normal cones of vertices
\[
    N_v P \ = \ \{ c \in \R^d : \inner{c,v} \ge \inner{c,u} \text{ for all } u
    \in V(P) \} \, .
\]
We call a fan \Def{polytopal}, if it is the normal fan of a polytope. Two
polytopes $P,P'$ are called \Def{normally equivalent} if $\Fan(P) = \Fan(P')$.
In~\cite{InFan1} we studied the question when for a polytope $P$ there is some
inscribed $P'$ normally equivalent to $P$. Since this depends only on the
normal fan, we call a polytopal fan $\Fan$ \Def{inscribable} if there is an
inscribed polytope $P$ with $\Fan(P) = \Fan$.

Let $T_d \cong \R^d$ be the group of translations. We defined the
\Def{inscribed cone} of $\Fan$ as
\[
    \InCone(\Fan) \defeq \{ P \subset \R^d \text{ inscribed polytope} :
    \Fan(P) = \Fan \} \,/\, T_d\,.
\]
The name is justified by the following fundamental result.
\begin{thm}[{\cite[Theorem~1.1]{InFan1}}]\label{thm:inspc}
    Let $\Fan$ be a polytopal fan in $\R^d$. Then $\InCone(\Fan)$ is closed
    with respect to Minkowski addition and has the structure of an open
    polyhedral cone of dimension $\le d$.
\end{thm}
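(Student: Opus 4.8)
The plan is to coordinatize polytopes with normal fan $\Fan$ by their support parameters, linearize the inscribability condition, and then bound the dimension by normalizing the circumcenter.

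Fix a generator $u_\rho$ of each ray $\rho$ of $\Fan$ and, for a vector $h$ of support parameters indexed by the rays, set $P_h \defeq \{x \in \R^d : \inner{u_\rho,x} \le h_\rho \text{ for all }\rho\}$. As is standard, the set of $h$ with $\Fan(P_h) = \Fan$ is the relative interior of a polyhedral cone (the type cone), translations act on it as a linear subspace, and Minkowski addition corresponds to addition of support parameters. For such $h$, each vertex $v_C = v_C(h)$, indexed by the regions $C$ of $\Fan$, is a linear function of $h$, and for every wall $F$ separating regions $C,C'$ the edge $e_F$ of $P_h$ has the form $v_{C'}(h) - v_C(h) = \ell_F(h)\,w_F$ with $w_F$ a fixed unit vector and $\ell_F$ a linear functional that is positive on the interior of the type cone. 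I would first observe that inscribability is a linear condition in $h$: if $P_h$ is inscribed with circumcenter $c$, then equidistance of the two endpoints of each edge $e_F$ reads $\inner{v_C(h)-v_{C'}(h),\,v_C(h)+v_{C'}(h)-2c} = 0$, and dividing by $\ell_F(h)>0$ this becomes $\inner{w_F,\,v_C(h)+v_{C'}(h)} = \inner{w_F,2c}$; conversely, if some $c$ satisfies all these, then adjacent vertices are equidistant from $c$, hence all are, since the vertex--edge graph of $P_h$ is connected. Eliminating $c$ (the condition is that $(\inner{w_F,v_C(h)+v_{C'}(h)})_F$ lies in the fixed subspace $\{(\inner{w_F,a})_F : a\in\R^d\}$) leaves linear equations purely in $h$. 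Therefore $\InCone(\Fan)$ is the intersection of a linear subspace with the relatively open type cone, hence a relatively open polyhedral cone; it is closed under Minkowski addition because both factors are closed under addition of support parameters.

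It remains to bound the dimension by $d$. The circumsphere of an inscribed (full-dimensional) polytope is unique, and circumcenters transform equivariantly under translation and scaling, so sending a translation class to its circumcenter-at-origin representative identifies $\InCone(\Fan)$ with the convex cone $\mathcal{C}_0$ of inscribed polytopes with normal fan $\Fan$ and circumcenter $\0$ — provided the Minkowski sum of two members of $\mathcal{C}_0$ again has circumcenter $\0$, which will follow from the computation below. Let $P,P' \in \mathcal{C}_0$ have vertices $v_C$ and $v'_C$. Equidistance of the endpoints of $e_F$ from $\0$ forces $\inner{v_C,w_F} = -\tfrac12\ell_F(P)$ and $\inner{v_C,w_F'}\!= -\tfrac12\ell_F(P')$ — no, more precisely $\inner{v'_C,w_F}=-\tfrac12\ell_F(P')$ — and substituting into $v_{C'} = v_C + \ell_F(P)w_F$, $v'_{C'} = v'_C + \ell_F(P')w_F$ yields $\inner{v_{C'},v'_{C'}} = \inner{v_C,v'_C}$. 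Hence $q(P,P') \defeq \inner{v_C,v'_C}$ is independent of $C$; it is bilinear, extends to the linear span $V_0$ of $\mathcal{C}_0$, and $q(P,P) = \|v_C\|^2$ is the squared circumradius. In particular $\|v_C+v'_C\|^2 = q(P,P)+2q(P,P')+q(P',P')$ is independent of $C$, so $P+P'$ has all vertices equidistant from $\0$ and thus circumcenter $\0$, validating the identification $\InCone(\Fan)\cong\mathcal{C}_0$. Finally, fixing one region $C_0$, the linear evaluation map $V_0 \to \R^d$, $P\mapsto v_{C_0}$, is injective: if $v_{C_0}=\0$ then $q(P,P)=0$, so every vertex of $P$ vanishes and $P=\0$. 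Therefore $\dim\InCone(\Fan) = \dim\mathcal{C}_0 \le \dim V_0 \le d$.

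I expect the main obstacle to be the dimension bound rather than the polyhedral description: the linear-subspace description by itself does not give $\dim\le d$, and the crux is the invariance of the parallel-vertex inner product $q$ (equivalently, the additivity of the circumcenter over Minkowski sums), which is what forces a circumcenter-normalized inscribed polytope to be determined by a single one of its vertices. The type-cone bookkeeping and the linearization of the inscribed condition are routine by comparison.
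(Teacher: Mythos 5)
Your argument is correct, and since the paper does not prove this statement itself --- it is imported from \cite{InFan1} --- the right comparison is with the machinery the paper recalls from that source, which your proof reproduces: the linearization of the wall-by-wall equidistance condition is the content of Lemma~\ref{lem:edge_midpoint_orthogonal}, and your circumcenter-normalized evaluation $P \mapsto v_{C_0}$, whose linearity and injectivity you extract from the $C$-invariance of the pairing $q(P,P') = \inner{v_C,v'_C}$, is exactly the injective linear map $v_{R_0}$ of Theorem~\ref{thm:inspc_is_cone}. So the proposal is sound and follows essentially the same route; the one step worth having made explicit, which you did, is that $q$ extends bilinearly to the linear span so that $v_{C_0}(P)=\0$ forces every vertex of the (virtual) polytope $P$ to vanish.
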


It turns out that there is a simple embedding of $\InCone(\Fan)$ into $\R^d$.
For an inscribed polytope $P$, let $c(P) \in \aff(P)$ be the
\Def{center} of the inscribing sphere relative to the affine hull of $P$.
\begin{thm}[{\cite[Corollary~2.8]{InFan1}}]\label{thm:inspc_is_cone}
    Let $\Fan$ be an inscribable fan and let $R_0 \in \Fan$ be a
    region. The map $v_{R_0} : \InCone(\Fan) \to \interior R_0$ with
    $v_{R_0}(P) \defeq v$, where 
    \[
        \{v\} \ = \ V(P - c(P)) \cap \interior R_0\,,
    \]
    is linear and injective. The image of $v_{R_0}$ is called the \Defn{based
    inscribed cone} $\InCone(\Fan, R_0)$ of $\Fan$.
\end{thm}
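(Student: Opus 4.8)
The plan is to reduce the whole statement to one structural fact: for an inscribed polytope with circumcenter at the origin, the vertices lying over two adjacent regions of its normal fan are exact mirror images of one another in the wall between them. Throughout I will normalize representatives so that $c(P) = 0$; since $c(P+t) = c(P) + t$, the polytope $P - c(P)$ depends only on the class of $P$ in $\InCone(\Fan)$, so this costs nothing and already shows $v_{R_0}$ is well defined on translation classes. To see it is also single-valued, recall that $v \mapsto N_v P$ identifies the regions of $\Fan = \Fan(P)$ with $V(P)$; write $v_R$ for the vertex with $N_{v_R}P = R$. With $c(P) = 0$ all vertices lie on a sphere of some radius $r$ about the origin, so Cauchy--Schwarz gives $\langle v_R, v_S\rangle \le \|v_R\|\|v_S\| = r^2 = \langle v_R, v_R\rangle$ with equality iff $v_S = v_R$; hence $v_R \in \{c : \langle c, v_R\rangle > \langle c, v_S\rangle \text{ for all regions } S \ne R\} = \interior R$, and as distinct regions have disjoint interiors, $V(P - c(P)) \cap \interior R_0 = \{v_{R_0}\}$.

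Next I would establish the \emph{reflection lemma}. Let $R, R'$ be adjacent regions, so that $W := R \cap R'$ is $(d-1)$-dimensional. The $(d-1)$-dimensional cones of $\Fan(P)$ are exactly the normal cones of edges of $P$, and $W = N_e P$ forces $e = [v_R, v_{R'}]$; since $0 \in W$, the subspace $H_W := \operatorname{span}(W)$ is a linear hyperplane (it is the supporting hyperplane along which the facet $W$ of the full-dimensional cone $R$ lies). Every $c \in \relint W$ is maximized over $P$ on all of $e$, so $\langle c, v_R - v_{R'}\rangle = 0$, and letting $c$ range over $\relint W$ gives $v_{R'} - v_R \perp H_W$, i.e.\ $v_{R'} - v_R \in \R n_W$ for the unit normal $n_W$ of $H_W$. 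Using $\|v_R\| = \|v_{R'}\| = r$ and $v_R \ne v_{R'}$ one solves for the scalar and obtains $v_{R'} = v_R - 2\langle v_R, n_W\rangle n_W = s_{H_W}(v_R)$, the orthogonal reflection of $v_R$ in $H_W$. The decisive point is that $H_W$, hence the linear map $s_{H_W}$, depends only on $\Fan$ and not on $P$.

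Given the lemma, \emph{linearity} and \emph{injectivity} follow by propagation. For $\lambda > 0$ the centered representative of $\lambda P$ has vertex $\lambda v_{R_0}$ in $\interior R_0$, so $v_{R_0}(\lambda P) = \lambda v_{R_0}(P)$. For $P, P' \in \InCone(\Fan)$ with centered representatives, the normal fan of $P + P'$ is the common refinement of the two equal fans, namely $\Fan$ again, and its vertices are the points $v_R + v'_R$ with $N_{v_R + v'_R}(P+P') = R$; applying the reflection lemma to $P$ and to $P'$ and using linearity of $s_{H_W}$ gives $v_{R'} + v'_{R'} = s_{H_W}(v_R + v'_R)$ across every wall $W$. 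Since the region-adjacency graph of $\Fan$ is connected and reflections are isometries fixing $0$, all the norms $\|v_R + v'_R\|$ coincide, so $c(P + P') = 0$ and $v_{R_0}(P+P') = v_{R_0} + v'_{R_0} = v_{R_0}(P) + v_{R_0}(P')$; alternatively one may quote Theorem~\ref{thm:inspc} to get $P + P' \in \InCone(\Fan)$ and use the reflection lemma only to locate its center. For injectivity, if $v_{R_0}(P) = v_{R_0}(P')$ then $v_{R_0} = v'_{R_0}$ for the centered representatives; walking from $R_0$ to an arbitrary region along a path of pairwise adjacent regions and applying the reflection lemma at each step — the same reflection for $P$ and for $P'$ — yields $v_R = v'_R$ for every $R$ by induction, hence $P = P'$.

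The entire content sits in the reflection lemma: recognizing that circumscription forces neighboring vertices to be genuine mirror images in the wall hyperplanes. Once this is in hand, path-independence of the resulting composites of wall reflections requires no separate argument, since it is witnessed by the existing polytope $P$, and injectivity and additivity become mechanical propagations along the connected adjacency graph. The remaining care is bookkeeping: handling the translation quotient (solved by always centering at the origin, and by noting $c(P)$ is taken relative to $\aff(P)$ so no essentiality hypothesis is needed) and confirming that $\operatorname{span}(W)$ is a linear hyperplane because $W$ is a facet of a full-dimensional cone through the origin.
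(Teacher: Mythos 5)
Your proof is correct and follows essentially the route of the source: the paper does not reprove this statement but imports it from \cite{InFan1}, where the argument is likewise built on the observation that an inscribed polytope centered at its circumcenter has adjacent vertices that are mirror images in the linear span of the separating wall — exactly your reflection lemma, which reappears in this paper in the equivalent guise of Lemma~\ref{lem:edge_midpoint_orthogonal} (orthogonality of $c([v_R,v_S])-c(P)$ to the wall normal $\alpha_{RS}$). Once that lemma is in place, your propagation along the connected region-adjacency graph to get single-valuedness, additivity, and injectivity is exactly the intended mechanism.
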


In this paper, we focus on the class of fans induced by linear
hyperplane arrangements. A linear \Def{hyperplane} is of the form
$H = z^\perp = \{ x : \inner{z,x} = 0 \}$ for some
$z \in \R^d \setminus \{\0\}$. An \Def{arrangement} of linear
hyperplanes is a collection $\Arr = \{ H_1,\dots, H_n \}$ where
$H_i = z_i^\perp$ for $i=1,\dots,n$. For a generic $c \in \R^d$, let
$\sigma = (\sigma_1,\dots,\sigma_n) \in \{-1,+1\}^n$ with
$\sigma_i = \sgn( \inner{z_i,c})$. Then $c$ is contained in the
interior of the cone
\[
    R_\sigma \ \defeq \ \{ x : \sigma_i\inner{z_i,x} \ge 0 \text{ for all }
    i=1,\dots,n \}.
\]
The collection of such cones $R_\sigma$ defines a fan $\Fan(\Arr)$ induced
by $\Arr$. It is the closure of connected components of $\R^d \setminus
\bigcup \Arr$. Since $\Arr$ is uniquely determined by $\Fan(\Arr)$, we do
not distinguish between $\Arr$ and its fan.  The \Defn{lineality space} of
an arrangement $\Arr$ is $\lineal(\Arr) \defeq \bigcap_{H \in \Arr} H$ and
we call $\Arr$ \Defn{essential}, if $\lineal(\Arr) = \{\0\}$. The
\Defn{rank} of $\Arr$ is $\rk(\Arr) \defeq d - \dim(\lineal(\Arr))$.  The
\Defn{essentialization} $\ess(\Arr) \defeq \{H \cap \lineal(\Arr)^\perp : H
\in \Arr\}$ is a hyperplane arrangement in $\lineal(\Arr)^\perp$ and, by passing
to the essentialization, we often will assume that $\Arr$ is essential.

Every fan induced by a hyperplane arrangement is polytopal. The
\Def{zonotope} associated to $z_1,\dots,z_n$ is the polytope
\[
    Z \ = \ [-z_1, z_1] + [-z_2, z_2] + \dots + [-z_n, z_n]
\]
and it is straightforward to verify that $R_\sigma$ is the normal cone of the
vertex $\sum_i \sigma_i z_i$ of $Z$.

A polytope $P$ with $\Fan(P) = \Arr$ is called a \Def{belt
polytope}~\cite{Bolker}. The name derives from the following fact.  For a set
$S \subset \R^d$ denote by $\aff(S)$ its affine hull and by $\aff_0(S) \defeq
\aff(S) - S$ the linear space parallel to it.  Two faces $F, F'$ of a belt
polytope $P$ are normally equivalent if and only if $\aff_0(F) = \aff_0(F')$.
The collections of faces that are normally equivalent are the \Def{belts} of
$P$. Moreover, if $L = \aff_0(F)^\perp$, then the faces normally equivalent to
$F$ are in bijection to the regions of the \Def{restriction}
\[
    \Arr_L \ \defeq \ \{ H \cap L : H \in \Arr, L \not\subseteq H\} \, ,
\]
which is a hyperplane arrangement in $L$. If we denote the \Def{localization}
of $\Arr$ at $L$ by 
\[
    \Arr^L \ \defeq \ \{ H \in \Arr : L \subseteq H \}
\]
then one checks that $F$ is a belt polytope with respect to $\Arr^L$. Such a
subspace $L$, which is an intersection of hyperplanes in $\Arr$, is called a
\Def{flat}.  The \Def{lattice of flats} $\flats(\Arr)$ is the collection of
flats of $\Arr$ partially ordered by reverse inclusion.  This is a (graded)
lattice with minimal element $\R^d$ and maximal element $\lineal(\Arr)$. We
denote by $\flats_k(\Arr)$ the subset of $k$-dimensional flats (or $k$-flats,
for short) and by $\faces_L(P) \subseteq \faces(P)$ the collection of faces in
the belt determined to $L$.

The upcoming local characterization of belt polytopes and zonotopes is
essentially due to Bolker~\cite[Thm.~3.3]{Bolker}.
\begin{thm}\label{thm:charac_zono}
    A polytope $P$ is a belt polytope if and only if for every $2$-dimensional
    face $F \subseteq P$ the following holds: $F$ has an even number of
    vertices and combinatorially antipodal edges are parallel.
    A polytope $P$ is a zonotope if and only if all $2$-dimensional faces are
    centrally-symmetric.
\end{thm}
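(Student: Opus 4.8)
The plan is to prove the two equivalences separately, the belt-polytope characterization carrying most of the weight. For the belt characterization, one direction is essentially what was recalled above: if $P$ is a belt polytope with $\Fan(P)=\Arr$, then every $2$-face $F$ is itself a $2$-dimensional belt polytope, i.e.\ $\Fan(F)$ is a line arrangement; and for a polygon this is exactly the stated condition, since the rays of $\Fan(F)$ come in antipodal pairs precisely when $F$ has an even number of vertices with combinatorially antipodal edges parallel. For the converse, I would first reduce to a local-to-global statement: writing $u_e$ for the direction of an edge $e$, it suffices to show that for every edge $e$ the linear hyperplane $H_e:=u_e^\perp$ is a union of cones of $\Fan(P)$. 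Indeed, letting $\mathcal{H}=\{H_e : e\text{ an edge of }P\}$, every region of $\Fan(P)$ is the intersection of the closed halfspaces bounded by the $H_e$ that support its facets, hence a union of regions of $\Fan(\mathcal{H})$, while by the claim no $H\in\mathcal{H}$ meets the interior of a region of $\Fan(P)$; so $\Fan(P)$ and $\Fan(\mathcal{H})$ have the same regions and therefore coincide, exhibiting $P$ as a belt polytope.

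To prove this claim, fix $e$ and put $H:=H_e$. The restriction $\Fan(P)|_H:=\{C\cap H : C\in\Fan(P)\}$ is the normal fan of the orthogonal projection of $P$ onto $H$; in particular it is a complete fan in $H$ whose dual graph of full-dimensional cones is connected, and $W_0:=N_eP$ is one of its full-dimensional cones and lies in $\Fan(P)$. Now propagate along the dual graph: if a full-dimensional cone $W$ of $\Fan(P)|_H$ lies in $\Fan(P)$ and $W'$ is adjacent to it across the common facet $\tau=W\cap W'$, then $\tau$ is a face of $W\in\Fan(P)$, so $\tau\in\Fan(P)$ with $\dim\tau=d-2$, and $\tau=N_FP$ for a $2$-face $F$. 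The hypothesis gives that $\Fan(F)$ is a line arrangement, hence centrally symmetric; viewing $\Fan(F)$ as the picture of $\Fan(P)$ around $\tau$ (the cones of $\Fan(P)$ containing $\tau$, reduced modulo $\lin(\tau)$ inside $\R^d/\lin(\tau)\cong\R^2$), the image of $W$ is a ray and the image of $H$ is a line through it, so the opposite ray is again a ray of $\Fan(F)$, namely the image of some wall $W''\in\Fan(P)$ with $\tau\subseteq W''\subseteq H$. Then $W''$ is a full-dimensional cone of $\Fan(P)|_H$ containing $\tau$ and distinct from $W$; since $\tau$ is a wall of $\Fan(P)|_H$, the full-dimensional cones of $\Fan(P)|_H$ containing $\tau$ are exactly $W$ and $W'$, so $W''=W'$ and $W'\in\Fan(P)$. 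By connectedness of the dual graph, every full-dimensional cone of $\Fan(P)|_H$ lies in $\Fan(P)$, so $H$ is a union of cones of $\Fan(P)$, as claimed.

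For the zonotope characterization, the implication ``zonotope $\Rightarrow$ all $2$-faces centrally symmetric'' is immediate, since every face of $\sum_i\lambda_i[-z_i,z_i]$ is again a translate of a zonotope and a $2$-dimensional zonotope is a centrally-symmetric polygon. Conversely, a centrally-symmetric polygon has an even number of vertices with antipodal edges parallel, so by the belt part $P$ is a belt polytope; write $\Fan(P)=\Arr=\{z_1^\perp,\dots,z_n^\perp\}$ with the $z_i$ unit vectors, one per hyperplane, so the belts of $P$ are indexed by $i=1,\dots,n$. I then claim that all edges in belt $i$ have a common length $\ell_i$: a $2$-face $F$ meeting belt $i$ has exactly two edges in belt $i$ (a convex polygon has at most two parallel edges, and the combinatorial antipode of a belt-$i$ edge of the centrally-symmetric polygon $F$ is again a belt-$i$ edge), and these two are antipodal in $F$, hence equal in length; since the edges of belt $i$ are in bijection with the regions of a restriction of $\Arr$, whose dual graph is connected, and adjacent regions correspond to belt-$i$ edges sharing such a $2$-face, the edge length is constant along the belt. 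Now set $Z:=\sum_i\tfrac{\ell_i}{2}[-z_i,z_i]$, so that $\Fan(Z)=\Arr=\Fan(P)$ and, by the standard description of faces of a zonotope, every belt-$i$ edge of $Z$ has length $\ell_i$ as well. Finally $P$ and $Z$ are normally equivalent, hence combinatorially equivalent with corresponding edges of equal direction and length; fixing a region $R_0\in\Arr$ with corresponding vertices of $P$ and $Z$, translating both to the origin, and inducting on the distance in the common $1$-skeleton -- the sign of each edge vector being forced by the normal fan -- shows that $P$ and $Z$ have the same vertices, so $P=Z$ is a zonotope.

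The step I expect to be the main obstacle is the propagation argument: one must argue that $H_e$ threads through each $2$-face it meets in a unique way, which is exactly where central symmetry of $\Fan(F)$ is used, and that $\Fan(P)|_{H_e}$ really is a complete fan with connected dual graph so the propagation reaches every full-dimensional cone. The remaining ingredients -- the equivalence ``even number of vertices with antipodal edges parallel $\Leftrightarrow$ $\Fan(F)$ a line arrangement'', the bookkeeping with belts, and the reconstruction of $P$ from its edge vectors -- are routine but need care.
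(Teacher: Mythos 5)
The paper offers no proof of this statement: it is attributed to Bolker and used as a citation, with only the easy implications (faces of belt polytopes are belt polytopes for the corresponding localizations, faces of zonotopes are translated zonotopes) implicit in the surrounding discussion. So your write-up is the only argument on the table, and I find it correct. The content lies in the two converses, and you prove both by the standard local-to-global propagation: for the belt statement you show that each hyperplane $H_e=u_e^\perp$ is a union of cones of $\Fan(P)$ by walking through the maximal cones of the section $\{C\cap H_e: C\in\Fan(P)\}$, using at each codimension-$2$ cone $\tau=N_FP$ that the hypothesis on the $2$-face $F$ makes the link of $\tau$ a centrally symmetric planar fan, so that the line $H_e/\lin(\tau)$ continues through $\tau$ along another cone of $\Fan(P)$; for the zonotope statement you propagate constancy of edge length along each belt through the connected dual graph of the restriction of $\Arr$ to $z_i^\perp$ and then reconstruct $P$ as $\sum_i\tfrac{\ell_i}{2}[-z_i,z_i]$ by walking the $1$-skeleton. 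This is in the spirit of Bolker's own argument and of the classical proofs of Alexandrov, Shephard, and McMullen that local central symmetry forces global zonotopal structure, so it is a faithful reconstruction rather than a shortcut. Two cosmetic remarks: you do not actually need the identification of $\{C\cap H_e\}$ with the normal fan of $\pi_{H_e}(P)$ (true, but requiring its own justification); all your propagation uses is that $\{C\cap H_e\}$ is a complete fan in $H_e$ whose dual graph on maximal cones is connected, and that follows directly from completeness of $\Fan(P)$. And in the final step it is worth stating explicitly that the sign of each edge vector $v_{R'}-v_R$ between adjacent regions is determined by which side of $z_i^\perp$ the region $R'$ lies on, so that the wall-crossing data of $P$ and of $Z$ agree literally; with that spelled out the induction along the $1$-skeleton closes.
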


We give a local characterization of inscribed belt polytopes and zonotopes
akin to the above in Theorem~\ref{thm:bolker_inscr}.

An arrangement $\Arr$ is \Def{inscribable} if $\InCone(\Arr) \defeq
\InCone(\Fan(\Arr)) \neq \emptyset$, that is, if there is an inscribed belt
polytope $P$ with $\Fan(P) = \Arr$. If there is an inscribed zonotope $Z$
with $\Fan(Z) = \Arr$, then $\Arr$ is \Def{strongly} inscribable.  Since
Minkowski sums of zonotopes are zonotopes, this prompts the definition of
the \Def{strongly
inscribed cone} of $\Arr$ as the subcone of the inscribed cone
\[
    \ZInCone(\Arr) \defeq \{ Z \in \InCone(\Arr) : \text{$Z$ zonotope}\} \
    \subseteq \ \InCone(\Arr) \,.
\]

A polytopal fan $\Fan = \Fan(P)$ is \Def{even} if every $2$-face of $P$ has an
even number of vertices or, equivalently, if the link of every codimension-$2$
cone of $\Fan$ is an even cycle.  Theorem~\ref{thm:charac_zono} yields that
every arrangement is even and together with Theorem~4.13 in~\cite{InFan1} we
get:

\begin{cor}\label{cor:belt_full}
    Let $\Arr$ be an arrangement of hyperplanes in
    $\R^d$. If $\Arr$ is inscribable, then $\dim \InCone(\Arr) = d$.
\end{cor}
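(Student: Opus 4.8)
The plan is to read this off as a one-line reduction. Theorem~\ref{thm:inspc} already gives $\dim\InCone(\Arr)\le d$, so the only content is the matching lower bound $\dim\InCone(\Arr)\ge d$. For this I would combine two ingredients: first, the observation that the fan of any hyperplane arrangement is \emph{even}; and second, Theorem~4.13 of~\cite{InFan1}, which states that an even inscribable fan has a full-dimensional inscribed cone. Granting that theorem, once evenness is checked the corollary is immediate.

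For the evenness step there are two routes, either of which is routine. The cleanest is to note that $\Fan(\Arr)$ is the normal fan of the zonotope $Z=[-z_1,z_1]+\dots+[-z_n,z_n]$; since every face of a zonotope is again a zonotope, every $2$-face of $Z$ is centrally symmetric and hence has an even number of vertices, which by Theorem~\ref{thm:charac_zono} is exactly the assertion that $\Fan(\Arr)$ is even. Alternatively one argues directly on the fan: the link of a codimension-$2$ cone of $\Fan(\Arr)$ is determined by a rank-$2$ localization $\Arr^L$, i.e.\ by $k\ge2$ lines through the origin in a plane, whose regions form a cycle of length $2k$. Either way $\Fan(\Arr)$ is even, and combining this with Theorem~4.13 of~\cite{InFan1} and the bound $\dim\InCone(\Arr)\le d$ yields $\dim\InCone(\Arr)=d$; equivalently, the based inscribed cone $\InCone(\Arr,R_0)\subseteq\interior R_0$ has nonempty interior in $\R^d$.

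The genuinely substantive input is Theorem~4.13 of~\cite{InFan1}, whose proof (not reproduced here) is where the real work sits: starting from one inscribed belt polytope $P$ centered at the origin, one must exhibit $d$ independent deformations of $P$ that stay normally equivalent to $\Arr$ while keeping all vertices on a common sphere, and evenness is precisely what lets one close up the deformed $2$-faces consistently so the perturbation assembles into an honest inscribed polytope. Within the present paper, however, there is no real obstacle; the argument is the trivial remark that arrangement fans are even, followed by a citation. The equivalent reformulation ``inscribed belt polytopes are simple'' then follows because a polytope is simple exactly when its normal fan is simplicial, and the $d$-dimensionality statement is the fan-theoretic dual of this.
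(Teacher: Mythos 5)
Your proof is correct and is exactly the paper's argument: the fan of any hyperplane arrangement is even (via Theorem~\ref{thm:charac_zono}, or equivalently via the rank-$2$ localizations), and Theorem~4.13 of~\cite{InFan1} then upgrades inscribability to $\dim\InCone(\Arr)=d$. Your closing aside about ``inscribed belt polytopes are simple'' belongs to Theorem~\ref{thm:insc_arr_are_simplicial} rather than to this corollary and is not the fan-theoretic dual of the dimension statement, but it plays no role in the argument.
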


\subsection{Reflection arrangements}

An important class of inscribable arrangements comes from reflection
arrangements. For $\alpha \in \R^d \setminus \{\0\}$, let $s_\alpha : \R^d
\to \R^d$ be the reflection in $\alpha^\perp$, that is, the orthogonal
transformation that fixes $\alpha^\perp$ pointwise and that satisfies
$s_\alpha(\alpha) = -\alpha$. A \Def{finite reflection group} $\ReflG$ is a
finite subgroup of $O(\R^d)$ that is generated by reflections. The
associated \Def{reflection arrangement} is 
\[
    \Arr_\ReflG \ = \  \{ \alpha^\perp : s_\alpha \in W \} \, .
\]
For $q \in \R^d$, the \Def{$\ReflG$-permutahedron} is the polytope
$P_\ReflG(q) \defeq \conv(\ReflG\cdot q)$. It follows for example from
Theorem~1.12 in~\cite{Humphreys} that whenever $q$ is not contained in a
reflection hyperplane, then $\Fan(P_\ReflG(q)) = \Arr_\ReflG$. Since $W$ acts
by orthogonal transformations, $P_\ReflG(q)$ is inscribed which proves the following:
\begin{prop}\label{prop:refl_insc}
    Every reflection arrangement $\Arr_W$ is inscribed and all
    $P \in \InCone(\Arr_W)$ are $W$-permutahedra.
\end{prop}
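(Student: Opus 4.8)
The plan is to handle the two assertions separately. The first — that $\Arr_W$ is inscribable — is already contained in the paragraph preceding the statement: $P_W(q)$ has normal fan $\Arr_W$ whenever $q$ avoids the reflecting hyperplanes, and it is inscribed because $W$ acts orthogonally. So the content to prove is the second assertion, that \emph{every} $P\in\InCone(\Arr_W)$ is a $W$-permutahedron. Here the idea is that the inscribed condition forces $P$ to carry the full $W$-symmetry; the mechanism is that the perpendicular-bisector hyperplane $\Centers_e$ of a suitable edge $e$ of $P$ turns out to be a reflecting hyperplane of $W$, so that the corresponding reflection must swap the two vertices of $P$ incident to $e$.

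Concretely, I would fix a representative of the class translated so that $c(P)=0\in\aff(P)$; then all vertices of $P$ have the same norm $r$. Since the regions of $\Arr_W$ are the chambers of $W$, on which $W$ acts simply transitively, the vertices of $P$ may be indexed as $v_w$, $w\in W$, with $N_{v_w}P=wR_0$ for a fixed fundamental chamber $R_0$; let $S$ be the set of reflections in the walls of $R_0$, and for $s\in S$ write $s=s_{\alpha_s}$ and $H_s=\alpha_s^\perp$. The first step is combinatorial: for $w\in W$ and $s\in S$, the chambers $wR_0$ and $wsR_0$ are related by the reflection in $wH_s$, hence are the two chambers of $\Arr_W$ sharing that wall, so the region cones $N_{v_w}P$ and $N_{v_{ws}}P$ share a codimension-one face lying in $wH_s$; this face is $N_eP$ for an edge $e$ joining $v_w$ to $v_{ws}$, and as $N_eP$ spans $wH_s$ we get $\aff_0(e)=(wH_s)^\perp=\R\,w\alpha_s$. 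The second step is where the inscribed hypothesis enters: since $\|v_w\|=\|v_{ws}\|=r$, the hyperplane $\Centers_e$ contains the origin, and being orthogonal to $e$ it is orthogonal to $w\alpha_s$, so $\Centers_e=(w\alpha_s)^\perp=wH_s$. The reflection in $\Centers_e$ interchanges the endpoints of $e$, hence interchanges $v_w$ and $v_{ws}$; since that reflection is $s_{w\alpha_s}=ws_{\alpha_s}w^{-1}$, we obtain $v_{ws}=(ws_{\alpha_s}w^{-1})(v_w)$ for all $w\in W$ and $s\in S$.

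From this relation I would conclude by induction on the Coxeter length $\ell(w)$: writing $w=w's$ with $s\in S$ and $\ell(w')=\ell(w)-1$, the relation applied with $w'$ in place of $w$ gives $v_w=v_{w's}=(w's_{\alpha_s}(w')^{-1})(v_{w'})$, and the inductive hypothesis $v_{w'}=w'(v_e)$ then gives $v_w=w'(s_{\alpha_s}(v_e))=(w's)(v_e)=w(v_e)$. Hence $P=\conv\{v_w:w\in W\}=\conv(W\cdot v_e)=P_W(v_e)$ is a $W$-permutahedron, which is exactly the claim.

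The only step that requires genuine care is the identification $\Centers_e=wH_s$: it combines the inscribed condition (all vertices equidistant from $c(P)=0$, so the perpendicular bisector of every edge passes through the origin) with the combinatorial fact that the edge of $P$ dual to the wall $wH_s$ is parallel to the root $w\alpha_s$; the rest is routine bookkeeping with the chamber geometry of $W$ and the correspondence $F\mapsto N_FP$. I would also note a shorter alternative via Theorem~\ref{thm:inspc_is_cone}: because $c(P_W(q))=0$ and $q$ is precisely the vertex of $P_W(q)$ with normal cone $R_0$, one has $v_{R_0}(P_W(q))=q$ for every $q\in\interior R_0$; thus the linear injection $v_{R_0}$ maps $\InCone(\Arr_W)$ onto $\interior R_0$ with inverse $q\mapsto P_W(q)$, and hence every $P\in\InCone(\Arr_W)$ equals $P_W(v_{R_0}(P))$.
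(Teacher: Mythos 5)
Your argument is correct, but your main route is genuinely different from the paper's. The paper disposes of the whole proposition in the paragraph preceding it: $P_\ReflG(q)$ is inscribed with normal fan $\Arr_\ReflG$ (first claim), and the second claim is meant to follow from the injectivity of the linear map $v_{R_0}$ of Theorem~\ref{thm:inspc_is_cone} — since $c(P_\ReflG(q))=\0$ and $v_{R_0}(P_\ReflG(q))=q$ for every $q\in\interior R_0$, the permutahedra already exhaust the image of $\InCone(\Arr_\ReflG)$, so they exhaust $\InCone(\Arr_\ReflG)$ itself. That is exactly the ``shorter alternative'' you record at the end. Your primary argument instead proves the second claim from scratch: you identify the perpendicular bisector $\Centers_e$ of the edge $e=[v_w,v_{ws}]$ with the reflecting hyperplane $w\alpha_s^\perp$ (using that both endpoints have norm $r$, so $\0\in\Centers_e$, together with $\aff_0(e)=\R\,w\alpha_s$), deduce $v_{ws}=ws_{\alpha_s}w^{-1}(v_w)$, and conclude $v_w=w(v_{\id})$ by induction on Coxeter length. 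This is a valid and self-contained proof; it does not lean on the imported Theorem~\ref{thm:inspc_is_cone} and makes the mechanism visible (each wall of the arrangement is forced to be a mirror symmetry of any inscribed $P$), at the cost of being longer than the paper's two-line appeal to injectivity. The one point worth stating explicitly in your write-up is the reduction to the essential case (or working inside $\aff(P)=\lineal(\Arr_\ReflG)^\perp$), so that ``all vertices have the same norm'' and ``$\0\in\Centers_e$'' are literally true after translating $c(P)$ to the origin; this is routine and does not affect correctness.
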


In particular, by Corollary~\ref{cor:belt_full}, $\InCone(\Arr_W)$ is
full-dimensional. The fans induced by reflection arrangements play a
distinguished role. A fan $\Fan$ is called \Def{full} if the linear map
$v_{R_0} : \InCone(\Fan) \to \interior R_0$ given in
Theorem~\ref{thm:inspc_is_cone} is an isomorphism for all $R_0$.

\begin{thm}\label{thm:full_fans}
    Let $\Fan$ be a fan. Then $\Fan$ is full if and only if $\Fan =
    \Fan(\Arr_W)$ for some reflection arrangement $\Arr_W$.
\end{thm}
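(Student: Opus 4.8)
The plan is to handle the two implications separately; the forward direction is where the real work lies. For the implication ``$\Leftarrow$'', I would deduce everything from Proposition~\ref{prop:refl_insc}. Suppose $\Fan=\Fan(\Arr_\ReflG)$ for a reflection group $\ReflG$; we may take $\Arr_\ReflG$ essential, since otherwise $\InCone(\Fan)$ has dimension $<d$ and no $v_{R_0}$ can be onto $\interior R_0$. Fix a region $R_0$ and a point $v\in\interior R_0$. Then $v$ lies on no reflection hyperplane, so the $\ReflG$-permutahedron $P\defeq\conv(\ReflG\cdot v)$ has normal fan $\Arr_\ReflG=\Fan$; it is inscribed in the sphere of radius $\|v\|$ about the origin because $\ReflG$ acts orthogonally, and $c(P)=\0$ because $\0$ is the unique $\ReflG$-fixed point. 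Hence $v\in V(P-c(P))\cap\interior R_0$, i.e.\ $v_{R_0}(P)=v$, so $v_{R_0}$ is onto $\interior R_0$; being linear and injective by Theorem~\ref{thm:inspc_is_cone}, it is an isomorphism, and the same argument works for every region, so $\Fan$ is full.

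For ``$\Rightarrow$'', assume $\Fan$ is full; the idea is to manufacture a reflection group out of the based inscribed cones. Fix a region $R_0$. By fullness, $v_{R_0}\colon\InCone(\Fan)\to\interior R_0$ is a linear bijection, so $P_v\defeq v_{R_0}^{-1}(v)$ is a well-defined inscribed polytope depending linearly on $v\in\interior R_0$; normalize it so that $c(P_v)=\0$. For each region $R$ the map $v\mapsto v_R(P_v)$ is then linear (Theorem~\ref{thm:inspc_is_cone}) and extends to a linear map $A_R\colon\R^d\to\R^d$ with $A_{R_0}=\id$. Since the vertices of the inscribed polytope $P_v$ are all equidistant from $c(P_v)=\0$, we get $\|A_Rv\|=\|v\|$ on an open set, so $A_R\in O(\R^d)$; and fullness applied at $R$ gives $A_R(\interior R_0)=\interior R$, hence $A_R(R_0)=R$. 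The crucial point is a local calculation: if $R$ is adjacent to $R_0$, i.e.\ $G\defeq R_0\cap R$ has dimension $d-1$, and $\lin(G)=z^\perp$, then $[v,v_R(P_v)]$ is an edge of $P_v$ with normal cone $G$, hence a translate of $\R z$; as both its endpoints lie on the circumsphere of $P_v$ centered at $\0\in z^\perp$, its midpoint lies in $z^\perp$, and a short computation forces $v_R(P_v)=s_z(v)$. Thus $A_R=s_{z^\perp}$, the reflection in $\lin(G)$.

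From here I would bootstrap along the adjacency graph of the regions of $\Fan$, which is connected. Since $A_{R'}$ is a linear isomorphism carrying $R_0$ onto $R'$, the walls of a region $R'=w(R_0)$ are the cones $w(G_1),\dots,w(G_m)$, where $G_1,\dots,G_m$ are the walls of $R_0$; applying the local calculation with base region $R'$ then expresses the region across the $j$-th wall of $R'$, together with its $A$-map, as $ws_{H_j}(R_0)$ and $ws_{H_j}$, where $H_j\defeq\lin(G_j)$. Inducting on distance from $R_0$ yields: every region is $w(R_0)$ and every $A_R$ is some $w$, where $w$ ranges over $\ReflG\defeq\langle s_{H_1},\dots,s_{H_m}\rangle$, and indeed $\{A_R:R\in\Fan\}=\ReflG$. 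Consequently $\ReflG$ is finite --- it acts faithfully, in fact simply transitively, on the finite set of regions --- it is generated by reflections, and since it sends regions to regions it preserves $\Fan$.

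It remains to identify $\Fan$ with $\Fan(\Arr_\ReflG)$. On one hand, no mirror of $\ReflG$ can meet the interior of a region: a reflection fixing an interior point of $R_0$ would fix $R_0$ and hence be the identity by simple transitivity; so $\bigcup\Arr_\ReflG$ is contained in the union of all cones of $\Fan$ of dimension $<d$. On the other hand, the walls $w(G_j)$ of a region $w(R_0)$ span the hyperplanes $w(H_j)$, and each $w(H_j)$ is the mirror of the reflection $ws_{H_j}w^{-1}\in\ReflG$, hence lies in $\Arr_\ReflG$; so that union of lower-dimensional cones is contained in $\bigcup\Arr_\ReflG$. The two sets therefore coincide, which means the open chambers of $\Arr_\ReflG$ are exactly the open regions of $\Fan$, and hence $\Fan=\Fan(\Arr_\ReflG)$. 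The step I expect to be the main obstacle is precisely this last identification, together with the simple transitivity it rests on: a priori a hyperplane spanned by a wall of $\Fan$ might slice through the interior of a far-away region, and ruling this out --- which comes down exactly to the equality just established between $\bigcup\Arr_\ReflG$ and the union of the lower-dimensional cones of $\Fan$ --- is the delicate part; the local reflection computation and the bootstrap along the adjacency graph, by contrast, are routine.
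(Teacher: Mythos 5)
Your proof is correct, but it takes a genuinely different route from the paper's. The paper argues by localization: it invokes results from~\cite{InFan1} to show that the localization $\Fan_C$ at every codimension-$2$ cone is again full, analyzes the rank-$2$ case to see that each $\Fan_C$ is a dihedral (reflection) line arrangement, deduces via Bolker's criterion (Theorem~\ref{thm:charac_zono}) that $\Fan$ is the fan of a hyperplane arrangement, and then cites a lemma of Ehrenborg--Klivans--Reading to globalize (``if every rank-$2$ localization is a reflection arrangement, so is $\Arr$''); the converse implication is left implicit in the discussion around Proposition~\ref{prop:refl_insc}. You instead work globally: you turn the transition maps $A_R = v_R \circ v_{R_0}^{-1}$ into a group, showing they are orthogonal because inscribed polytopes centered at the origin have all vertices equidistant from $\0$, identifying $A_R$ for adjacent regions with the wall reflection by the elementary chord computation, and then running the classical ``chamber geometry'' bootstrap to get a finite reflection group acting simply transitively on regions whose mirrors are exactly the union of the walls. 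What the paper's route buys is brevity given the machinery already developed in~\cite{InFan1} and the external EKR lemma, consistent with its localize-to-rank-$2$ philosophy throughout; what your route buys is self-containedness (only Theorem~\ref{thm:inspc_is_cone} is needed), an explicit proof of the converse via the $W$-permutahedron, and a concrete identification of $W$ with the set of regions. Two small points you should make explicit: the surjectivity $\ReflG \subseteq \{A_R : R\}$ needs its own induction on word length in the generators $s_{H_1},\dots,s_{H_m}$ (the induction on graph distance only gives the reverse containment), though it follows from the same wall-crossing computation; and the statement, in both your reading and the paper's, tacitly requires the reflection arrangement to be essential, since a non-essential $\Arr_W$ has $\dim\InCone < d$ and cannot be full.
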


\begin{proof}
    Let $\Fan$ be a full and inscribable fan. By Theorem~4.13
    in~\cite{InFan1} $\Fan$ is even. Let
    $C \in \Fan$ be a cone of codimension-$2$. It follows from
    Corollary~4.16 of~\cite{InFan1} that $\Fan_C$ is also full. In
    fact, $v_{R_0} : \InCone(\Fan_C) \to \interior R_0$ is an
    isomorphism for every cone $R_0 \in \Fan_C$. But this means that
    $\Fan_C$ is a $2$-dimensional complete fan with all regions
    isometric to each other. Since the number of regions is even, this
    is means that $\Fan_C = \Fan(\Arr')$ for some line arrangement
    $\Arr'$.  Via Theorem~\ref{thm:charac_zono} this implies that
    $\Fan = \Fan(\Arr)$ for some hyperplane arrangement. Lemma~2.5
    in~\cite{EhrenborgKlivansReading} states that if the restriction
    of $\Arr$ to any flat of codimension $2$ is a reflection
    arrangement, then $\Arr$ is a reflection arrangement. This
    finishes the proof.
\end{proof}

To see that reflection arrangements are \emph{strongly} inscribed, recall
that a \Def{root system} is a non-empty, finite collection $\Phi \subset \R^d
\setminus \{\0\}$ such that for all $\alpha \in \Phi$
\begin{enumerate}[\rm (R1)]
    \item $\Phi \cap \R\alpha = \{-\alpha, \alpha\}$ and
    \item $s_\alpha(\Phi) = \Phi$.
\end{enumerate}
The group generated by the reflections in the hyperplanes $\{ \alpha^\perp :
\alpha \in \Phi \}$ is a finite reflection group and, conversely, every
reflection group has a root system. The \Def{Coxeter zonotope} associated to
$\Phi$ 
\[
    Z_\Phi \ \defeq \ \sum_{\alpha \in \Phi} [-\alpha, \alpha]
\]
has normal fan $\Arr_W$ and $w \cdot Z_\Phi  =  Z_\Phi$
every $w \in \ReflG$. In fact $\ReflG$ acts transitively on the regions of
$\Arr_W$
\cite[Sect.~1.15]{Humphreys} and thus $\ReflG$ acts transitively on the
vertices of $Z_\Phi$. This implies that all vertices lie on a common sphere
and proves 
\begin{prop}
    $\Arr_W$ is strongly inscribed for all reflection groups $W$.
\end{prop}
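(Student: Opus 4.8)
The plan is to produce, for every finite reflection group $\ReflG$ with root system $\Phi$, the explicit inscribed zonotope witnessing strong inscribability, namely the Coxeter zonotope $Z_\Phi = \sum_{\alpha \in \Phi} [-\alpha,\alpha]$. Two of the needed properties have already been recorded above: $Z_\Phi$ is a zonotope with $\Fan(Z_\Phi) = \Arr_\ReflG$, and $Z_\Phi$ is $\ReflG$-invariant (since $w(\Phi) = \Phi$ for all $w \in \ReflG$ by axiom (R2) together with the fact that $\ReflG$ is generated by the reflections $s_\alpha$). So the only thing left to check is that the vertices of $Z_\Phi$ lie on a common sphere.

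For this I would first observe that, because $\ReflG \subseteq O(\R^d)$ fixes $Z_\Phi$ setwise, it permutes the faces of $Z_\Phi$, and the induced action on $V(Z_\Phi)$ is compatible with the bijection between vertices of $Z_\Phi$ and regions of $\Arr_\ReflG$: indeed $\inner{wc, wv} = \inner{c, v}$ for all $c, v \in \R^d$, so $w(N_v Z_\Phi) = N_{wv} Z_\Phi$ for every $w \in \ReflG$. Then I would invoke the classical fact that $\ReflG$ acts transitively on the regions (chambers) of $\Arr_\ReflG$ \cite[Sect.~1.15]{Humphreys}; combined with the previous observation this yields that $\ReflG$ acts transitively on $V(Z_\Phi)$. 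Since every $w \in \ReflG$ is orthogonal, it preserves the Euclidean norm, so all vertices of $Z_\Phi$ are at the same distance from the origin and therefore lie on a sphere centered at $\0$. Hence $Z_\Phi$ is an inscribed zonotope with normal fan $\Arr_\ReflG$ (and center $c(Z_\Phi) = \0$), which is precisely the assertion that $\Arr_\ReflG$ is strongly inscribable.

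There is no substantial obstacle in this argument; the single point that warrants a sentence of care is the $\ReflG$-equivariance of the vertex--region correspondence, and that reduces immediately to the orthogonality of $\ReflG$. Everything else is a direct appeal to the structural facts about root systems and Coxeter zonotopes recalled above.
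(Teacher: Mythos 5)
Your proposal is correct and follows the paper's argument exactly: the Coxeter zonotope $Z_\Phi$ is $\ReflG$-invariant, $\ReflG$ acts transitively on the regions of $\Arr_\ReflG$ and hence on $V(Z_\Phi)$, and orthogonality of $\ReflG$ forces all vertices onto a common sphere centered at the origin. The only difference is that you spell out the $\ReflG$-equivariance of the vertex--region correspondence, which the paper leaves implicit.
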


The converse is also true:
\begin{prop}
    If $Z$ is an inscribed zonotope with $\Fan(Z) = \Arr_\ReflG$, then $Z$
    is a translate of $Z_\Phi$ for some root system of $\ReflG$.  In
    particular $\dim \ZInCone(\Arr_W)$ is equal to the number of orbits of
    $\Phi$ under $W$.
\end{prop}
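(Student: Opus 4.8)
The plan is to reduce everything to Proposition~\ref{prop:refl_insc}. After replacing $\Arr_W$ by its essentialization (so that $W$ fixes only the origin and the common center of any orbit $W\cdot q$ is $\0$), an inscribed zonotope $Z$ with $\Fan(Z) = \Arr_W$ is in particular an inscribed belt polytope with this normal fan, hence, up to translation, a $W$-permutahedron $P_W(q) = \conv(W\cdot q)$. Translating $Z$ so that $Z = P_W(q)$ exactly, the polytope $Z$ becomes $W$-invariant: $w\cdot Z = \conv(W\cdot q) = Z$ for all $w\in W$, since $w$ permutes the orbit $W\cdot q$.

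Next I would exploit the rigidity of zonotopal presentations. Since $\Arr_W = \{H_1,\dots,H_n\}$ consists of pairwise distinct hyperplanes, the generators of $Z$ are pairwise non-parallel, so $Z$ has a \emph{unique} reduced presentation $Z = \sum_{i=1}^n c_i[-u_i,u_i]$, where $u_i$ is a unit normal of $H_i$ and $c_i > 0$: indeed, the belt of $Z$ determined by $H_i$ consists of edges parallel to $u_i$ of common length $2c_i$, so each $c_i$ is recovered from $Z$ alone. For $w\in W$ one has $wH_i = H_{\pi(i)}$ for a permutation $\pi$ of $\{1,\dots,n\}$, hence $wu_i = \pm u_{\pi(i)}$ and $w\cdot Z = \sum_i c_i[-u_{\pi(i)},u_{\pi(i)}]$; comparing this with the unique reduced presentation of $Z = w\cdot Z$ forces $c_{\pi(i)} = c_i$. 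Thus $(c_1,\dots,c_n)$ is constant on the $W$-orbits of hyperplanes.

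Then I would manufacture the root system. Set $\Phi \defeq \{\,\pm\tfrac12 c_i u_i : i = 1,\dots,n\,\}$. Property (R1) is immediate since the lines $\R u_i$ are pairwise distinct, and (R2) holds because $s_{u_i}$, the reflection in $H_i\in\Arr_W$, lies in $W$ and permutes the $H_j$ while $(c_j)$ is $W$-invariant, so $s_\beta(\Phi)=\Phi$ for every $\beta\in\Phi$. Hence $\Phi$ is a root system; the reflections $s_\beta$ with $\beta\in\Phi$ are precisely the reflections in the hyperplanes of $\Arr_W$, so they generate $W$ and $\Phi$ is a root system of $W$. A one-line Minkowski computation, using $[-\beta,\beta]+[-(-\beta),-\beta]=[-2\beta,2\beta]$ for $\beta=\tfrac12 c_iu_i$, gives $Z_\Phi = \sum_{\alpha\in\Phi}[-\alpha,\alpha] = \sum_i c_i[-u_i,u_i] = Z$, which proves the first claim.

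For the dimension statement I would run the argument backwards: any $(c_i)\in\R_{>0}^n$ that is constant on $W$-orbits of hyperplanes yields a $W$-invariant zonotope $\sum_i c_i[-u_i,u_i]$ with normal fan $\Arr_W$, and since $W$ acts transitively on the regions of $\Arr_W$ it acts transitively on its vertices, which therefore all lie on a sphere about $\0$, so this zonotope is inscribed. Thus $Z\mapsto(c_i)$ identifies $\ZInCone(\Arr_W)$ with the relatively open cone $\{(c_i)\in\R_{>0}^n : c_i=c_j \text{ whenever }H_i,H_j\text{ are }W\text{-equivalent}\}$, of dimension equal to the number of $W$-orbits of hyperplanes; and $\alpha\mapsto\alpha^\perp$ is a bijection between the $W$-orbits of $\Phi$ and the $W$-orbits of hyperplanes (as $-\alpha=s_\alpha(\alpha)$ always lies in the orbit of $\alpha$), so this dimension also equals the number of $W$-orbits of $\Phi$, independent of the chosen root system. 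The one genuinely delicate point in all of this is the uniqueness of the reduced zonotopal presentation and the identification of the belts of $Z$ with the hyperplanes of $\Fan(Z)$, together with the sign bookkeeping $wu_i=\pm u_{\pi(i)}$; after that the remainder is just the verification of (R1), (R2) and a single Minkowski-sum computation, with the inputs that $Z$ is a $W$-permutahedron and that $W$ acts transitively on regions supplied by Proposition~\ref{prop:refl_insc} and the quoted facts on reflection groups.
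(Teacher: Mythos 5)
Your argument is correct and follows essentially the same route as the paper: both deduce $W$-invariance of $Z$ from Proposition~\ref{prop:refl_insc} and then use the uniqueness of the Minkowski decomposition of a zonotope into pairwise non-parallel segments to conclude that the (suitably scaled) generators form a root system, with the dimension count reducing to the number of $W$-orbits. Your write-up is merely more explicit than the paper's at a few points (the factor $\tfrac12$ in defining $\Phi$, the verification of (R1)--(R2), and the converse inclusion needed to identify $\ZInCone(\Arr_W)$ with the cone of orbit-constant coefficient vectors), but no genuinely different idea is involved.
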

\begin{proof}
    We may assume that $Z = \sum_{i = 1}^n [-z_i,z_i]$. If we require the
    vectors $z_i$ to be pairwise linearly independent, then the Minkowski
    sum decomposition is unique up to relabelling. Since $Z \in
    \InCone(\Arr)$, $Z$ is an $W$-permutahedra by
    Proposition~\ref{prop:refl_insc} and hence $wZ = \sum_i [-wz_i,wz_i] =
    Z$ for all $w \in \ReflG$. This shows that $\Phi = \{ \pm z_1,\dots, \pm
    z_n \}$ is a root system for $\ReflG$. For the second claim, we note
    that root systems of $\ReflG$ are unique up to scaling, where (R2)
    requires that roots in the same orbit have the same length.
\end{proof}

Finite reflection groups have been classified \cite[Section 2.7]{Humphreys}:
there are four infinite families of irreducible reflection groups $A_n$,
$B_n$, $D_n, I_2(k)$, where the subscript denotes rank. In addition there
are six sporadic examples $F_4$, $E_6$, $E_7$, $E_8$, $H_3$ and $H_4$. The
following table gives the dimensions of the strong inscribed cones:

\begin{center}
    \begin{tabular}{c|c|c|c|c|c|c|c|c|c}
        $A_n$&$B_n$&$D_n$&$F_4$&$E_6$&$E_7$&$E_8$&$H_3$&$H_4$&$I_2(k)$\\
      \hline
      $1$  &$2$  &$1$  &$2$ &$1$  &$1$  &$1$  &$1$  &$1$ &$k \mod 2$
    \end{tabular}
\end{center}

We come back to these examples in Sections~\ref{sec:sporadic}
to~\ref{sec:A_n}.

\subsection{Inscribed virtual Zonotopes}
In~\cite{InFan1}, we observed that questions of inscribability
naturally extend beyond polytopes to virtual polytopes. In this paper
we will explore inscribed virtual zonotopes and belt polytopes, so we
give an executive summary of the theory tailored to our needs.

\newcommand\hTypeCone{\widehat{\TypeSpc}_+}%
The collection $\hTypeCone$ of polytopes $P$ with fixed normal fan $\Fan$
forms a convex cone with respect to Minkowski sums. The support function
of $P$ is the function $h_P : \R^d \to \R$ with
\[
    h_P(c) \ = \ \max \{ \inner{c,v} : v \in V(P) \} \, .
\]
This is a strictly-convex piecewise-linear function supported on $\Fan$.
Since $h_P$ uniquely determines $P$ and $h_{P+Q} = h_P + h_Q$, we get a
faithful representation of $\hTypeCone$ as the (open) polyhedral cone of
strictly-convex piecewise-linear functions supported on $\Fan$. The formal
Minkowski difference of $P$ and $Q$ is the PL function $h_P - h_Q$.
Conversely, every PL function is the difference of two support functions and
hence $\PL(\Fan) = \hTypeCone + (-\hTypeCone)$. Convex functions in
$\PL(\Fan)$ correspond to weak Minkowski summands of $P$. Non-convex functions
$\ell \in \PL(\Fan)$ are called \Def{virtual polytopes} and denoted by $P-Q$,
whenever $\ell = h_P - h_Q$. The lineality space of $\hTypeCone$ is given by
translations. The \Def{type cone} of $\Fan$ is $\TypeCone(\Fan) =
\hTypeCone(\Fan) / T_d$, where $T_d$ is the group of translations.
Consequently, the \Def{type space} $\TypeSpc(\Fan) = \TypeCone(\Fan) +
(-\TypeCone(\Fan))$ is isomorphic to $\PL(\Fan) / (\R^d)^*$.
If $P - Q \in \TypeSpc(\Fan)$ and $c \in \R^d \setminus \{0\}$, then we say
that $P^c - Q^c$ is a \Defn{face} of $P - Q$.

If $\Arr = \{ H_i = z_i^\perp : i=1,\dots,n\}$ is an arrangement of
hyperplanes, then up to translation every zonotope of $\Arr$ is of the form
$Z_\lambda = \lambda_1 [-z_1,z_1]   + \cdots + \lambda_n [-z_n,z_n]$ for a
unique $\lambda \in \Rpp^n$. Thus, the subcone of $\TypeCone(\Arr)$ spanned by
zonotopes is isomorphic to $\Rpp^n$. If $Z_{\lambda^+}, Z_{\lambda^-}$ are two
zonotopes with $\lambda^+,\lambda^- \in \Rpp^n$, then 
\[
    h_{Z_{\lambda^+}}(c) - h_{ Z_{\lambda^-}}(c) \ = \ \sum_{i=1}^n \lambda_i
    |\inner{z_i,c}| \, .
\]
where $\lambda = \lambda^+ - \lambda^- \in \R^n$. Every \Def{virtual zonotope}
$Z_\lambda = Z_{\lambda^+} - Z_{\lambda^-}$ is, up to translation, uniquely
represented by $\lambda \in \R^n$.

For a PL function $\ell \in \PL(\Fan)$ and a region $R \in \Fan$,
there is $v_R \in \R^d$ such that
$\ell(x) = \inner{v_R,x}$ for all $x \in R$. The collection
$V(\ell) \defeq \{ v_R : R \in \Fan \text{ region}\}$ is the
\Defn{vertex set} of $\ell$. We call $\ell$ \Def{inscribed} if
$V(\ell)$ is contained in a sphere and we define $c(\ell)$ to be the
\Defn{center} of this sphere (if $V(\ell)$ is contained in an affine
subspace of $\R^d$ the center is contained in this subspace). If
$\ell = h_P$, then $V(\ell) = V(P)$ and $P$ is inscribed precisely
when $h_P$ is. The inscribed cone $\InCone(\Fan)$ is naturally a
relatively open subcone of $\TypeCone(\Fan)$.

If a polytopal fan $\Fan$ is inscribable, we call
$\InSpc(\Fan) \defeq \InCone(\Fan) + (-\InCone(\Fan)) \subseteq
\TypeSpc(\Fan)$ the \Def{inscribed space} of $\Fan$. The precise
definition of $\InSpc(\Fan)$ for $\Fan$ not inscribable is a bit
subtle, we refer to~\cite[Section 5]{InFan1} for details. Indeed, not
every inscribed PL function $\ell \in \TypeSpc(\Fan)$ is contained in
$\InSpc(\Fan)$. For example, if $\Fan = \Fan(\Arr)$ with
$z_1^\perp, z_2^\perp \in \Arr$, then
$S_i = [-z_i,z_i] \in \TypeSpc(\Fan)$ is inscribed for $i=1,2$ but
$S_i \in \InSpc(\Fan)$ if and only if $z_1 \perp z_2$.  The
following Lemma follows easily from the definition of the inscribed
space in~\cite[Section 5]{InFan1}, but we might also take it as a
definition for our purposes here. For any two regions $R,S \in \Fan$
such that $R \cap S$ is of codimension $1$, let $\alpha_{RS}$ be a
normal vector to the hyperplane $\lin(R \cap S)$.  We have:
\begin{lem}\label{lem:edge_midpoint_orthogonal}
    Let $P-Q \in \TypeSpc(\Fan)$ and let
    $V(P-Q) = \{ v_R : R \in \Fan \text{ region}\}$. Then
    $P-Q \in \InSpc(\Fan)$ if and only if $P-Q$ is inscribed and
    \[
        \inner{\alpha_{RS}, \,c([v_R, v_S]) - c(P-Q)} \ = \ 0
    \]
    for all adjacent regions $R,S \in \Fan$.
\end{lem}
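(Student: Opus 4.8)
The plan is to unwind the description of $\InSpc(\Fan)$ from \cite[Section~5]{InFan1}. Recall from there that for inscribable $\Fan$ one has $\InSpc(\Fan)=\InCone(\Fan)+(-\InCone(\Fan))$, and that in general $\ell\in\TypeSpc(\Fan)$ lies in $\InSpc(\Fan)$ exactly when there is a point $c$ — a \emph{common center} — such that for every pair of adjacent regions $R,S$ the perpendicular dropped from $c$ to the edge $[v_R,v_S]$ of $\ell$ meets that edge in its midpoint $c([v_R,v_S])$. Now, since the support function of $P-Q$ is continuous across the wall $\lin(R\cap S)$, we have $v_R-v_S\in\R\,\alpha_{RS}$, so the edge $[v_R,v_S]$ is parallel to $\alpha_{RS}$ (or is degenerate); hence ``the perpendicular from $c$ to $[v_R,v_S]$ meets it in its midpoint'' is literally the equation $\inner{\alpha_{RS},\,c([v_R,v_S])-c}=0$. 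Granting the description above, one implication is then immediate: if $P-Q$ is inscribed and this equation holds with $c=c(P-Q)$ for all adjacent $R,S$, then $c(P-Q)$ is a common center and $P-Q\in\InSpc(\Fan)$.

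For the converse, let $P-Q\in\InSpc(\Fan)$ with common center $c$ and write $v_R$ for its vertices. Writing $v_S=v_R+t_{RS}\,\alpha_{RS}$ and substituting into $\inner{\alpha_{RS},\,\tfrac12(v_R+v_S)-c}=0$, one solves for $t_{RS}$ and finds that $v_S$ is the image of $v_R$ under the reflection in the affine hyperplane $c+\alpha_{RS}^\perp$; the value $t_{RS}=0$ occurs precisely when $v_R\in c+\alpha_{RS}^\perp$, which this reflection fixes. Each such reflection is an isometry fixing $c$, so $\lvert v_R-c\rvert$ is constant along every edge, and since the adjacency graph of the regions of $\Fan$ is connected, all vertices of $P-Q$ lie on a single sphere centered at $c$. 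In particular $P-Q$ is inscribed, and $c(P-Q)$ is the orthogonal projection of $c$ onto $\aff(V(P-Q))$. It remains to replace $c$ by $c(P-Q)$ in the midpoint equation: across a non-degenerate edge it is equivalent to $\lvert v_R-c(P-Q)\rvert=\lvert v_S-c(P-Q)\rvert$, which holds because $c(P-Q)$ is a circumcenter of $V(P-Q)$; the degenerate edges require a small extra argument, using the precise definition in \cite[Section~5]{InFan1} to see that $c(P-Q)$ is itself a common center.

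For inscribable $\Fan$ one can even bypass \cite[Section~5]{InFan1}: given the midpoint equations with center $c$, the vertex sets propagate across walls by the \emph{linear} reflections $s_{\alpha_{RS}}$ once $c$ is translated to the origin, so for a fixed $P_0\in\InCone(\Fan)$ centered at the origin the virtual polytope $MP_0+(P-Q)$ has, for $M$ large, a strictly convex support function and hence is a genuine inscribed polytope $P'$ with $\Fan(P')=\Fan$ (inscribedness follows because $\inner{v_R^{P_0},v_R^{P-Q}}$ is invariant under the $s_{\alpha_{RS}}$, hence constant), whence $P-Q=P'-MP_0\in\InCone(\Fan)+(-\InCone(\Fan))$. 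The one genuinely delicate point — the place where the subtlety flagged just before the lemma (``not every inscribed $\ell\in\TypeSpc(\Fan)$ lies in $\InSpc(\Fan)$'', witnessed by the segments $[-z_i,z_i]$) actually bites — is the treatment of degenerate edges and of vertex sets failing to affinely span $\R^d$: exactly there one must invoke the definition of the inscribed space from \cite[Section~5]{InFan1} rather than the intuitive picture of edge midpoints lying on perpendiculars through the center, in order to identify the abstract common center with $c(P-Q)$.
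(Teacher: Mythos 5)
The paper offers no proof of this lemma at all: it states only that the lemma ``follows easily from the definition of the inscribed space in \cite[Section 5]{InFan1}, but we might also take it as a definition.'' Your proposal supplies the missing mechanics, and what you supply is sound: the observation that $v_R-v_S\in\R\,\alpha_{RS}$ (continuity of the PL function across the wall), the reflection in the bisecting hyperplane $c+\alpha_{RS}^\perp$ showing that $\lvert v_R-c\rvert$ is constant along the connected adjacency graph, and the reduction of the midpoint equation to equidistance from $c(P-Q)$ across non-degenerate edges are all correct. You also correctly isolate the only genuinely delicate point — degenerate edges $v_R=v_S$, which is exactly the phenomenon behind the paper's example of the segments $S_i=[-z_i,z_i]$ — and you defer that point, together with the identification of the abstract common center with $c(P-Q)$, to the definition in \cite[Section 5]{InFan1}, which is precisely what the paper itself does. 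So the argument is essentially the same deferral the authors make, fleshed out with the elementary geometry they omit. Your additional self-contained route for inscribable $\Fan$ (writing $P-Q=(MP_0+(P-Q))-MP_0$ for large $M$, with inscribedness of $MP_0+(P-Q)$ following from the orthogonality of the reflections $s_{\alpha_{RS}}$ and connectivity of the region graph) is a nice bonus the paper does not contain; it genuinely proves the ``if'' direction without consulting \cite{InFan1} in the case the paper actually uses, though one should note in passing that it needs all edges of $P_0$ to be non-degenerate, which holds because $P_0$ is an honest polytope with normal fan $\Fan$. The only caveat is that your converse direction presumes a particular form of the definition from \cite[Section 5]{InFan1} (existence of a common center satisfying the midpoint equations); since the paper explicitly licenses taking the lemma itself as the definition, this cannot be held against you, but it means the proposal is a reconstruction rather than a verification of that reference.
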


If $\Fan = \Fan(\Arr)$, then $\alpha_{RS} = z_i$ for some $i$ for any two
adjacent regions $R,S$. We will exploit
Lemma~\ref{lem:edge_midpoint_orthogonal} in Section~\ref{sec:repr} to give a
simple representation of $\InCone(\Arr)$ and $\InSpc(\Arr)$.

\begin{example}
    \begin{figure}[ht]
        \centering
        \includegraphics[width=\textwidth]{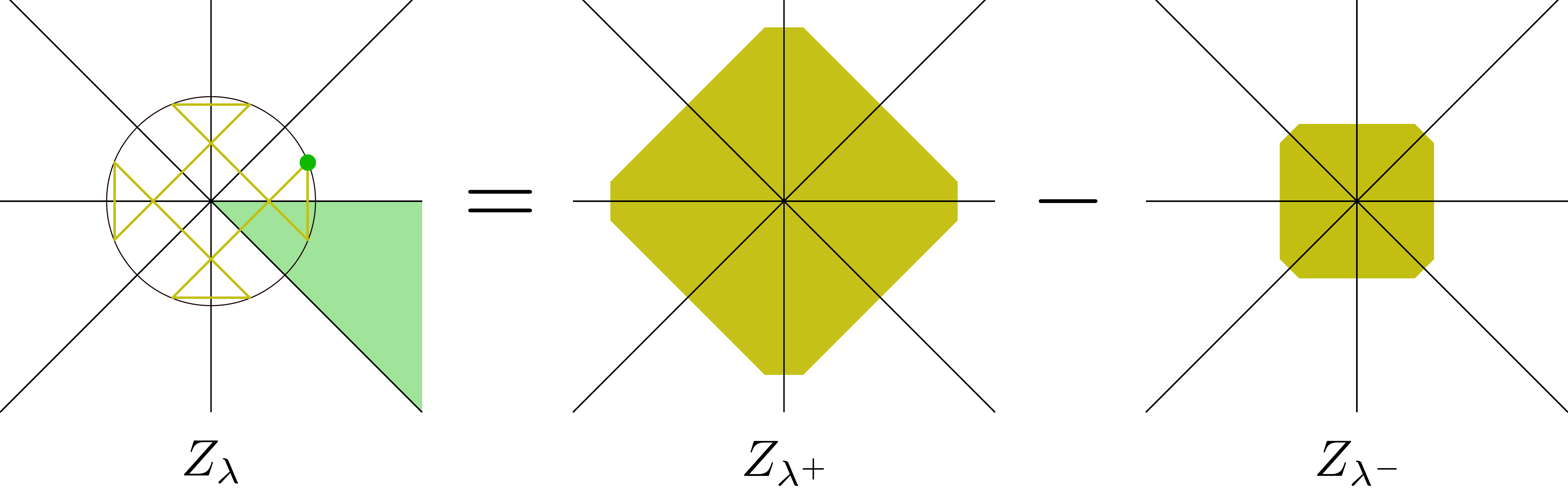}
        \caption{  \label{fig:b2_zonotope} }
    \end{figure}

    Figure~\ref{fig:b2_zonotope} shows an inscribed virtual zonotope
    $Z_\lambda$ presented as the Minkowski difference of two (inscribed)
    non-virtual zonotopes. The vertex of $Z_\lambda$ corresponding to the
    green region $R$ is $v_R = \begin{psmallmatrix}5 \\ 2\end{psmallmatrix}$.
        Thus, the corresponding PL function $\ell(c) \defeq
        h_{Z_{\lambda^+}}(c) - h_{Z_{\lambda^-}}(c)$ satisfies
    $\ell_R(c) = \inner{v_R,c}$ for all $c \in R$.
\end{example}

\section{Strongly inscribable fans in dimension $2$}\label{sec:dim2}
\newcommand{\tbeta}{\tilde\beta}%
Throughout the section, let $\Arr$ be an arrangement of $n \geq 2$ lines in
$\R^2$. Belt polytopes and zonotopes are easily described in the
$2$-dimensional setting: a \Defn{belt polygon} is a polygon with an even
number of edges with opposite edges parallel and a \Defn{zonogon} is a
centrally symmetric polygon, i.e.\ a belt polygon, whose opposites edges are
not only parallel but equal in length.  In the following two subsections, we
give geometric and algebraic characterizations of (strongly) inscribed line
arrangements. These characterizations are indispensable for subsequent
results in higher dimensions. In particular, we show that the notions of
\emph{inscribed} and \emph{strongly inscribed} coincide in the plane.

\begin{thm}\label{thm:2d_strongly_insc}
    A rank-$2$ arrangement $\Arr$ is (virtually) strongly inscribed if and
    only it is (virtually) inscribed.
\end{thm}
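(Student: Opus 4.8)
The plan is to prove Theorem~\ref{thm:2d_strongly_insc} by an explicit, almost constructive argument in the plane. One direction is trivial: a strongly inscribed (resp. virtually strongly inscribed) arrangement is by definition inscribed (resp. virtually inscribed), since zonogons are belt polygons and virtual zonotopes are virtual belt polytopes. So the content is the converse: given a rank-$2$ arrangement $\Arr = \{z_1^\perp, \dots, z_n^\perp\}$ that admits an inscribed belt polygon, produce an inscribed zonogon with the same normal fan. I would work with the based inscribed cone $\InCone(\Arr, R_0)$ from Theorem~\ref{thm:inspc_is_cone}, together with the polyhedral/orthogonality description coming from Lemma~\ref{lem:edge_midpoint_orthogonal}, and reduce everything to a statement about a single region and its boundary rays.

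**The key reduction.** Here is the main idea. Fix a region $R_0$ of $\Arr$, with its two bounding rays spanned by consecutive vectors, and normalize so that $c(P) = 0$; then an inscribed belt polygon $P$ with $\Fan(P) = \Arr$ is determined by one vertex $v \in \interior R_0$ with $\|v\|$ the common circumradius, and by the condition from Lemma~\ref{lem:edge_midpoint_orthogonal} that for each pair of adjacent regions $R,S$ the edge midpoint $c([v_R,v_S])$ is orthogonal to the separating normal $z_i$. Walking around the circle, each crossing of a hyperplane $z_i^\perp$ sends the current vertex $v_R$ to $v_S = v_R - 2\frac{\langle z_i, v_R\rangle}{\langle z_i, z_i\rangle} z_i = s_{z_i}(v_R)$: the orthogonality condition forces the new vertex to be the reflection of the old across $z_i^\perp$. (This is exactly the rank-$2$ phenomenon already used in the proof of Theorem~\ref{thm:full_fans}: a $2$-dimensional inscribed belt fan has all regions isometric.) Consequently, going all the way around, the edge lengths are prescribed by $v$ and by the angles between consecutive hyperplanes, and moreover opposite edges — which are separated by the \emph{same} hyperplane direction $z_i^\perp$, traversed once going up and once coming down — get reflected vertices that are antipodal pairs, hence opposite edges have equal length. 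That is precisely the statement that $P$ is centrally symmetric, i.e. a zonogon. The same computation applies verbatim to virtual polytopes using the vertex-set description $V(\ell)$ in $\TypeSpc(\Fan)$ and the "if and only if" in Lemma~\ref{lem:edge_midpoint_orthogonal}.

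**Carrying it out.** Concretely I would: (1) record that crossing $z_i^\perp$ acts on vertices by $s_{z_i}$, deduced from $\langle z_i, c([v_R,v_S]) - 0\rangle = 0$ together with $v_R - v_S \parallel z_i$ and $\|v_R\| = \|v_S\|$; (2) compose these reflections around the fan and observe that the vertex over a region $R$ is $w_R(v)$ for a product $w_R$ of the reflections $s_{z_i}$ crossed on a path from $R_0$ to $R$, which is well-defined because going all the way around returns $v$ to itself (each direction crossed an even number of times, once in each orientation — this is where "even number of edges", automatic for arrangements by Theorem~\ref{thm:charac_zono}, is used); (3) pair up the two regions adjacent across a given belt (same $z_i^\perp$) and check that the corresponding two edge vectors are negatives of each other, so $P = \sum_i \lambda_i[-z_i, z_i]$ for suitable $\lambda_i > 0$ read off from the edge lengths; hence $P$ is an inscribed zonogon and $\Arr$ is strongly inscribed. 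For the virtual case, one drops the positivity of the $\lambda_i$ but keeps the identity $V(\ell) = \{w_R(v)\}$, lands in $\InSpc(\Arr)$ by Lemma~\ref{lem:edge_midpoint_orthogonal}, and reads off a virtual zonotope $Z_\lambda$ with $\lambda \in \R^n$.

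**Main obstacle.** I expect the only real subtlety is bookkeeping the global consistency as one loops around the arrangement: one must be careful that the assignment $R \mapsto w_R(v)$ is independent of the path (equivalently, that the total product of reflections around the $2$-dimensional fan is the identity on $v$), and that "opposite edge" is matched to the correct second crossing of the same hyperplane direction with the correct orientation. Both follow from the combinatorics of a complete rank-$2$ fan — it is a single even cycle of regions, and each of the $n$ hyperplane directions contributes exactly two antipodal separating walls — but it should be stated cleanly, perhaps by choosing a cyclic labeling of the $2n$ rays and the $2n$ regions. Everything else is the linear-algebra identity $v \mapsto s_{z_i}(v)$ plus the observation that antipodal vertices on the circumscribed circle force equal-length opposite edges; the equivalence of virtual and non-virtual versions then differs only in whether the resulting Minkowski coefficients are required to be positive.
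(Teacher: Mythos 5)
Your step (1) is correct: for an inscribed (virtual) belt polygon with center at the origin, the edge--midpoint condition of Lemma~\ref{lem:edge_midpoint_orthogonal} together with $v_R - v_S \parallel z_i$ does force $v_S = s_{z_i}(v_R)$, so crossing a wall acts by the orthogonal reflection in that wall. But the conclusion you draw in steps (2)--(3) --- that the two edges in the belt of $z_i^\perp$ are negatives of each other, so that $P$ itself is centrally symmetric and hence already a zonogon --- is false. The vertex over the region antipodal to $R_0$ is $w(v)$, where $w$ is the product of the $n$ reflections crossed on a half-circuit of the fan, and the paired vertices are antipodal only if $w(v) = -v$. When $n$ is odd, $w$ is a product of an odd number of reflections in lines through the origin, hence itself a reflection, so $w(v) \neq -v$ for generic $v$; the two edges of a belt then have different lengths. (When $n$ is even, $w$ is a rotation, and it equals $-\id$ exactly under the angle condition of Proposition~\ref{prop:2d_insc}; this is why $\InSpc(\Arr) = \ZInSpc(\Arr)$ holds for even $n$ but not in general.) The paper's own example, stated immediately before its proof, refutes your claim concretely: for the reflection arrangement of type $A_2$ the permutahedron $P_W(1,2,4)$ is an inscribed belt hexagon satisfying every hypothesis you use, yet its opposite edges (e.g.\ $[(1,2,4),(2,1,4)]$ and $[(4,2,1),(2,4,1)]$) have lengths $\sqrt{2}$ and $2\sqrt{2}$. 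Your argument would therefore prove that every inscribed belt polygon in the plane is a zonogon, which is false; the theorem only asserts that the \emph{arrangement} admits some inscribed zonogon, possibly different from the given $P$.

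What is missing is a construction of a new inscribed (virtual) polygon with the same normal fan. For honest polytopes the Minkowski sum $P + (-P)$ suffices (the paper notes this), but for virtual polygons this symmetrization can be identically zero --- which is precisely the delicate case the theorem must cover. The paper's proof instead symmetrizes the face angles, defining a new inscribed virtual polygon with $\alpha_i' = \alpha_{n+i}' = \tfrac12(\alpha_i + \alpha_{n+i})$, and checks that it is centrally symmetric by construction and has the same reduced profile as $\Arr$, hence is an inscribed virtual zonogon for $\Arr$ up to rotation. Your proposal contains no such construction, so the gap is not one of bookkeeping but of a missing idea, and the step you flag as the ``main obstacle'' (path-independence of $w_R$) is not where the argument fails.
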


If $\Arr$ has an inscribed belt polytope $P$, then $-P$ is also inscribed and
$\Fan(-P) = \Fan(P) = \Arr$. We simply observe that $P + (-P)$ is a
centrally-symmetric polygon and hence inscribed by Theorem~\ref{thm:inspc}.
This shows that inscribed arrangements are also strongly inscribed. The same
argument does not work for inscribed virtual polytopes: For example, if $\Arr$
is the reflection arrangement of type $A_2$, then $P = P_W(1,2,4)$ is an
inscribed hexagon, which is not centrally symmetric. The virtual polytope $P -
(-P)$ given by $\ell = h_P - h_{-P}$ is inscribed by
Lemma~\ref{lem:edge_midpoint_orthogonal} but its symmetrization satisfies
\[
    \ell(c) + \ell(-c) \ = \ h_{P}(c) - h_{-P}(c) + h_{P}(-c) - h_{-P}(-c) \ = \ 0
\]
and thus is the zero function. We proof
Theorem~\ref{thm:2d_strongly_insc} in full generality in
Section~\ref{sec:2d_geometric}.

\subsection{A geometric perspective}\label{sec:2d_geometric}
We order the $2n$ regions of $\Arr$ counterclockwise and denote them by
$R_0,\dots,R_{2n-1}$ as in Figure~\ref{fig:2arr}. For $i = 0,\dots,2n-1$, let
$\beta_i \in (0,\pi)$ be the angle of $R_i$. Note that by central symmetry,
$\beta_i = \beta_{n+i}$ for $i = 0, \dots, n-1$ and thus $\beta_0 + \beta_1 +
\dots + \beta_{n-1} = \pi$. We call $\beta(\Arr) = (\beta_0,\dots,\beta_{n-1},
\beta_0, \dots, \beta_{n-1})$ the \Def{profile} of $\Arr$ and $\tbeta(\Arr) =
(\beta_0, \dots, \beta_{n-1})$ the \Def{reduced profile}. The reduced profile
$\tbeta(\Arr)$ determines $\Arr$ up to rotation. More generally, the profile
$\beta(\Fan)$ of a $2$-dimensional fan $\Fan$ is given by the angles of its
regions ordered counterclockwise~\cite[Section~3]{InFan1}.
Clearly $\beta(\Arr) = \beta(\Fan(\Arr))$.

\begin{figure}[ht]
    \centering
    \includegraphics[width=0.35\textwidth]{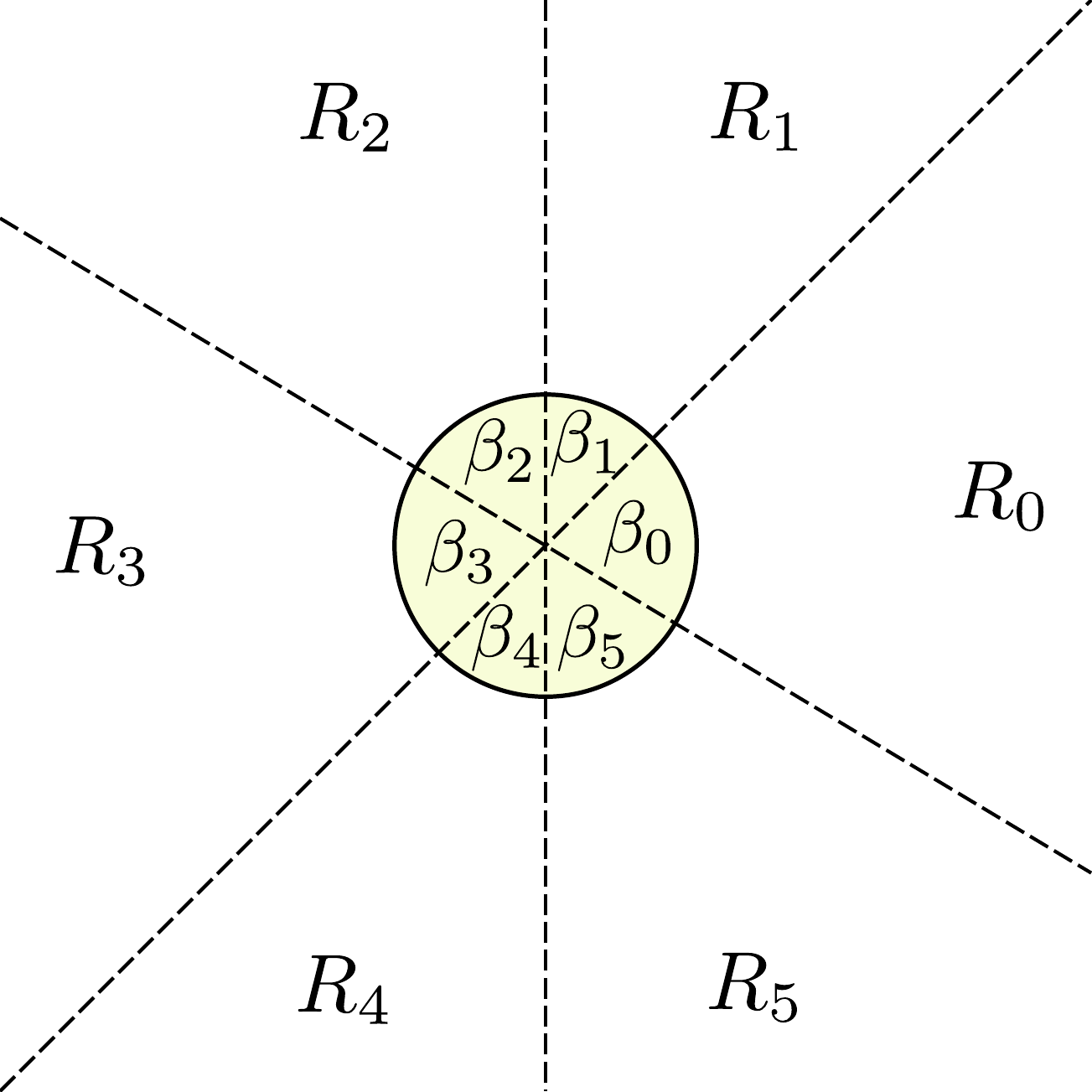}\qquad\qquad
    \includegraphics[width=0.35\textwidth]{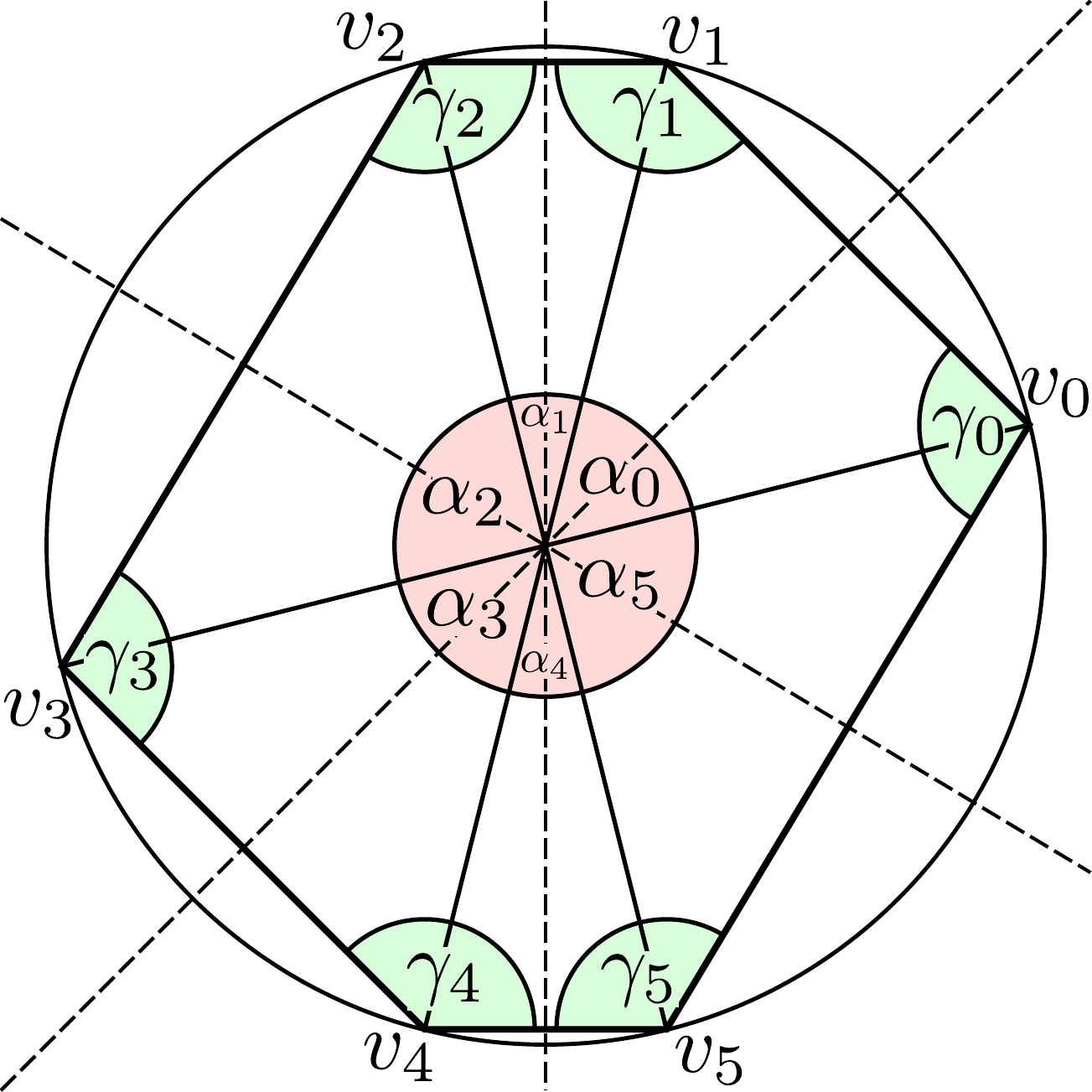}
    \caption{ Left: A $2$-dimensional arrangement $\Arr$ with regions and
      its reduced profile $\tbeta = (\beta_0, \beta_1, \beta_2)$.
      Right: An inscribed zonogon with normal fan $\Arr$, face angles
      $\alpha_0, \alpha_1, \dots, \alpha_5$, and interior angles
      $\gamma_0, \gamma_1, \dots, \gamma_5$.
      \label{fig:2arr} }
\end{figure}

To give a complete geometric picture, let us introduce two additional sets of
angles. Let $P$ be a virtual belt polygon with normal fan $\Arr$ and assume
that $P$ is
inscribed into a circle with center at the origin $\0$. We label the vertices
$v_0$, $v_1$, \dots, $v_{2n-1}$ such that $v_i$ has normal cone $R_i$ for
$i=0,\dots,2n-1$. For convenience, set $v_{-1} \defeq v_{2n-1}$ and $v_{2n}
\defeq v_0$. The \Defn{face angle} $\alpha_i$, $i = 0, \dots, 2n-1$, is the
oriented angle $\measuredangle(v_{i-1}, \0, v_i) \in (-\pi,\pi]$. Furthermore,
we define the \Defn{interior angle} $\gamma_i$ , $i = 0, \dots, 2n-1$, to be
the angle $\measuredangle(v_{i+1}, v_{i}, v_{i-1})$
(cf.~Figure~\ref{fig:2arr}). 
These angles are related to $\beta(\Arr)$ via the
following equations for all $i = 0, \dots, n-1$:
\[
    \tfrac{1}{2}(\alpha_{i} + \alpha_{i+1}) \ \equiv \ \beta_{i} \ \equiv \
    \pi - \gamma_i\,.
\]
Moreover, we have $\gamma_i = \gamma_{i+n}$. If $P$ is a zonotope, then
$\alpha_i = \alpha_{i+n}$ by central symmetry.

\begin{proof}[Proof of Theorem~\ref{thm:2d_strongly_insc}]
    We only need to show that if $\Arr$ has an inscribed virtual belt
    polygon $P$, then is also has an inscribed virtual zonogon.

    Let $\alpha_1, \dots, \alpha_{2n}$ be the face angles of $P$. Let $Z$ be
    the inscribed virtual zonogon with face angles $\alpha'_i = \alpha'_{n+i}
    = \tfrac{1}{2}(\alpha_i + \alpha_{n+i})$, for $i = 1, \dots, n$. 
    We observe that 
    \[
        \tfrac{1}{2}(\alpha'_i + \alpha'_{i+1}) \ = \
        \tfrac{1}{4}(\alpha_i + \alpha_{n+i} + \alpha_{i+1} +
        \alpha_{n+i+1}) \ = \ \tfrac{1}{2}(\beta_i + \beta_{n+i}) \ = \ \beta_i\,,
    \]
    and hence $\tbeta(\Fan(Z)) = \tbeta(\Arr)$. Thus, up to rotation, $Z$ is
    an inscribed zonogon with normal fan $\Arr$.
\end{proof}

\begin{thm}
    $\beta$ is the reduced profile of an inscribed arrangement if and
    only if $2\beta$ is the profile of an inscribed fan.
\end{thm}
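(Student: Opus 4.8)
The plan is to reduce the statement to elementary bookkeeping with the \emph{arc-lengths} of inscribed polygons and to observe that passing between the arrangement and the fan in the statement corresponds to scaling these arc-lengths by $2$. First I would record the relevant dictionary (compare \cite[Section~3]{InFan1}). If $P$ is a convex $m$-gon inscribed in a circle centered at $\0$ with vertices $v_0,\dots,v_{m-1}$ in counterclockwise order at angles $\theta_i = \theta_0 + \alpha_1 + \dots + \alpha_i$, then the arc-lengths $\alpha_i = \theta_i - \theta_{i-1}$ are positive with $\sum_i \alpha_i = 2\pi$, the outer normal of the edge $[v_{i-1},v_i]$ has direction $\tfrac12(\theta_{i-1}+\theta_i)$, and hence $N_{v_i}P$ is the cone of angular width $\tfrac12(\alpha_i + \alpha_{i+1})$ (indices modulo $m$), so that $\beta(\Fan(P)) = \bigl(\tfrac12(\alpha_0+\alpha_1),\dots,\tfrac12(\alpha_{m-1}+\alpha_0)\bigr)$. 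Conversely, for $m \ge 3$ any positive reals summing to $2\pi$ are the arc-lengths of such a polygon. Since a $2$-dimensional fan is determined up to rotation by its profile and inscribability is rotation-invariant, this yields the criterion: a fan with $m \ge 3$ regions and profile $\mu$ is inscribable if and only if there are positive reals $\alpha_i$, $i \in \Z/m$, with $\alpha_i + \alpha_{i+1} = 2\mu_i$ (the identity $\sum_i\alpha_i = 2\pi$ then being automatic). The case $m=2$ is trivial: the only such fan is the normal fan of a segment, with profile $(\pi,\pi)$.

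For the forward implication I would argue as follows. Let $\beta = (\beta_0,\dots,\beta_{n-1})$ be the reduced profile of an inscribed arrangement $\Arr$. By Theorem~\ref{thm:2d_strongly_insc} (or directly: if $P \in \InCone(\Arr)$ then $P + (-P) \in \InCone(\Arr)$ by Theorem~\ref{thm:inspc}, and $P+(-P)$ is centrally symmetric) there is an inscribed zonogon $Z$ with $\Fan(Z) = \Arr$, which I inscribe in a circle about $\0$. Its $2n$ arc-lengths satisfy $\alpha_{i+n} = \alpha_i$ by central symmetry, hence $\sum_{i=0}^{n-1}\alpha_i = \pi$, and $\tfrac12(\alpha_i+\alpha_{i+1}) = \beta_{i \bmod n}$. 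Now set $\alpha'_j := 2\alpha_j$ for $j \in \Z/n$: these are positive, sum to $2\pi$, and $\alpha'_j + \alpha'_{j+1} = 2(\alpha_j + \alpha_{j+1}) = 2(2\beta_j)$ for every $j$ (for $j = n-1$ using $\alpha_n = \alpha_0$ and the periodic extension of $\beta$). By the criterion of the first paragraph the fan with $n$ regions and profile $2\beta$ is inscribable; explicitly it is the normal fan of the inscribed $n$-gon with arc-lengths $\alpha'_0,\dots,\alpha'_{n-1}$.

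For the converse, suppose $2\beta$ is the profile of an inscribed fan; it has $n$ regions. The case $n = 2$ forces $2\beta = (\pi,\pi)$, i.e.\ $\beta = (\tfrac\pi2,\tfrac\pi2)$, which is the reduced profile of the arrangement of two perpendicular lines, realized by a square; so assume $n \ge 3$. By the criterion there are positive reals $\alpha'_0,\dots,\alpha'_{n-1}$ with $\sum_j\alpha'_j = 2\pi$ and $\alpha'_j + \alpha'_{j+1} = 2(2\beta_j) = 4\beta_j$. Put $\alpha_i := \tfrac12 \alpha'_{i \bmod n}$ for $i \in \Z/2n$; these are positive, sum to $2\pi$, satisfy $\alpha_{i+n} = \alpha_i$, and $\alpha_i + \alpha_{i+1} = \tfrac12(\alpha'_{i\bmod n} + \alpha'_{(i+1)\bmod n}) = 2\beta_{i\bmod n}$. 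The inscribed $2n$-gon $P$ with these arc-lengths satisfies $v_{i+n} = -v_i$, so it is centrally symmetric, hence a zonogon by Theorem~\ref{thm:charac_zono} and in particular a belt polygon, so $\Fan(P)$ is an arrangement, with reduced profile $\bigl(\tfrac12(\alpha_i+\alpha_{i+1})\bigr)_{i=0}^{n-1} = \beta$. Hence $\beta$ is the reduced profile of an inscribed arrangement.

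The calculations are routine; the part I expect to require the most care is the arc-length dictionary in the first paragraph. It has to be set up without assuming that the circumcenter of the inscribed polygon lies in its interior (it need not), which is why one should work with arc-lengths in $(0,2\pi)$ rather than with the oriented angles in $(-\pi,\pi]$ of the excerpt; one must also keep the wrap-around index honest (the relation $\alpha_{n-1}+\alpha_n = 2\beta_{n-1}$ uses both $\alpha_n = \alpha_0$ and the periodic extension of $\beta$) and dispose of the degenerate case $n = 2$, where the ``$n$-gon'' on the fan side is a segment, separately as above.
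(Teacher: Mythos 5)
Your proof is correct and follows essentially the same route as the paper's: produce an inscribed zonogon via Theorem~\ref{thm:2d_strongly_insc}, double its face angles (arc-lengths) to obtain an inscribed $n$-gon whose normal fan has profile $2\beta$, and reverse the construction for the converse. Your write-up is merely more explicit than the paper's about the converse, the wrap-around indexing, the degenerate case $n=2$, and the distinction between arc-lengths and oriented angles when the circumcenter lies outside the polygon.
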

\begin{proof}
    Let $\beta = \tbeta(\Arr)$ be the reduced profile of an inscribed
    arrangement $\Arr$. By Theorem~\ref{thm:2d_strongly_insc}, there
    exist an inscribed zonotope $Z \in \ZInCone(\Arr)$. Let
    $\alpha_1, \dots, \alpha_{2n}$ be the face angles of $Z$. These
    are positive real numbers with $\alpha_i = \alpha_{n+i}$, and
    therefore $\alpha_1 + \dots + \alpha_n = \pi$.

    Let $w_1, \dots, w_n$ be $n$ points on the unit sphere such that
    $\measuredangle(w_{i-1}, \0, w_i) = 2\alpha_i$ for
    $i=2, \dots, n$. Then $Q \defeq \conv(w_1, \dots, w_n)$ is an
    inscribed polygon with face angles $\alpha_i' \defeq 2\alpha_i$,
    $1 \leq i \leq n$. It is not hard to see that for
    $\beta' \defeq \beta(\Fan(Q))$:
    \[
        \beta'_i \ = \ \tfrac{1}{2}(\alpha'_i + \alpha'_{i+1}) \ = \
        \alpha_i + \alpha_{i+1} \ = \ 2\beta_i\,.
    \]
    Starting with $Q$ and reversing the process completes the proof.
\end{proof}

We conclude the following from~\cite[Theorem~3.5]{InFan1}:
\begin{prop}\label{prop:2d_insc}
    Let $\Arr$ be a $2$-dimensional line arrangement with $n \geq 2$
    lines, and reduced profile
    $(\beta_0, \beta_1, \dots,\beta_{n-1})$. If $n$ is odd, then
    $\Arr$ is (strongly) inscribable, if and only if for all
    $0 \leq j < n$:
    \[
        \beta_{j+0} - \beta_{j+1} + \cdots - \beta_{j+n-2} +
        \beta_{j+n-1} \ > \ 0\,.
    \]        
    If $n = 2m$ is even, then $\Arr$ is (strongly) inscribable, if and
    only if for all $0 \le h < m$ and $0 \le j < n$
    \begin{align*}
        \beta_0 + \beta_2 + \cdots + \beta_{n-2} \ &= \ \frac{\pi}{2} \,,&
      \sum_{i = 1}^h \beta_{2i+j} + \sum_{i = h+1}^{m-1} \beta_{2i+1+j}
      \ &< \ \frac{\pi}{2}\,.
    \end{align*}
\end{prop}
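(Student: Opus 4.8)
The statement to prove is Proposition~\ref{prop:2d_insc}, and the natural strategy is to reduce it directly to \cite[Theorem~3.5]{InFan1}, which presumably characterizes inscribable $2$-dimensional \emph{fans} in terms of their profile $\beta(\Fan)$. The key translation is already supplied by the two theorems preceding this proposition: by Theorem~\ref{thm:2d_strongly_insc} the notions of (strongly) inscribable and inscribable coincide for rank-$2$ arrangements, so there is nothing to prove about the word ``strongly,'' and by the immediately preceding theorem, $\beta = (\beta_0,\dots,\beta_{n-1})$ is the reduced profile of an inscribable arrangement on $n$ lines if and only if $2\beta = (2\beta_0,\dots,2\beta_{n-1},2\beta_0,\dots,2\beta_{n-1})$ is the profile of an inscribable $2$-dimensional fan with $2n$ regions. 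So the whole proof is: apply \cite[Theorem~3.5]{InFan1} to the profile $2\beta$, and unwind the resulting inequalities.

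**Carrying it out.** First I would recall the precise form of \cite[Theorem~3.5]{InFan1}: it gives necessary and sufficient inequalities on the angles $\theta_0,\dots,\theta_{N-1}$ of the regions of a complete $2$-dimensional fan (with $\sum_i \theta_i = 2\pi$) for that fan to be inscribable, presumably with a case split according to the parity of $N$. Here $N = 2n$ is always even, but the profile has the extra central symmetry $\theta_i = \theta_{i+n}$ inherited from the arrangement, and this symmetry is exactly what lets the $N$-region criterion collapse to a criterion indexed by $0 \le j < n$ rather than $0 \le j < 2n$. Substituting $\theta_i = 2\beta_{i \bmod n}$ and dividing through by $2$ should convert the fan inequalities into the alternating-sum inequality $\beta_{j} - \beta_{j+1} + \cdots + \beta_{j+n-1} > 0$ in the odd case, and into the two families $\beta_0 + \beta_2 + \cdots + \beta_{n-2} = \tfrac{\pi}{2}$ (a normalization forced by $\sum \beta_i = \pi$ together with the even-length constraint) and $\sum_{i=1}^h \beta_{2i+j} + \sum_{i=h+1}^{m-1}\beta_{2i+1+j} < \tfrac{\pi}{2}$ in the even case $n = 2m$, where the index ranges $0 \le h < m$, $0 \le j < n$ track the choice of a ``turning point'' and a starting region in the cited theorem. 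I would also double-check the edge case $n = 2$ (where $m = 1$, the sum $\sum_{i=1}^h$ is empty, and the condition degenerates to $\beta_0 = \tfrac{\pi}{2}$, i.e.\ two perpendicular lines), since small cases are where index bookkeeping tends to break.

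**The main obstacle.** The real work — and the only place anything can go wrong — is matching the combinatorics of the indices: making sure that the $2n$-fold family of inequalities from \cite[Theorem~3.5]{InFan1}, after imposing $\theta_i = \theta_{i+n}$, is \emph{equivalent} to (not merely implied by) the smaller $n$-indexed family in the statement, and that the parity split in the cited theorem (which depends on $N = 2n$, hence on the parity of $n$) lines up correctly with the two displayed cases. In particular one must verify that the inequalities indexed by $j$ and by $j+n$ become identical under the symmetry, so no information is lost by restricting $j$ to $0 \le j < n$, and that in the even case the ``$=\tfrac{\pi}{2}$'' equality is precisely the specialization of the boundary (non-strict) inequality in the cited theorem rather than an independent hypothesis. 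This is a finite, mechanical check once the exact statement of \cite[Theorem~3.5]{InFan1} is in hand, so I expect the proof to be short — essentially ``substitute $2\beta$ into \cite[Theorem~3.5]{InFan1} and simplify using $\beta_i = \beta_{i+n}$.''
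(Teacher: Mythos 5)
Your approach is exactly the paper's: the proposition is stated with no proof beyond the remark that it follows from \cite[Theorem~3.5]{InFan1}, applied after the unlabelled theorem just before it converts the reduced profile $\beta$ of the arrangement into the profile $2\beta$ of an inscribable fan and Theorem~\ref{thm:2d_strongly_insc} disposes of the word ``strongly''. One correction to your setup before you unwind the inequalities: $2\beta$ is the $n$-entry profile $(2\beta_0,\dots,2\beta_{n-1})$ of a fan with $n$ regions (its angles sum to $2\pi$ because $\sum_i\beta_i=\pi$), not a $2n$-entry tuple with the symmetry $\theta_i=\theta_{i+n}$; this is precisely why the parity split in the cited theorem is governed by $n$ itself rather than by the always-even number $2n$ of regions of $\Fan(\Arr)$.
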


\begin{example}\label{ex:small_ieqs}
    Let us explicitly state these equations and inequalities for
    $n = 2, 3$ and $4$.

    For $n = 2$, we have $\beta_1 = \beta_2$ and
    $\beta_1 + \beta_2 = \pi$, so
    \begin{align*}
      \beta_1 &= \beta_2 = \tfrac{\pi}{2}\,.
    \end{align*}
    Thus, an inscribed parallelogram is a rectangle.
    
    For $n = 3$, we get $\beta_1 + \beta_2 + \beta_3 = \pi$,
    so $\pi - \beta_3 = \beta_1 + \beta_2 > \beta_3 > 0$, so by symmetry:
    \begin{align*}
      \beta_1 + \beta_2 + \beta_3 &= \pi\,, & 0 &< \beta_1, \beta_2, \beta_3 < \tfrac{\pi}{2}\,.
    \intertext{Finally, for $n = 4$, the inequalities can be reduced to}
      \beta_1 + \beta_3 = \beta_2 + \beta_4 &= \tfrac{\pi}{2}\,, & 0 &< \beta_1, \beta_2, \beta_3, \beta_4\,.
    \end{align*}
    For $n \geq 5$, the inequalities are more involved.
\end{example}

We close this section with a important corollary, which will help us
to show in Theorem~\ref{thm:insc_arr_are_simplicial} that inscribed belt polytopes are
simple:
\begin{cor}\label{cor:belt_polygons_obtuse}
    Let $\Arr$ be a $2$-dimensional inscribed line arrangement with
    $n \geq 2$ lines and reduced profile
    $(\beta_0,\dots,\beta_{n-1})$.  Then
    $\beta_i \leq \frac{\pi}{2} \leq \gamma_i$ for
    $i = 0, \dots, n-1$.
\end{cor}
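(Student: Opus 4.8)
The plan is to reduce the two asserted inequalities to one and then treat the parities of $n$ separately via Proposition~\ref{prop:2d_insc}. The displayed angle relations before the proof of Theorem~\ref{thm:2d_strongly_insc} give $\gamma_i = \pi - \beta_i$ for every $i$, so $\beta_i \le \tfrac{\pi}{2}$ and $\gamma_i \ge \tfrac{\pi}{2}$ are equivalent, and it suffices to show $\beta_i \le \tfrac{\pi}{2}$ for all $i$. By Theorem~\ref{thm:2d_strongly_insc} an inscribed line arrangement has an honest inscribed zonogon, so we may use the profile $(\beta_0,\dots,\beta_{n-1})$ of an inscribed belt polygon, for which $\beta_0+\cdots+\beta_{n-1}=\pi$ and every $\beta_i>0$.

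If $n=2m$ is even, Proposition~\ref{prop:2d_insc} provides the equality $\beta_0+\beta_2+\cdots+\beta_{n-2}=\tfrac{\pi}{2}$; subtracting it from $\beta_0+\cdots+\beta_{n-1}=\pi$ gives $\beta_1+\beta_3+\cdots+\beta_{n-1}=\tfrac{\pi}{2}$ as well. Each $\beta_j$ is a single summand in one of these two sums of positive numbers, so $\beta_j\le\tfrac{\pi}{2}$ (with equality possible only when $n=2$). If $n$ is odd, put $f(j):=\sum_{k=0}^{n-1}(-1)^k\beta_{j+k}$ with indices taken modulo $n$; Proposition~\ref{prop:2d_insc} says $f(j)>0$ for every $j$. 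Because $n$ is odd the first and last signs in $f(j)$ are both $+$, and a direct sign count shows that in $f(j)+f(j+1)$ all terms cancel except two copies of $\beta_j$, i.e.\ $f(j)+f(j+1)=2\beta_j$. Summing over $j$ and interchanging the order of summation gives $\sum_{j=0}^{n-1}f(j)=\bigl(\sum_{k=0}^{n-1}(-1)^k\bigr)\pi=\pi$. Since the remaining $n-2\ge 1$ values $f(i)$ with $i\neq j,j+1$ are positive, $2\beta_j=f(j)+f(j+1)\le\sum_{i=0}^{n-1}f(i)=\pi$, hence $\beta_j\le\tfrac{\pi}{2}$.

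The only non-formal ingredients are the cancellation identity $f(j)+f(j+1)=2\beta_j$ and the evaluation $\sum_j f(j)=\pi$, both of which are routine once indices are tracked modulo $n$ and the parity of $n$ is used; I do not expect a genuine obstacle. (One could instead argue geometrically: $\gamma_i\ge\tfrac{\pi}{2}$ is the inscribed-angle statement that the arc from $v_{i-1}$ to $v_{i+1}$ through $v_i$ is at most a semicircle, i.e.\ $\alpha_i+\alpha_{i+1}\le\pi$, but extracting this from inscribability amounts to the same inequalities, so the algebraic route above is cleaner.)
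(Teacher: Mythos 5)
Your proof is correct, but it takes a genuinely different route from the paper's. You derive everything from the linear characterization of inscribable reduced profiles in Proposition~\ref{prop:2d_insc}: in the even case you read off $\beta_j\le\tfrac{\pi}{2}$ directly from the two half-sums equal to $\tfrac{\pi}{2}$, and in the odd case you combine the alternating sums $f(j)>0$ via the telescoping identity $f(j)+f(j+1)=2\beta_j$ and the evaluation $\sum_j f(j)=\pi$ (both of which I checked; they are right, using that $n$ is odd so $\sum_k(-1)^k=1$). The paper instead argues directly with the face angles $\alpha_i$ of an inscribed realization: from $\beta_i=\tfrac12(\alpha_i+\alpha_{i+1})$, $\alpha_i>0$ and $\sum_i\beta_i=\pi$ one gets $\pi\ge\beta_{i-1}+\beta_i+\beta_{i+1}=\tfrac12\alpha_{i-1}+\alpha_i+\alpha_{i+1}+\tfrac12\alpha_{i+2}>\alpha_i+\alpha_{i+1}=2\beta_i$ for $n\ge 3$, with $n=2$ handled by Example~\ref{ex:small_ieqs}. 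The paper's argument is shorter and does not need the full strength of Proposition~\ref{prop:2d_insc}, only positivity of the face angles of one inscribed polygon; yours stays entirely at the level of the profile data and, as a bonus, makes transparent that equality $\beta_i=\tfrac{\pi}{2}$ can only occur for $n=2$. Two minor remarks: the appeal to Theorem~\ref{thm:2d_strongly_insc} is superfluous ($\sum_i\beta_i=\pi$ and $\beta_i>0$ hold for the reduced profile of any line arrangement, and Proposition~\ref{prop:2d_insc} applies to inscribed arrangements as stated), and the reduction $\gamma_i=\pi-\beta_i$ is exactly the relation displayed before Theorem~\ref{thm:2d_strongly_insc}, which the paper's proof also uses implicitly.
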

\begin{proof}
    If $n = 2$, then by Example~\ref{ex:small_ieqs}
    $\beta_0 = \beta_1 = \frac{\pi}{2} = \gamma_0 = \gamma_1$, so any
    inscribed realization is a rectangle. Otherwise:
    \begin{align*}
      \pi \ & = \ \beta_0 + \beta_1 + \dots + \beta_n \ > \ \beta_{i-1} + \beta_i + \beta_{i+1} \\
            & > \ \tfrac{1}{2}\alpha_{i-1} + \alpha_{i} + \alpha_{i+1} + \tfrac{1}{2}\alpha_{i+2} \ > \ \alpha_i + \alpha_{i+1} = 2\beta_i
              \,.\qedhere
    \end{align*}
\end{proof}

\subsection{An algebraic perspective}\label{sec:2d_algebraic}
\newcommand{\pfaff}{\operatorname{pf}}%
\newcommand{\sR}{R}%
While the description of inscribable rank-$2$ arrangements in the previous
subsection in terms of the reduced profile is very simple, it does not
give a description of the strongly inscribed cone.

Let $\Arr$ be an arrangement of $n \geq 2$ lines in $\R^2$. The lines are
given by $z_1^\perp, \dots, z_n^\perp$ for some non-zero vectors $z_1, \dots,
z_n$. We assume $z_1, \dots, z_n, -z_1, \dots, -z_n$ to be cyclically ordered.
The \Defn{skew-Gram matrix} of an ordered collection of vectors $z_1, \dots,
z_n$ is the skew-symmetric matrix $\sR = \sR(z_1, \dots, z_n) \in \R^{n \times
n}$ with $\sR_{ij} = -\sR_{ji} = \inner{z_i, z_j}$ for $i < j$ and $\sR_{ii} =
0$.

\begin{thm}\label{thm:inspc_eq_ker_R}
    Let $\Arr = \{z_1^\perp, \dots, z_n^\perp\}$ as above and let $\sR =
    \sR(z_1, \dots, z_n)$. As vector spaces, $\ZInSpc(\Arr) \cong \ker \sR$.
    As cones, $\ZInCone(\Arr) \cong \ker \sR \cap \Rpp^n$.
\end{thm}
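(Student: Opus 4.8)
The plan is to realize every inscribed virtual zonotope explicitly, read off its defining equations, and match them row by row with $\sR\lambda = 0$. Since $z_1,\dots,z_n,-z_1,\dots,-z_n$ is a cyclic order, the $z_i$ lie in an open half-plane, so $R_0 \defeq \{\, c : \inner{z_i,c} > 0 \text{ for all } i\,\}$ is a region of $\Arr$; I label the $2n$ regions $R_0,\dots,R_{2n-1}$ counterclockwise (indices modulo $2n$). A short computation with the angles of the $z_i$ shows that the wall separating $R_{j-1}$ from $R_j$ lies on $z_j^\perp$ for $1 \le j \le n$ and on $z_{j-n}^\perp$ for $n < j \le 2n$. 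Up to translation a virtual zonotope is $Z_\lambda$ with $\lambda \in \R^n$, and I would work with the distinguished representative $\ell_\lambda(c) \defeq \sum_i \lambda_i|\inner{z_i,c}|$. On $R_\sigma$ one has $\ell_\lambda(c) = \inner{\sum_i \sigma_i \lambda_i z_i,\, c}$, so the vertex $v_j$ of $\ell_\lambda$ on $R_j$ is $\sum_i \epsilon_{ji}\lambda_i z_i$, where $\epsilon_{ji} = \sgn\inner{z_i,c}$ for $c$ in the interior of $R_j$. Because $R_0$ is the all-positive region and crossing the wall between $R_{j-1}$ and $R_j$ flips exactly the sign indexed by the ambient hyperplane, for $1 \le j \le n$ one gets $\epsilon_{j-1,j} = +1$, while for $i \ne j$ one has $\epsilon_{j-1,i} = +1$ if $i > j$ and $\epsilon_{j-1,i} = -1$ if $i < j$. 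Moreover $v_{j+n} = -v_j$, so $V(\ell_\lambda)$ is symmetric about the origin; in particular, if $\ell_\lambda$ is inscribed at all, its circumcenter is $c(\ell_\lambda) = 0$.

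From this sign pattern, $v_j - v_{j-1} = -2\lambda_j z_j$ and $v_j + v_{j-1} = 2\sum_{i \ne j}\epsilon_{j-1,i}\lambda_i z_i$. Using that $\sR_{ji} = \inner{z_j,z_i}$ for $i > j$ and $\sR_{ji} = -\inner{z_j,z_i}$ for $i < j$, this gives, for $1 \le j \le n$,
\[
    \tfrac{1}{2}\inner{z_j,\, v_{j-1} + v_j} \ = \ \sum_{i > j}\lambda_i\inner{z_j,z_i} \ - \ \sum_{i < j}\lambda_i\inner{z_j,z_i} \ = \ (\sR\lambda)_j \,,
\]
and hence $\|v_j\|^2 - \|v_{j-1}\|^2 = \inner{v_j - v_{j-1},\, v_j + v_{j-1}} = -4\lambda_j(\sR\lambda)_j$. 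The walls with $n < j \le 2n$ repeat these equations up to sign via $v_{j+n} = -v_j$.

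Now I would conclude. For $\lambda \in \Rpp^n$ the polytope $Z_\lambda$ is a genuine zonotope with $\Fan(Z_\lambda) = \Arr$, and it is inscribed if and only if all $\|v_j\|^2$ coincide; by central symmetry this reduces to $\|v_j\|^2 = \|v_{j-1}\|^2$ for $1 \le j \le n$, i.e.\ $\lambda_j(\sR\lambda)_j = 0$, and since $\lambda_j > 0$ to $\sR\lambda = 0$. This yields $\ZInCone(\Arr) \cong \ker\sR \cap \Rpp^n$. For the inscribed space I would apply Lemma~\ref{lem:edge_midpoint_orthogonal}: taking $\alpha_{R_{j-1}R_j} = z_j$ and $c(\ell_\lambda) = 0$, the edge-midpoint conditions read precisely $(\sR\lambda)_j = 0$ for $j = 1,\dots,n$, i.e.\ $\sR\lambda = 0$; and $\sR\lambda = 0$ forces $\|v_j\|^2 = \|v_{j-1}\|^2$ by the identity above, so the ``inscribed'' clause of the lemma is automatic. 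Thus $Z_\lambda$ represents an element of $\InSpc(\Arr)$ exactly when $\lambda \in \ker\sR$. If $\Arr$ is strongly inscribable, every $\lambda \in \ker\sR$ is a difference of two elements of $\ker\sR \cap \Rpp^n$ (add a large multiple of a fixed one), so these virtual zonotopes form $\ZInCone(\Arr) + (-\ZInCone(\Arr)) = \ZInSpc(\Arr)$; since $\lambda \mapsto Z_\lambda$ is linear, $\ZInSpc(\Arr) \cong \ker\sR$. (If $\Arr$ is not strongly inscribable, this identification is to be read with the definition of $\InSpc$ from~\cite{InFan1}, for which Lemma~\ref{lem:edge_midpoint_orthogonal} serves as a working definition, and the same computation applies.)

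The real work is the bookkeeping of the signs $\epsilon_{ji}$, so that the telescoping sum lands exactly on the rows of the skew-Gram matrix, together with keeping straight the distinction between an \emph{inscribed} piecewise-linear function and a representative of the \emph{inscribed space}. The latter is precisely where Lemma~\ref{lem:edge_midpoint_orthogonal} carries its weight: it explains why the degenerate segments $S_i = [-z_i,z_i]$, which are inscribed but in general not in $\InSpc(\Arr)$, do not spoil the identification with $\ker\sR$.
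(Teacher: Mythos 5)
Your proof is correct and follows essentially the same route as the paper: parametrize the vertices of the virtual zonogon $Z_\lambda$, compute the edge midpoints, and identify the conditions of Lemma~\ref{lem:edge_midpoint_orthogonal} with the rows of $\sR\lambda=0$. The only (welcome) addition is that you make explicit, via the identity $\|v_j\|^2-\|v_{j-1}\|^2=-4\lambda_j(\sR\lambda)_j$, why the midpoint conditions already force inscribedness and why $\lambda>0$ suffices in the cone case — points the paper leaves implicit.
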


These isomorphisms can be described as follows: For $\lambda \in \ker \sR \cap
\Rpp^n$ the corresponding zonogon is
\[
    Z_{\lambda} \ \defeq \ \sum_{i = 1}^n  \lambda_i [-z_i, z_i]  \in
    \ZInCone(\Arr)\,.
\]
For $\lambda \in \ker \sR$, let $\lambda^+, \lambda^- \in \Rpp^n$ such that
$\lambda = \lambda^+ - \lambda^-$. The virtual zonogon associated to $\lambda$
is then $Z_\lambda \defeq Z_{\lambda^+} - Z_{\lambda^-} \in \ZInSpc(\Arr)$.
\begin{proof}[Proof of Theorem~\ref{thm:inspc_eq_ker_R}]
    Let $Z \in \ZInSpc(\Arr)$ with $c(Z) = 0$, i.e., that
    $Z = Z_\lambda$ for some $\lambda \in \R^n$. The
    vertices of the (virtual) zonogon $Z_\lambda$ are
    \[
        v_i \ = \  \lambda_1 z_1 + \cdots + \lambda_{i-1} z_{i-1} - 
        \lambda_{i} z_{i} - \cdots - \lambda_{n} z_{n}  
    \]
    for $i=1,\dots,n$ together with their reflections
    $-v_1,\dots,-v_n$. Thus, the midpoint of the edge
    $e_i \defeq [v_i,v_{i+1}] = v_i + [0,2\lambda_iz_i]$ is
    \[
        c(e_i) \ = \ v_i + \lambda_iz_i \ = \ \lambda_1 z_1 + \cdots + \lambda_{i-1} z_{i-1} - \lambda_{i+1} z_{i+1} - \cdots - \lambda_{n} z_{n}\,.
    \]
    Lemma~\ref{lem:edge_midpoint_orthogonal} gives $\inner{z_i, c(e_i)} = 0$
    for all $i = 1, \dots, n$ which is equivalent to
    $\sR \lambda = 0$.
\end{proof}
  
Recall that a skew-symmetric matrix of odd order is singular. If $\sR$ is a
skew-symmetric matrix of order $n = 2m$, then $\det \sR = (\pfaff \sR)^2$, where
$\pfaff \sR$ denotes the \Defn{Pfaffian} of $\sR$.

\begin{cor}\label{cor:pfaff}
    Let $\Arr$ be an arrangement of $n$ lines in $\R^2$. If $n$ is
    odd, then $\Arr$ is virtually inscribable. Otherwise, $\Arr$ is
    virtually inscribable if and only if $\pfaff \sR(\Arr) = 0$.
\end{cor}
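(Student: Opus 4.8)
The plan is to read off the corollary directly from Theorem~\ref{thm:inspc_eq_ker_R} together with the elementary linear algebra of skew-symmetric matrices. By Theorem~\ref{thm:inspc_eq_ker_R}, the arrangement $\Arr$ is virtually inscribable precisely when $\ZInSpc(\Arr) \cong \ker \sR(\Arr)$ is nonzero, i.e.\ when the skew-symmetric matrix $\sR = \sR(\Arr)$ is singular. (One should be slightly careful about the distinction between $\ker \sR \ne 0$ and the existence of a \emph{strictly positive} vector in $\ker \sR$, but the statement asks only about \emph{virtual} inscribability, so $\ker \sR \ne 0$ is exactly the right condition; the positive-cone refinement is precisely what Proposition~\ref{prop:2d_insc} handles on the geometric side.)

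First I would recall the standard fact that a skew-symmetric matrix of odd order $n$ is always singular: since $\sR^\top = -\sR$, we get $\det \sR = \det \sR^\top = \det(-\sR) = (-1)^n \det \sR = -\det \sR$ when $n$ is odd, forcing $\det \sR = 0$, hence $\ker \sR \ne 0$. Combined with Theorem~\ref{thm:inspc_eq_ker_R}, this gives the first claim: every arrangement of an odd number of lines in $\R^2$ is virtually inscribable.

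Next, for $n = 2m$ even, I would invoke the Pfaffian identity $\det \sR = (\pfaff \sR)^2$ for skew-symmetric matrices of even order. Then $\sR$ is singular if and only if $\det \sR = 0$ if and only if $(\pfaff \sR)^2 = 0$ if and only if $\pfaff \sR = 0$. Again via Theorem~\ref{thm:inspc_eq_ker_R}, $\Arr$ is virtually inscribable iff $\ker \sR \ne 0$ iff $\pfaff \sR(\Arr) = 0$, which is the second claim.

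There is no real obstacle here; the content has already been done in Theorem~\ref{thm:inspc_eq_ker_R}, and what remains is to cite the two textbook facts about skew-symmetric matrices (odd order $\Rightarrow$ singular; $\det = \pfaff^2$ in even order). The only point deserving a sentence of care is the passage from "$\ZInSpc(\Arr) \ne 0$" to "$\Arr$ is virtually inscribable," which is immediate from the definition of virtual inscribability as $\ZInSpc(\Arr) \ne 0$ (equivalently, the existence of a virtual inscribed zonogon with normal fan $\Arr$), and the fact — also from Theorem~\ref{thm:2d_strongly_insc} — that in rank $2$ virtually inscribable and virtually strongly inscribable coincide, so no separate argument distinguishing belt polygons from zonogons is needed.
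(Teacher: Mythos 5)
Your proposal is correct and matches the paper's intended argument exactly: the paper states the two skew-symmetric facts (odd order implies singular; $\det \sR = (\pfaff\sR)^2$ in even order) immediately before the corollary and leaves the deduction from Theorem~\ref{thm:inspc_eq_ker_R} implicit, which is precisely what you spell out. Your extra sentence invoking Theorem~\ref{thm:2d_strongly_insc} to pass from $\ZInSpc(\Arr)\neq 0$ to virtual inscribability is the right way to close the one gap the paper glosses over.
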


As a refinement of Corollary~\ref{cor:pfaff}, we have the following:
\begin{cor}
    Let $\Arr$ be an arrangement of $n$ lines in $\R^2$.
    If $n$ is even, $\InSpc(\Arr) = \ZInSpc(\Arr)$. If $n$ is odd,
    then $\dim \ZInSpc(\Arr) = 1$.
\end{cor}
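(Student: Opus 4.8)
The final corollary has two parts. For $n$ even: $\InSpc(\Arr) = \ZInSpc(\Arr)$, i.e. every inscribed virtual belt polygon is (up to translation) a virtual zonogon. For $n$ odd: $\dim \ZInSpc(\Arr) = 1$.

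The plan is to treat the two assertions separately, using Theorem~\ref{thm:inspc_eq_ker_R} to identify $\ZInSpc(\Arr)$ with $\ker\sR$ throughout; in both cases the inclusion $\ZInSpc(\Arr)\subseteq\InSpc(\Arr)$ is free. For $n$ odd I would argue a dimension bound on $\ker\sR$. Since $\sR$ is skew-symmetric of odd order it is singular, so $\ker\sR\neq 0$; it therefore suffices to show $\dim\ker\sR\le 2$, which together with the fact that the corank of a skew-symmetric matrix has the same parity as its order forces $\dim\ker\sR=1$. To bound the dimension, fix a region $R_0$ and, for $\lambda\in\ker\sR$, choose the representative of the virtual zonogon $Z_\lambda$ whose circumcenter is the origin. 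The vertex of $Z_\lambda$ lying in $R_0$ depends linearly on $\lambda$, and if it is $0$ then (all vertices of an inscribed centrally symmetric polygon having the same norm) $Z_\lambda=0$, hence $\lambda=0$. Thus $\lambda\mapsto v_0(Z_\lambda)$ embeds $\ker\sR$ into $\R^2$ and $\dim\ker\sR\le 2$.

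For $n$ even I would prove the reverse inclusion $\InSpc(\Arr)\subseteq\ZInSpc(\Arr)$ directly, by an angular computation. Let $\ell\in\InSpc(\Arr)$ and pick the representative with circumcenter $c(\ell)=0$; write $v_i$ for the vertex in region $R_i$ and $\phi_i$ for its angle on the circumscribed circle of radius $\rho$ (if $\rho=0$ then $\ell=0=Z_0$ and we are done). For each adjacent pair $R_i,R_{i+1}$ the edge $[v_i,v_{i+1}]$ is parallel to $\lin(R_i\cap R_{i+1})^\perp$, a line depending only on $\Arr$, and the condition of Lemma~\ref{lem:edge_midpoint_orthogonal} with $c(\ell)=0$ reads $\langle\alpha_{R_iR_{i+1}},v_i+v_{i+1}\rangle=0$; since $v_i,v_{i+1}$ lie on the circle this is equivalent to $\phi_i+\phi_{i+1}\equiv\psi_i\pmod{2\pi}$ for a constant $\psi_i$ determined by $\Arr$ (this stays valid even when $v_i=v_{i+1}$, where the $\InSpc$-hypothesis is doing the work). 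Because the walls $R_i\cap R_{i+1}$ and $R_{i+n}\cap R_{i+n+1}$ span the same line, $\psi_{i+n}=\psi_i$. Setting $g_i\defeq\phi_{i+n}-\phi_i$, the relations give $g_{i+1}\equiv-g_i$ and $g_{i+n}\equiv-g_i\pmod{2\pi}$; as $n$ is even this yields $g_i\equiv(-1)^ig_0$ together with $2g_0\equiv0$, so each $g_i$ is $0$ or $\pi\pmod{2\pi}$. When $g_i\equiv\pi$ we obtain $v_{i+n}=-v_i$, so the vertex set of $\ell$ is centrally symmetric; hence $\ell$ is a centrally symmetric virtual polygon and therefore a virtual zonogon (the virtual analogue of Theorem~\ref{thm:charac_zono}), i.e.\ $\ell\in\ZInSpc(\Arr)$, as desired.

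The main obstacle is to exclude the remaining subcase $g_i\equiv0$. There $v_{i+n}=v_i$ and the constraints collapse to the cyclic system $\phi_i+\phi_{i+1}\equiv\psi_i\pmod{2\pi}$ for $0\le i<n$ with $\phi_n=\phi_0$, whose coefficient matrix is $I+P$ for the cyclic shift $P$. For $n$ even $\det(I+P)=0$, and solvability modulo $2\pi$ forces the alternating sum $\sum_i(-1)^i\psi_i$ to vanish modulo $2\pi$; translating $\psi_i$ back into the angles $0\le d_1<\dots<d_n<\pi$ of the lines of $\Arr$, this becomes $d_1-d_2+\dots+d_{n-1}-d_n\equiv0\pmod\pi$. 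But writing this as $(d_1-d_2)+(d_3-d_4)+\dots+(d_{n-1}-d_n)$ and also as $d_1+(d_3-d_2)+\dots+(d_{n-1}-d_{n-2})-d_n$, the cyclic ordering pins the sum strictly inside $(-\pi,0)$, a contradiction. Everything outside this one step is bookkeeping with the same angular relations used in the proof of Theorem~\ref{thm:inspc_eq_ker_R}.
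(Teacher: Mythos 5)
Your proof is correct, but for the even case it takes a genuinely different route from the paper. The paper's argument is a soft dimension count: since $\ZInSpc(\Arr)\subseteq\InSpc(\Arr)$ are vector spaces with $\dim\InSpc(\Arr)\le 2$ (the vertex embedding from \cite{InFan1}), $\ZInSpc(\Arr)\cong\ker\sR$, and $\dim\ker\sR\equiv n\pmod 2$, one gets for $n$ even that either both spaces are zero or both are $2$-dimensional, and for $n$ odd that $\dim\ker\sR$ is odd and at most $2$, hence $1$. Your odd case is essentially this same parity argument, with the bound $\dim\ker\sR\le 2$ re-derived via the vertex map rather than cited. Your even case, by contrast, proves the inclusion $\InSpc(\Arr)\subseteq\ZInSpc(\Arr)$ directly: you show that any inscribed PL function in $\InSpc$ with circumcenter at the origin must satisfy $v_{i+n}=-v_i$, by turning Lemma~\ref{lem:edge_midpoint_orthogonal} into the angular recursion $\phi_i+\phi_{i+1}\equiv\psi_i$ and excluding the branch $v_{i+n}=v_i$ through the solvability obstruction $\sum_i(-1)^i\psi_i\equiv 0\pmod{2\pi}$, which the strict cyclic ordering of the lines forbids. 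This is more work, but it is self-contained (it does not invoke the prequel's bound on $\dim\InSpc$ or Theorem~\ref{thm:2d_strongly_insc}) and it yields the stronger, explicitly geometric statement that every origin-centered inscribed virtual belt polygon on an even line arrangement is centrally symmetric. Two small points to tighten: when you claim the edge condition is equivalent to $\phi_i+\phi_{i+1}\equiv\psi_i$, the factorization of $\langle\alpha_i,v_i+v_{i+1}\rangle$ has a second vanishing branch, namely $v_{i+1}=-v_i$; this case is also fine, but only because continuity of the PL function forces $v_{i+1}-v_i\in\R\alpha_i$, whence $\phi_i\equiv\theta_i\pmod\pi$ and the congruence still holds --- you should say this explicitly alongside the $v_i=v_{i+1}$ case. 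And in the final step you should note that identifying centrally symmetric PL functions with virtual zonogons is a dimension count on even PL functions (both spaces are $n$-dimensional and one contains the other), since Theorem~\ref{thm:charac_zono} as stated applies only to genuine polytopes.
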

\begin{proof}
    Let $n$ be even. Since $\ZInSpc(\Arr) \subseteq \InSpc(\Arr)$, we
    are left to show that $\dim \ZInSpc(\Arr) = \dim \InSpc(\Arr)$.
    The statement is trivial if $\dim \InSpc(\Arr) = 0$, so assume
    $\dim \InSpc(\Arr) > 0$. The rank of every
    skew-symmetric matrix is even and since
    $\rk\sR(\Arr) \neq 0$, we get
    $\dim \ker \sR = n - \rk(\sR) \geq
    2$. By~\cite[Proposition~3.1]{InFan1}, we see that
    $2 = \dim \InSpc(\Arr) \geq \dim \ZInSpc(\Arr) = \dim \ker \sR$
    and therefore $\dim \ZInSpc(\Arr) = \dim \InSpc(\Arr)$.

    If $n$ is odd, then $\dim \ker \sR = n - \rk(\sR) \geq 1$ is
    odd, but since $\dim \ZInSpc(\Arr) \leq \dim \InSpc(\Arr) = 2$
    and $\ker \sR \cong \ZInSpc(\Arr)$, we have in fact
    $\dim \ZInSpc(\Arr) = 1$.
\end{proof}

\begin{example}\label{ex:small_tRs}
    We will list the immediate consequences of the last results for $n = 2$,
    $3$ and $4$ to compare with Example~\ref{ex:small_ieqs}. For $n=2,3,4$,
    let $\Arr_n = \{z_1^\perp, \dots, z_n^\perp\}$ be an arrangement of $n$
    lines $z_1^\perp,\dots,z_n^\perp$ in $\R^2$. Assume that 
$z_1, \dots, z_n, -z_1, \dots, -z_n$ is cyclically ordered and write 
    $r_{ij} = \inner{z_i, z_j}$ for $i,j = 1,\dots,n$. The skew-Gram matrices
    $\sR_n = \sR(\Arr_n)$ are then:
    \begin{align*}
      \sR_2 &= \begin{pmatrix}0&r_{12}\\-r_{12}&0\end{pmatrix}&
      \sR_3 &= \begin{pmatrix}0&r_{12}&r_{13}\\-r_{12}&0&r_{23}\\-r_{13}&-r_{23}&0\end{pmatrix}&
      \sR_4 &=  \begin{pmatrix}0&r_{12}&r_{13}&r_{14}\\-r_{12}&0&r_{23}&r_{24}\\-r_{13}&-r_{23}&0&r_{34}\\-r_{14}&-r_{24}&-r_{34}&0\end{pmatrix}.
    \end{align*}

    For $n = 2$, we have $\pfaff \sR_2 = r_{12}$, so $\Arr_2$ is
    inscribed if and only if $r_{12} = 0$. The kernel of $\sR_2$ is
    simply $\R^2$. Geometrically, this is clear: If $r_{12} = 0$,
    then $\Arr_2$ consists of two orthogonal lines. All (virtual) polygons
    with this normal fan are rectangles and therefore inscribed. The space of
    such rectangles is parameterized by the two side lengths.

    For $n = 3$, $\pfaff \sR_3 = 0$, since $n$ is odd, so $\Arr_3$ is always
    virtually inscribed. The kernel of $\sR_3$ is $(r_{23}, -r_{13}, r_{12})$
    and therefore $1$-dimensional. Finally, $\Arr_3$ is strongly inscribed, if
    and only if $r_{12}, -r_{13}, r_{23} > 0$. In relation to reduced
    profiles, $r_{13} < 0$ is equivalent to $\beta_3 < \tfrac{\pi}{2}$, while
    $r_{12} > 0$ if and only if $\beta_1 < \tfrac{\pi}{2}$ and $r_{23} > 0$ if
    and only if $\beta_2 < \tfrac{\pi}{2}$. These are precisely the
    inequalities in Example~\ref{ex:small_ieqs} (that $\beta_i > 0$ follows
    from the cyclic ordering).

    For $n = 4$, $\pfaff \sR_4 = r_{12} r_{34} - r_{13} r_{24} + r_{14}
    r_{23}$. Plugging
    \begin{align*}
        \cos \beta_1 &= r_{12},& \cos \beta_2 &= r_{23},& \cos \beta_3 &= r_{34},\\
        \cos(\beta_1 + \beta_3) &= r_{13},& \cos(\beta_2 + \beta_3) &= r_{24},&
        \cos(\beta_1 + \beta_2 + \beta_3) &= r_{14}
    \end{align*}
    into $\pfaff \sR_4$, a quick struggle with trigonometric identities
    reveals $\pfaff \sR_4 = \cos(\beta_1 + \beta_3)$. This is in line with the
    angle conditions of Example~\ref{ex:small_ieqs}.
\end{example}

\section{Operations and strongly inscribed cones}\label{sec:rest_simp} 

We show that the class of inscribed arrangements is closed under the basic
operations of taking products and localizations. While this is clear from the
geometric point of view, we prove that \emph{strong} inscribability is retained
under restrictions as well. Said differently, the orthogonal projection of an
inscribed zonotope onto any of its flats is inscribed. Up to an additional
assumption, this characterizes inscribed zonotopes.

In Section~\ref{sec:repr} we derive a simple representation of the strongly
inscribed cone of an arrangement and give a first extension of Bolker's
characterization to inscribed zonotopes.

\subsection{Products and reducible arrangements}
Let $P_1 \subset \R^d$, $P_2 \subset \R^e$ be inscribed
polytopes. Then $P_1 \times P_2 \subset \R^{d + e}$ is also
inscribed. If $P_1$, $P_2$ are belt polytopes for $\Arr_1$, $\Arr_2$,
respectively, then $P_1 \times P_2$ is a belt polytope for the
arrangement
\[
    \Arr_1 \times \Arr_2 \ \defeq \ \{H \times \R^e : H \in \Arr_1
    \} \cup \{\R^d \times H : H \in \Arr_2 \}\,.
\]
This shows:
\begin{prop}
    If $\Arr_1$, $\Arr_2$ are (strongly) inscribable, then so is
    $\Arr_1 \times \Arr_2$.
\end{prop}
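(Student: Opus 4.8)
The plan is to verify the two claims directly from the definitions. First I would check that the product of inscribed polytopes is inscribed: if $P_1 \subset \R^d$ has vertices on a sphere centered at $c_1$ of radius $r_1$ and $P_2 \subset \R^e$ has vertices on a sphere centered at $c_2$ of radius $r_2$, then every vertex of $P_1 \times P_2$ is of the form $(v_1, v_2)$ with $v_1 \in V(P_1)$, $v_2 \in V(P_2)$, and hence $\lVert (v_1,v_2) - (c_1,c_2) \rVert^2 = r_1^2 + r_2^2$ is constant. So $P_1 \times P_2$ is inscribed with center $(c_1,c_2)$. (This is the step already asserted at the start of the subsection, so it may be taken as given.)

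Second, I would check the claim about normal fans. The face lattice of $P_1 \times P_2$ is the product $\faces(P_1) \times \faces(P_2)$, with $N_{F_1 \times F_2}(P_1 \times P_2) = N_{F_1}P_1 \times N_{F_2}P_2$ under the identification $\R^{d+e} = \R^d \times \R^e$. In particular the regions of $\Fan(P_1 \times P_2)$ are products of regions of $\Fan(P_1)$ and $\Fan(P_2)$, which is exactly the fan $\Arr_1 \times \Arr_2$: the hyperplane $z^\perp \subset \R^d$ becomes $z^\perp \times \R^e = (z,0)^\perp \subset \R^{d+e}$, and similarly for $\Arr_2$, so $\Arr_1 \times \Arr_2$ is the arrangement with normal vectors $\{(z_i,0)\} \cup \{(0,w_j)\}$ whose zonotope is $Z_1 \times Z_2$ and whose fan refines into the claimed product cones. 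Hence if $P_k \in \InCone(\Arr_k)$ for $k=1,2$, then $P_1 \times P_2 \in \InCone(\Arr_1 \times \Arr_2)$, proving the inscribable case.

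For the strongly inscribable case, I would simply observe that the product of zonotopes is a zonotope: if $Z_k = \sum_i [-z_i^{(k)}, z_i^{(k)}]$ for $k=1,2$, then
\[
    Z_1 \times Z_2 \ = \ \sum_i [-(z_i^{(1)},0), (z_i^{(1)},0)] + \sum_j [-(0,z_j^{(2)}),(0,z_j^{(2)})]
\]
is a Minkowski sum of segments in $\R^{d+e}$, hence a zonotope, and its normal fan is $\Arr_1 \times \Arr_2$. Combining this with the first part: given inscribed zonotopes $Z_k$ with $\Fan(Z_k) = \Arr_k$, the polytope $Z_1 \times Z_2$ is an inscribed zonotope with $\Fan(Z_1 \times Z_2) = \Arr_1 \times \Arr_2$, so $\Arr_1 \times \Arr_2$ is strongly inscribable.

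Honestly there is no real obstacle here — the statement is a bookkeeping exercise combining the standard facts that products of spheres-in-orthogonal-subspaces are spheres, that normal fans of products are products of normal fans, and that products of zonotopes are zonotopes. The only mild care needed is the correct identification of the ambient inner product space $\R^{d+e}$ as an orthogonal direct sum so that "inscribed" is preserved, and matching the normal-vector description $z \mapsto (z,0)$ with the definition of $\Arr_1 \times \Arr_2$ given in the excerpt.
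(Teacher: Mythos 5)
Your proposal is correct and follows exactly the argument the paper gives (the paper's ``proof'' is just the two sentences preceding the proposition: products of inscribed polytopes are inscribed, and the product belt polytope/zonotope has normal fan $\Arr_1 \times \Arr_2$). You have simply written out the routine verifications --- the Pythagorean computation for the circumsphere, the product structure of the normal fan, and the fact that $Z_1 \times Z_2$ is again a Minkowski sum of segments --- all of which are accurate.
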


An essential hyperplane arrangement $\Arr$ in $\R^d$ is called \Defn{reducible},
if there exists a partition $\Arr = \Arr_1 \uplus \Arr_2$ into
non-empty subarrangements, such that
$\lineal(\Arr_1) \oplus \lineal(\Arr_2) = \R^d$. Equivalently, $\ess \Arr$
is linearly isomorphic to $\ess \Arr_1 \times \ess \Arr_2$. If $\Arr$
is not reducible, it is called \Defn{irreducible}.

Towards a classification of inscribed arrangements, the following proposition yields that we focus on irreducible arrangements:
\begin{prop}\label{prop:reducible}
    Let $\Arr$ be an inscribable arrangement. If $\Arr$ is reducible
    with $\Arr = \Arr_1 \uplus \Arr_2$ as above, then
    $\lineal \Arr_1 \perp \lineal \Arr_2$.
\end{prop}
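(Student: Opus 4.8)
The plan is to localize the problem to the $2$-dimensional faces of an inscribed belt polytope and to invoke the planar base case that an inscribed parallelogram is a rectangle (Example~\ref{ex:small_ieqs}). Write $L_k \defeq \lineal \Arr_k$, so that $L_1 \oplus L_2 = \R^d$ by hypothesis; in particular $\dim L_1 + \dim L_2 = d$ and $L_1^\perp \cap L_2^\perp = (L_1 + L_2)^\perp = \{\0\}$, hence $L_1^\perp \oplus L_2^\perp = \R^d$ as well. If $H_i = z_i^\perp \in \Arr_1$, then $z_i \perp L_1$, and the normals of $\Arr_1$ span $L_1^\perp = \lineal(\Arr_1)^\perp$; symmetrically for $\Arr_2$. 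Since $\dim L_1^\perp = \dim L_2$, a quick dimension count shows $L_1 \perp L_2$ if and only if $L_1^\perp \perp L_2^\perp$, and the latter holds if and only if $\inner{z_i,z_j} = 0$ for every $H_i \in \Arr_1$ and every $H_j \in \Arr_2$. So it suffices to fix one such pair $H_i, H_j$ and prove $z_i \perp z_j$.

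The key step is the claim that $L \defeq H_i \cap H_j$ is a codimension-$2$ flat contained in no hyperplane of $\Arr$ other than $H_i$ and $H_j$, i.e.\ $\Arr^L = \{H_i, H_j\}$. Indeed, $z_j \in L_2^\perp \setminus \{\0\}$ and $L_1^\perp \cap L_2^\perp = \{\0\}$ force $z_j \notin L_1^\perp \ni z_i$, so $z_i, z_j$ are linearly independent and $L$ has codimension $2$. Now suppose $H_k \in \Arr$ satisfies $L \subseteq H_k$, equivalently $z_k \in \lin(z_i, z_j)$. If $H_k \in \Arr_1$, then $z_k \in L_1^\perp$; were $z_i, z_k$ a basis of $\lin(z_i,z_j)$, we would get $z_j \in \lin(z_i, z_k) \subseteq L_1^\perp$, a contradiction, so $z_k \in \R z_i$ and $H_k = H_i$. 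Symmetrically $H_k \in \Arr_2$ forces $H_k = H_j$, proving the claim.

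Next I would bring in inscribability: let $P$ be an inscribed belt polytope with $\Fan(P) = \Arr$. Consider the belt of $P$ determined by $L$, namely the faces $F'$ of $P$ with $\aff_0(F') = L^\perp$; this belt is nonempty, since its faces are in bijection with the regions of the restriction $\Arr_L$. Each such $F'$ satisfies $\dim F' = \dim L^\perp = 2$ and, by the local description of belts, is a belt polytope for the localization $\Arr^L = \{H_i, H_j\}$; hence $F'$ is a $2$-polytope whose normal fan in $L^\perp$ is the two-line arrangement $\{H_i \cap L^\perp, H_j \cap L^\perp\}$, so $F'$ is a parallelogram, and since $z_i, z_j \in L^\perp$ are normals of those two lines, the edges of $F'$ are parallel to $z_i$ and to $z_j$. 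Finally, $F'$ is a face of the inscribed polytope $P$, so its vertices lie on the intersection of the inscribing sphere with $\aff(F')$, which is again a sphere; thus $F'$ is an inscribed parallelogram, hence a rectangle, and therefore $z_i \perp z_j$. This finishes the argument.

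The only step that is not a routine verification is the flat-counting claim $\Arr^{H_i \cap H_j} = \{H_i, H_j\}$: it is exactly what guarantees that the relevant $2$-faces are honest parallelograms rather than belt polygons with more edges, for which inscribability does \emph{not} force right angles (witness the rank-$2$ restrictions of $A_3$, which are inscribed but meet at $60^\circ$). Everything else reduces to the standard fact that faces of inscribed polytopes are inscribed, combined with the planar case recorded in Example~\ref{ex:small_ieqs}.
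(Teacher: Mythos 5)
Your proof is correct and follows essentially the same route as the paper: localize at the codimension-$2$ flat $H_i \cap H_j$, observe that no other hyperplane of $\Arr$ contains it, and conclude from the planar fact that an inscribed parallelogram is a rectangle. The only differences are cosmetic --- you phrase the argument in terms of the $2$-faces of an inscribed belt polytope rather than the localized arrangement, and you write out the verification that $\Arr^{H_i \cap H_j} = \{H_i, H_j\}$, which the paper merely asserts.
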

\begin{proof}
    Choose $H_1 = z_1^\perp \in \Arr_1$ and
    $H_2 = z_2^\perp \in \Arr_2$. Then $L \defeq H_1 \cap H_2$ is a
    flat of codimension $2$ and no other hyperplane in $\Arr$ contains
    $L$. The localization $\Arr_L$ is inscribed, and thus its
    essentialization is a $2$-dimensional inscribed arrangement with
    $2$ lines. By Example~\ref{ex:small_ieqs}, we see that
    $\beta(\ess(\Arr_L)) = (\frac{\pi}{2}, \frac{\pi}{2})$, or in
    other words, $z_1 \perp z_2$. Varying over $H_1$, $H_2$ shows the
    claim.
\end{proof}

\begin{rem}
    The same argument shows that for two polytopes $P, Q \subset \R^d$
    if $P+Q$ is inscribed and $P + Q$ is combinatorially isomorphic to
    $P \times Q$, then $\aff_0(P) \perp \aff_0(Q)$ and $P$ and $Q$ are
    both inscribed. Note that it is not enough to assume that $P + Q$
    is combinatorially isomorphic to an inscribed polytope, as seen
    for example by irregular inscribed quadrilaterals or more
    generally irregular inscribed cubes.
\end{rem}

\subsection{Restrictions and localizations}
Let $P$ be an inscribed polytope. If  $F \subseteq P$ is a face, then $F$ is
clearly inscribed as well. Recall from Section~\ref{sec:insc_cones} that if
$P$ is a belt polytope with arrangement $\Arr$, then $F$ is a belt polytope
with respect to the localization $\Arr_L$, where \mbox{$L =
\aff_0(F)^\perp$}. This shows that (strongly) inscribed arrangements are
closed under localizations:
\begin{lem}
    Let $L$ be a flat of a (strongly) inscribed arrangement
    $\Arr$. Then $\Arr_L$ is (strongly) inscribed.
\end{lem}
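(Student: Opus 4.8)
The plan is to realise the localization as the normal fan of a single face of an inscribed belt polytope (resp.\ zonotope) for $\Arr$, and then to use that faces of inscribed polytopes are inscribed and that faces of zonotopes are zonotopes. Since $\Arr$ is (strongly) inscribable there is an inscribed belt polytope $P$ with $\Fan(P) = \Arr$, and we may take $P$ to be an inscribed zonotope when $\Arr$ is strongly inscribable. As recalled in Section~\ref{sec:insc_cones}, the belts of $P$ are indexed by the flats of $\Arr$; in particular the flat $L$ determines a nonempty belt, so there is a face $F$ of $P$ with $\aff_0(F)^\perp = L$. (Concretely, a point of $L$ generic among the hyperplanes of $\Arr$ not containing $L$ lies in the relative interior of a cone of $\Fan(\Arr)$ whose linear span is $L$, and that cone is $N_F P$.)

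Next I would record the two stability facts. A face $F$ of the inscribed polytope $P$ is again inscribed: its vertices are $V(P)\cap\aff(F)$ and hence lie on the intersection of the inscribing sphere of $P$ with $\aff(F)$, which is a sphere of $\aff(F)$. When $P$ is a zonotope, every $2$-face of $F$ is also a $2$-face of $P$, hence centrally symmetric, so $F$ is a zonotope by Theorem~\ref{thm:charac_zono}. Finally, again by Section~\ref{sec:insc_cones}, $F$ is a belt polytope for the localization $\Arr_L = \{H \in \Arr : L \subseteq H\}$, i.e.\ $\Fan(F) = \Arr_L$; equivalently, translating $\aff(F)$ onto $L^\perp$ turns $F$ into a belt polytope for $\ess(\Arr_L)$. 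As translations are isometries, they preserve being inscribed (together with the center) and being a zonotope, so $F$ witnesses $\InCone(\Arr_L) \ne \emptyset$, resp.\ $\ZInCone(\Arr_L) \ne \emptyset$, which is the claim.

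There is no real obstacle here; the only delicate points are bookkeeping. One must know that every flat of $\Arr$ arises as $\aff_0(F)^\perp$ for a face $F$ of the belt polytope $P$ — this is precisely the indexing of belts by flats, and follows from the generic-point remark above. One must also read ``inscribed'' inside the correct affine span, so that passing between $F \subset \aff(F)$ and its translate in $L^\perp$, and between $\Arr_L$ and $\ess(\Arr_L)$, does not affect inscribability; this is immediate from the definitions, since inscribability of an arrangement depends only on its essentialization. No virtual-polytope issue arises either, since ``(strongly) inscribed arrangement'' literally means $\InCone(\Arr) \ne \emptyset$ (resp.\ $\ZInCone(\Arr) \ne \emptyset$), so an honest polytope $P$ is available to begin with.
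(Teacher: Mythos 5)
Your proposal is correct and follows essentially the same route as the paper: pass to a face $F$ of an inscribed belt polytope (resp.\ zonotope) for $\Arr$ with $\aff_0(F)^\perp = L$, observe that $F$ is inscribed (and a zonotope, by the central symmetry of its $2$-faces) and is a belt polytope for the localization. The paper states this in two sentences; your version merely spells out the bookkeeping (existence of $F$ for a given flat, translation to $L^\perp$) that the paper leaves implicit.
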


For a flat $L$, let $\pi_L : \R^d \to L$ be the orthogonal projection onto
$L$. The image of a belt polytope $P$ with $\Arr(P) = \Arr$ under $\pi_L$ is a
belt polytope with respect to the restriction $\Arr^L$. The vertices of
$\pi_L(P)$ are precisely the images $\pi_L(F)$, where $F$ ranges over the
faces in the belt corresponding to $L^\perp$.

Projections of inscribed polytopes are rarely inscribed. Inscribed
zonotopes however have the fascinating property, that they remain
inscribed under orthogonal projection onto their flats:
\begin{prop}\label{prop:proj_edges}
    Let $Z$ be an inscribed zonotope and $H \in \Arr = \Arr(Z)$. Then
    $\pi_H(Z)$ is inscribed with center $c(Z)$. Equivalently, $\Arr^H$
    is strongly inscribed.
\end{prop}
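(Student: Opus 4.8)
The plan is to compute directly with the vertices of $Z$ and their squared norms. After a translation we may assume $Z = \sum_{i=1}^n \lambda_i[-z_i, z_i]$ with $\lambda \in \Rpp^n$, and we write $H = H_k = z_k^\perp$; since $Z$ is then centrally symmetric and inscribed, its center is $c(Z) = \0$, so all vertices $v_\sigma = \sum_{i=1}^n \sigma_i\lambda_i z_i$ (as $R_\sigma$ ranges over the regions of $\Arr$) satisfy $\inner{v_\sigma, v_\sigma} = \rho^2$ for a fixed $\rho \ge 0$. As recalled above, $\pi_H(Z)$ is again a zonotope -- it is the image of the zonotope $Z$ under the linear map $\pi_H$ -- its normal fan is the restriction $\Arr^H$, and its vertices are exactly the points $\pi_H(e)$ where $e$ runs over the edges of $Z$ parallel to $z_k$. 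So the proposition reduces to showing that these points are equidistant from $\0 = c(Z)$.

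First I would make the $z_k$-edges explicit. Such an edge has the form $e = [v_\sigma, v_{\sigma'}]$ where $\sigma$ and $\sigma'$ differ exactly in the $k$-th coordinate; choosing $\sigma_k = -1$ and $\sigma'_k = +1$ gives $v_{\sigma'} = v_\sigma + 2\lambda_k z_k$ and midpoint $c(e) = v_\sigma + \lambda_k z_k = \sum_{i \neq k} \sigma_i\lambda_i z_i$. From $\inner{v_{\sigma'}, v_{\sigma'}} = \inner{v_\sigma, v_\sigma}$ and $\lambda_k > 0$, expanding the left-hand side yields $\inner{v_\sigma, z_k} = -\lambda_k\inner{z_k, z_k}$, and hence $\inner{c(e), z_k} = \inner{v_\sigma, z_k} + \lambda_k\inner{z_k, z_k} = 0$, so $c(e) \in H$ and $\pi_H(e) = \pi_H(c(e)) = c(e)$. (This last point is exactly the instance of Lemma~\ref{lem:edge_midpoint_orthogonal} for the belt of $z_k$, made concrete.)

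The decisive step is then the one-line computation
\[
    \inner{c(e), c(e)} \ = \ \inner{v_\sigma + \lambda_k z_k,\, v_\sigma + \lambda_k z_k} \ = \ \rho^2 + 2\lambda_k\inner{v_\sigma, z_k} + \lambda_k^2\inner{z_k, z_k} \ = \ \rho^2 - \lambda_k^2\inner{z_k, z_k}\,,
\]
whose value is independent of the edge $e$. Thus every vertex of $\pi_H(Z)$ lies at distance $\sqrt{\rho^2 - \lambda_k^2\inner{z_k,z_k}}$ from $\0$, so $\pi_H(Z)$ is inscribed with center $c(Z)$ (the radius being positive unless $\pi_H(Z)$ degenerates to a point). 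Since $\pi_H(Z)$ is moreover a zonotope with normal fan $\Arr^H$, it follows that $\Arr^H$ is strongly inscribed, which is the claimed reformulation.

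I do not expect a real obstacle here; the mathematical content is the single norm identity above. The one thing that needs care is the bookkeeping behind the first paragraph -- namely that the vertex set of $\pi_H(Z)$ equals $\{c(e) : e \text{ a } z_k\text{-edge of } Z\}$, which combines the belt description of $\pi_H(Z)$ with the fact, just established, that each $c(e)$ lies in $H$; and that $\pi_H(Z)$ is full-dimensional in $H$, so that its circumcenter relative to its affine hull is unambiguously $c(Z)$. Both are straightforward once one works with the essentialization of $\Arr$.
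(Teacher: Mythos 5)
Your proof is correct and is essentially the paper's argument in coordinate form: the paper observes that the triangles $\conv(e\cup\{\0\})$ over the belt edges $e$ are congruent isoceles triangles (common circumradius, common edge length since $Z$ is a zonotope) and hence have equal altitudes $[\0,c(e)]$, while your identity $\inner{c(e),c(e)}=\rho^2-\lambda_k^2\inner{z_k,z_k}$ is exactly the Pythagorean computation of that altitude, and both proofs use the same bookkeeping that the vertices of $\pi_H(Z)$ are the midpoints $c(e)$ lying in $H$. No gaps.
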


Theorem~\ref{thm:restr_insc} follows from Proposition~\ref{prop:proj_edges}
by repeated application.

\begin{proof}
    We may assume that $c(Z) = \0$. Let $E$ be the belt of edges orthogonal to
    $H$. If $e = [u,v] \in E$ is an edge, then all points of $H$ have the same
    Euclidean distance to $u$ and to $v$. In particular, $H$ bisects every edge
    in $E$. Since $Z$ is a zonotope, any two edges in $E$ are
    translates of each other and thus have the same length. Therefore,
    all the triangles $T_e \defeq \conv(e \cup \{\0\})$ with $e \in E$
    are congruent. The segments $[\0, c(e)]$ are the altitudes of
    $T_e$ and therefore $c(e) = \pi_H(e)$ have the same distance to
    the origin for all $e \in E$. The claim now follows from
    $\pi_H(Z) = \conv(c(e) : e \in E)$.
\end{proof}

\begin{rem}
    Note that Proposition~\ref{prop:proj_edges} is not true for belt
    polytopes. If $W$ is the reflection group of type $A_3$ acting on
    $\R^4$ by permuting coordinates, one checks that projections of
    the $W$-permutahedron $P_W(1,2,3,5)$ along edges are not
    inscribed.
\end{rem}

In addition to zonotopes, products of polygons and simplices also
have the property that inscribability is preserved under projection
along edges. Thus, we propose the following question:
\begin{quest}\label{quest:insc_under_proj}
    Which inscribed polytopes remain inscribed when projecting along
    any of their edge directions?
\end{quest}

Unlike products of polygons and simplices, inscribed zonotopes have the
property that the center of the inscribing sphere is preserved under
projection. It turns out that this characterizes inscribed zonotopes.

\begin{proof}[Proof of Theorem~\ref{thm:inscribed_zono}]
    We may assume that $c(P) = 0$ and hence $\Centers_e = e^\perp$.

    (i)$\implies$(ii) follows from Proposition~\ref{prop:proj_edges}. 
    Since $P$ is a belt polytope, it follows that the vertices of the
    projection of $P$ along $e$ are precisely the images of edges $e'$
    parallel to $e$. The proof of Proposition~\ref{prop:proj_edges} now shows
    that the projection and intersection coincide and thus yields
    (i)$\implies$(iii).

    (ii)$\implies$(i) and (iii)$\implies$(i): We will show that
    every $2$-face of $P$ is centrally symmetric. The result then follows from
    Theorem~\ref{thm:charac_zono}. If $P$ satisfies (ii) or (iii)
    respectively, then so does every $2$-face of $P$. Therefore, we can assume
    that $P$ is $2$-dimensional and the  result follows from the next two
    Lemmas.
\end{proof}

\begin{lem}
    Let $P \subseteq \R^2$ be an inscribed polygon with $c(P) =
    \0$. If for every edge $e$ the orthogonal projection $\pi_e(P)$ of
    $P$ onto $e^\perp$ satisfies $c(\pi_e(P)) = 0$, then $P$ is a
    zonogon.
\end{lem}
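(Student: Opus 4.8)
The plan is to argue directly with the circumscribed circle. Normalise so that $c(P) = \0$ and the circumradius is $1$; label the vertices $v_1,\dots,v_N$ in counterclockwise cyclic order, let $\theta_k$ be the angle of $v_k$ on the unit circle, and put $\alpha_k \defeq \theta_k - \theta_{k-1} \in (0,2\pi)$ for the face angles (indices mod $N$, so $\sum_k\alpha_k = 2\pi$). For the edge $e_i = [v_i,v_{i+1}]$ let $u_i$ be the unit vector at angle $\phi_i \defeq \tfrac12(\theta_i+\theta_{i+1})$, so that $e_i^\perp = \R u_i$ (this $u_i$ is orthogonal to $v_{i+1}-v_i$). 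Since projecting a polygon onto a line gives the segment between its extreme values in that direction, $\pi_{e_i}(P) = [\,(\min_k\inner{u_i,v_k})\,u_i,\ (\max_k\inner{u_i,v_k})\,u_i\,]$, and the hypothesis $c(\pi_{e_i}(P)) = \0$ becomes the scalar identity $\max_k\inner{u_i,v_k} + \min_k\inner{u_i,v_k} = 0$. (This is insensitive to replacing $u_i$ by $-u_i$, so one need not worry which of the two normals of $e_i$ this is.)

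First I would compute the maximum. Since $v_i,v_{i+1}$ are consecutive, every other vertex has angle in the complementary arc $(\theta_{i+1},\theta_i+2\pi)$, hence $\theta_k-\phi_i \in (\tfrac{\alpha_{i+1}}2,\,2\pi-\tfrac{\alpha_{i+1}}2)$ and therefore $\inner{u_i,v_k} = \cos(\theta_k-\phi_i) < \cos(\tfrac{\alpha_{i+1}}2) = \inner{u_i,v_i} = \inner{u_i,v_{i+1}}$. So $\max_k\inner{u_i,v_k} = \cos(\tfrac{\alpha_{i+1}}2)$, attained exactly at $v_i$ and $v_{i+1}$, and the identity forces $\min_k\inner{u_i,v_k} = -\cos(\tfrac{\alpha_{i+1}}2)$. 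As $\min_k \le \max_k$, this gives $\cos(\tfrac{\alpha_{i+1}}2)\ge 0$; and equality $\alpha_{i+1}=\pi$ would make all vertices lie on the line $u_i^\perp$, which is absurd. Hence every face angle of $P$ is $<\pi$.

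The heart of the argument is a short count over arcs. Let $\mathrm{Arc}_i$ be the open circular arc between the antipodal points $-v_i$ and $-v_{i+1}$ that subtends the angle $\alpha_{i+1}$ (equivalently, the open arc of angular width $\alpha_{i+1}$ centred in the direction opposite $u_i$). The identity $\max_k+\min_k=0$ forces $\inner{u_i,v_k}\ge -\cos(\tfrac{\alpha_{i+1}}2)$ for every vertex $v_k$; since $\alpha_{i+1}<\pi$, this says precisely that $\theta_k$ is at angular distance $\ge\tfrac{\alpha_{i+1}}2$ from the direction opposite $u_i$, i.e.\ that no vertex of $P$ lies in $\mathrm{Arc}_i$. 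Now as $i$ runs through $1,\dots,N$ the endpoint pairs $\{-v_i,-v_{i+1}\}$ run through consecutive pairs of the antipodal set $-V(P)$, so $\{\overline{\mathrm{Arc}_i}\}_{i=1}^N$ are exactly the $N$ closed arcs into which $-V(P)$ divides the circle; in particular they cover the whole circle and the union of their endpoints is $-V(P)$. Hence any point of the circle not in $-V(P)$ lies in the interior of some $\mathrm{Arc}_i$ and is therefore not a vertex of $P$; equivalently $V(P)\subseteq -V(P)$. Cardinalities being equal, $V(P) = -V(P)$, so $P = \conv(V(P)) = \conv(-V(P)) = -P$. Thus $P$ is centrally symmetric about $\0$, i.e.\ a zonogon (and in particular $N$ is even).

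I expect the only real friction to be the planar-trigonometry bookkeeping: confirming that the extreme value of $\inner{u_i,\cdot}$ over vertices is attained precisely at the endpoints of $e_i$ (pure convexity, using that the remaining vertices sit on the complementary arc), translating the ``midpoint-at-$\0$'' condition into the inequality $\inner{u_i,v_k}\ge -\cos(\tfrac{\alpha_{i+1}}2)$, and disposing of face angles $\ge\pi$ before running the arc count so that ``angular distance'' behaves as expected. None of these is deep, so I would keep the write-up close to the level of detail above.
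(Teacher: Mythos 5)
Your proof is correct. It is, at bottom, the same mechanism as the paper's, but organized quite differently. The paper argues by contradiction: if $P \neq -P$, pick a vertex $v$ with $-v \notin P$, note that $-v$ (lying on the circle) sits strictly beyond the supporting line of a single edge $e$ of $P$, and observe that this violates the midpoint condition for that one edge, since the maximum of $\inner{\eta,\cdot}$ over $P$ in the outer normal direction of $e$ is attained on $e$ itself. Your version proves the contrapositive of that local step for \emph{every} edge at once --- the condition $\max + \min = 0$ for $e_i$ says exactly that no antipode $-v_k$ lies strictly beyond $\aff(e_i)$, which you phrase as ``no vertex lies in $\mathrm{Arc}_i$'' --- and then globalizes by the arc-covering count to conclude $V(P) \subseteq -V(P)$ directly. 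What the paper's route buys is brevity: one witness vertex, one edge, no coordinates, no need to first establish $\alpha_{i+1} < \pi$. What your route buys is a direct (non-contradiction) argument and an explicit quantitative picture: the forbidden arcs tile the circle off of $-V(P)$, which makes the conclusion $V(P) = -V(P)$ visibly forced rather than extracted from a contradiction. One small point worth keeping in your write-up is the step ruling out $\alpha_{i+1} \ge \pi$: since the circumcenter of an inscribed polygon need not lie in its interior, this is a genuine consequence of the hypothesis and not automatic, and your arc argument does need it for ``angular distance'' to behave; you handle it correctly.
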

\begin{proof}
    If $P$ is not a zonogon, then there exists a vertex $v$, such that $-v
    \not\in P$. Let $u, u'$ be the neighbors of $-v$ in $\conv(P \cup
    \{-v\})$. Since $P$ is inscribed, the segment $e = uu'$ is an edge of $P$
    and the affine line spanned by $e$ separates $-v$ from $P$.  Let $\eta$ be
    the outer unit normal vector to $\aff(e)$. The image $\pi_e(P)$ is a
    segment $aa'$, with
    \[
        ||a||  \ = \ \max \{\inner{x, \eta} : x \in P\} \ = \ \inner{u, \eta}\,,\qquad
        ||a'|| \ = \ \max \{\inner{x,-\eta} : x \in P\}\,,\qquad
    \]
    and $||a|| = ||a'||$ by assumption. But then
    \[
        ||a'|| \ \geq \ \inner{v, -\eta} \ = \ \inner{-v, \eta} \ > \
        \inner{u, \eta} \ = \ ||a||\,,
    \]
    a contradiction.
\end{proof}

\begin{lem}
    Let $P \subseteq \R^2$ be an inscribed polygon with $c(P) = 0$. If
    $c(P \cap e^\perp) = \0$ for every edge $e$, then $P$ is a
    zonogon.
\end{lem}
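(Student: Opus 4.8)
We are given an inscribed polygon $P \subseteq \R^2$ with $c(P) = \0$, and the hypothesis that for every edge $e$ of $P$ the section $P \cap e^\perp$ is inscribed with center $\0$. We must show $P$ is a zonogon, i.e.\ centrally symmetric. The plan is to argue by contradiction in the same spirit as the previous lemma: assume $P$ is not a zonogon, produce a witness vertex $v$ with $-v \notin P$, and extract a contradiction from the section hypothesis applied to a well-chosen edge.

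**The main line of argument.** Suppose $P$ is not a zonogon. Then there is a vertex $v$ with $-v \notin P$. As in the previous lemma, let $u, u'$ be the neighbors of $-v$ in $\conv(P \cup \{-v\})$; since $P$ is inscribed, $e \defeq [u,u']$ is an edge of $P$, and the line $\aff(e)$ strictly separates $-v$ from $P$. Now I want to understand $P \cap e^\perp$. The key observation is that $e^\perp$ is the line through $\0$ perpendicular to the \emph{direction} of $e$ — not the perpendicular bisector of $e$ — so $e^\perp$ passes through the center $\0 = c(P)$ and is therefore a genuine chord-line of the circle on which $P$ lies. The section $P \cap e^\perp$ is a segment $[p, p']$ with $p, p'$ on the circumcircle (or possibly a shorter segment if $e^\perp$ exits $P$ through edges), and by hypothesis its center is $\0$, which forces $\|p\| = \|p'\|$, i.e.\ $p' = -p$. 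I then need to derive a contradiction with the separation property: the edge $e$ lies entirely on one side of $\aff(e)$ which separates $-v$ from $P$, yet $\0 \in e^\perp \cap (\text{interior of } P)$ and $-v$ is on the far side — I will compare the coordinate of points of $P$ in the direction of the outer normal $\eta$ to $\aff(e)$, exactly as in the previous proof, but now using that $P \cap e^\perp$ is a \emph{diameter} of the circumcircle, so it contains a point as far in the $-\eta$ direction as $-v$ is, contradicting that $\aff(e)$ separates $-v$ from $P$.

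**Filling the gap.** Concretely: write $\eta$ for the outer unit normal to $\aff(e)$, so $\inner{u,\eta} = \inner{u',\eta} = \max_{x\in P}\inner{x,\eta}$ and $\inner{-v,\eta} > \inner{u,\eta}$. The direction of $e$ is orthogonal to $\eta$, hence $e^\perp = \R\eta$, the line through $\0$ in direction $\eta$. Thus $P \cap e^\perp = P \cap \R\eta = [-t'\eta, t\eta]$ for some $t, t' > 0$ with $t = \max\{s : s\eta \in P\}$ and $t' = \max\{s : -s\eta \in P\}$; here $t = \max\{s : s\eta \in P\} \le \max_{x\in P}\inner{x,\eta} = \inner{u,\eta}$ since $\eta$ is a unit vector. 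The center hypothesis gives $t = t'$. But $-v \in \conv(P\cup\{-v\})$ has $\inner{-v,\eta}$ strictly larger than $\inner{u,\eta} \ge t = t'$; meanwhile $-v$ is separated from $P$ by $\aff(e)$, so $-v$ lies strictly on the $+\eta$ side, which in particular means $\inner{-v,\eta} > \inner{w,\eta}$ for all $w \in P$, i.e.\ $\inner{-v,\eta} > t$ — this alone is not yet a contradiction, so I instead compare in the $-\eta$ direction. The honest contradiction comes from noting $v \in P$ (it is a vertex of $P$) with $\inner{v,-\eta} = -\inner{v,\eta} = \inner{-v,\eta} > \inner{u,\eta} \ge t'$, whereas $t' = \max\{s : -s\eta\in P\}$ only bounds the extent along the line $\R\eta$, not all of $P$. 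So the correct contradiction must use the circle: $p = t\eta$ and $p' = -t'\eta = -t\eta$ both lie on the circumcircle of radius $\rho \defeq \|u\|$, forcing $t = \rho$, hence $p = \rho\eta \in P$ achieves $\inner{p,\eta} = \rho \ge \|u\| \ge |\inner{u,\eta}|$ with equality only if $u = \pm\rho\eta$; but then $\inner{-v,\eta} > \inner{u,\eta}$ combined with $-v$ on the circle of radius $\rho$ gives $\inner{-v,\eta} > \inner{u,\eta}$, and $-v \in \conv(P\cup\{-v\})$ has the same circumradius obstruction.

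**Where the difficulty lies.** The main obstacle is precisely the subtlety flagged above: unlike the projection case, $e^\perp$ is a line through the center, so $P \cap e^\perp$ is a chord (in fact, forced by the hypothesis to be a \emph{diameter}) of the circumcircle, and one must be careful that this diameter genuinely points in the normal direction $\eta$ and that the endpoint $\rho\eta$ of this diameter, which lies in $P$, violates the separating-line inequality $\inner{x,\eta} \le \inner{u,\eta}$ for $x \in P$ once one also uses that $u$ lies on the same circumcircle of radius $\rho$ and is cut off from the far arc by $\aff(e)$. So the clean argument is: $P \cap e^\perp = [-\rho\eta, \rho\eta]$ with $\rho$ the circumradius; the point $\rho\eta \in P$ satisfies $\inner{\rho\eta, \eta} = \rho$; but every vertex of $P$, being on the circle and on the near side of the separating line $\aff(e)$, has $\eta$-coordinate at most $\inner{u,\eta} < \inner{-v,\eta} \le \rho$, and since $\rho\eta$ itself must be in $P$ this is only consistent if $\aff(e)$ does not in fact separate $-v$ from $P$ — contradiction. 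I would write this out carefully, as the bookkeeping with which direction the inequalities go is the only place an error could creep in.
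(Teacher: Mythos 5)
Your argument breaks down at the step where you claim that the section $P \cap e^\perp$ is a diameter of the circumcircle. Write $e = [u,u']$ and let $\eta$ be the outer unit normal of $\aff(e)$, so that $e^\perp = \R\eta$. The endpoint of $P \cap e^\perp$ in the $+\eta$ direction is the point where the line $\R\eta$ leaves $P$ through the edge $e$ itself; since $\0$ is the circumcenter, this is the foot of the perpendicular from $\0$ to the chord $\aff(e)$, i.e.\ the midpoint $c(e) = \inner{u,\eta}\,\eta$ of $e$. This point lies on $\partial P$ but \emph{strictly inside} the circumcircle: $t = \inner{u,\eta} = \|c(e)\| < \|u\| = \rho$. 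The hypothesis $c(P\cap e^\perp)=\0$ therefore forces $P \cap e^\perp = [c(e), -c(e)]$, a segment of length $2\inner{u,\eta}$, not $[-\rho\eta,\rho\eta]$. With the correct value $t = \inner{u,\eta}$, the inequality you hoped to violate, namely $\inner{x,\eta} \le \inner{u,\eta}$ for $x \in P$, is satisfied with \emph{equality} by $t\eta = c(e)$, and no contradiction remains. Worse, the approach cannot be repaired by fixing the bookkeeping: all that the hypothesis at the single edge $e$ gives you is that $\partial P$ passes through $-c(e)$, and this is perfectly compatible with $-v \notin P$ and with $P$ failing to be centrally symmetric (one can build inscribed polygons realizing exactly this local configuration; they of course violate the hypothesis at \emph{other} edges). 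Any correct proof must therefore combine the hypothesis over several edges, which your one-vertex, one-separating-edge template does not do.

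The paper's proof is such a global argument. Writing $e = [x,y]$, one first checks that the far endpoint of $P \cap e^\perp$ is never a vertex (a vertex has norm $\rho$, while the hypothesis forces that endpoint to be $-c(e)$, of norm $\|c(e)\| < \rho$), so $q \defeq -c(e)$ lies in the relative interior of some other edge $f=[u,v]$. Since $q$ is the midpoint of the reflected chord $[-x,-y]$, the intersecting chords theorem gives $\|u-q\|\cdot\|q-v\| = \tfrac14\|x-y\|^2$, and AM--GM yields $\|x-y\| \le \|u-v\|$ with equality if and only if $q$ is the midpoint of $f$, i.e.\ $f = -e$. If $P$ were not a zonogon one could start from an edge $e_1$ such that $-e_1$ is not an edge and iterate this step; a separate claim rules out ever returning via $e_{n+1}=-e_n$, so the edge lengths would increase strictly forever, which is impossible for a polygon. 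Some version of this iteration (or another mechanism that uses the hypothesis at more than one edge) is unavoidable here.
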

\begin{proof}
    Let $e = [x, y]$ be an edge of $P$.  We first establish the
    following claims:
    \begin{enumerate}[(i)]
    \item \emph{No vertex $v $ of $P$ is contained in $e^\perp$.}
        Otherwise, $P \cap e^\perp = [c(e), v]$. Since $P$ is
        inscribed, $||v|| = ||x|| = ||y||$, but as
        $c(P \cap e^\perp) = \0$, we have $||v|| = ||c(e)|| < ||x||$,
        contradiction.
    \item \emph{If $-e$ is an edge of $P$, then $e \cap f^\perp$
          is empty for all other edges $f$ of $P$.} This follows from
        the observation that if $e \cap f^\perp \neq \emptyset$, then
        also $-e \cap f^\perp \neq \emptyset$, which contradicts
        $f \cap f^\perp = \{c(f)\} \neq \emptyset$, since at most two
        edges of $P$ intersect any line.
    \item \emph{Let $q \defeq c(-e)$. If $\{q\} = e^\perp \cap f$ for
          an edge $f = [u, v]$, we have $||x-y|| \leq ||u-v||$ with
          equality if and only if $f = -e$.}
        By the Chord Theorem from plane geometry, we
        have
        \begin{align*}
            ||u - q|| \cdot ||q - v|| &= ||(-x) - q|| \cdot ||q -
            (-y)|| = \tfrac{1}{4} ||x-y||^2\,.
        \intertext{
        The inequality of arithmetic and geometric means now yields
        }
            ||u - q|| \cdot ||q - v|| &\leq  \big(\tfrac{1}{2}(||u-q|| +
            ||q-v||)\big)^2 = \tfrac{1}{4}(||u-v||)^2\,,
        \end{align*}
        with equality if and only if $||u-q|| = ||q-v||$, i.e. if
        $f = -e$.
    \end{enumerate}
    Now, assume that $e_1$ is an edge of $P$ such that $-e_1$ is not
    an edge of $P$. Because of (i), $e_1^\perp$ intersects another edge
    $e_2 \neq e_1$ of $P$. Iterating this, there exists an edge
    $e_{n+1} \neq e_n$ intersecting $e_n^\perp$ for all $n \geq 1$. By
    (ii), we will never have $e_{n+1} = -e_n$, so by (iii) the lengths of
    the $e_i$'s will strictly increase, a contradiction.
\end{proof}

Theorem~\ref{thm:restr_insc} furnishes further examples of inscribed
arrangements by restrictions of reflection arrangements. These
restrictions are well understood, see \cite[Section 6.5]{OrlikTerao},
forming two infinite families and several sporadic examples (all of
rank $\leq 8$). A Hasse diagram of the restrictions is given in
Figure~\ref{fig:restrictions}. For example, there are $17$ such
restrictions of rank $3$ and Figure~\ref{fig:inscribed_zonotopes}
shows inscribed zonotopes for each one of them (we refer to
Section~\ref{sec:simplicial_arrangements} for the naming scheme).

\begin{figure}
    \captionsetup[subfigure]{labelformat=empty}
    \begin{center}
        \begin{subfigure}{\textwidth/6}
            \includegraphics[width=\textwidth]{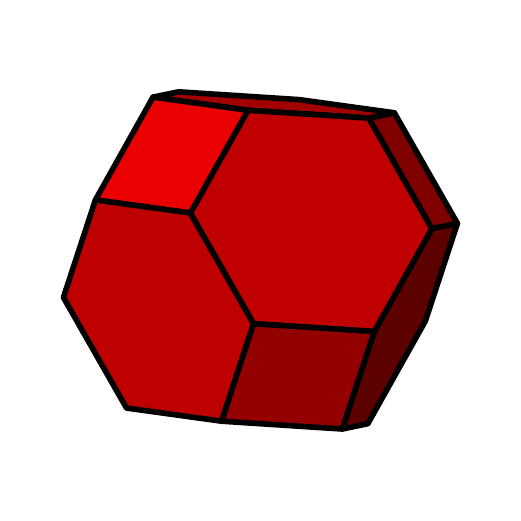}%
            \caption{$\Arr_3(6, 1) = A_3$}
        \end{subfigure}%
        \begin{subfigure}{\textwidth/6}
            \includegraphics[width=\textwidth]{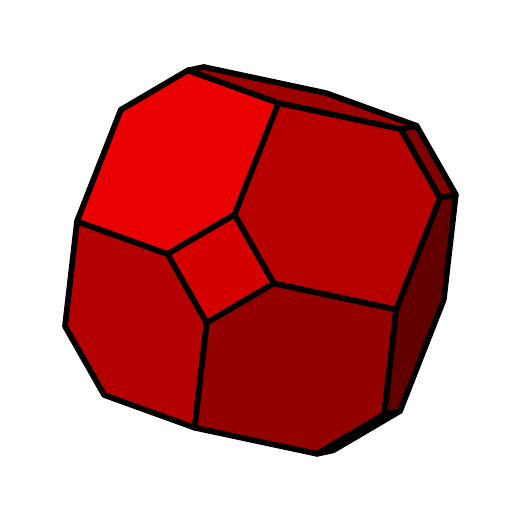}%
            \caption{$\Arr_3(7, 1)$}
        \end{subfigure}%
        \begin{subfigure}{\textwidth/6}
            \includegraphics[width=\textwidth]{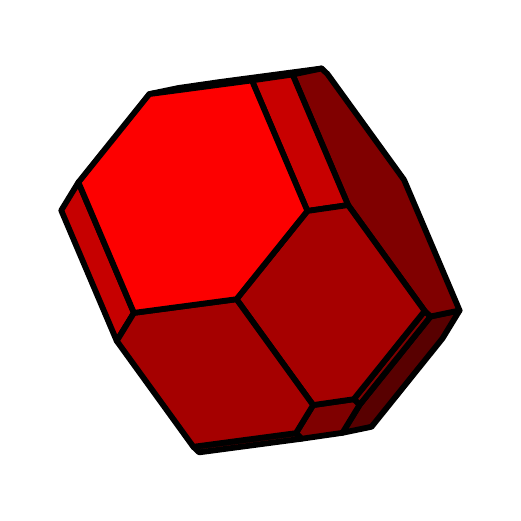}%
            \caption{$\Arr_3(8, 1)$}
        \end{subfigure}%
        \begin{subfigure}{\textwidth/6}
            \includegraphics[width=\textwidth]{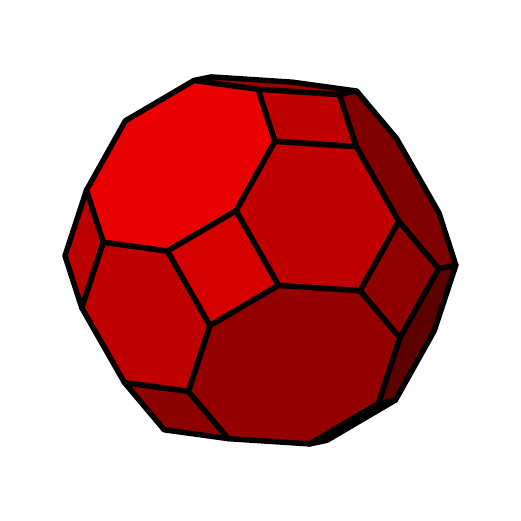}%
            \caption{$\Arr_3(9, 1) = B_3$}
        \end{subfigure}%
        \begin{subfigure}{\textwidth/6}
            \includegraphics[width=\textwidth]{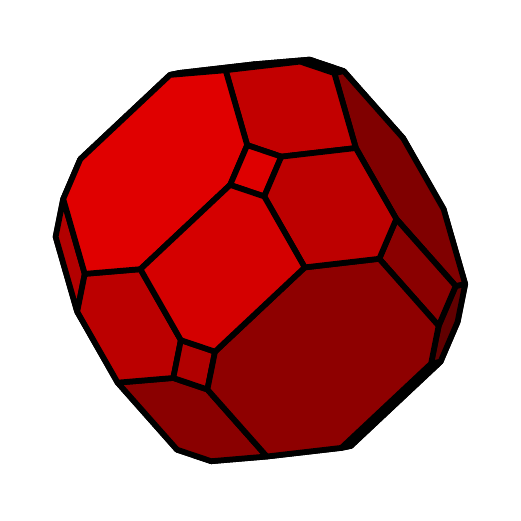}%
            \caption{$\Arr_3(10, 2)$}
        \end{subfigure}%
        \begin{subfigure}{\textwidth/6}
            \includegraphics[width=\textwidth]{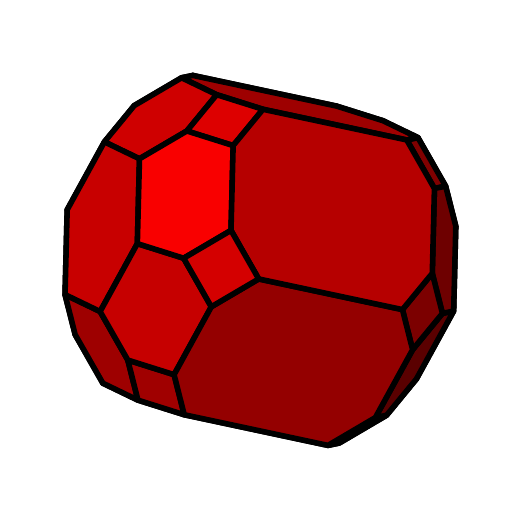}%
            \caption{$\Arr_3(10, 3)$}
        \end{subfigure}

        \begin{subfigure}{\textwidth/6}
            \includegraphics[width=\textwidth]{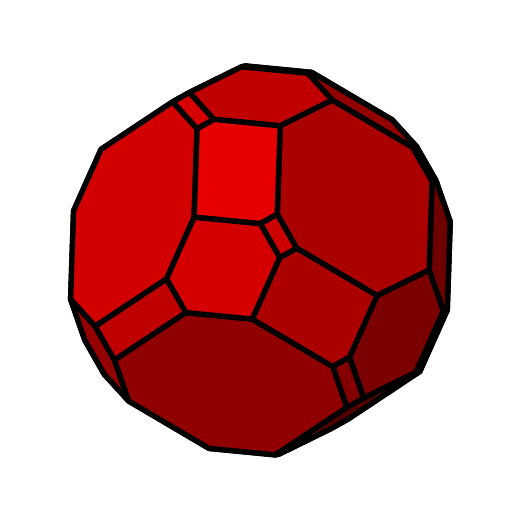}%
            \caption{$\Arr_3(11, 1)$}
        \end{subfigure}%
        \begin{subfigure}{\textwidth/6}
            \includegraphics[width=\textwidth]{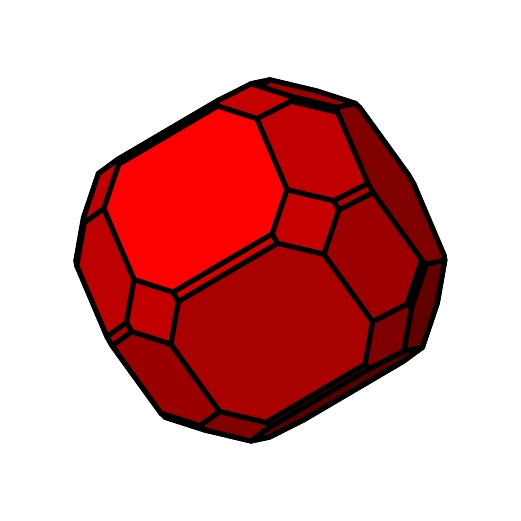}%
            \caption{$\Arr_3(13, 1)$}
        \end{subfigure}%
        \begin{subfigure}{\textwidth/6}
            \includegraphics[width=\textwidth]{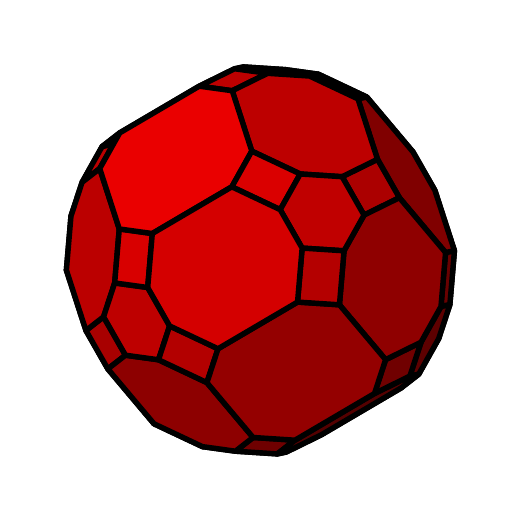}%
            \caption{$\Arr_3(13, 2)$}
        \end{subfigure}%
        \begin{subfigure}{\textwidth/6}
            \includegraphics[width=\textwidth]{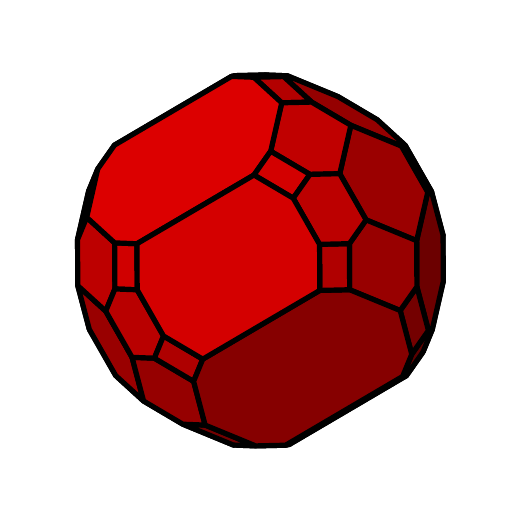}%
            \caption{$\Arr_3(13, 3)$}
        \end{subfigure}%
        \begin{subfigure}{\textwidth/6}
            \includegraphics[width=\textwidth]{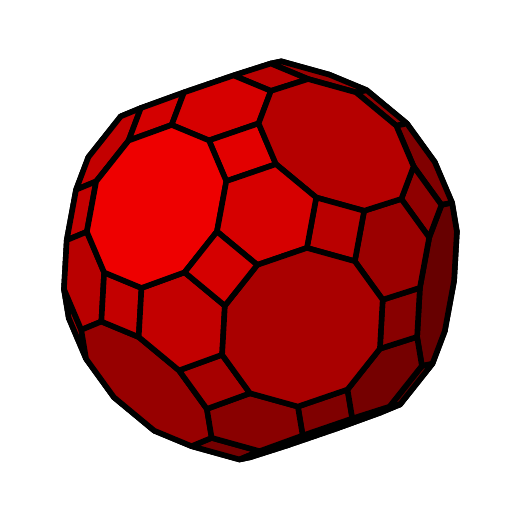}%
            \caption{$\Arr_3(15, 1) = H_3$}
        \end{subfigure}%
        \begin{subfigure}{\textwidth/6}
            \includegraphics[width=\textwidth]{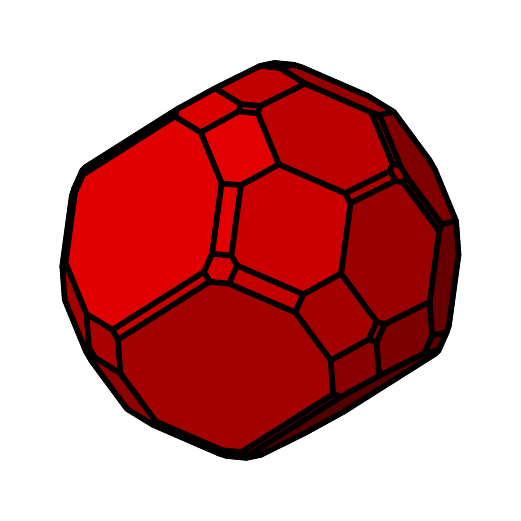}%
            \caption{$\Arr_3(16, 3)$}
        \end{subfigure}%

        \begin{subfigure}{\textwidth/6}
    \includegraphics[width=\textwidth]{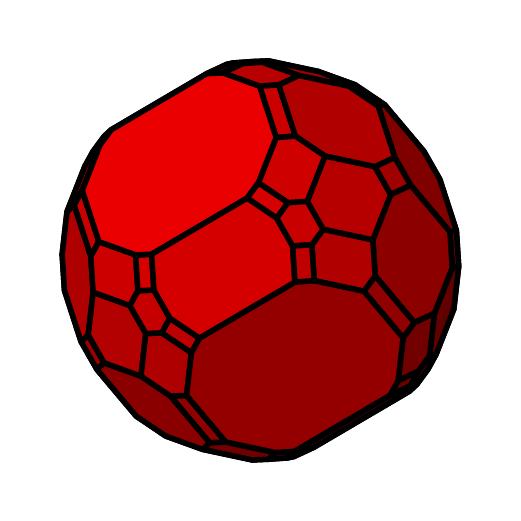}%
            \caption{$\Arr_3(17, 2)$}
        \end{subfigure}%
        \begin{subfigure}{\textwidth/6}
    \includegraphics[width=\textwidth]{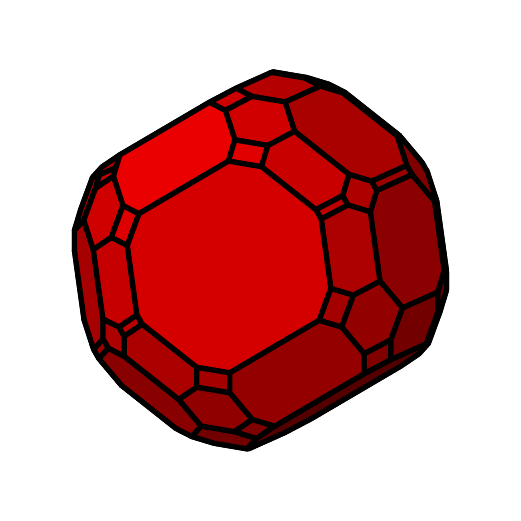}%
            \caption{$\Arr_3(17, 4)$}
        \end{subfigure}%
        \begin{subfigure}{\textwidth/6}
    \includegraphics[width=\textwidth]{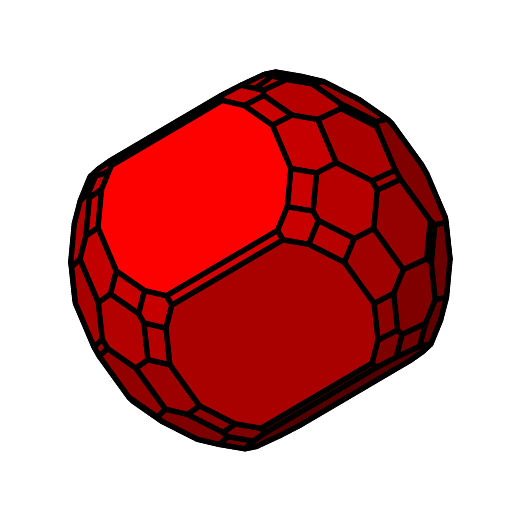}%
            \caption{$\Arr_3(19, 1)$}
        \end{subfigure}%
        \begin{subfigure}{\textwidth/6}
    \includegraphics[width=\textwidth]{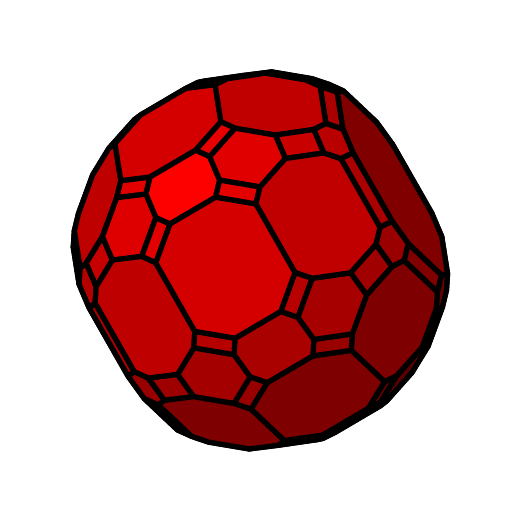}%
            \caption{$\Arr_3(19, 3)$}
        \end{subfigure}%
        \begin{subfigure}{\textwidth/6}
    \includegraphics[width=\textwidth]{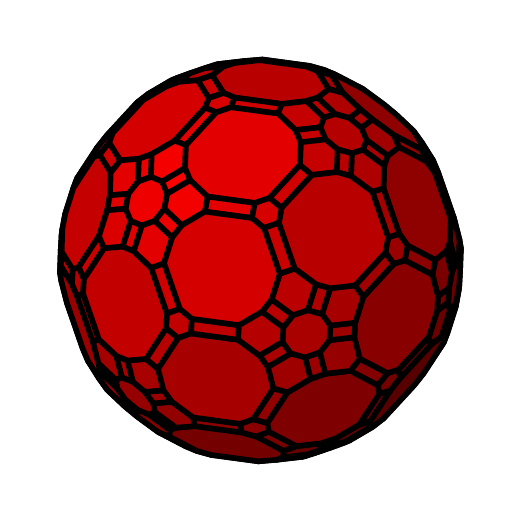}
            \caption{$\Arr_3(31, 1)$}
        \end{subfigure}%

        \caption{An inscribed zonotope for each of the combinatorial
          type of rank~$3$ which can be strongly inscribed.}
        \label{fig:inscribed_zonotopes}
    \end{center}
\end{figure}

\newcommand\PM{\{-1,+1\}}%
\newcommand\PMZ{\{-1,0,+1\}}%
\newcommand\TG{\mathcal{T}}%
\subsection{Representation of the strongly inscribed cone}\label{sec:repr}

In this section we give a simple representation of the strongly inscribed cone
and we prove a characterization of inscribed zonotopes that generalizes that
of Bolker. 

Let $\Arr = \{H_1,\dots,H_n\}$ be an essential arrangement of $n$ distinct
hyperplanes with $H_i = z_i^\perp$ for $i=1,\dots,n$. We assume that
$z_1,\dots,z_n$ are contained in an open halfspace. For every region $R$ of
$\Fan(\Arr)$ there is unique $\sigma = (\sigma_1,\dots,\sigma_n) \in \PM^n$
such that
\[R \ = \ \{ x \in \R^d : \inner{\sigma_iz_i,x} \ge 0 \text{ for } i=1,\dots,n\}\,.\]
Such sign vectors are called \Def{topes}. The tope graph
$\TG(\Arr)$ is the undirected simple graph with nodes given by topes and edges
$\sigma\sigma' \in E(\TG)$ if and only if the corresponding regions are
adjacent. Two topes $\sigma,\sigma'$ represent adjacent regions if and only if
the separation set $S(\sigma,\sigma') \defeq \{ j \in [n] : \sigma_j \neq
\sigma'_j\}$ of the corresponding topes contains a single index $j$. We can
identify edges by sign vectors $\tau \in \PMZ^n$ with a unique zero entry at
$j = j(\tau)$ and such that $\tau \pm e_j$ are topes. 

Let $\lambda = (\lambda_1, \lambda_2,\dots,\lambda_n) \in \R^n$ and let $Z_\lambda$ be the corresponding (virtual) zonotope, where
\[
    Z_\lambda \ \defeq \ \lambda_1 [-z_1,z_1] + \lambda_2 [-z_2,z_2] + \cdots +
    \lambda_n [-z_n,z_n]\,.
\]

Every vertex of $Z_\lambda$ is of the form
$v_\sigma \defeq \sigma_1\lambda_1z_1 + \cdots +\sigma_n\lambda_n z_n$
for a unique tope $\sigma$, and every edge
$e = [v_\sigma, v_{\sigma'}]$, represented by
$\tau = \frac{1}{2}(\sigma + \sigma')$, has center
$c(e) = \sum_i \tau_i \lambda_i z_i$. Let
$T(\Arr) \in \R^{E(\TG) \times n}$ be the matrix with entries
$T_{\tau, i} = \tau_i \inner{z_{j(\tau)},z_i}$ for $\tau \in E(\TG)$
and $i=1,\dots,n$.  Applying Lemma~\ref{lem:edge_midpoint_orthogonal}
to all edges of $Z_\lambda$ yields:
\begin{prop}\label{prop:Zmatrix}
    Let $\Arr = \{ z_1^\perp, \dots, z_n^\perp\}$ be a hyperplane arrangement
    in $\R^d$ and let $T = T(\Arr)$ as above. Then 
    \begin{align*}
        \ZInSpc(\Arr) \ &\cong \ \{\lambda \in \R^n : T\lambda = 0 \}
         \quad \text{and} \\
        \ZInCone(\Arr) \ &\cong \ \{\lambda \in \Rpp^n : T\lambda = 0 \} \, .
    \end{align*}
\end{prop}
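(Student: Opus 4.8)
The plan is to unwind the definitions so that the characterization of $\ZInSpc(\Arr)$ becomes a direct translation of Lemma~\ref{lem:edge_midpoint_orthogonal} into linear algebra. First I would recall that, up to translation, every virtual zonotope of $\Arr$ is $Z_\lambda$ for a unique $\lambda \in \R^n$, and that by Theorem~\ref{thm:inspc_eq_ker_R}-style reasoning (here via Lemma~\ref{lem:edge_midpoint_orthogonal}) membership in $\InSpc(\Fan(\Arr))$ is governed by the edge-midpoint orthogonality conditions. Concretely: assume $c(Z_\lambda) = \0$, so $\Centers_e = e^\perp$ and the lemma demands $\inner{\alpha_{RS}, c([v_R,v_S])} = 0$ for every adjacent pair of regions. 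The computation already recorded just before the statement shows $v_\sigma = \sum_i \sigma_i \lambda_i z_i$ and, for the edge indexed by $\tau = \tfrac12(\sigma+\sigma')$, that $c(e) = \sum_i \tau_i \lambda_i z_i$, while $\alpha_{RS} = z_{j(\tau)}$ because the separating hyperplane of two adjacent regions is $H_{j(\tau)}$. Hence the lemma's condition for this edge reads $\inner{z_{j(\tau)}, \sum_i \tau_i \lambda_i z_i} = \sum_i \tau_i \inner{z_{j(\tau)}, z_i}\lambda_i = (T\lambda)_\tau = 0$, which is exactly the $\tau$-th coordinate of $T\lambda$.

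Collecting these equations over all edges $\tau \in E(\TG)$ gives $\lambda \in \ker T$ if and only if $Z_\lambda$ (centered at the origin) lies in $\InSpc(\Fan(\Arr))$; since $Z_\lambda$ is automatically a (virtual) zonotope, this is the same as $Z_\lambda \in \ZInSpc(\Arr)$. I would note that inscribedness of $V(Z_\lambda)$ itself is not an extra constraint: $Z_\lambda$ is centrally symmetric with center $\0$, so all its vertices $\pm v_\sigma$ automatically lie on a common sphere, and the only content of Lemma~\ref{lem:edge_midpoint_orthogonal} is the midpoint-orthogonality condition. The map $\lambda \mapsto Z_\lambda$ is linear and, by uniqueness of the Minkowski representation modulo translation, injective onto $\ZInSpc(\Arr)$; restricting to $\lambda \in \Rpp^n$ gives genuine (non-virtual) zonotopes and the cone statement $\ZInCone(\Arr) \cong \{\lambda \in \Rpp^n : T\lambda = 0\}$ follows identically.

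The one point requiring a little care — and the only plausible obstacle — is checking that the origin is the right normalization, i.e.\ that it is harmless to assume $c(Z_\lambda) = \0$. The center of the inscribing sphere of a centrally symmetric polytope is its center of symmetry, so after translating $Z_\lambda$ to be symmetric about $\0$ we indeed have $c(Z_\lambda) = \0$ and $c(P-Q) = \0$ in the notation of the lemma; translation does not change the vertex \emph{differences} nor the edge vectors, so the matrix $T$ is translation-independent and the identification descends to the type space $\ZInSpc(\Arr)$, which is defined modulo translation. For virtual zonotopes one argues identically after writing $\lambda = \lambda^+ - \lambda^-$ and using linearity of $h \mapsto V(h)$; Lemma~\ref{lem:edge_midpoint_orthogonal} is stated precisely for elements $P - Q \in \TypeSpc(\Fan)$, so no separate argument is needed. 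I would close by remarking that this is the promised simple, fan-free description of $\ZInCone(\Arr)$: the defining linear system $T\lambda = 0$ has one equation per edge of the tope graph, and in Section~\ref{sec:repr} this will be reduced further to depend only on the rank-$2$ flats of the configuration $z_1,\dots,z_n$.
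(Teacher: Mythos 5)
Your overall route is exactly the paper's: apply Lemma~\ref{lem:edge_midpoint_orthogonal} to every edge of $Z_\lambda$, using the already-computed midpoints $c(e_\tau)=\sum_i\tau_i\lambda_i z_i$ and the fact that the wall between the two regions of an edge $\tau$ lies in $H_{j(\tau)}$, so that $\alpha_{RS}=z_{j(\tau)}$ and the orthogonality conditions become precisely the rows of $T\lambda=0$. The normalization discussion (translating so that the center of symmetry is $\0$, and noting $T$ is translation-independent) is also sound.

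The one genuine flaw is your claim that the inscribedness clause of Lemma~\ref{lem:edge_midpoint_orthogonal} is vacuous because ``$Z_\lambda$ is centrally symmetric with center $\0$, so all its vertices $\pm v_\sigma$ automatically lie on a common sphere.'' Central symmetry only pairs each vertex with its antipode; it does not force distinct vertices $v_\sigma$ and $v_{\sigma'}$ to have equal norm --- a non-rectangular parallelogram is a centrally symmetric zonogon that is not inscribed. Since the lemma characterizes membership in $\InSpc(\Fan)$ as \emph{inscribed and} midpoint-orthogonal, your backward implication ($T\lambda=0\Rightarrow Z_\lambda\in\ZInSpc(\Arr)$) has a hole as written. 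The repair is short and should be stated: for an edge $\tau$ with $j=j(\tau)$, the two endpoints are $v_\sigma=c(e_\tau)\pm\lambda_j z_j$, so $(T\lambda)_\tau=\inner{z_j,c(e_\tau)}=0$ gives $\|v_\sigma\|^2=\|c(e_\tau)\|^2+\lambda_j^2\|z_j\|^2$ for both endpoints; hence each equation forces the two endpoints of that edge to be equidistant from $\0$, and connectivity of the tope graph then puts all vertices on one sphere centered at $\0$. With that observation (in place of the central-symmetry appeal) your argument closes and coincides with the paper's.
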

Note that if $\dim \Arr = 2$, then up to a permutation of rows
$T(\Arr)$ and $\sR$ as defined in Section~\ref{sec:2d_algebraic} are
identical.

We further analyze the kernel of $T$. For fixed $j=1,\dots,n$, let $T^j$ the
matrix with rows given by those $\tau \in E(\TG)$ such that $j(\tau) = j$.
Define the diagonal matrix $D^j \in \R^{n \times n}$ with diagonal
$\inner{z_j,z_i}$ for $i=1,\dots,n$. Then $T\lambda = 0$ if and only if
$T^jD^j\lambda = 0$ for all $j=1,\dots,n$. The rows of $T^j$ are strongly
related to the topes of the restriction $\Arr_{H_j}$. Every hyperplane in
$\Arr_{H_j}$ is of the form $L = H_s \cap H_j$ for some $s \neq j$. If $H_s
\cap H_j = H_t \cap H_j$, then the corresponding columns $s$ and $t$ of $T^j$
are either identical or differ by $-1$, which is the case when $z_s$ and $z_t$
induce different orientations on $L$. Choose an orientation for every
hyperplane of $\Arr_{H_j}$ and for $s \neq j$ set $\rho^j_s = 1$ if $H_s \cap
H_j$ induces the chosen orientation and $\rho^j_s = -1$ otherwise.

We will need the following technical fact.
\begin{lem}\label{lem:topes}
    The collection of topes of an arrangement of $n$ distinct hyperplanes has
    full rank $n$.
\end{lem}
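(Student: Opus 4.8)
The plan is to show that the $\{-1,+1\}$-vectors indexing the topes (regions) of an essential arrangement $\Arr = \{z_1^\perp,\dots,z_n^\perp\}$ span all of $\R^n$. Equivalently, writing $M \in \R^{\#\text{topes}\times n}$ for the matrix whose rows are the topes, I want $\rk M = n$, i.e.\ no nonzero linear functional vanishes on every tope. The first and cleanest approach I would take is to reduce to the essentialization and then produce, for each coordinate $i$, a \emph{pair} of topes that differ in exactly the $i$-th sign. Such a pair exists precisely because $H_i = z_i^\perp$ is genuinely a hyperplane of the arrangement (the $z_i$ are distinct and nonzero): on a generic segment crossing $H_i$ transversally and avoiding all $H_s$ with $s\neq i$, the two endpoints lie in adjacent regions whose sign vectors $\sigma^{(i)},\sigma'^{(i)}$ satisfy $\sigma^{(i)} - \sigma'^{(i)} = \pm 2 e_i$. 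Then the differences $\tfrac12(\sigma^{(i)}-\sigma'^{(i)}) = \pm e_i$ for $i=1,\dots,n$ all lie in the linear span of the topes, so that span is all of $\R^n$.

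To make the ``generic segment'' step rigorous I would argue as follows: pick any point $p$ in the relative interior of a codimension-$1$ face contained in $H_i$ (such a face exists since $\Arr$ is an arrangement of $n$ distinct hyperplanes, so $H_i$ is cut into full-dimensional pieces by the other hyperplanes inside it, unless $n=1$, which is the trivial base case). Concretely, one can choose $p \in H_i$ with $\inner{z_s,p}\neq 0$ for all $s\neq i$; this is possible because $H_i \not\subseteq \bigcup_{s\neq i} H_s$, as the $z_s$ are pairwise linearly independent after essentialization — and the general case follows since adding the lineality space back only repeats coordinates without changing the rank. Moving a short distance $\pm\eps z_i/\|z_i\|^2$ (or along any direction with $\inner{z_i,\cdot}\neq 0$) off $p$ lands in two regions whose topes agree in every coordinate $s\neq i$ (sign of $\inner{z_s,\cdot}$ is locally constant near $p$) and disagree in coordinate $i$. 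This yields the required pair.

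The main obstacle — really the only subtlety — is handling the case where some $z_s$ with $s \neq i$ could be parallel to $z_i$ or where $\Arr$ is not essential, which would spoil the claim ``$H_i \not\subseteq \bigcup_{s\neq i} H_s$'' or make the separating pair ambiguous; but the hypothesis says the $n$ hyperplanes are \emph{distinct}, so no $H_s$ equals $H_i$, and passing to $\ess(\Arr)$ (which has the same tope vectors, coordinate-wise, since $\inner{z_i,x} = \inner{z_i,\pi_{\lineal^\perp}(x)}$) reduces to the essential case where the $z_i$ are pairwise non-proportional. An alternative, perhaps slicker, route would be an induction on $n$: deleting $H_n$ gives $\Arr' = \{z_1^\perp,\dots,z_{n-1}^\perp\}$ whose topes (projected to the first $n-1$ coordinates) have full rank $n-1$ by the inductive hypothesis, and then exhibiting a single pair of topes of $\Arr$ separated only in coordinate $n$ adds the missing dimension; the base case $n=1$ is immediate since the two topes $+1$ and $-1$ span $\R$. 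I would present the direct ``separating pair for each $i$'' argument as the main proof, as it avoids bookkeeping about how topes of $\Arr$ restrict to topes of $\Arr'$.
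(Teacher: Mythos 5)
Your proof is correct. The underlying linear algebra is the same as in the paper --- both arguments reduce the claim to exhibiting topes whose pairwise differences produce every standard basis vector $e_i$ --- but the way you manufacture those topes is genuinely different. The paper takes a single shortest path in the tope graph from a region to its antipodal region; such a geodesic crosses each hyperplane exactly once, so after relabelling the visited topes form a triangular $\{-1,+1\}$-matrix of rank $n$. This is slick but leans on the (there unproved) standard fact that graph distance between topes equals the cardinality of their separation set. You instead handle each hyperplane separately: since the $H_s$ are distinct, $H_i$ is not covered by the proper subspaces $H_i \cap H_s$, $s \neq i$, so there is a point $p \in H_i$ with $\inner{z_s,p} \neq 0$ for all $s \neq i$, and for small $\eps$ the points $p \pm \eps z_i$ lie in adjacent regions whose topes differ exactly in coordinate $i$; summing up, the span of the topes contains all $e_i$. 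This is more elementary and self-contained (no tope-graph combinatorics), at the cost of producing $2n$ topes rather than one nested chain. Your reduction to the essential case is harmless but not actually needed: distinctness of linear hyperplanes already forces the normals to be pairwise non-proportional, which is all the covering argument uses.
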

\begin{proof}
    By replacing some $z_i$ by $-z_i$ if necessary, we may assume that $\sigma
    = (+1,\dots,+1)$ is a tope of $\Arr$. Pick a shortest path in the tope
    graph of $\Arr_{H_j}$ from $\sigma$ to $-\sigma$. Up to relabelling
    hyperplanes if necessary, the
    sequence of topes visited along the path is $(+1,\dots,+1),
    (+1,\dots,+1,-1)$, $ (+1,\dots,+1,-1,-1), \dots, (-1,\dots,-1)$, which is a
    collection of vectors of full rank $n$.
\end{proof}

Lemma~\ref{lem:topes} implies $T^jD^j \lambda = 0$ if and only if for every $L
\in \Arr_{H_j}$
\begin{equation}\label{eqn:T^j_kernel}
    \sum_{s \neq j,L \subseteq H_s} \rho^j_s \inner{z_j,z_s} \lambda_s \ = \ 0
    \, .
\end{equation}

Now $L \in \Arr_{H_j}$ is a codimension-$2$-flat of $\Arr$. We identify $L$
with the ordered sequence $L = (z_{i_1},\dots,z_{i_k})$ such that $L \subset H_s$
if and only if $s \in \{i_1,\dots,i_k\}$ and such that the vector
configuration $z_{i_1}, z_{i_2}, \dots, z_{i_k}, -z_{i_1}, -z_{i_2}, \dots,
-z_{i_k}$ is cyclically ordered in the $2$-dimensional subspace $L^\perp$. We
refer to this tuple as an \Def{ordered} codimension-$2$ flat.
We may now choose $\rho_{i_s}^{i_j} = -1$ if $i_s < i_j$ and $=+1$ otherwise.
Lastly, let $\sR_L \defeq \sR(z_{i_1}, z_{i_2}, \dots, z_{i_k})$ be the
skew-Gram matrix and define $\lambda_L = (\lambda_{i_1},\dots,\lambda_{i_k})$.
Then combining conditions~\eqref{eqn:T^j_kernel} for all $j=1,\dots,k$ then
yields the following representation.

\begin{thm}\label{thm:compute_by_2flats}
    Let $\Arr = \{ H_i = z_i^\perp : i=1,\dots,n\}$ be an arrangement in
    $\R^d$. Then
    \begin{align*}
      \ZInSpc(\Arr) \ &\cong \ \{\lambda \in \R^n : \sR_L \lambda_L = 0, L
        \in \flats_{d-2}(\Arr)\} \quad \text{and} \\
      \ZInCone(\Arr) \ &\cong \ \{\lambda \in \Rpp^n : \sR_L \lambda_L = 0, L
        \in \flats_{d-2}(\Arr)\}\,.
    \end{align*}
\end{thm}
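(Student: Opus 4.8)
The plan is to read off $\ZInSpc(\Arr)$ from the kernel of the matrix $T = T(\Arr)$ via Proposition~\ref{prop:Zmatrix}, and then to reorganize the description of $\ker T$ obtained above — which is phrased in terms of the hyperplanes of the restrictions $\Arr_{H_j}$ — into one phrased directly in terms of the codimension-$2$ flats of $\Arr$. Since Proposition~\ref{prop:Zmatrix} already gives $\ZInCone(\Arr) \cong \{\lambda \in \Rpp^n : T\lambda = 0\}$, the statement about cones will follow from that about vector spaces by intersecting with $\Rpp^n$; so it is enough to prove
\[
    \ker T \ = \ \{\lambda \in \R^n : \sR_L\lambda_L = 0 \text{ for all } L \in \flats_{d-2}(\Arr)\}.
\]
As throughout this section $\Arr$ is essential, so $d = \rk(\Arr)$; if $d \le 1$ there are no codimension-$2$ flats and both sides equal $\R^n$, so I may assume $d \ge 2$.

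First I would split $T\lambda = 0$ according to the separating index, so that $T\lambda = 0$ iff $T^jD^j\lambda = 0$ for every $j = 1, \dots, n$. By Lemma~\ref{lem:topes}, collapsing the columns of $T^j$ that belong to one hyperplane of $\Arr_{H_j}$ (using the signs $\rho^j$) yields the tope matrix of $\Arr_{H_j}$, which has full column rank; hence $T^jD^j\lambda = 0$ is equivalent to equation~\eqref{eqn:T^j_kernel} holding for each hyperplane $L' \in \Arr_{H_j}$. Therefore $T\lambda = 0$ is equivalent to the system made up of one instance of~\eqref{eqn:T^j_kernel} for each pair $(j, L')$ with $1 \le j \le n$ and $L' \in \Arr_{H_j}$.

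It then remains to reindex. The hyperplanes of $\Arr_{H_j}$ are exactly the codimension-$2$ flats $L$ of $\Arr$ with $L \subseteq H_j$, and for an ordered flat $L = (z_{i_1}, \dots, z_{i_k}) \in \flats_{d-2}(\Arr)$ the indices $j$ with $L \subseteq H_j$ are precisely $i_1, \dots, i_k$; so the pairs $(j, L')$ biject with the pairs $(L, a)$ where $L \in \flats_{d-2}(\Arr)$ and $1 \le a \le k$, the equation attached to $(i_a, L)$ being $\sum_{b \ne a} \rho^{i_a}_{i_b}\inner{z_{i_a}, z_{i_b}}\lambda_{i_b} = 0$. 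Choosing the signs $\rho^{i_a}_{i_b}$ compatibly with the cyclic order on $L^\perp$ used to define $\sR_L = \sR(z_{i_1}, \dots, z_{i_k})$, this is exactly the $a$-th coordinate of $\sR_L\lambda_L$. Hence the $k$ equations coming from $L$ are the rows of $\sR_L\lambda_L = 0$, and running over all $L$ recovers the whole system. This proves the displayed identity, and with it the theorem.

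I expect the only genuinely fiddly step to be the sign bookkeeping just mentioned: one must check that the orientation signs $\rho^j$ — which arise from an arbitrary choice of orientation of each hyperplane of $\Arr_{H_j}$, and so alter each block of equations only by a global sign and not its solution set — can be taken consistently with the cyclic orders defining the matrices $\sR_L$, so that the collected equations literally form the skew-symmetric systems $\sR_L\lambda_L = 0$. Everything else is a direct assembly of Proposition~\ref{prop:Zmatrix}, the splitting $T\lambda = 0 \Leftrightarrow \{T^jD^j\lambda = 0\}_j$, and Lemma~\ref{lem:topes}. As a consistency check, when $d = 2$ there is a single codimension-$2$ flat containing all $n$ lines, $\sR_L = \sR(z_1, \dots, z_n)$, and the theorem specializes to Theorem~\ref{thm:inspc_eq_ker_R}.
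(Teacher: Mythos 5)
Your proposal is correct and follows essentially the same route as the paper: Proposition~\ref{prop:Zmatrix}, the block decomposition $T\lambda=0\Leftrightarrow T^jD^j\lambda=0$ for all $j$, Lemma~\ref{lem:topes} to collapse each block to the equations~\eqref{eqn:T^j_kernel} indexed by hyperplanes of $\Arr_{H_j}$, and the reindexing of these by ordered codimension-$2$ flats into the systems $\sR_L\lambda_L=0$. The sign issue you flag is handled in the paper exactly as you suggest, by choosing $\rho^{i_a}_{i_b}$ according to the position in the cyclic order on $L^\perp$, and your observation that each equation is in any case only changed by a global sign disposes of it.
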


There is nice interpretation of the conditions of
Theorem~\ref{thm:compute_by_2flats}: If $F \subset Z_\lambda$ is a $2$-face
with $\aff_0(F)^\perp = L$, then $F$ is a translate of $
\lambda_{i_1}[-z_{i_1},z_{i_1}] + \lambda_{i_2}[-z_{i_2},z_{i_2}] + \cdots +
\lambda_{i_k}[-z_{i_k},z_{i_k}]$. The{\nobreak}orem~\ref{thm:inspc_eq_ker_R} yields that
$Z_\lambda$ is inscribed if and only if all $2$-faces of $Z_\lambda$ are
inscribed. Combining this with Theorem~\ref{thm:charac_zono}, this gives an
extension of Bolker's characterization of zonotopes.
\begin{thm}\label{thm:bolker_inscr}
    A (virtual) polytope $P$ is an inscribed (virtual) zonotope if and only if
    all $2$-dimensional faces are inscribed and centrally-symmetric.
\end{thm}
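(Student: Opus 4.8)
The plan is to reduce the statement to the local characterization of zonotopes (Theorem~\ref{thm:charac_zono}) together with the $2$-dimensional inscribability criterion (Theorem~\ref{thm:inspc_eq_ker_R}), using the fact that inscribability of a virtual zonotope is a face-by-face condition on its $2$-faces. One direction is essentially immediate: if $P$ is an inscribed (virtual) zonotope, then by Theorem~\ref{thm:charac_zono} every $2$-face of $P$ is centrally symmetric, and since every face of an inscribed polytope is inscribed — and, for virtual polytopes, this is encoded by Lemma~\ref{lem:edge_midpoint_orthogonal} passing to the face $P^c - Q^c$ — each $2$-face is an inscribed zonogon. So I would first dispatch this direction in a sentence or two.

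For the converse, suppose every $2$-dimensional face of $P$ is inscribed and centrally symmetric. Central symmetry of all $2$-faces already gives, via Theorem~\ref{thm:charac_zono}, that $P$ is a (virtual) zonotope; so write $P = Z_\lambda$ for some $\lambda \in \R^n$ with respect to the arrangement $\Arr = \Arr(P) = \{z_1^\perp,\dots,z_n^\perp\}$. It remains to show $Z_\lambda$ is inscribed, i.e.\ that the PL function $\ell$ representing $Z_\lambda$ has vertex set on a common sphere and lies in $\InSpc(\Arr)$. By Lemma~\ref{lem:edge_midpoint_orthogonal} this is exactly the condition that $\inner{\alpha_{RS},\,c([v_R,v_S]) - c(P)} = 0$ for all adjacent regions $R,S$, where $\alpha_{RS} = z_{j}$ for the separating hyperplane $H_j$. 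The key observation is that this edge condition is precisely the system $T(\Arr)\lambda = 0$ from Proposition~\ref{prop:Zmatrix}, which Theorem~\ref{thm:compute_by_2flats} re-expresses as $\sR_L \lambda_L = 0$ for every codimension-$2$ flat $L$. By Theorem~\ref{thm:inspc_eq_ker_R}, the condition $\sR_L\lambda_L = 0$ on a single flat $L$ holds if and only if the corresponding rank-$2$ virtual zonotope $Z_{\lambda_L} = \sum_{s : L \subseteq H_s} \lambda_s[-z_s,z_s]$ lies in $\ZInSpc(\Arr_{L})$, i.e.\ is an inscribed virtual zonogon. But $Z_{\lambda_L}$ is, up to translation, exactly the $2$-face $F$ of $Z_\lambda$ with $\aff_0(F)^\perp = L$, as recorded in the paragraph preceding the theorem. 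Thus the hypothesis — every $2$-face inscribed — gives $\sR_L\lambda_L = 0$ for all $L \in \flats_{d-2}(\Arr)$, hence $T\lambda = 0$, hence $Z_\lambda \in \ZInSpc(\Arr)$ is an inscribed virtual zonotope.

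The main obstacle, and the step requiring the most care, is the bookkeeping identifying the abstract edge-orthogonality condition of Lemma~\ref{lem:edge_midpoint_orthogonal} with the per-$2$-face condition of Theorem~\ref{thm:inspc_eq_ker_R}: one must check that an edge $e = [v_\sigma, v_{\sigma'}]$ of $Z_\lambda$ separated by $H_j$ lies in a $2$-face whose normal flat is some $L \ni H_j$, that $c(e)$ projected appropriately records exactly an edge-midpoint of that $2$-face, and that the orientation signs $\rho^j_s$ and the cyclic ordering chosen to define $\sR_L$ match up so that "$\inner{z_j, c(e)} = 0$ for all $j$-edges in the belt of $L$" is literally "$\sR_L \lambda_L = 0$". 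Most of this is assembled in Section~\ref{sec:repr}, so in the write-up I would lean on Theorem~\ref{thm:compute_by_2flats} as a black box and only need to articulate the geometric dictionary "$2$-face with normal flat $L$ $\leftrightarrow$ rank-$2$ virtual zonogon $Z_{\lambda_L}$", together with the observation that inscribability is equivalent face-by-face. A secondary point to state carefully is that for \emph{virtual} polytopes "inscribed" must be interpreted as membership in $\InSpc$, not merely as having a cospherical vertex set (recall the discussion after Lemma~\ref{lem:edge_midpoint_orthogonal}); the argument above produces membership in $\ZInSpc(\Arr) \subseteq \InSpc(\Arr)$ directly, so this subtlety is handled automatically.
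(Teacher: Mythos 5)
Your proposal is correct and follows essentially the same route as the paper: the paper derives Theorem~\ref{thm:bolker_inscr} in the paragraph preceding its statement by combining Theorem~\ref{thm:charac_zono} (central symmetry of $2$-faces $\Rightarrow$ zonotope) with the identification of the $2$-face over a codimension-$2$ flat $L$ with the zonogon $Z_{\lambda_L}$, so that Theorem~\ref{thm:inspc_eq_ker_R} and Theorem~\ref{thm:compute_by_2flats} turn ``all $2$-faces inscribed'' into $\sR_L\lambda_L=0$ for all $L$, i.e.\ $Z_\lambda\in\ZInSpc(\Arr)$. Your additional remarks on the orientation bookkeeping and on the meaning of ``inscribed'' for virtual faces are sound but not points where the paper does anything different.
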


Using description of inscribed cones for general fans
in~\cite[Theorem~5.20]{InFan1} with the additional constraint
  that edges in the same belt have the same length gives a different
  path to Theorem~\ref{thm:compute_by_2flats}.  Since $\Fan(\Arr)$ has
  typically many edges, this approach is much more involved. A
  byproduct to our approach is a simple way to decide if an
  arrangement is virtually inscribable.

\begin{cor}\label{cor:pfaffian}
    Let $\Arr$ be an arrangement in $\R^d$. We have
    $\InSpc(\Arr) \cong \R^d$ if and only if $\pfaff \sR_L = 0$ for all
    $L \in \flats_{d-2}(\Arr)$.
\end{cor}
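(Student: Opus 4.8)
The plan is to unfold the characterization of $\InSpc(\Arr)$ in Lemma~\ref{lem:edge_midpoint_orthogonal} into a parallel-transport statement around the tope graph, and then reduce, one codimension-$2$ flat at a time, to the rank-$2$ situation controlled by Corollary~\ref{cor:pfaff}. Fix a base region $R_0$ of $\Fan(\Arr)$. If $\ell\in\InSpc(\Arr)$ has vertices $\{v_R\}_R$ and center $c=c(\ell)$, then across a wall with normal $z_j$ separating adjacent regions $R,S$ the midpoint condition of Lemma~\ref{lem:edge_midpoint_orthogonal} reads $\inner{z_j,(v_R-c)+(v_S-c)}=0$; since $\ell$ is piecewise linear one also has $v_S-v_R\in\R z_j$, and these two relations together are equivalent to the single identity $v_S-c=s_{z_j}(v_R-c)$. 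In particular $\|v_R-c\|$ is then constant along walls, so inscribedness is automatic. Transporting along a path from $R_0$ to an arbitrary region $R$ gives $v_R-c=g_R(v_{R_0}-c)$, where $g_R$ is the composition of the reflections $s_{z_j}$ read off along the path. Hence $\ell$ is determined, modulo translations, by $u\defeq v_{R_0}-c\in\R^d$, and the only requirement on $u$ is that $g_R$ be independent of the chosen path.

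Next I would use that $\Fan(\Arr)$ is a complete fan, so that the cycle space of its tope graph is generated by the loops bounding codimension-$2$ faces (the ``simple connectivity in codimension $2$'' that is also implicit in the proof of Theorem~\ref{thm:full_fans}, and standard for complete fans). A codimension-$2$ face spans a flat $L\in\flats_{d-2}(\Arr)$, and the loop around it passes through the cycle of $2k$ regions of the rank-$2$ restriction $\ess(\Arr^L)$, $k=|\Arr^L|$, crossing each hyperplane through $L$ twice; its holonomy is $\mathrm{hol}_L\defeq\bigl(\prod_{H\supseteq L}s_{z_H}\bigr)^2$, which fixes $L$ pointwise and, being a square of an orthogonal transformation, acts on the plane $L^\perp$ as a rotation. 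Path-independence of the $g_R$ amounts to $\mathrm{hol}_L=\id$ for all $L\in\flats_{d-2}(\Arr)$, and conversely every $u$ fixed by all $\mathrm{hol}_L$ is realized by some $\ell$. This yields $\InSpc(\Arr)\cong\{u\in\R^d:\mathrm{hol}_L(u)=u\ \text{for all}\ L\in\flats_{d-2}(\Arr)\}$, which is all of $\R^d$ exactly when every $\mathrm{hol}_L$ is trivial.

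It remains to match $\mathrm{hol}_L=\id$ with $\pfaff\sR_L=0$. Running the construction above inside the plane $L^\perp$ for the rank-$2$ arrangement $\ess(\Arr^L)$ (whose unique codimension-$2$ flat carries exactly this holonomy) identifies $\InSpc(\ess(\Arr^L))$ with the fixed space of the rotation $\mathrm{hol}_L|_{L^\perp}$; by Theorems~\ref{thm:2d_strongly_insc} and~\ref{thm:inspc_eq_ker_R} this space is nonzero precisely when $\ker\sR_L\neq\{0\}$, and being the fixed space of a plane rotation it is then forced to be all of $L^\perp$, i.e.\ $\mathrm{hol}_L=\id$. By Corollary~\ref{cor:pfaff}---reading the case of odd $|\Arr^L|$, where $\sR_L$ is automatically singular, as $\pfaff\sR_L=0$---this is equivalent to $\pfaff\sR_L=0$, which finishes the proof. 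The step I expect to be the main obstacle, and the one needing the most care, is the codimension-$2$ reduction: establishing that an element of $\InSpc(\Arr)$ is reconstructed without redundancy from $v_{R_0}$ and a single point $c$, that the only obstruction is the holonomy around codimension-$2$ cones, and that each such cone genuinely spans a codimension-$2$ flat of $\Arr$ (which one may arrange by passing to the essentialization).
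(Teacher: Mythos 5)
Your argument is correct, but it takes a genuinely different (and more self-contained) route than the paper. The paper's proof is essentially a citation: it invokes Corollary~4.14 of~\cite{InFan1} to reduce virtual inscribability of $\Arr$ to that of the rank-$2$ localizations at codimension-$2$ flats, and then applies Theorem~\ref{thm:2d_strongly_insc} and Theorem~\ref{thm:inspc_eq_ker_R} exactly as you do in your endgame. What you do differently is to \emph{prove} that reduction from scratch: you unwind Lemma~\ref{lem:edge_midpoint_orthogonal} into the reflection relation $v_S-c=s_{z_j}(v_R-c)$ across each wall, observe that an element of $\InSpc(\Arr)$ is therefore a flat section of a reflection-valued connection on the tope graph, and reduce path-independence to triviality of the holonomy around codimension-$2$ cones, which you then identify with nonvanishing of $\InSpc$ of the rank-$2$ localization because a plane rotation has fixed space $\{0\}$ or everything. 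This buys a transparent, self-contained proof (and in effect a proof of the cited [InFan1, Cor.~4.14] for arrangement fans), at the cost of having to justify that the cycle space of the tope graph is generated by the boundaries of $2$-faces of the zonotope --- which is standard, and which you correctly flag as the delicate step along with the identification of codimension-$2$ cones with codimension-$2$ flats. One small imprecision: your displayed description $\InSpc(\Arr)\cong\{u:\operatorname{hol}_L(u)=u \text{ for all } L\}$ is not quite right in general, since a general loop is a product of \emph{conjugates} of the local loops, so the correct condition is that $\operatorname{hol}_L$ fix the parallel transport of $u$ to a region adjacent to $L$ for every connecting path; this does not affect the corollary, because $\InSpc(\Arr)\cong\R^d$ forces every holonomy transformation to be the identity and conversely, but the intermediate formula should be stated with more care if you want it for its own sake.
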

\begin{proof}
    It follows from Corollary~4.14 in~\cite{InFan1} that $\Arr$ is
    virtually inscribable if and only if the localizations $\Arr_L$
    are inscribable for all $L \in \flats_{d-2}(\Arr)$. Thus, the
    result follows from Theorem~\ref{thm:2d_strongly_insc} together
    with Theorem~\ref{thm:inspc_eq_ker_R}.
\end{proof}

\section{Simplicial arrangements}\label{sec:simplicial}

A hyperplane arrangement is \Def{simplicial}, if all of its regions are
simplicial cones, that is, spanned by linearly independent vectors. It is
well-known~\cite[Sect.~1.15]{Humphreys} that reflection arrangements are
simplicial. Restrictions and localizations preserve simpliciality. Therefore,
all examples of inscribed arrangements given so far are simplicial. This is
true for general inscribed arrangements. We use this to prove the extension of
Bolker's characterization to inscribed belt polytopes and zonotopes
(Theorem~\ref{thm:bolker_inscr}).

Simplicial arrangements are fascinating but rare.  They arise, for example, in
the study of reflection arrangements (see the comments in
Section~\ref{sec:rest_simp}) as well as in the classification of simply
connected Cartan schemes~\cite{CuntzHeckenberger, HeckenbergerWelker} by way
of \emph{crystallographic} arrangements~\cite{Cuntz-crystallographic}.  Via
localization and restriction, this is a rich source of simplicial arrangements
of any rank.  Gr\"unbaum~\cite{Grunbaum} studied simplicial arrangements of
rank $3$ (as line arrangements in the projective plane) and gave a list of two
infinite families and $90$ sporadic examples. Five further arrangements were
discovered by Cuntz~\cite{Cuntz_27lines, Cuntz_Greedy} by computer search. It
is believed that the two infinite families together with the $95$ sporadic
examples give a complete list of the combinatorial types of simplicial
arrangements in rank $3$. Apart from restrictions of reflection arrangements we
are aware of only two further simplicial arrangements~\cite{Geis_RP3}, both in
rank $4$.  

We adapt the naming scheme of Gr\"unbaum~\cite{Grunbaum_Simplicial} and denote
simplicial arrangements by $\Arr_r(n, k)$, where $k$ is the position in the
list of simplicial arrangements in rank $r$ with $n$ hyperplanes. For example,
the two infinite families in rank $3$ will be denoted by $\Arr_3(2n, 1)$ and
$\Arr_3(4m + 1, 1)$. 

We confirm that all known simplicial
arrangements in rank $3$ are projectively unique. Assuming that the
Gr\"unbaum--Cuntz catalog is complete, we show in
Section~\ref{sec:projectively_unique} that simplicial arrangements of all
ranks $\ge3$ are projectively unique and that there are only finitely many
irreducible strongly inscribed arrangements of every rank $\ge 3$.

\subsection{Inscribable arrangements are simplicial}
\label{sec:simplicial_arrangements}

In this section we prove Theorem~\ref{thm:insc_arr_are_simplicial}.  Recall
that a $d$-polytope $P$ is \Def{simple} if every vertex is incident to $d$
edges.  We will need the following result on polyhedral cones.

\begin{lem}\label{lem:obtuse_vectors}
    Let $a_1, \dots, a_d \in \R^d$ be linearly independent vectors
    such that $\inner{a_i, a_j} \leq 0$ for all $1 \leq i < j \leq d$.
    Let $b_1, \dots, b_d$ be such that
    \[
        \cone(b_1, \dots, b_d) \ = \ \{ x \in \R^d : \inner{a_i, x}
        \leq 0 \text{ for $1 \leq i \leq d$}\}\,.
    \]
    Then $\inner{b_i, b_j} \geq 0$ for all $1 \leq i < j \leq d$.
\end{lem}
\begin{proof}
    We prove the claim by induction on $d$. The case $d \le 2$ is
    clear.  We may assume that $\inner{a_i,a_i} = 1$ for all $i$.  Let
    $C$ be the stated simplicial cone.  Consider the facet
    $C' \defeq \{ x \in C : \inner{a_d,x} = 0\} =
    \cone(b_1,\dots,b_{d-1})$. This is a $(d-1)$-dimensional cone in
    the hyperplane $H = a_d^\perp$ given by
    \[
        C' \ = \ \{ x \in H : \inner{\bar a_i, x} \le 0 \text{ for }
        i=1,\dots,d-1 \} \, ,
    \]
    where $\bar a_i = a_i - \inner{a_{d},a_i}a_d$ is the orthogonal
    projection of $a_i$ onto $H$. One checks that
    $\inner{ \bar a_i, \bar a_j } \le 0$ for all $1 \le i < j < d$ and
    thus, by induction, $\inner{b_i,b_j} \ge 0$ for all
    $1 \le i < j < d$.
    Let $\bar b_d = b_d - \inner{a_{d},b_d}a_d$ be the orthogonal projection of
    $b_d$ onto $H$. Note that for all $i < d$ we have $\inner{a_i,\bar b_d} =
    \inner{a_i,b_d} - \inner{a_{d},b_d} \inner{a_{i},a_d} \leq 0$ and hence
    $\bar b \in C'$.  Thus, there are $\mu_1,\dots,\mu_{d-1} \ge 0$ such that
    $b_d = \mu_1 b_1 + \cdots + \mu_{d-1} b_{d-1} +
    \inner{a_{d},b_d}a_d$. For $j < d$, it follows that
    $\inner{b_j,b_d} = \sum_{i = 1}^{d-1} \mu_{i} \inner{b_j, b_i} \ge
    0$, which finishes the proof.
\end{proof}

A \Defn{ridge} $F$ of $P$ is a face of codimension $2$. There are
precisely two facets $G, G'$ of $P$ containing $F$. The \Defn{dihedral
  angle} of $F$ is the angle between $G$ and $G'$. If $b$ and $b'$ are
outer facet normals of $G$ and $G'$ respectively, then the dihedral
angle is $\pi - \measuredangle(b, \0, b')$. In particular the dihedral
angle is non-acute, if $\inner{b, b'} \geq 0$. We call a polytope
\Def{non-acute}, if the dihedral angles at all ridges are non-acute.

\begin{proof}[Proof of Theorem~\ref{thm:insc_arr_are_simplicial}]
    We proceed by induction on the dimension $d$. The induction step
    is divided into two parts. First, assuming that every facet is
    non-acute, we show that $P$ is simple. In the second step we then
    show, that if the inscribed belt polytope $P$ is simple, then it
    is non-acute. The dihedral angles of a polygon coincide with its
    interior angles, so Corollary~\ref{cor:belt_polygons_obtuse}
    proves the base case of the induction.

    For the first step, let $P$ be a $d$-dimensional inscribed belt
    polytope and let $F$ be a $(d-3)$\nobreakdash-face of $P$. The link of $F$ in
    the face lattice is a cycle. Let $G$ be a facet of $P$ containing
    $F$. Then $F$ is a ridge of $G$ and the dihedral angle of $F$ in
    $G$ is non-acute by the induction hypothesis. Since the
    sum of all dihedral angles has to be strictly smaller than $2\pi$,
    there are precisely three facets in $P$ containing $F$, which is
    equivalent to $P$ being simple.

    Thus far, we can assume that $P$ is a simple and inscribed belt
    polytope. To show that the dihedral angles of $P$ are non-acute,
    let $v \in V(P)$ be a vertex, and let
    $a_1, \dots, a_d \in \R^d$ be the $d$ edge directions emanating
    from $v$. By Corollary~\ref{cor:belt_polygons_obtuse}, we have
    $\inner{a_i, a_j} \leq 0$ for all $1 \leq i < j \leq d$. Let
    $G_i$, $1 \leq i \leq d$, be the facets of $P$ containing $F$ and
    let $b_i$ be their outer normal vectors. Then the dihedral angle of
    $G_i \cap G_j$ is non-acute if and only if
    $\inner{b_i, b_j} \leq 0$, which is the conclusion of
    Lemma~\ref{lem:obtuse_vectors}.
\end{proof}

\begin{rem}
    The proof of Theorem~\ref{thm:insc_arr_are_simplicial} is
    considerably simpler for \emph{strongly} inscribed arrangements. Let $Z$
    be an inscribed zonotope and $F$ a ridge of $Z$. By
    Theorem~\ref{thm:restr_insc}, the orthogonal projection of $Z$
    along $F$ is an (non-acute) inscribed zonogon, showing that the
    dihedral angle of $F$ is non-acute. This removes the need for
    Lemma~\ref{lem:obtuse_vectors}.
\end{rem}

The proof does not go through for virtually inscribed zonotopes.

\begin{quest}
    Is there a non-simplicial arrangement $\Arr$ that is (strongly) virtually
    inscribable?
\end{quest}

The simplicity of inscribed zonotopes is the key to the following
refinement of Theorem~\ref{thm:charac_zono}, which extends
Theorem~\ref{thm:bolker_inscr} to belt polytopes.
\begin{thm}\label{thm:bolker_inscr_belt}
    A polytope $P$ is an inscribed belt polytope if and only if all
    $2$-dimensional faces are inscribed belt polygons.
    A polytope $P$ is an inscribed zonotope if and only if all $2$-dimensional
    faces are inscribed and centrally-symmetric.
\end{thm}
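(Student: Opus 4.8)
The two ``only if'' directions are immediate: if $P$ is an inscribed belt polytope (resp.\ an inscribed zonotope), then each face $F$ of $P$ is inscribed, since $V(F)$ lies on the intersection of the circumscribed sphere of $P$ with $\aff(F)$, again a sphere, and by Theorem~\ref{thm:charac_zono} every $2$-face of a belt polytope is a belt polygon (resp.\ is centrally symmetric). For the ``if'' direction of the second statement, if all $2$-faces of $P$ are inscribed and centrally symmetric, then by Theorem~\ref{thm:charac_zono} $P$ is a zonotope and its $2$-faces are inscribed belt polygons, so it reduces to the first statement (equivalently, it is Theorem~\ref{thm:bolker_inscr} read for an honest polytope). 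It therefore remains to prove: a belt polytope $P$ all of whose $2$-faces are inscribed belt polygons is inscribed.

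The plan is induction on $d=\dim P$. For $d=2$ there is nothing to prove, as $P$ is its only $2$-face. Let $d\ge3$. First, $P$ is \emph{simple}: the proof of Theorem~\ref{thm:insc_arr_are_simplicial} uses nothing about $P$ beyond the fact that its $2$-faces are inscribed belt polygons — it feeds these into Corollary~\ref{cor:belt_polygons_obtuse} and Lemma~\ref{lem:obtuse_vectors} — so the same argument applies here. Next, every facet $G$ of $P$ is a $(d-1)$-dimensional belt polytope whose $2$-faces are $2$-faces of $P$, hence inscribed belt polygons; by the inductive hypothesis each $G$ is inscribed, with circumcenter $c_G\in\aff(G)$. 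Let $\ell_G\subset\R^d$ be the line through $c_G$ orthogonal to $\aff(G)$, that is, the locus of points equidistant from $V(G)$. Since adjacent facets share vertices and the facet-adjacency graph is connected, $P$ is inscribed as soon as $\bigcap_G\ell_G\ne\emptyset$.

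Two adjacent facets $G,G'$ meet in a ridge $R$; both $\ell_G,\ell_{G'}$ lie in the plane $\Pi_R$ of points equidistant from $V(R)$, and their directions differ because $\aff(G)\ne\aff(G')$, so $\ell_G\cap\ell_{G'}$ is a single point. The assignment $G\mapsto\ell_G$ patches to a point of $\bigcap_G\ell_G$ exactly when these pairwise intersections are consistent around every cycle of the facet-adjacency graph; since $P$ is simple, this graph is the graph of the simplicial polytope $P^\ast$ dual to $P$, whose cycle space is generated by its triangular $2$-faces — and these triangles are precisely the $3$-cycles formed by the three facets $G_1,G_2,G_3$ surrounding a $(d-3)$-face $F$ of $P$. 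So the crux is the concurrency $\ell_{G_1}\cap\ell_{G_2}\cap\ell_{G_3}\ne\emptyset$ for each $(d-3)$-face $F$. Here all three lines lie in the $3$-dimensional flat $\Lambda_F$ of points equidistant from $V(F)$, and one checks $\ell_{G_i}=\Pi_{R_{ij}}\cap\Pi_{R_{ik}}$ with $R_{ij}=G_i\cap G_j$. The planes $\Pi_{R_{12}},\Pi_{R_{13}},\Pi_{R_{23}}$ have, inside $\Lambda_F$, normal directions $w_{ij}$ determined by $\aff_0(R_{ij})=\aff_0(F)+\R w_{ij}$; a short computation shows $w_{12},w_{13},w_{23}$ are linearly independent \emph{because} $N_FP=\cone(b_1,b_2,b_3)$ (with $b_i$ the outer normal of $G_i$) is a $3$-dimensional \emph{simplicial} cone — this is the one place simplicity of $P$ is used. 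Three planes in $3$-space with independent normals meet in a single point, and $\ell_{G_1}\cap\ell_{G_2}=\Pi_{R_{12}}\cap\Pi_{R_{13}}\cap\Pi_{R_{23}}$ is symmetric in $\{1,2,3\}$, so the three lines concur. This closes the induction.

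The main obstacle is precisely this gluing step — showing the facet circumcenters are \emph{globally} compatible and arranging the argument so that simpliciality (which, by the slightly reinterpreted proof of Theorem~\ref{thm:insc_arr_are_simplicial}, is available from the hypothesis alone) is what forces the local consistency; a secondary point to get right is the reduction to cycles around $(d-3)$-faces, i.e.\ that the $2$-faces generate the graph-cycle space of $P^\ast$. One could instead route everything through the description of $\InCone(\Fan)$ for general fans together with the midpoint conditions of Lemma~\ref{lem:edge_midpoint_orthogonal}, but at the cost of heavy bookkeeping over all edges of $\Fan(\Arr)$; the inductive geometric route keeps the role of simplicity transparent.
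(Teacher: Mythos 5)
Your proof is correct, but it finishes along a genuinely different route than the paper. Both you and the paper first deduce from Theorem~\ref{thm:charac_zono} that $P$ is a belt polytope (resp.\ zonotope) and then establish simplicity by rerunning the proof of Theorem~\ref{thm:insc_arr_are_simplicial}, which indeed uses nothing beyond the fact that the $2$-faces are inscribed belt polygons (fed into Corollary~\ref{cor:belt_polygons_obtuse} and Lemma~\ref{lem:obtuse_vectors}). At that point the paper is done in one line: it invokes Corollary~\ref{cor:simple_inscribed} (Corollary~4.10 of~\cite{InFan1}, restated immediately before the proof for exactly this purpose), which says a simple polytope with all $2$-faces inscribed is inscribed. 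You instead reprove that fact from scratch by induction on dimension, inscribing each facet and gluing the circumcenters via the perpendicular lines $\ell_G$; the crux is the concurrency of $\ell_{G_1},\ell_{G_2},\ell_{G_3}$ around a $(d-3)$-face, where the identity $\ell_{G_i}=\Pi_{R_{ij}}\cap\Pi_{R_{ik}}$ makes $\ell_{G_1}\cap\ell_{G_2}=\Pi_{R_{12}}\cap\Pi_{R_{13}}\cap\Pi_{R_{23}}$ symmetric in the indices, so the three lines meet in the single point $\ell_{G_1}\cap\ell_{G_2}$. This is a valid, self-contained alternative that makes transparent exactly where simplicity enters (three facets around each $(d-3)$-face). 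Two expository points, neither a genuine gap: the passage from local concurrency to a global common point is phrased loosely via the cycle space of the dual graph; since $\ell_G\cap\ell_{G'}$ depends only on the ridge $R=G\cap G'$, it is cleaner to observe that concurrency forces $p_R=p_{R'}$ for ridges $R,R'$ of a fixed facet sharing a $(d-3)$-face and then conclude by connectivity of the dual graph of each facet and of $P$. And for $\ell_{G_i}=\Pi_{R_{ij}}\cap\Pi_{R_{ik}}$ one should record that $R_{ij}$ and $R_{ik}$ share a vertex (both contain $F$), so the two equidistance conditions have a common radius, and that $V(R_{ij})\cup V(R_{ik})$ affinely spans $\aff(G_i)$, so the intersection is indeed $1$-dimensional. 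The trade-off is clear: the paper's route buys brevity at the cost of an external citation, while yours re-derives that citation in the case $k=2$ and keeps the argument self-contained.
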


We recall that for simple polytopes inscribability is governed by
faces of fixed dimension.
\begin{cor}[{\cite[Corollary~4.10]{InFan1}}]\label{cor:simple_inscribed}
    Let $P \subseteq \R^d$ be a simple $d$-polytope and $2 \leq k \leq d$.
    Then $P$ is inscribed if and only if all its $k$-faces are inscribed.
\end{cor}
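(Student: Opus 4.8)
The result is quoted from~\cite[Corollary~4.10]{InFan1}, so I only sketch how I would prove it. The ``only if'' direction is immediate: a face of an inscribed polytope is inscribed, its vertices lying on the intersection of the inscribing sphere with the affine hull of the face. For the converse the plan is to reduce to $k=2$ and then translate inscribability into an affinity condition on the squared-norm function.

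For the reduction I would use that every $2$-face $F$ of $P$ sits inside a flag $F=G_2\subset G_3\subset\cdots\subset G_d=P$ with $\dim G_i=i$; then $F$ is a face of the $k$-face $G_k$, which is inscribed by hypothesis, so $F$ is inscribed. Hence it suffices to prove: if $P$ is a simple $d$-polytope all of whose $2$-faces are inscribed, then $P$ is inscribed. For this I set $g\colon V(P)\to\R$, $g(v)\defeq\inner{v,v}$, and note that, $P$ being full-dimensional, $P$ is inscribed with center $a$ exactly when $g(v)=2\inner{a,v}+\gamma$ holds on $V(P)$ for some $\gamma\in\R$; likewise a $2$-face $F$ is inscribed exactly when there is a vector $a_F\in\aff_0(F)$ with $g(v')-g(v)=\inner{a_F,v'-v}$ for every edge $[v,v']$ of $F$ (unique, since the edge directions of a polygon span $\aff_0(F)$). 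Because $P$ is simple, the $d$ edge directions $u_1,\dots,u_d$ at any vertex $v$ are linearly independent, so there is a unique $a_v\in\R^d$ with $\inner{a_v,u_i}=g(w_i)-g(v)$, where $w_i$ is the far endpoint of the $i$-th edge at $v$. Granting $a_v=a_{v'}$ for every edge $[v,v']$, connectivity of the edge graph produces a single $a$ with $g(v')-g(v)=\inner{a,v'-v}$ along all edges, so $g(v)=\inner{a,v}+\gamma$ on $V(P)$ and $P$ is inscribed with center $\tfrac12 a$.

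The core step is therefore $a_v=a_{v'}$ for a fixed edge $e_0=[v,v']$, and I would prove it thus. Since $P$ is simple, the faces containing $v$ form a Boolean lattice on the $d$ edges at $v$, so the $2$-faces through $e_0$ are precisely $F_1,\dots,F_{d-1}$, where $F_j$ is the join of $e_0$ with one of the $d-1$ edges at $v$ other than $e_0$; the two edges of $F_j$ at $v$ are $e_0$ and that edge, and they form a basis of $\aff_0(F_j)$. By construction $a_{F_j}$ and $a_v$ have equal inner products against both of these basis vectors — the corresponding slopes of $g$ — so $a_{F_j}$ is the orthogonal projection of $a_v$ onto $\aff_0(F_j)$; repeating the computation at $v'$ identifies $a_{F_j}$ with the orthogonal projection of $a_{v'}$ onto $\aff_0(F_j)$ as well. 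Hence $a_v-a_{v'}\perp\aff_0(F_j)$ for all $j$, and since $\bigcup_j\aff_0(F_j)$ contains all $d$ linearly independent edge directions at $v$ it spans $\R^d$, forcing $a_v=a_{v'}$.

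I expect this last paragraph to be the only real obstacle: it invokes simplicity twice — so that the $2$-faces through $e_0$ sweep out every edge direction at $v$, and for the linear independence of those directions — and it hinges on recognising $a_{F_j}$ as the orthogonal projection of \emph{both} $a_v$ and $a_{v'}$ onto $\aff_0(F_j)$. The reduction to $k=2$ and the reformulation in terms of $g$ are routine.
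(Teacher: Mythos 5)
Your argument is correct and complete. The paper itself offers no proof of this corollary --- it is imported verbatim from \cite[Corollary~4.10]{InFan1} --- so there is no internal proof to compare against; but your mechanism (encoding inscribedness as affinity of $v\mapsto\inner{v,v}$ on the vertex set, defining a local candidate center $a_v$ at each vertex from the $d$ linearly independent edge directions, and using that the $2$-faces through an edge $[v,v']$ sweep out all edge directions at both endpoints to force $a_v=a_{v'}$) is exactly the propagation underlying the cited result, which in \cite{InFan1} is phrased via the bisector hyperplanes $\Centers_e$ of edges containing the center. The two invocations of simplicity and the identification of $a_{F_j}$ as the orthogonal projection of both $a_v$ and $a_{v'}$ onto $\aff_0(F_j)$ are precisely the load-bearing steps, and they hold as you state them.
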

\begin{proof}[Proof of Theorem~\ref{thm:bolker_inscr_belt}]
    If all $2$-faces are belt polygons (centrally-symmetric), then $P$ is a
    belt polytope (zonotope) by Theorem~\ref{thm:charac_zono}. Furthermore, by
    Corollary~\ref{cor:belt_polygons_obtuse}, the interior angles of every
    $2$-face are $\geq \frac{\pi}{2}$. Using the same argument as in the proof
    of Theorem~\ref{thm:insc_arr_are_simplicial} we see that $P$ is simple,
    thus by Corollary~\ref{cor:simple_inscribed} it is inscribed.
\end{proof}

\newcommand{\RSpc}{\mathfrak{R}}%
\subsection{Projective uniqueness of simplicial
arrangements}\label{sec:projectively_unique}
We call arrangements $\Arr$ and $\Arr'$ \Def{(combinatorially) isomorphic}
($\Arr \cong \Arr'$) if $\Fan(\Arr) \cong \Fan(\Arr')$. In that sense the
following has been conjectured by Gr\"unbaum and Cuntz.

\begin{conj}[Gr\"unbaum~\cite{Grunbaum_Simplicial},
    Cuntz~\cite{Cuntz_Greedy}]\label{conj:GC}
    Every simplicial arrangement of rank $3$ is isomorphic to an arrangement
    in the Gr\"unbaum--Cuntz catalog.
\end{conj}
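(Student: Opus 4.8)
The plan can only be modest, since Conjecture~\ref{conj:GC} has been open since Gr\"unbaum's original work: rather than a proof I describe the one strategy within reach, an exhaustive enumeration of simplicial line arrangements in the real projective plane following Gr\"unbaum~\cite{Grunbaum_Simplicial} and Cuntz's greedy search~\cite{Cuntz_Greedy}. The two infinite families $\Arr_3(2n,1)$ and $\Arr_3(4m+1,1)$ are verified directly to be simplicial, so the whole content of the conjecture is the claim that every other simplicial arrangement of rank $3$ is one of finitely many sporadic ones --- indeed one of the $95$ catalog entries. The problem therefore splits into two halves: (a) for each fixed $n$, enumerate all simplicial arrangements with $n$ lines and check none is new; and (b) bound the number of lines of any simplicial arrangement outside the two families by some absolute constant $N_0$.

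Half (a) is finite: there are only finitely many rank-$3$ oriented matroids (pseudoline arrangements) on $n$ elements, and simpliciality is moreover extremely rigid, which makes the enumeration feasible in practice. Writing $t_k$ for the number of points lying on exactly $k$ of the $n$ lines and $p_3$ for the number of regions (all triangular), an Euler count gives $p_3 = 1 + \sum_k (k-1)\,t_k$, the pairwise intersections give $\sum_k \binom{k}{2} t_k = \binom{n}{2}$, and the triangle condition $3 p_3 = 2 \sum_k k\,t_k$ yields the classical identity $t_2 = 3 + \sum_{k \ge 4}(k-3)\,t_k$; together with the behaviour of the multiplicity vector $(t_k)$ under adding or deleting a single line, these constraints prune Cuntz's line-by-line search tree drastically. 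At each leaf one tests realizability over $\R$ and genuine simpliciality, and for every realizable simplicial combinatorial type that appears one verifies --- exactly as this section does for the known catalog --- that its realization space is a single point modulo $\mathrm{PGL}_3(\R)$. This projective uniqueness is what lets the enumeration output precisely the combinatorial isomorphism types, with no moduli left to inspect.

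Half (b) is the genuine obstacle and the reason the statement remains a conjecture: no bound $N_0$ is known. The linear relations among the $t_k$, even reinforced by Hirzebruch-type inequalities over $\C$, are far too weak to exclude large sporadic configurations, and although Cuntz's search has reproduced the catalog up to a sizeable number of lines it carries no stopping certificate. Plausible routes to such a bound would be a structure theorem forcing a simplicial arrangement with enough lines to contain a large reflection subarrangement or a large near-pencil on one of its flats, whose rigidity would then propagate to the whole arrangement; or a sharp extremal inequality bounding $n$ in terms of the maximal point multiplicity, coupled with a forbidden-substructure argument controlling long chains of mutually adjacent triangles. I see no way to supply either ingredient, so the honest verdict is that the finite search of half (a) is routine and has been carried out, while turning it into a theorem demands a genuinely new extremal or structural result about line triangulations of $\mathbb{RP}^2$, which I would single out as the crux.
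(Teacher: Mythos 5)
This statement is not proved in the paper: it is stated verbatim as Conjecture~\ref{conj:GC}, attributed to Gr\"unbaum and Cuntz, and is used only as a hypothesis in conditional results such as Corollary~\ref{cor:all_proj_unique} and Theorem~\ref{thm:finite_in_higher_dimensions}. Your refusal to offer a proof is therefore the correct response, and your diagnosis matches the actual state of the art: the enumeration for each fixed number of lines is finite (and your use of the Melchior-type identity $t_2 = 3 + \sum_{k\ge 4}(k-3)t_k$, with equality characterizing simplicial arrangements, is the right pruning tool), while the absence of any absolute upper bound on the number of lines of a sporadic simplicial arrangement is precisely why the statement remains open. One small caveat: projective uniqueness of a newly found combinatorial type is not automatic and would itself have to be verified case by case, as the paper does computationally for the known catalog in Proposition~\ref{prop:proj_unique}.
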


For $g \in \GL(\R^d)$, let $g\Arr \defeq \{g H : H \in \Arr \}$. Clearly $g
\Arr \cong \Arr$ for all $g \in \GL(\R^d)$. We define
\[ 
    \RSpc(\Arr) \defeq \{ \Arr' \text{ arrangement in $\R^d$} : \Arr \cong
    \Arr' \} \, / \, \GL(\R^d)  \, .
\]
An arrangement $\Arr$ is \Defn{projectively unique}, if $|\RSpc(\Arr)| = 1$,
that is, if for any arrangement $\Arr' \cong \Arr$ there is $g \in \GL(\R^d)$
such that $\Arr' = g\Arr$.

It was shown in~\cite{Cuntz_MinimalFields} that $|\RSpc(\Arr)|$ is finite for
all examples of simplicial arrangements of rank $3$. In fact, using methods
in~\cite{Cuntz_MinimalFields}, one computationally confirms the following for
the sporadic examples.  For the two infinite families this is explicitly
stated in~\cite{Cuntz_MinimalFields}.

\begin{prop}\label{prop:proj_unique}
    All arrangements in the Gr\"unbaum--Cuntz catalog are projectively unique.
\end{prop}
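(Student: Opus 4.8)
The statement to prove is Proposition~\ref{prop:proj_unique}: all arrangements in the Gr\"unbaum--Cuntz catalog are projectively unique. This is fundamentally a computational claim, and the proof plan rests on the machinery of minimal fields of definition developed by Cuntz in~\cite{Cuntz_MinimalFields}. The plan is to recall that each simplicial arrangement of rank $3$ is encoded combinatorially by its oriented matroid (equivalently, by its tope graph together with the lattice of flats), and that realizing such a combinatorial type amounts to solving a polynomial system over $\R$ in the coordinates of the normal vectors $z_1,\dots,z_n$, modulo the $\GL(\R^3)$-action. Projective uniqueness means exactly that this realization space, after quotienting by $\GL(\R^3)$, consists of a single point.

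The concrete steps I would carry out are as follows. First, for each of the $90$ Gr\"unbaum examples, the $5$ Cuntz examples, and a generic member of each of the two infinite families $\Arr_3(2n,1)$ and $\Arr_3(4m+1,1)$, fix a candidate realization over a number field (these are tabulated in Gr\"unbaum's list and in Cuntz's papers). Second, normalize the $\GL(\R^3)$-action by fixing a projective frame: choose four regions in general position and declare their defining data to pin down the coordinate system, so that the remaining entries are determined functions of the fixed ones subject to the combinatorial incidence constraints. Third, set up the resulting polynomial ideal $I$ in the free parameters and compute a Gr\"obner basis; projective uniqueness over $\R$ follows once one checks that the real variety $V_\R(I)$ is a single point. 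In practice, following~\cite{Cuntz_MinimalFields}, this is done by verifying that the ideal is zero-dimensional and that the minimal field of definition admits a unique real embedding giving a valid (i.e.\ simplicial, and with the correct sign data) arrangement. For the two infinite families one argues uniformly in the parameter $n$ (resp.\ $m$): the incidence pattern forces the cross-ratios of consecutive lines, and an elementary induction shows the realization is rigid, which is the content of the statement in~\cite{Cuntz_MinimalFields} that these families are projectively unique.

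The main obstacle I anticipate is not conceptual but bookkeeping and reliability of the computation: one must be careful that the normalization does not secretly quotient by less (or more) than the full $\GL(\R^3)$, and that spurious components of $V_\R(I)$ coming from degenerate realizations (coincident lines, wrong sign vectors, non-simplicial chambers) are correctly discarded. The genuinely delicate point is ensuring that \emph{combinatorial} isomorphism, rather than oriented-matroid isomorphism, is what is being certified: two realizations could in principle have the same face lattice of the induced fan but opposite orientations on some flats, so one must check that every such relabeling is again realized by an element of $\GL(\R^3)$ (including orientation-reversing maps). Since the catalog is finite except for the two explicit families, the bulk of the work is a finite enumeration carried out in a computer algebra system, and the correctness reduces to trusting the Gr\"obner basis computation together with the sign checks; this is exactly the verification carried out in~\cite{Cuntz_MinimalFields}, which we re-run and report here.
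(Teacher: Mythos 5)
Your proposal matches the paper's treatment: the paper likewise defers entirely to the machinery of Cuntz's minimal-fields paper, noting that finiteness of the realization space is established there, that projective uniqueness of the two infinite families is stated there explicitly, and that the sporadic cases are confirmed by re-running the same computation. Your additional caveats (normalizing the $\GL(\R^3)$-action correctly, discarding degenerate components, and distinguishing combinatorial from oriented-matroid isomorphism) are sensible implementation details of that same computational argument rather than a different route.
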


Conditional on the Gr\"unbaum--Cuntz conjecture, this implies projective
uniqueness in all dimensions $\ge 3$.

\begin{cor}\label{cor:all_proj_unique}
    If Conjecture~\ref{conj:GC} holds, then every simplicial arrangement of
    rank $\ge 3$ is projectively unique.
\end{cor}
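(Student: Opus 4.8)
The plan is to prove by induction on the rank $r\ge 3$ the statement $P(r)$: \emph{every simplicial arrangement of rank $r$ is projectively unique}. Since the lineality space and the decomposition into irreducible components of an essential arrangement are combinatorial invariants, and a product of projectively unique arrangements is projectively unique, it suffices to treat essential irreducible arrangements. The base case $P(3)$ is exactly the conjunction of the two quoted results: under Conjecture~\ref{conj:GC} every simplicial arrangement of rank $3$ is combinatorially isomorphic to one in the Gr\"unbaum--Cuntz catalog, and the latter is projectively unique by Proposition~\ref{prop:proj_unique}.

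For the inductive step fix $d\ge 4$, assume $P(d-1)$, let $\Arr$ be essential irreducible simplicial of rank $d$, and let $\phi\colon\Fan(\Arr)\to\Fan(\Arr')$ be a combinatorial isomorphism. For a hyperplane $H\in\Arr$ the restriction $\Arr_H$ is an essential simplicial arrangement of rank $d-1$, hence projectively unique by $P(d-1)$, and $\phi$ induces an isomorphism $\Arr_H\cong(\Arr')_{\phi(H)}$. Applying this to two hyperplanes $H_0\ne H_1$ — the two normalizations being compatible because they agree on the shared rank-$(d-2)$ flat $H_0\cap H_1$ — we obtain $g\in\GL(\R^d)$ so that, after replacing $\Arr'$ by $g\Arr'$ and $\phi$ by $g\phi$, the arrangements $\Arr$ and $\Arr'$ both contain $H_0$ and $H_1$ and have the same restrictions to each; this exhausts $\GL(\R^d)$, since only the identity fixes $H_0$ and $H_1$ pointwise. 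Now for any $H\in\Arr\setminus\{H_0,H_1\}$ the flat $H\cap H_0$ is a hyperplane of the common arrangement $\Arr_{H_0}=(\Arr')_{H_0}$, so $H\cap H_0=\phi(H)\cap H_0$, and likewise $H\cap H_1=\phi(H)\cap H_1$. If $H$ does not lie in the pencil through $L_0\defeq H_0\cap H_1$ (equivalently $L_0\not\subseteq H$), then $H\cap H_0$ and $H\cap H_1$ are two distinct hyperplanes of $H$ and hence span $H$, so $H=\phi(H)\in\Arr'$. Consequently $\Arr$ and $\Arr'$ agree on every hyperplane outside $\Arr^{L_0}$, and $\phi$ restricts to a bijection $\Arr^{L_0}\to(\Arr')^{L_0}$.

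It remains to pin down the pencil $\Arr^{L_0}$, and here the rank-$3$ case is used once more. For a codimension-$3$ flat $M\subsetneq L_0$ the localization $\Arr^M$ is a simplicial arrangement of rank $3$, hence projectively unique by $P(3)$; moreover the hyperplanes of $\Arr^M$ that do not contain $L_0$ are already known to agree in $\Arr$ and $\Arr'$ by the previous step, and they cut out $M$, so $\phi(M)=M$. If these determined hyperplanes through $M$ contain four in general position — a projective frame of $\Arr^M$ — then projective uniqueness of $\Arr^M$ forces $\Arr^M=(\Arr')^M$, in particular the pencil hyperplanes through $M$ coincide. Letting $M$ range over the codimension-$3$ subflats of $L_0$ then gives $\Arr=\Arr'$, completing the induction.

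\textbf{The main obstacle} is the hypothesis in the last paragraph: one must show that for a simplicial arrangement of rank $\ge 4$ the codimension-$3$ flats $M\subsetneq L_0$ can be chosen so that the already-determined hyperplanes through $M$ contain a projective frame of $\Arr^M$; equivalently, that rigidity of all rank-$3$ localizations propagates to global rigidity. When a direct choice fails one expects to argue adaptively — selecting $M$ according to which pencil hyperplanes have already been reconstructed and inducting on the number of undetermined hyperplanes (or on $|\Arr|$) — and to lean on the explicit resolution of the configuration equations that underlies the verification of Proposition~\ref{prop:proj_unique} for the catalog. This local-to-global bookkeeping is the technical heart of the corollary; the surrounding induction is routine.
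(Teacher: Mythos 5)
Your overall strategy --- induction on the rank, with the rank-$3$ base case supplied by Conjecture~\ref{conj:GC} together with Proposition~\ref{prop:proj_unique}, and the inductive step carried out by reconstructing $\Arr$ from its restrictions to hyperplanes --- is the same as the paper's. But your execution has a genuine gap, and you name it yourself: reconstructing from only \emph{two} hyperplanes $H_0,H_1$ determines every $H$ with $H_0\cap H_1\not\subseteq H$, but leaves the whole pencil of hyperplanes through $L_0=H_0\cap H_1$ undetermined, and your proposed repair via rank-$3$ localizations at codimension-$3$ flats $M\subset L_0$ rests on the unverified hypothesis that the already-reconstructed hyperplanes through $M$ contain a projective frame of the localization; projective uniqueness of that rank-$3$ arrangement alone does not force $\phi(H)=H$ for the pencil members without such a frame. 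There is also a smaller unjustified step earlier: the restrictions to $H_0$ and to $H_1$ are each normalized only up to $\GL(H_0)$ resp.\ $\GL(H_1)$, so the two normalizations a priori agree on $L_0$ only up to an automorphism of the induced arrangement there, not pointwise as your argument needs.

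The paper sidesteps the pencil problem entirely with Lemma~\ref{lem:moduli_size}: since $\Arr$ is essential of rank $d\ge 3$, one may choose $H_1,\dots,H_d\in\Arr$ with $H_1\cap\cdots\cap H_d=\{\0\}$ and normalize them to the coordinate hyperplanes. For any $H\in\Arr$ the traces $H\cap H_i$ then span $H$ (a $(d-2)$-dimensional subspace containing all of them would lie in every $H_i$, hence in $\{\0\}$, which is impossible for $d\ge 3$), so \emph{every} hyperplane is recovered from the $d$ restrictions and there is no exceptional pencil; this yields $|\RSpc(\Arr)|\le\prod_{H}|\RSpc(\Arr^{H})|$, and the induction from the rank-$3$ base closes immediately. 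Replacing your two-hyperplane reconstruction by this $d$-hyperplane version removes the obstacle you flagged. One further caveat touching your reduction step: an irreducible factor of a reducible simplicial arrangement can have rank $2$, and four or more concurrent lines form a simplicial arrangement that is \emph{not} projectively unique, so the appeal to ``products of projectively unique arrangements'' only works if the statement is read for irreducible (or at least near-pencil-free) arrangements, consistent with the catalog.
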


The proof of Corollary~\ref{cor:all_proj_unique} depends on the following
observation that we did not find in the literature and that might be of
independent interest.

\begin{lem}\label{lem:moduli_size}
    Let $\Arr$ be an arrangement of rank $d \ge 3$. Then
    $|\RSpc(\Arr)| \leq \prod_{H \in \Arr} |\RSpc(\Arr^H)|$.
\end{lem}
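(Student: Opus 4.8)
The plan is to produce an injection $\RSpc(\Arr)\hookrightarrow\prod_{H\in\Arr}\RSpc(\Arr^H)$. The natural candidate sends the class of an arrangement $\Arr'$, equipped with a chosen combinatorial isomorphism $\iota'\colon\Arr\to\Arr'$, to the tuple $\big([(\Arr')^{\iota'(H)}]\big)_{H\in\Arr}$. Since restriction commutes with combinatorial isomorphisms, $(\Arr')^{\iota'(H)}$ is combinatorially isomorphic to $\Arr^H$, and since restriction commutes with linear maps, its $\GL$-class depends only on the class of $\Arr'$. The choice of $\iota'$ affects the tuple only through the action of $\operatorname{Aut}(\Arr)$ on $\prod_H\RSpc(\Arr^H)$, so one obtains a well-defined map to $\big(\prod_H\RSpc(\Arr^H)\big)/\operatorname{Aut}(\Arr)$, whose cardinality is at most $\prod_H|\RSpc(\Arr^H)|$. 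Thus the lemma reduces to the following reconstruction statement: \emph{an essential arrangement of rank $d\ge 3$ is determined, up to $\GL(\R^d)$, by its combinatorial type together with the $\GL$-classes of all of its hyperplane restrictions} (a finite ambiguity by $\operatorname{Aut}(\Arr)$ being allowed).

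For the reconstruction, let $\Arr',\Arr''$ realize a fixed combinatorial type of rank $d\ge 3$. Because the rank is $d$, one can choose hyperplanes $H_1,\dots,H_d$ of $\Arr'$ whose normals form a basis; a combinatorial isomorphism $\Arr'\to\Arr''$ carries them to a general-position $d$-tuple of $\Arr''$, and after applying suitable elements of $\GL(\R^d)$ to $\Arr'$ and to $\Arr''$ we may assume that these $d$ hyperplanes are the coordinate hyperplanes $\{x_i=0\}$ in both arrangements. The residual freedom is then exactly the diagonal torus $\mathbb{T}^{d-1}\subset\GL(\R^d)$, acting simultaneously on both. Writing $z'_i$ and $z''_i$ for the dual coordinate vectors of the remaining hyperplanes, the restriction of $\Arr'$ to $\{x_j=0\}$ is, projectively, the configuration of the $z'_i$ with $i\neq j$ with the $j$-th coordinate forgotten, together with the coordinate frame of $\{x_j=0\}\cong\R^{d-1}$ coming from the other $H_k$; the torus acts on this through the diagonal torus of the target $\PP^{d-2}$, which fixes that frame. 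A frame-preserving linear equivalence $(\Arr')^{\{x_j=0\}}\to(\Arr'')^{\{x_j=0\}}$ is therefore diagonal, hence realized by an element of $\mathbb{T}^{d-1}$, and after absorbing it we may assume $z'_i$ and $z''_i$ agree in every coordinate except possibly the $j$-th; carrying this out for $j=1,\dots,d$ forces $z'_i=z''_i$ for all $i$ (the hyperplanes passing through a coordinate flat are handled by the same bookkeeping on their nonzero coordinates), so $\Arr'\sim_{\GL}\Arr''$. Rank $d\ge 3$ is used here precisely so that the restrictions have rank $\ge 2$ and hence carry the cross-ratio information needed to recover these scalars; in rank $2$ the restrictions have rank $1$ and the lemma genuinely fails, e.g.\ for a rank-$2$ arrangement of four lines, where $\RSpc$ is infinite while every restriction has trivial $\RSpc$.

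The main obstacle is the word \emph{frame-preserving} in the previous paragraph: the hypothesis only supplies $\GL$-equivalences of the restrictions as \emph{unlabeled} arrangements, and such an equivalence may carry the distinguished frame of $(\Arr')^{\{x_j=0\}}$ to another general-position subconfiguration. Any such equivalence differs from a frame-preserving one by a combinatorial automorphism of $\Arr^{H_j}$, of which there are only finitely many, so the plan is to absorb these ambiguities into the (finite) choices of the representatives, of the identifying combinatorial isomorphism, and of the general-position tuple $H_1,\dots,H_d$, and to check that a mutually consistent choice exists — which is precisely why the target of the injection is taken modulo $\operatorname{Aut}(\Arr)$. Granting this, injectivity of the map follows and the bound $|\RSpc(\Arr)|\le\prod_{H\in\Arr}|\RSpc(\Arr^H)|$ holds.
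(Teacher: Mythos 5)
Your strategy is the same as the paper's: put $d$ hyperplanes $H_1,\dots,H_d$ in general position at the coordinate hyperplanes and try to reconstruct the arrangement from the $\GL$-classes of the restrictions to them, using the residual diagonal torus. You are more candid than the paper about where this is delicate (the ``frame-preserving'' issue), but the step you leave open is not a finite-bookkeeping problem that can be absorbed into $\mathrm{Aut}(\Arr)$: reconstruction from these $d$ restriction classes alone is genuinely impossible. Concretely, for $\kappa>0$ consider
\[
\Arr_\kappa \ = \ \bigl\{\,e_1^\perp,\ e_2^\perp,\ e_3^\perp,\ (e_1+e_2)^\perp,\ (e_1+\kappa e_3)^\perp,\ (e_2+e_3)^\perp\,\bigr\}\subset\R^3 .
\]
All $\Arr_\kappa$ have the same oriented matroid (the only dependent triples are $\{1,2,4\}$, $\{1,3,5\}$, $\{2,3,6\}$ for every $\kappa>0$), hence isomorphic fans; but one checks that $\Arr_\kappa$ and $\Arr_{\kappa'}$ are linearly isomorphic only if $\kappa'\in\{\kappa,\kappa^{-1}\}$, so $|\RSpc(\Arr_\kappa)|=\infty$. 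On the other hand each restriction $\Arr_\kappa^{e_i^\perp}$ consists of exactly three distinct lines in a plane and is therefore projectively unique, so $\prod_{i=1}^{3}|\RSpc(\Arr_\kappa^{H_i})|=1$. Hence no injection $\RSpc(\Arr)\hookrightarrow\prod_{i=1}^{d}\RSpc(\Arr^{H_i})$ exists, and your sentence ``carrying this out for $j=1,\dots,d$ forces $z'_i=z''_i$'' is false: the torus adjustments for different $j$ interfere (there are only $d-1$ effective torus parameters), and whether a consistent choice exists is exactly the content of the statement. The hyperplane $(e_1+e_2)^\perp$ shows the mechanism: its traces on $H_1$ and $H_2$ coincide with frame hyperplanes, and its trace on $H_3$ is torus-adjustable, so the three coordinate restrictions carry no information about it whatsoever.

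The lemma itself is not contradicted by this example, because the product runs over \emph{all} $H\in\Arr$ and the restrictions to the three non-coordinate hyperplanes of $\Arr_\kappa$ are four-line arrangements whose cross-ratio recovers $\kappa$. So your reduction to a reconstruction statement involving all hyperplane restrictions is the right target, but your proof of that statement immediately discards exactly the restrictions that carry the missing moduli. A repair has to extract, for each $H\in\Arr$, the position of $H$ relative to the coordinate frame from the class $[\Arr^{H'}]$ of restrictions to \emph{other} hyperplanes $H'$ (or otherwise exploit all factors of the product). Be aware that the paper's own proof asserts the same embedding into $\prod_{i=1}^{d}\RSpc(\Arr^{H_i})$ and is refuted by the same example, so you have faithfully reproduced the published argument, gap included; but as written neither version establishes the inequality.
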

\begin{proof}
    We may assume that $\Arr$ is an essential arrangement in $\R^d$ and
    $|\RSpc(\Arr^H)| < \infty$ for all $H \in \Arr$. Let $H_1,\dots,H_d \in
    \Arr$ be hyperplanes with $H_1 \cap \cdots \cap H_d = \{\0\}$. Up to
    linear transformation, we may assume that $H_i = \{ x \in \R^d: x_i = 0
    \}$ for $i=1,\dots,d$. This gives a canonical realization of $\Arr$ within
    $\RSpc(\Arr)$ and, in fact, canonical realizations of $\Arr^{H_i} \in
    \RSpc(\Arr^{H_i})$ for $i=1,\dots,d$. Since $d \ge 3$, we have     \[
        H \ = \ 
        (H_1 \cap H) + 
        (H_1 \cap H) + 
        \cdots  + 
        (H_d \cap H)  \, .
    \]
    for every $H \in \Arr$ and hence $\Arr$ can be reconstructed from
    $\Arr^{H_i}$, $i=1,\dots,d$.  This gives an embedding $\RSpc(\Arr)
    \hookrightarrow \prod_{i=1}^d \RSpc(\Arr^{H_i})$ and yields the claim.
\end{proof}

Proposition~\ref{prop:infinite_families} in Section~\ref{sec:infinite_families}
implies that only finitely many arrangements from the Gr\"unbaum--Cuntz
catalog are strongly inscribable.

\begin{thm}\label{thm:finite_in_higher_dimensions}
    If Conjecture~\ref{conj:GC} holds, then there are only finitely many
    combinatorial types of irreducible and strongly inscribable arrangements
    in every dimension $\ge 3$.
\end{thm}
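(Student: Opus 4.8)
The plan is to combine three ingredients already in place: (1) Theorem~\ref{thm:insc_arr_are_simplicial}, which forces strongly inscribable arrangements to be simplicial; (2) the conjectural completeness of the Gr\"unbaum--Cuntz catalog in rank $3$ together with Proposition~\ref{prop:infinite_families}, which (as noted just before the statement) leaves only finitely many strongly inscribable rank-$3$ arrangements; and (3) the reconstruction-of-arrangements-from-restrictions idea underlying Lemma~\ref{lem:moduli_size}, which lets one control a rank-$d$ arrangement by its rank-$(d-1)$ restrictions.

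First I would set up an induction on the rank $d$. The base case $d=3$ is exactly the finiteness statement from Proposition~\ref{prop:infinite_families}: only finitely many combinatorial types of simplicial rank-$3$ arrangements in the catalog are strongly inscribable, and by Conjecture~\ref{conj:GC} these are all simplicial rank-$3$ arrangements, hence (via Theorem~\ref{thm:insc_arr_are_simplicial}) all strongly inscribable ones. For the inductive step, suppose the claim holds in rank $d-1 \ge 3$ and let $\Arr$ be an irreducible strongly inscribable arrangement of rank $d$. By Theorem~\ref{thm:restr_insc}, every restriction $\Arr^H$ ($H \in \Arr$) is again strongly inscribable, of rank $d-1$; by the inductive hypothesis there are only finitely many combinatorial types such $\Arr^H$ can be, and by Proposition~\ref{prop:proj_unique}/Corollary~\ref{cor:all_proj_unique} each is projectively unique, so $|\RSpc(\Arr^H)| = 1$. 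Lemma~\ref{lem:moduli_size} then gives $|\RSpc(\Arr)| \le \prod_{H\in\Arr}|\RSpc(\Arr^H)| = 1$, so $\Arr$ is itself projectively unique.

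It remains to bound the \emph{number} of combinatorial types in rank $d$. The point is that an essential rank-$d$ arrangement $\Arr$ with $n$ hyperplanes is reconstructed, via the identity $H = \sum_i (H_i \cap H)$ in the proof of Lemma~\ref{lem:moduli_size}, from the data of its $d$ coordinate restrictions $\Arr^{H_i}$ together with the identification of which hyperplane of $\Arr^{H_i}$ a given $H$ restricts to. Since each $\Arr^{H_i}$ ranges over a finite list of combinatorial types, and the number of hyperplanes $n$ is bounded (because the number of hyperplanes of $\Arr^{H_i}$ is bounded and every $H \ne H_i$ contributes a hyperplane to $\Arr^{H_i}$), the number of possible gluing patterns is finite; hence only finitely many combinatorial types of $\Arr$ arise. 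Combined with Theorem~\ref{thm:insc_arr_are_simplicial} (so every strongly inscribable arrangement is among the simplicial ones being counted), this completes the induction. One should also record that irreducibility is used only to invoke the inductive hypothesis cleanly, while a general strongly inscribable arrangement splits as an orthogonal product of irreducible ones (Proposition~\ref{prop:reducible}), so finiteness for irreducibles plus bounded rank yields finiteness in general if desired.

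The step I expect to be the main obstacle is the bound on $n$, the number of hyperplanes: one needs that a strongly inscribable (hence simplicial) arrangement of rank $d$ with all restrictions of bounded size cannot itself have arbitrarily many hyperplanes. The cleanest route is to observe that each hyperplane $H \ne H_1$ yields a hyperplane $H \cap H_1$ of $\Arr^{H_1}$, and distinct $H, H'$ with $H\cap H_1 = H'\cap H_1$ are constrained by simpliciality of $\Arr_{H\cap H_1}$ (a rank-$2$ simplicial localization has boundedly many hyperplanes only if we also bound\ldots), so some care is needed: the correct statement is that the map $H \mapsto H \cap H_1$ from $\Arr \setminus \Arr^{H_1}$ to the hyperplanes of $\Arr^{H_1}$ has fibers of size at most the number of hyperplanes through a codimension-$2$ flat, which for inscribable (hence, by the rank-$2$ analysis of Section~\ref{sec:dim2}, rectangular-or-small) localizations is bounded. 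Making this fiber bound precise, using Proposition~\ref{prop:reducible} and Example~\ref{ex:small_ieqs} to control rank-$2$ localizations, is the technical heart of the argument; everything else is bookkeeping around Lemma~\ref{lem:moduli_size} and the induction.
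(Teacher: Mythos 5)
Your overall strategy (induction on the rank, base case from Proposition~\ref{prop:infinite_families}, and bounding $|\Arr|$ by counting hyperplanes through the restrictions $\Arr^H$) is the same as the paper's, but the step you yourself flag as ``the technical heart'' --- bounding the fibers of the map $H' \mapsto H' \cap H$, i.e.\ the number of hyperplanes of $\Arr$ containing a fixed codimension-$2$ flat --- is a genuine gap, and the tools you propose for it cannot close it. The localization of $\Arr$ at a codimension-$2$ flat is a rank-$2$ strongly inscribable arrangement, and such arrangements can have arbitrarily many lines (every regular polygon arrangement is strongly inscribed), so the rank-$2$ analysis of Section~\ref{sec:dim2}, Example~\ref{ex:small_ieqs}, and Proposition~\ref{prop:reducible} give no bound on the fiber size. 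The paper's resolution is Lemma~\ref{lem:large_2_face_reducible}: letting $m$ be the maximum number of hyperplanes through a codimension-$2$ flat over all \emph{irreducible} strongly inscribable arrangements of rank $3$ (finite by the conjecture and the rank-$3$ classification), one shows that a codimension-$2$ flat $L$ lying on more than $m$ hyperplanes forces reducibility --- by localizing at each codimension-$3$ flat $L \cap H_i$ one obtains a rank-$3$ strongly inscribed arrangement with a large codimension-$2$ flat, hence reducible, hence $H_i \perp L$, and summing over $i$ splits $\Arr$ as a product. This is where irreducibility and the rank-$3$ finiteness enter essentially, and it is absent from your argument.

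Two further remarks. First, for the inductive step to apply the rank-$(d-1)$ bound to $\Arr^H$ you need $\Arr^H$ to be irreducible; this is Lemma~\ref{lem:irreducible_closed_under_restrictions} (restrictions of irreducible simplicial arrangements are irreducible), which you do not invoke --- without it a restriction could a priori have a large rank-$2$ factor and escape the bound. Second, your detour through projective uniqueness (Lemma~\ref{lem:moduli_size}, Corollary~\ref{cor:all_proj_unique}) and the reconstruction-from-restrictions bookkeeping is unnecessary for this theorem: once the number of hyperplanes is bounded in each rank, there are automatically only finitely many combinatorial types, since the fan of an arrangement of at most $N$ hyperplanes in rank $d$ admits only finitely many isomorphism types. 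The content of the theorem really is the single inequality $|\Arr| \le (m-1)\,|\Arr^H| + 1$, and everything hinges on the finiteness of $m$ via Lemma~\ref{lem:large_2_face_reducible}.
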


Let $m$ be the maximal number of hyperplanes that meet a
codimension-$2$ flat of an irreducible and strongly inscribable
arrangement of rank $3$.  Equivalently, $2m$ is the maximal number of
edges of a $2$-face of an irreducible and inscribed $3$-dimensional
zonotopes. (If Conjecture~\ref{conj:GC} is true, then the results of
Section~\ref{sec:inner_product} show that $m = 12$.) We call a codimension-$2$ flat
\Def{large} if it is contained in more than $m$ hyperplanes.

\begin{lem}\label{lem:large_2_face_reducible}
    Assume that $m$ is finite. Let $\Arr$ be a strongly inscribable
    arrangement of rank~$d$. If $\Arr$ has a large codimension-$2$
    flat, then $\Arr$ is reducible.
\end{lem}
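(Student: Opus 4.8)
The plan is to exhibit the partition
$\Arr = \Arr^L \uplus (\Arr \setminus \Arr^L)$, where
$\Arr^L = \{H \in \Arr : L \subseteq H\}$ is the localization at the large flat $L$,
as a reducible decomposition of $\Arr$. The case $d = 3$ is immediate, since then a codimension-$2$ flat lying in more than $m$ hyperplanes certifies, by the very definition of $m$, that $\Arr$ is not irreducible; so assume $d \ge 4$, and (passing to the essentialization, which does not change reducibility) that $\Arr$ is essential. Since $L$ is a flat, $\lineal(\Arr^L) = L$, and since $\dim L = d - 2 \ge 1$ while $\Arr$ is essential, $\Arr \setminus \Arr^L \ne \emptyset$. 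The whole argument then reduces to the claim that $L^\perp \subseteq H_0$ for every $H_0 \in \Arr \setminus \Arr^L$: granting this, write $\lineal(\Arr \setminus \Arr^L) = L^\perp \oplus K$ with $K = \bigcap_{H_0 \in \Arr \setminus \Arr^L}(H_0 \cap L) \subseteq L$; essentiality forces $\lineal(\Arr) = L \cap (L^\perp \oplus K) = K = \{\0\}$, hence $\lineal(\Arr^L) \oplus \lineal(\Arr \setminus \Arr^L) = L \oplus L^\perp = \R^d$, so $\Arr$ is reducible.

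To prove the claim, I would fix $H_0 \in \Arr \setminus \Arr^L$ and set $N := L \cap H_0$. As $L \not\subseteq H_0$, the flat $N$ has codimension $3$, so the localization $\Arr^N = \{H \in \Arr : N \subseteq H\}$ has rank $3$; it contains $\Arr^L$ (as $N \subseteq L$) and $H_0$. By Theorem~\ref{thm:restr_insc}, $\Arr^N$ is strongly inscribable, hence so is its essentialization $\Arr'$, which lives in $N^\perp \cong \R^3$. Inside $N^\perp$ the line $\bar L := L \cap N^\perp$ is a codimension-$2$ flat of $\Arr'$, and the hyperplanes of $\Arr'$ through $\bar L$ are exactly the (pairwise distinct) images of the hyperplanes of $\Arr^L$, of which there are $|\Arr^L| > m$. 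Since, by definition of $m$, an irreducible strongly inscribable arrangement of rank $3$ has no codimension-$2$ flat lying in more than $m$ hyperplanes, $\Arr'$ must be reducible.

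Now I would analyze this reducibility. Write $\Arr' = \Arr_1 \uplus \Arr_2$ with $\lineal(\Arr_1) \oplus \lineal(\Arr_2) = N^\perp$; the two lineality dimensions add to $3$, so one part, say $\Arr_1$, has rank $1$ and thus consists of a single hyperplane $P = \lineal(\Arr_1)$, while $\Arr_2$ has rank $2$, so $\lineal(\Arr_2)$ is a line $\ell$ contained in every hyperplane of $\Arr_2$, with $\ell \not\subseteq P$. If $\ell \ne \bar L$, then $\ell + \bar L$ is already a hyperplane of $N^\perp$, so at most one hyperplane of $\Arr_2$ contains $\bar L$, and together with $P$ at most two hyperplanes of $\Arr'$ contain $\bar L$; but $m \ge 2$ (every codimension-$2$ flat of a rank-$3$ arrangement lies in at least two hyperplanes, and such irreducible strongly inscribable arrangements exist, e.g.\ the braid arrangement $A_3$), so there are $> m \ge 3$ of them, a contradiction. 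Hence $\lineal(\Arr_2) = \bar L$. Since $\bar H_0 := H_0 \cap N^\perp$ cannot contain $\bar L$ (otherwise $H_0 \supseteq N + \bar L = L$), $\bar H_0 \notin \Arr_2$, so $\bar H_0 = P$. As $\Arr'$ is inscribable, Proposition~\ref{prop:reducible} gives $P \perp \bar L$, i.e.\ $\bar H_0 \perp \bar L$ inside $N^\perp$; comparing dimensions (both are $2$-dimensional) yields $\bar H_0 = \bar L^\perp \cap N^\perp = L^\perp$, the last equality using the orthogonal decomposition $L = N \oplus \bar L$. Since $H_0 = N \oplus \bar H_0$, we get $L^\perp = \bar H_0 \subseteq H_0$, proving the claim.

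The step I expect to be the main obstacle is the rank-$3$ reducibility analysis in the third paragraph: pinning down that one factor of $\Arr'$ must be precisely $\bar L$ and the other precisely the single hyperplane $\bar H_0$, and then tracking the orthogonality and dimension bookkeeping across $N$, $N^\perp$, and $\R^d$ carefully enough to convert $\bar H_0 \perp \bar L$ into $L^\perp \subseteq H_0$. The remaining ingredients — reducing to rank $3$ by localization via Theorem~\ref{thm:restr_insc}, invoking the definition of $m$ to force reducibility, and assembling the global decomposition — are routine.
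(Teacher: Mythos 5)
Your proof is correct and follows essentially the same route as the paper: localize at the codimension-$3$ flat $L \cap H_0$, invoke the definition of $m$ to force the resulting rank-$3$ arrangement to be reducible, use Proposition~\ref{prop:reducible} to deduce $H_0 \perp L$, and assemble the product decomposition $\Arr^L \uplus (\Arr \setminus \Arr^L)$. You simply supply much more detail than the paper's terse step ``therefore reducible, hence $H_i \perp L$'', in particular the identification of the two factors of the reducible rank-$3$ localization.
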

\begin{proof}
    Let $\Arr = \{H_1, \dots, H_n\}$ and w.l.o.g.\ let
    $L = H_1 \cap \cdots \cap H_k$ be a large codimension-$2$ flat
    with $k > m$. For any $i > k$, consider the codimension-$3$ flat
    $M = L \cap H_i$. The localization $\Arr_M$ is a strongly
    inscribed arrangement of rank $3$ with the large codimension-$2$
    flat $L$ and therefore reducible. Hence $H_i \perp L$ for
    $i = k+1, \dots, n$ and thus
    \[
        \Arr \ = \ \ess \{H_1, \dots, H_k\} \times \ess \{H_{k+1}, \dots, H_n\}\,.\qedhere
    \]
\end{proof}

The last ingredient for the proof of
Theorem~\ref{thm:finite_in_higher_dimensions} is the following result in
\cite{CuntzMucksch}:
\begin{lem}[{\cite[Lemma 3.11]{CuntzMucksch}}]
    \label{lem:irreducible_closed_under_restrictions}
    Let $\Arr$ be a simplicial and irreducible hyperplane arrangement.
    Then $\Arr^L$ is irreducible for all $L \in \flats(\Arr)$.
\end{lem}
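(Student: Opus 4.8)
The plan is to prove the equivalent contrapositive and to reduce first to restriction by a single hyperplane. Since restriction is transitive, $\Arr^L=(\Arr^M)^L$ whenever $M$ is a flat with $L\subseteq M$, and since restrictions of simplicial arrangements are again simplicial, I would induct on the codimension of $L$: writing a codimension-$k$ flat as $L\subsetneq M$ with $M$ of codimension $k-1$, it suffices to know that $\Arr^M$ is simplicial and irreducible and then to apply the inductive hypothesis to the flat $L$ inside $\Arr^M$. Everything thus reduces to the codimension-one statement: if $\Arr$ is simplicial and irreducible and $H=z_H^\perp\in\Arr$, then the restriction $\Arr^H$ of $\Arr$ to $H$ is irreducible. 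I would prove this contrapositively, assuming $\Arr^H$ reducible and producing a decomposition of $\Arr$.

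Suppose $\Arr^H$ is reducible. Then there are nonzero subspaces $L_1,L_2\subseteq H$ with $L_1\oplus L_2=H$ so that every $K\in\Arr\setminus\{H\}$ contains exactly one of $L_1,L_2$ (a hyperplane containing both would contain $H$ and hence equal $H$). This partitions $\Arr\setminus\{H\}=P\sqcup Q$ according to whether the trace $K\cap H$ belongs to the $L_1$- or the $L_2$-factor. Writing $U_P=\lin\{z_K:K\in P\}$ and $U_Q=\lin\{z_K:K\in Q\}$, the orthogonal projection $\pi_H$ along $z_H$ satisfies $\pi_H(U_P)\oplus\pi_H(U_Q)=H$, since the traces of $P$ and of $Q$ are exactly the two factors of $\Arr^H$.

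The easy case is $z_H\notin U_Q$ (or symmetrically $z_H\notin U_P$). Here I claim the partition of normals $\{z_H\}\cup\{z_K:K\in P\}$ versus $\{z_K:K\in Q\}$ exhibits $\Arr$ as reducible. Their spans are $U_P+\langle z_H\rangle$ and $U_Q$; these sum to $\R^d$ because $\Arr$ is essential, and any $x$ in their intersection has $\pi_H(x)\in\pi_H(U_P)\cap\pi_H(U_Q)=0$, so $x\in\langle z_H\rangle\cap U_Q=0$. Thus the two spans are complementary, $\Arr$ is reducible, and this contradicts irreducibility. Note that simpliciality is not used in this case.

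The remaining case $z_H\in U_P\cap U_Q$ is the heart of the matter, and it is precisely here that simpliciality is indispensable: without it the statement fails, since four hyperplanes in general position in $\R^3$ are irreducible yet restrict to a reducible rank-$2$ arrangement. In this case the naive partition breaks down because $U_P$ and $U_Q$ overlap in the line $\langle z_H\rangle$, so their spans are not complementary. My plan is to reach a contradiction through the rank-$2$ localizations of $\Arr$, using as the key simplicial input that for a simplicial arrangement irreducibility is equivalent to connectivity of the graph on hyperplanes whose edges are the coplanar triples of normals — equivalently, the codimension-$2$ flats lying on a third hyperplane, the same flats that govern $\ZInSpc(\Arr)$ in Theorem~\ref{thm:compute_by_2flats}. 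Directly from the separator one checks that no coplanar triple can mix one normal from $P$ with one from $Q$ unless it also involves $z_H$, so in the reducible restriction every link between the two parts is forced to route through $H$. The decisive step is then to show that simpliciality upgrades this to a genuinely two-dimensional interaction, namely a coplanar triple joining $P$ and $Q$ directly, contradicting the strict splitting $U_P\cap U_Q=\langle z_H\rangle$. I expect this transfer of connectivity of the triple-flat graph across the contraction at $H$ to be the main obstacle; it is where the geometry of simplicial cones — and in particular the abundance of triple flats they force, in contrast to generic arrangements — must be brought to bear, and it constitutes the content of the cited Lemma.
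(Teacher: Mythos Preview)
The paper does not supply its own proof of this lemma; it is simply quoted from \cite[Lemma~3.11]{CuntzMucksch} and used as a black box. There is therefore nothing in the present paper to compare your proposal against.

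As for the proposal itself: the reduction to the codimension-one case via transitivity of restriction is fine, and your ``easy case'' $z_H\notin U_Q$ (or $z_H\notin U_P$) is handled correctly and cleanly. The genuine gap is the remaining case $z_H\in U_P\cap U_Q$, which you yourself identify as the heart of the matter. Your proposed route---connectivity of the graph on hyperplanes whose edges are codimension-$2$ flats of multiplicity $\ge 3$---is not carried out: the equivalence of irreducibility with connectivity of this graph for simplicial arrangements is asserted but not proved, and even granting it, you still need to exhibit a coplanar triple mixing a normal from $P$ with one from $Q$ while avoiding $z_H$. You acknowledge that this is ``the main obstacle'' and then write that it ``constitutes the content of the cited Lemma,'' which is circular: that \emph{is} the lemma you are trying to prove. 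Your observation about four generic hyperplanes in $\R^3$ correctly shows that the statement fails without simpliciality, so the missing step cannot be soft; some concrete use of the simplicial cone structure (e.g.\ analysing the walls of a chamber incident to $H$) is required, and the proposal does not supply it.
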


\begin{proof}[Proof of Theorem~\ref{thm:finite_in_higher_dimensions}]
    Let $N_d$ be the maximal number of hyperplanes of an irreducible
    inscribable arrangement of rank $d$. It suffices to show that $N_d <
    \infty$ for all $d \geq 3$. Assuming the validity of
    Conjecture~\ref{conj:GC}, it follows from 
    Proposition~\ref{prop:infinite_families} that there are only 
    finitely many combinatorial types of irreducible and inscribable
    arrangements of rank $3$ and so $N_3$ is finite. 

    Let $\Arr$ be an irreducible inscribable arrangement of rank $d > 3$ and
    let $H \in \Arr$. The restriction $\Arr^H$ is irreducible and inscribable
    by Lemma~\ref{lem:irreducible_closed_under_restrictions} and
    Theorem~\ref{thm:restr_insc}. Any $H' \in \Arr \setminus \{H\}$ meets $H$
    in a hyperplane of $\Arr^H$. Conversely, any 
    $J \in \Arr^H$ is a codimension-$2$ flat of $\Arr$ and hence is contained
    in at most $m$ hyperplanes by Lemma~\ref{lem:large_2_face_reducible}. We
    compute
    \[
        |\Arr| \ \leq \ (m-1) \cdot |\Arr^H| + 1 \ \leq \ (m-1) N_{d-1} + 1
    \]
    and hence $N_d$ is finite.
\end{proof}

\section{Inner products and algebraic computations}\label{sec:inner_product}

Let $\Arr$ be a (simplicial) arrangement of $n$ hyperplanes in $\R^d$. In this
section we consider the question when $\Arr$ is inscribable up to a change of
coordinates, that is, if there is a $g \in \GL(\R^d)$ such that $g\Arr$ is
inscribable. In light of Corollary~\ref{cor:all_proj_unique}, this allows us
to treat the combinatorial types of all known simplicial arrangements of rank
$3$. For this, we propose a change in perspective.

Throughout $Q$ denotes a real symmetric $d \times d$-matrix and we write
$\inner{x, y}_Q \defeq x^t Q y$ for the associated bilinear form.  A fan
$\Fan$ is \Def{(virtually) $Q$-inscribable} if there is a (virtual) polytope
$P$ with $\Fan(P) = \Fan$ such that $V(P) \subset E_Q \defeq  \{ x \in \R^d :
\inner{x,x}_Q = 1 \}$. If $Q$ is positive definite, then $E_Q$ is an
ellipsoid.  The following lemma is apparent.

\begin{lem}\label{lem:innerprod}
    Let $\Fan$ be a complete fan in $\R^d$ and $g \in \GL(\R^d)$. Then $g\Fan$
    is (strongly or virtually) inscribable if and only if $\Fan$ is (strongly
    or virtually) $Q$-inscribable with respect to the positive definite matrix
    $Q \defeq g^tg$.
\end{lem}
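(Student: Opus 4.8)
The plan is to transport everything through the linear automorphism of $\R^d$ given by $g$, the one identity driving the argument being
\[
    \langle x,x\rangle_Q \ = \ x^tg^tgx \ = \ \langle gx,gx\rangle \qquad (x\in\R^d).
\]
This already supplies most of what is needed: since $g$ is invertible, $Q = g^tg$ is positive definite, so $E_Q$ is a genuine ellipsoid and the notion of being inscribed in $E_Q$ is meaningful; and $g$ restricts to a bijection of $E_Q$ onto the Euclidean unit sphere, with inverse $g^{-1}$. More is true: $g$ is an isometry from $(\R^d,\langle\cdot,\cdot\rangle_Q)$ onto $(\R^d,\langle\cdot,\cdot\rangle)$, so it carries $Q$-orthogonality, $Q$-perpendicular bisectors, and $Q$-circumcenters to the corresponding Euclidean notions. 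It remains to check that $g$ matches up the fan-theoretic and polytopal data in the three cases, which it does essentially by inspection.

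For plain inscribability: the coordinate change induced by $g$ sends a polytope with normal fan $\Fan$ to a polytope with normal fan $g\Fan$ — this is the usual behaviour of normal fans under $\GL(\R^d)$, the only thing to watch being the contravariance of the normal-fan construction. Combined with the displayed identity, a polytope $P$ with $\Fan(P)=\Fan$ has $V(P)\subseteq E_Q$ if and only if its image $P'$ has $\Fan(P')=g\Fan$ and $V(P')$ on the unit sphere; since translating and rescaling $P'$ leaves $\Fan(P')$ unchanged, the latter says precisely that $g\Fan$ is inscribable, and running the equivalence backwards with $g^{-1}$ in place of $g$ yields the converse.

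For strong inscribability one adds the observation that linear maps commute with Minkowski addition and send segments to segments, so $g$ carries zonotopes with normal fan $\Fan$ exactly onto zonotopes with normal fan $g\Fan$; the previous argument then restricts to the subcones generated by zonotopes. For virtual inscribability, $g$ induces a linear isomorphism $\PL(\Fan)\cong\PL(g\Fan)$ which sends support functions to support functions (hence genuine polytopes to genuine polytopes), intertwines Minkowski sums and the passage to the type space, and carries the vertex set of each PL function to its image under the same coordinate change; by the displayed identity it therefore sends functions inscribed in $E_Q$ to functions inscribed in the unit sphere. The point requiring care here — and the only real subtlety in the proof — is that the inscribed \emph{space} (as opposed to inscribed PL functions in general) is cut out by the orthogonality condition of Lemma~\ref{lem:edge_midpoint_orthogonal}, so one must confirm that the $Q$-analogue of $\InSpc(\Fan)$ is exactly the one for which Lemma~\ref{lem:edge_midpoint_orthogonal} holds verbatim with $\langle\cdot,\cdot\rangle_Q$; granting this, the isometry property of $g$ makes the equivalence of virtual $Q$-inscribability of $\Fan$ with virtual inscribability of $g\Fan$ immediate.
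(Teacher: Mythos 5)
The paper offers no argument for this lemma (it is declared ``apparent''), and your transport-by-$g$ strategy is certainly the intended one; the zonotope, virtual, and $\InSpc$ parts of your writeup are fine in principle. The problem is that the one step you yourself flag as needing care --- the contravariance of the normal fan --- is exactly where the argument breaks, and you do not actually resolve it. The map $x \mapsto gx$ does carry $E_Q$ (for $Q = g^tg$) bijectively onto the unit sphere, but it carries a polytope $P$ with $\Fan(P) = \Fan$ to $gP$ with $\Fan(gP) = (g^{-1})^t\,\Fan$, which is not $g\Fan$ in general. Equivalently, $g\Fan = \Fan\bigl((g^{-1})^tP\bigr)$, and $V\bigl((g^{-1})^tP\bigr)$ lies on the unit sphere if and only if $V(P) \subset E_{(g^tg)^{-1}}$, not $E_{g^tg}$. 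So the equivalence asserted in your second paragraph is false as stated: its two halves refer to two different transformations. A concrete check: for $\Arr = \{z_1^\perp, z_2^\perp\}$ in $\R^2$ the fan is $Q$-inscribable iff $\inner{z_1,z_2}_Q = 0$, whereas $g\Arr = \{((g^{-1})^tz_1)^\perp, ((g^{-1})^tz_2)^\perp\}$ is inscribable iff $\inner{z_1,z_2}_{(g^tg)^{-1}} = 0$; with $z_1 = e_1$, $z_2 = e_1+e_2$ and $Q = \begin{psmallmatrix}1 & -1\\ -1 & 2\end{psmallmatrix}$ one has $\inner{z_1,z_2}_Q = 0$ but $\inner{z_1,z_2}_{Q^{-1}} = 3$.

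The upshot is that the statement you are proving holds with $Q = (g^tg)^{-1} = g^{-1}(g^{-1})^t$ rather than $Q = g^tg$ --- equivalently, the lemma as printed becomes correct after replacing $g$ by $(g^{-1})^t$. This is a slip in the paper's statement as much as in your proof, and it is harmless downstream: $g \mapsto (g^{-1})^t$ is a bijection of $\GL(\R^d)$, every positive definite matrix arises as $(g^tg)^{-1}$, and the lemma is only ever invoked with $g$ or $Q$ quantified existentially (in Proposition~\ref{prop:I2_symmetric} the relevant $g$ is diagonal with equal first two entries, a class stable under $g \mapsto (g^{-1})^t$). But a proof cannot simply assert the covariant transformation rule for normal fans. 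Once you transport vertices, support functions, and zonotope generators by $(g^{-1})^t$ --- the map that actually realizes $\Fan \mapsto g\Fan$ --- the rest of your argument, including the isometry of $Q$-bisectors and $Q$-circumcenters and the compatibility with Lemma~\ref{lem:edge_midpoint_orthogonal}, goes through verbatim with $Q^{-1}$ in place of $Q$.
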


Testing if $\Arr$ is strongly inscribable up to linear transformation leads to
a finite set of bilinear equations in $\lambda$ and $Q$, for which we seek a
solution over $\R^n_{>0} \times \mathrm{PSD}_d$.  We treat the two infinite
families in Section~\ref{sec:infinite_families}. For the sporadic simplicial
arrangement from the Gr\"ubaum--Cuntz catalog we employ techniques from
Gr\"obner basis theory to prove the existence or non-existence of a strongly
inscribed realization. Assuming Conjecture~\ref{conj:GC}, this shows that the
only strongly inscribable rank-$3$ arrangements are restrictions of reflection
arrangements.

Our approach also yields information about the space of $Q$'s for which
$\Arr$ is strongly $Q$\nobreakdash-in\-scribable. We determine these for
all known inscribable arrangements.

\renewcommand{\Re}{\operatorname{Re}}%
\renewcommand{\Im}{\operatorname{Im}}%
\newcommand{\regularpolygon}{\mathfrak{P}}%
\subsection{Infinite families of rank-$3$}\label{sec:infinite_families}

Recall from Section~\ref{sec:simplicial} that the two infinite
families in the Gr\"unbaum--Cuntz catalog are denoted $\Arr_3(2n,1)$
for $n \ge 3$ and $\Arr_3(4m+1,1)$ and $m \ge 2$. To construct
$\Arr_3(2n,1)$ as an arrangement of lines in the projective plane, one
starts with the $n$ lines spanned by the sides of a regular polygon
$\regularpolygon_n$ with $n$ sides and adds the $n$ lines of mirror
symmetry of $\regularpolygon_n$. To obtain $\Arr_3(4m+1,1)$ one adds
the line at infinity to $\Arr_3(4m,1)$.

We can realize $\Arr_3(2n, 1)$ as $\mathcal{R}_n \defeq \{z_0^\perp, \dots,
z_{n-1}^\perp, z_0^{\prime\perp}, \dots, z_{n-1}^{\prime\perp}\}\,,$
where
\[
    z_j \ \defeq \ (\Re(\zeta^j ),
    \Im(\zeta^j), 0), \qquad z'_j \ \defeq \
    (-\Im(\zeta^j), \Re(\zeta^j), 1)\,,
\] 
for $j = 0, \dots, n-1$ and $\zeta \defeq \operatorname{exp}(2\pi i / n)$. We
will say that hyperplanes of the form $z_j^\perp$ are of \Defn{type
\color{darkred}$M$}, corresponding to mirrors of $\regularpolygon_n$, and that those of the
form $z_j^{\prime\perp}$ are of type \Defn{type \color{darkblue}$E$},
corresponding to edges of $\regularpolygon_n$. If $n$ is even, then we distinguish those
mirrors that pass through vertices of $\regularpolygon_n$ and those that connect edge
midpoints and denote them by $M_v$ and $M_e$, respectively.  For $m \ge 2$, we
realize $\Arr_3(4m+1,1)$ as $\mathcal{R}'_m \defeq \mathcal{R}_{2n} \cup \{
(0, 0, 1)^\perp \}$ and denote $(0, 0, 1)^\perp$ by $\infty$. Examples are
shown in Figure~\ref{fig:infinite_families}.

\begin{figure}
\begin{center}
    \begin{tikzpicture}[
        dot/.style={circle,inner sep=0pt},
        eline/.style={shorten >=-5cm,shorten <=-5cm}]
        \begin{scope}[scale=0.4]
            \clip(-4.5,-4.5) rectangle (4.5,4.5);
            \node[dot] (n0) at (  0:0) {};
            \node[dot] (n1) at (  0:1) {};
            \node[dot] (n2) at ( 72:1) {};
            \node[dot] (n3) at (144:1) {};
            \node[dot] (n4) at (216:1) {};
            \node[dot] (n5) at (288:1) {};
            
            \draw[eline, darkblue] (n1) -- (n2);
            \draw[eline, darkblue] (n2) -- (n3);
            \draw[eline, darkblue] (n3) -- (n4);
            \draw[eline, darkblue] (n4) -- (n5);
            \draw[eline, darkblue] (n5) -- (n1);
            \draw[eline, darkred] (n1) -- (n0);
            \draw[eline, darkred] (n2) -- (n0);
            \draw[eline, darkred] (n3) -- (n0);
            \draw[eline, darkred] (n4) -- (n0);
            \draw[eline, darkred] (n5) -- (n0);

        \end{scope}
        \node at (0, -2.5) {$\Arr_3(2n, 1)$, $n=5$};
    \end{tikzpicture}
    \qquad
    \begin{tikzpicture}[
        dot/.style={circle,inner sep=1pt,fill},
        eline/.style={shorten >=-5cm,shorten <=-5cm}]
        \begin{scope}[scale=0.6]
            \clip(-3,-3) rectangle (3,3);
            \node (n0) at (  0:0) {};
            \node (n1) at (  0:1) {};
            \node (n2) at ( 60:1) {};
            \node (n3) at (120:1) {};
            \node (n4) at (180:1) {};
            \node (n5) at (240:1) {};
            \node (n6) at (300:1) {};

            \node (h1) at ( 30:1) {};
            \node (h2) at ( 90:1) {};
            \node (h3) at (150:1) {};
            
            \draw[eline, darkblue] (n1) -- (n2);
            \draw[eline, darkblue] (n2) -- (n3);
            \draw[eline, darkblue] (n3) -- (n4);
            \draw[eline, darkblue] (n4) -- (n5);
            \draw[eline, darkblue] (n5) -- (n6);
            \draw[eline, darkblue] (n6) -- (n1);
            \draw[eline, darkred] (n1) -- (n0);
            \draw[eline, darkred] (n2) -- (n0);
            \draw[eline, darkred] (n3) -- (n0);
            \draw[eline, darkred] (h1) -- (n0);
            \draw[eline, darkred] (h2) -- (n0);
            \draw[eline, darkred] (h3) -- (n0);

        \end{scope}
        \node at (0, -2.5) {$\Arr_3(4m+1, 1)$, $m=3$};
        \node at (2.2, 1.7) {$\infty$};
    \end{tikzpicture}
    \end{center}
    \caption{Two members of the infinite families.}
    \label{fig:infinite_families}
\end{figure}
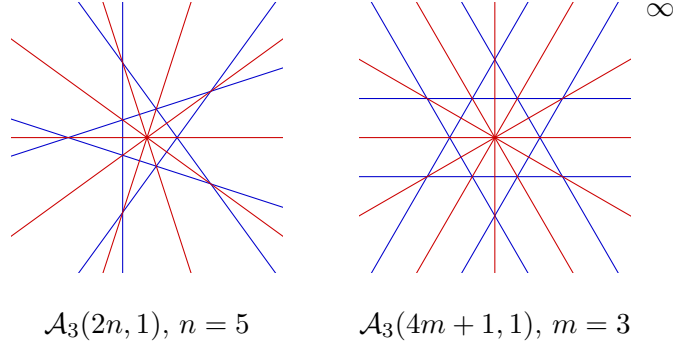

The codimension-$2$ flats (i.e., the $1$-dimensional flats) of $\mathcal{R}_n$
and $\mathcal{R}'_m$ correspond to intersection points (possibly at infinity)
of lines in the projective picture. The possible cases are illustrated in
Figure~\ref{fig:infinite_families}.  We list them according to the types
of hyperplanes in which they are contained. The subscript in $M_v$ and $M_e$ is
to be ignored, if $n$ is odd.
\begin{itemize}[$\bullet$]
\item Intersection of all $n$ lines of type $M$:
    $M_vM_eM_v \dots M_vM_e$;
\item $1$-flats of type $M_eE$;
\item $1$-flats of type $M_vEE$ and $M_eEE$;
\item For $\Arr(4m+1,1)$ there are $m$ $1$-flats of type $M_e\infty$;
\item For $\Arr(4m+1,1)$ there are $m$ $1$-flats of type $M_vE\infty E$.
\end{itemize}

The main result of this section is the following.
\begin{prop}\label{prop:infinite_families}
    The only strongly inscribable simplicial arrangements in the infinite families
    $\Arr_3(2n, 1)$ and $\Arr_3(4m+1, 1)$ are $\Arr_3(6, 1)$, $\Arr_3(8, 1)$,
    $\Arr_3(9, 1)$ and $\Arr_3(13, 1)$.
\end{prop}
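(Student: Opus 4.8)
The four listed arrangements are strongly inscribable because $\Arr_3(6,1)=A_3$ and $\Arr_3(9,1)=B_3$ are reflection arrangements while $\Arr_3(8,1)$ and $\Arr_3(13,1)$ are restrictions of reflection arrangements (Figure~\ref{fig:inscribed_zonotopes}); in all four cases strong inscribability follows from Theorem~\ref{thm:restr_insc} together with the fact that reflection arrangements are strongly inscribable. So the substance is the converse, and the plan is to reduce it to a rigidity statement about the explicit realizations $\mathcal{R}_n$ and $\mathcal{R}'_m$ via the change-of-coordinates picture of Section~\ref{sec:inner_product}. By Proposition~\ref{prop:proj_unique} (projective uniqueness of the catalog) and Lemma~\ref{lem:innerprod}, a member of either family is strongly inscribable if and only if the fixed realization is strongly $Q$-inscribable for some positive definite $Q$, and by Theorem~\ref{thm:compute_by_2flats} in its version relative to $\inner{\cdot,\cdot}_Q$ this amounts to finding $Q$ positive definite and $\lambda\in\Rpp^N$ with $\sR^Q_L\lambda_L=0$ for every codimension-$2$ flat $L$, where $\sR^Q_L$ is the skew-Gram matrix of the relevant normals for $\inner{\cdot,\cdot}_Q$.

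I would then process the flats by size, using the classification of codimension-$2$ flats of $\mathcal{R}_n$ and $\mathcal{R}'_m$ recorded above. Every $2$-line flat — those of type $M_eE$, and the type-$M_e\infty$ flats in $\mathcal{R}'_m$ — has $\sR^Q_L=\begin{psmallmatrix}0 & r\\ -r & 0\end{psmallmatrix}$, so $\sR^Q_L\lambda_L=0$ with $\lambda_L>0$ forces $r=0$. This is a \emph{linear} condition on $Q$, expressing that the mirror bisecting an edge is $\inner{\cdot,\cdot}_Q$-orthogonal to that edge (and, in the second family, that the $M_e$ mirrors are $Q$-orthogonal to $\infty$). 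The order-$2n$ dihedral symmetry group of the regular $n$-gon acts linearly on $\mathcal{R}_n$ and transitively on each of these flat types, so the admissible $Q$ form a subrepresentation of $\mathrm{Sym}^2(\R^3)$. Decomposing $\mathrm{Sym}^2(\R^3)$ as a dihedral representation and checking which isotypic components such an orbit reaches shows that for $n\ge 5$ the only admissible forms are the diagonal ones $\mathrm{diag}(a,a,b)$; up to scaling and ambient symmetry, $Q$ then lies in the one-parameter family $Q_t=\mathrm{diag}(1,1,t)$ with $t>0$, and the extra $M_e\infty$-conditions of $\mathcal{R}'_m$ pin $t$ down further (or already eliminate it).

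With $Q=Q_t$ fixed the remaining conditions become linear in $\lambda$. The localization at the central flat $L_0$ is, for every $t$, the reflection arrangement of $n$ equally spaced lines, so Theorem~\ref{thm:inspc_eq_ker_R} (equivalently Proposition~\ref{prop:2d_insc}) pins down the part of $\lambda$ supported on the mirrors up to scaling. Each $3$-line flat of type $M_vEE$ or $M_eEE$ — and each $4$-line flat $M_vE\infty E$ of $\mathcal{R}'_m$ — then, via the kernel of its skew-Gram matrix whose entries are explicit trigonometric expressions in $2\pi/n$ and $t$, fixes the $\lambda$-ratios of its edge lines against its mirror line and imposes the positivity required for $\lambda\in\Rpp^N$. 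Because every edge line lies in several such flats, propagating these constraints around the cyclic incidence structure over-determines $\lambda$; eliminating $t$ and the free $\lambda$'s should reduce the whole system to a single polynomial condition on $\cos(2\pi/n)$ with no admissible solution once $n$ exceeds a small explicit bound. The finitely many remaining values of $n$ (and of $m$) are then settled by solving the corresponding systems $\sR^Q_L\lambda_L=0$ directly, which leaves exactly $\Arr_3(6,1)$, $\Arr_3(8,1)$, $\Arr_3(9,1)$, $\Arr_3(13,1)$.

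The main obstacle is this last step: organizing the $3$- and $4$-line flat conditions so that infeasibility for all large $n$ is manifest \emph{uniformly in $t$}, rather than verified case by case. Keeping careful track of which mirror/edge pairs are forced $Q$-orthogonal and of how positivity of the edge parameters propagates around the arrangement is the delicate part, and the line at infinity in the $\Arr_3(4m+1,1)$ family adds an extra layer of flats (and extra linear constraints on $Q$) that must be treated on its own.
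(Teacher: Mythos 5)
Your treatment of the positive direction and of the rigidity of the quadric is sound and matches the paper in substance: the four listed arrangements are restrictions of reflection arrangements, and the $M_eE$ (two-line) flat conditions force $Q=\diag(a,a,t)$ for $n\ge 5$ exactly as in Lemma~\ref{lem:inf_family_Q} (the paper does this by an explicit $4\times 4$ determinant rather than by decomposing $\mathrm{Sym}^2(\R^3)$ as a dihedral representation, but either route works). One small inaccuracy: the $M_e\infty$ conditions in the second family do \emph{not} pin down $t$ further --- with $Q=\diag(a,a,t)$ the type-$M_e$ normals are automatically $Q$-orthogonal to $(0,0,1)$, so the same one-parameter family of quadrics survives there.

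The genuine gap is precisely the step you flag as ``the main obstacle'': you never actually rule out the large-$n$ cases, you only assert that eliminating $t$ and $\lambda$ ``should'' produce an infeasible polynomial condition in $\cos(2\pi/n)$. That is the entire content of the hard direction, and the elimination you describe is not carried out (nor is it obvious that it yields a \emph{uniform} bound in $n$ rather than a case-by-case computation). The paper avoids this algebra altogether with a geometric argument you are missing: using projective uniqueness it first puts the arrangement in $I_2(n)$-symmetric position, and then --- crucially --- invokes Corollary~2.9 of~\cite{InFan1} to replace the inscribed zonotope $Z$ by an $I_2(n)$-symmetric one. Symmetry forces all edges of a given type ($M_v$, $M_e$, $E$, $\infty$) to have equal length, and since an inscribed polygon is determined by its edge lengths, all hexagonal facets of type $M_vEE$ are congruent (with interior angles $\alpha_1,\beta_1$ satisfying $4\alpha_1+2\beta_1=4\pi$) and likewise for $M_eEE$ (angles $\alpha_2,\beta_2$ with $4\alpha_2+2\beta_2=4\pi$). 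For $n\ge 5$ (resp.\ $m\ge 4$) the boundary of $Z$ contains two $M_vEE$ and two $M_eEE$ hexagons meeting around two vertices, and the angle sums at those vertices give $\beta_1+2\alpha_2<2\pi$ and $\beta_2+2\alpha_1<2\pi$, contradicting $2\alpha_1+\beta_1=2\alpha_2+\beta_2=2\pi$. Without this configuration argument (or a completed version of your elimination), the proposal does not prove the converse for any infinite set of $n$, so the proposition is not established.
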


\newcommand{\diag}{\operatorname{diag}}%
We first derive conditions on $Q$ for which $\mathcal{R}_n$ and
$\mathcal{R}'_m$ are virtually $Q$-inscribable.

\begin{lem}\label{lem:inf_family_Q}
    Let $\Arr = \mathcal{R}_n$, $n \geq 5$ or $\Arr = \mathcal{R}'_m$, $m \geq
    3$. Then $\Arr$ is virtually $Q$-inscribable if and only if $Q = \diag(a, a,
    t)$, where $a, t \in \Ri$.
\end{lem}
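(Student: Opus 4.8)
The plan is to turn virtual $Q$-inscribability into a list of Pfaffian equations, one per codimension-$2$ flat, and then solve that list. By Lemma~\ref{lem:innerprod}, $\Arr$ is virtually $Q$-inscribable iff $g\Arr$ is virtually inscribable for some $g$ with $g^tg=Q$, and applying Corollary~\ref{cor:pfaffian} to $g\Arr$ — equivalently, rerunning Section~\ref{sec:2d_algebraic} with $\inner{\cdot,\cdot}$ replaced by $\inner{\cdot,\cdot}_Q$ — this holds iff $\pfaff\sR^Q_L=0$ for every codimension-$2$ flat $L$, where $\sR^Q_L$ is the skew-Gram matrix, formed with $\inner{\cdot,\cdot}_Q$, of the normals of the hyperplanes through $L$; a flat lying in an odd number of hyperplanes imposes no condition. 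I would then run through the list of codimension-$2$ flats of $\mathcal{R}_n$ and $\mathcal{R}'_m$ recorded above (write $n$ for the number of mirror hyperplanes, so $n\ge5$, indeed $n=2m\ge6$ when $\Arr=\mathcal{R}'_m$): the flats of type $M_vEE$ and $M_eEE$ lie in three hyperplanes and drop out; those of type $M_eE$, and (for $\mathcal{R}'_m$) $M_e\infty$, lie in two hyperplanes, so their condition is the single \emph{linear} equation $\inner{z,z'}_Q=0$; and the central flat $L_0$ (through all $n$ mirrors) together with the flats of type $M_vE\infty E$ (four hyperplanes) give the remaining, higher-degree, conditions.

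For the ``only if'' direction the key point is that for each $j\in\Z/n$ the perpendicular bisector mirror of the $j$-th edge of the underlying regular polygon has normal $z_j=(\cos\theta_j,\sin\theta_j,0)$, $\theta_j=2\pi j/n$, and meets exactly that edge, whose normal is $z'_j=(-\sin\theta_j,\cos\theta_j,1)$, so $z_j^\perp\cap z_j^{\prime\perp}$ is an $M_eE$ flat; for even $n$ the hyperplanes $z_j^\perp$ and $z_{j+n/2}^\perp$ coincide, but these $n$ flats remain distinct. Writing $Q=(q_{k\ell})$, one computes
\[
    \inner{z_j,z'_j}_Q \;=\; \tfrac12(q_{22}-q_{11})\sin 2\theta_j + q_{12}\cos 2\theta_j + q_{13}\cos\theta_j + q_{23}\sin\theta_j .
\]
The four functions $j\mapsto\cos\theta_j,\sin\theta_j,\cos2\theta_j,\sin2\theta_j$ on $\Z/n$ are linearly independent over $\R$ precisely when $n\ge5$, so requiring the right-hand side to vanish for all $j$ forces $q_{11}=q_{22}$ and $q_{12}=q_{13}=q_{23}=0$. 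Hence $Q=\diag(a,a,t)$ with $a=q_{11}$, $t=q_{33}$; and $a,t\neq0$ because a degenerate $Q$ is not of the form $g^tg$ (directly, $a=0$ kills the induced form in which the mirrors lie and $t=0$ collapses the $E$-hyperplanes, either obstructing inscribability).

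For the ``if'' direction, fix $Q=\diag(a,a,t)$. The symmetry group of the regular polygon, realized on $\R^3$ as the orthogonal maps $x\mapsto(O(x_1,x_2),x_3)$ with $O\in O(2)$, preserves both $\inner{\cdot,\cdot}$ and $Q=aI_2\oplus t$; it permutes $\Arr$, acts on the codimension-$2$ flats transitively within each type in the list, and carries a polytope inscribed in $E_Q$ with normal fan $\Arr$ to another such polytope, as well as $\sR^Q_L$ to $\sR^Q_{\rho L}$. Thus it suffices to verify one flat of each type. For $M_eE$ and $M_e\infty$ the equation $\inner{z,z'}_Q=0$ is immediate from the displayed formula with $q_{11}=q_{22}$, $q_{12}=q_{13}=q_{23}=0$. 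For $L_0$ the $n$ mirror hyperplanes, essentialized, form the arrangement of $n$ equally spaced lines in $\R^2$ with the form $aI_2$; this is a rescaling of the reflection arrangement $\Arr_{I_2(n)}$, with the regular $2n$-gon as inscribed zonotope, so (for $n$ even) $\pfaff\sR^{aI_2}_{L_0}=a^{n/2}\pfaff\sR^{I}_{L_0}=0$ by Corollary~\ref{cor:pfaff}, while for $n$ odd there is no condition; alternatively invoke Example~\ref{ex:small_ieqs}/Proposition~\ref{prop:2d_insc} for the reduced profile $(\pi/n,\dots,\pi/n)$. For $M_vE\infty E$, in cyclic order the four normals are $z_{M_v}=(c,s,0)$, $z'_k$, $e_3$, $z'_l$, where $z'_k,z'_l$ are the normals of two parallel edges, so their $xy$-parts are $(c,s)$ and $-(c,s)$; then $\inner{z_{M_v},e_3}_Q=0$ kills the middle term of the $4\times4$ Pfaffian and the remaining two terms sum to $at\bigl((c,s)\cdot((a_k,b_k)+(a_l,b_l))\bigr)=0$. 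This exhausts the conditions.

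The part I expect to be the real work is the ``only if'' bookkeeping: pinning down precisely which hyperplanes pass through each codimension-$2$ flat — hence which Pfaffian conditions occur and of what order — and then checking that the $M_eE$ conditions \emph{alone} already resolve all four frequencies $\pm1,\pm2$, which is exactly where the hypotheses $n\ge5$ and $m\ge3$ are used (for $n\le4$ the frequency-$2$ characters on $\Z/n$ degenerate and one obtains strictly less). A secondary point is to confirm that the higher-order conditions at $L_0$ and at the $M_vE\infty E$ flats do not cut the family $\diag(a,a,t)$ down further; the ``if''-direction computations are precisely what confirms this.
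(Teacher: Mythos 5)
Your proposal is correct and follows essentially the same route as the paper: reduce virtual $Q$-inscribability to the Pfaffian conditions of Corollary~\ref{cor:pfaffian} on the codimension-$2$ flats, extract from the two-hyperplane flats of type $M_eE$ the linear system in the entries of $Q$ whose only solutions for $n\ge 5$ are $\diag(a,a,t)$, and then verify the remaining flat types for the converse. The only differences are cosmetic or supplementary: you replace the paper's explicit $4\times 4$ determinant by the linear independence of the frequency-$\pm1,\pm2$ characters on $\Z/n$ (equivalent), and you spell out the $M_vE\infty E$ Pfaffian for $\mathcal{R}'_m$ and the nondegeneracy $a,t\neq 0$, which the paper leaves as ``analogous''/implicit.
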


\begin{proof}
    We only consider the arrangement $\Arr = \mathcal{R}_n$ of type
    $\Arr_3(2n, 1)$ for $n \ge 5$. The argument for $\Arr_3(4m+1, 1)$ is
    analogous. Let $Q$ be a symmetric $3\times 3$-matrix.

    The codimension-$2$ flats of type $M_eE$ are precisely $L_j \defeq
    z_j^\perp \cap z_j^{\prime\perp}$ for $j = 0, \dots, n-1$.  Using
    Example~\ref{ex:small_tRs}, we deduce that $\pfaff \sR_{L_j} = \inner{z_j,
    z'_j}_Q = 0$ and thus
    \begin{align*}
      0 \ = \ -\Re(\zeta^j)\Im(\zeta^j) (Q_{11}-Q_{22})
      + (\Re(\zeta^j)^2-\Im(\zeta^j)^2) Q_{12} + \Re(\zeta^j) Q_{13} + \Im(\zeta^j) Q_{23}\,.
    \end{align*}
    Using $\Re(\zeta^j) = \frac{1}{2}(\zeta^j + \zeta^{-j})$ and
    $\Im(\zeta^j) = \frac{i}{2}(\zeta^j - \zeta^{-j})$ we get a linear
    system of equations for $j = 0, 1, 2, 3$:
    \[
        \underbrace{
        \begin{pmatrix}
            0 & 1 & 1 & 0\\
            \frac{1}{4} i (\zeta^{2} - \zeta^{-2}) & \frac{1}{2} (\zeta^{2} + \zeta^{-2}) & \frac{1}{2} (\zeta     + \zeta^{-1}) & -\frac{1}{2} i (\zeta    + \zeta^{-1})  \\
            \frac{1}{4} i (\zeta^{4} - \zeta^{-4}) & \frac{1}{2} (\zeta^{4} + \zeta^{-4}) & \frac{1}{2} (\zeta^{2} + \zeta^{-2}) & -\frac{1}{2} i (\zeta^{2} + \zeta^{-2}) \\
            \frac{1}{4} i (\zeta^{6} - \zeta^{-6}) & \frac{1}{2} (\zeta^{6} + \zeta^{-6}) & \frac{1}{2} (\zeta^{3} + \zeta^{-3}) & -\frac{1}{2} i (\zeta^{3} + \zeta^{-3})
        \end{pmatrix}
      }_{\eqdef X}
      \begin{pmatrix}
          Q_{11}-Q_{22}\\Q_{12}\\Q_{13}\\Q_{23}
      \end{pmatrix}
      \ = \ 0\,.
    \]
    The determinant of $X$ is
    \[
        \det(X) \ = \ -\frac{1}{8\zeta^7} (\zeta-1)^2 (\zeta^2 - 1)
        (\zeta^3-1)^2 (\zeta^4-1)\,.
    \]
    Thus, for $n \geq 5$, $\det X \neq 0$ and we conclude that $Q_{12} =
    Q_{13} = Q_{23} = 0$ and $Q_{11} = Q_{22}$. 

    Conversely, to show that $\Arr$ is virtually $Q$-inscribable for 
    $Q = \diag(a, a, t)$, $a, t \neq 0$, we
    show for each codimension-$2$ flat $L$ that $\Arr_L$ is virtually
    inscribable. For $L$ of type $M_eE$ we have
    $\pfaff \sR_L = 0$ by construction.
    If $L$ is of type $M_vM_eM_v\dots M_vM_e$, then $\Arr_L$ is the
    arrangement of a regular polygon and is therefore (virtually) inscribable.
    For the types $M_vEE$ and $M_eEE$, the localization $\Arr_L$ has an odd
    number of hyperplanes, thus $\det \sR_L = 0$. 
\end{proof}

Let us denote by $I_2(n)$ the symmetry group of the regular polygon
$\regularpolygon_n$ acting on $\R^3 = \R^2 \oplus \R$ by fixing the
last coordinate. By construction, $\mathcal{R}_n$ and $\mathcal{R}'_{n/2}$
are $I_2(n)$-symmetric.

\begin{prop}\label{prop:I2_symmetric}
    If $\Arr_3(2n,1)$ or $\Arr_3(4m+1,1)$ (for $n = 2m$) is
    inscribable, then there exists an inscribable realization that is
    $I_2(n)$-symmetric.
\end{prop}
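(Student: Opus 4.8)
The plan is to combine Lemma~\ref{lem:innerprod} with Lemma~\ref{lem:inf_family_Q}: since (for large enough index) the only inner products witnessing inscribability are the $I_2(n)$-invariant ones, the linear map straightening the associated ellipsoid to a round sphere can be taken to commute with $I_2(n)$, and then it carries the $I_2(n)$-symmetric realization to an $I_2(n)$-symmetric inscribed one.

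Concretely, suppose $\Arr = \Arr_3(2n,1)$ is inscribable and realize it as $\mathcal{R}_n$ (the case $\Arr_3(4m+1,1)$, realized as $\mathcal{R}'_m$ and with symmetry group $I_2(2m)$, is entirely parallel). By Lemma~\ref{lem:innerprod}, $\mathcal{R}_n$ is $Q$-inscribable for some positive definite $Q$, hence in particular virtually $Q$-inscribable; so for $n \ge 5$ (resp.\ $m \ge 3$) Lemma~\ref{lem:inf_family_Q} forces $Q = \diag(a,a,t)$ with $a,t > 0$.

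Next I would produce the symmetric realization. Since $I_2(n)$ acts on $\R^3 = \R^2 \oplus \R$ by orthogonal block-diagonal matrices of the form $\diag(w_0,1)$ with $w_0 \in O(2)$, the positive definite matrix $g \defeq Q^{1/2} = \diag(\sqrt a,\sqrt a,\sqrt t)$ commutes with every element of $I_2(n)$ (the scalar $\sqrt a$ commutes with $w_0$). Using $w\mathcal{R}_n = \mathcal{R}_n$ for all $w \in I_2(n)$, we get $w(g\mathcal{R}_n) = g(w\mathcal{R}_n) = g\mathcal{R}_n$, so $g\mathcal{R}_n$ is $I_2(n)$-symmetric; and since $Q = g^tg$, Lemma~\ref{lem:innerprod} shows that $g\mathcal{R}_n$ is inscribable. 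Thus $g\mathcal{R}_n$ is the required realization.

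It remains to treat the finitely many inscribable arrangements in these families that are not covered by Lemma~\ref{lem:inf_family_Q}, namely $\Arr_3(6,1) = A_3$, $\Arr_3(8,1)$ and $\Arr_3(9,1) = B_3$. For $A_3$ and $B_3$ one may take the standard reflection arrangement itself, which is inscribed by Proposition~\ref{prop:refl_insc} and carries an $I_2(3)$- (resp.\ $I_2(4)$-) action via the subgroup of the reflection group fixing a coordinate axis. The single type $\Arr_3(8,1)$ is handled by a direct computation showing that its $I_2(4)$-symmetric realization $\mathcal{R}_4$ is $\diag(a,a,t)$-inscribable for a suitable $a,t>0$ (compare the explicit inscribed zonotopes in Figure~\ref{fig:inscribed_zonotopes}). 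The only genuine obstacle is this final bookkeeping: the clean argument applies precisely where Lemma~\ref{lem:inf_family_Q} rigidifies $Q$ (one cannot instead simply average $Q$ over $I_2(n)$, since $Q$-inscribability need not be convex in $Q$), so the low rank-$3$ exceptions must be checked by hand.
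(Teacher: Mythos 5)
Your core argument is the paper's own: force $Q = \diag(a,a,t)$ via Lemma~\ref{lem:inf_family_Q}, take $g = Q^{1/2} = \diag(\sqrt a,\sqrt a,\sqrt t)$, observe that $g$ commutes with the block-diagonal action of $I_2(n)$ on $\R^2\oplus\R$, and conclude that $g\mathcal{R}_n$ is an $I_2(n)$-symmetric inscribable realization. That part is correct.

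There is, however, one missing link at the very first step. The hypothesis is that the combinatorial type $\Arr_3(2n,1)$ is inscribable, i.e.\ that \emph{some} arrangement $\Arr$ combinatorially isomorphic to it is inscribable. To ``realize it as $\mathcal{R}_n$'' and then invoke Lemma~\ref{lem:innerprod} to conclude that $\mathcal{R}_n$ is $Q$-inscribable for a positive definite $Q$, you must know that $\Arr = g\mathcal{R}_n$ for some $g \in \GL(\R^3)$ — that is, that $\mathcal{R}_n$ is projectively unique, so that the combinatorial isomorphism is induced by a linear one. This is exactly Proposition~\ref{prop:proj_unique}, which the paper cites at this point and which your write-up silently assumes; without it, Lemma~\ref{lem:innerprod} simply does not apply to the given inscribable realization. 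Once that citation is inserted, the proof is complete. Your separate treatment of the small indices $\Arr_3(6,1)$, $\Arr_3(8,1)$, $\Arr_3(9,1)$ not covered by Lemma~\ref{lem:inf_family_Q} is a legitimate piece of bookkeeping that the paper's own proof leaves implicit (those cases are restrictions of reflection arrangements and are disposed of directly in Proposition~\ref{prop:infinite_families}); your remark that one cannot simply average $Q$ over $I_2(n)$ is correct but not needed.
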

\begin{proof}
    We again only consider the case when $\Arr$ is an inscribable
    arrangement of type $\Arr_3(2n,1)$. Since $\mathcal{R}_n$ is
    projectively unique by Proposition~\ref{prop:proj_unique}, there
    is some $g \in \GL(\R^3)$ with $\Arr = g\mathcal{R}_n$. Thus
    $\mathcal{R}_n$ is $Q$-inscribable with respect to $Q =
    g^tg$. From Lemma~\ref{lem:inf_family_Q} we conclude that up to an
    orthogonal transformation, we can assume that
    $g = \diag(\sqrt{a},\sqrt{a},\sqrt{t})$ for some $a,t > 0$. Since
    $g$ commutes with the action of $I_2(n)$ on $\R^3$, $\Arr$ is
    $I_2(n)$-symmetric up to orthogonal transformation.
\end{proof}

\begin{proof}[Proof of Proposition~\ref{prop:infinite_families}]
    All of the arrangements $\Arr_3(6, 1)$, $\Arr_3(8, 1)$, $\Arr_3(9, 1)$ and
    $\Arr_3(13, 1)$ are restrictions of reflection arrangements, see
    Figure~\ref{fig:restrictions}, and therefore strongly inscribable by
    Theorem~\ref{thm:restr_insc}. Therefore, we assume that $\Arr$ is a
    strongly inscribable arrangement of type $\Arr_3(2n, 1)$ for $n \geq 5$ or
    of type $\Arr_3(4m+1,1)$ for $m \geq 4$.

    By Proposition~\ref{prop:I2_symmetric} we can assume that $\Arr$ is
    symmetric with respect to $I_2(n)$. Let $Z$ be an inscribed zonotope for
    $\Arr$. Using Corollary~2.9 of~\cite{InFan1} we can assume that $Z$ is
    also symmetric with respect to $I_2(n)$. 

    The type of a hyperplanes of $\Arr$ is invariant under $I_2(n)$
    and hence the edges of $Z$ of the same type $M_v$, $M_e$, $E$, or
    $\infty$ have the same length. As inscribed polygons are uniquely
    determined by their edge lengths, we see that all facets
    corresponding to codimension-$2$ flats of the same type are
    congruent. Hexagons of type $M_vEE$ have two different angles
    $\alpha_1$, $\beta_1$ (between the edges of type $M_v$ and $E$ and
    between edges of type $E$ and $E$), likewise for $M_eEE$. For
    $\Arr_3(2n,1)$ and $n \geq 5$ or $\Arr_3(4m+1,1)$ and $m \geq 4$,
    $Z$ contains the configuration of four hexagons depicted in
    Figure~\ref{fig:four_hexagons}.
    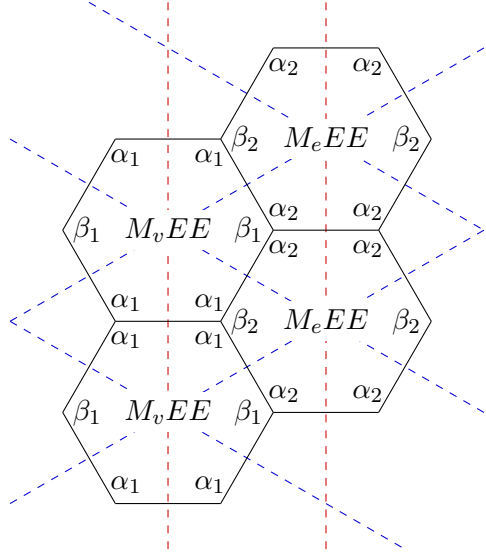
\begin{figure}
    \begin{center}
        \begin{tikzpicture}[yscale=1.732, scale=0.7]
            \draw[dashed, darkred] (-1, 6.5) to (-1, 0.5);
            \draw[dashed, darkred] ( 2, 6.5) to ( 2, 0.5);
            \draw[dashed, darkblue] (-4, 3) to (3.5, 0.5);
            \draw[dashed, darkblue] (-4, 5) to (5, 2);
            \draw[dashed, darkblue] (-2.5, 6.5) to (5, 4);
            \draw[dashed, darkblue] (-4, 1) to (5, 4);
            \draw[dashed, darkblue] (-4, 3) to (5, 6);
                        
            \node[anchor=300] at ( 0, 1) {$\alpha_1$};
            \node[anchor=240] at (-2, 1) {$\alpha_1$};
            \node[anchor=180] at (-3, 2) {$\beta_1$};
            \node[anchor=120] at (-2, 3) {$\alpha_1$};
            \node[anchor= 60] at ( 0, 3) {$\alpha_1$};
            \node[anchor=  0] at ( 1, 2) {$\beta_1$};

            \node[anchor=300] at ( 3, 2) {$\alpha_2$};
            \node[anchor=240] at ( 1, 2) {$\alpha_2$};
            \node[anchor=180] at ( 0, 3) {$\beta_2$};
            \node[anchor=120] at ( 1, 4) {$\alpha_2$};
            \node[anchor= 60] at ( 3, 4) {$\alpha_2$};
            \node[anchor=  0] at ( 4, 3) {$\beta_2$};
            
            \node[anchor=300] at ( 0, 3) {$\alpha_1$};
            \node[anchor=240] at (-2, 3) {$\alpha_1$};
            \node[anchor=180] at (-3, 4) {$\beta_1$};
            \node[anchor=120] at (-2, 5) {$\alpha_1$};
            \node[anchor= 60] at ( 0, 5) {$\alpha_1$};
            \node[anchor=  0] at ( 1, 4) {$\beta_1$};

            \node[anchor=300] at ( 3, 4) {$\alpha_2$};
            \node[anchor=240] at ( 1, 4) {$\alpha_2$};
            \node[anchor=180] at ( 0, 5) {$\beta_2$};
            \node[anchor=120] at ( 1, 6) {$\alpha_2$};
            \node[anchor= 60] at ( 3, 6) {$\alpha_2$};
            \node[anchor=  0] at ( 4, 5) {$\beta_2$};

            \node[preaction={fill, white}, inner sep=3pt] at (-1, 2) {$M_vEE$};
            \node[preaction={fill, white}, inner sep=3pt] at ( 2, 3) {$M_eEE$};
            \node[preaction={fill, white}, inner sep=3pt] at (-1, 4) {$M_vEE$};
            \node[preaction={fill, white}, inner sep=3pt] at ( 2, 5) {$M_eEE$};
            
            \draw ( 0, 1) to ( 1, 2);
            \draw ( 1, 2) to ( 3, 2);
            \draw ( 1, 2) to ( 0, 3);
            \draw ( 0, 3) to ( 1, 4);
            \draw ( 1, 4) to ( 3, 4);
            \draw ( 4, 3) to ( 3, 4);
            \draw ( 3, 2) to ( 4, 3);
            \draw ( 1, 4) to ( 0, 5);
            \draw ( 0, 5) to ( 1, 6);
            \draw ( 1, 6) to ( 3, 6);
            \draw ( 4, 5) to ( 3, 6);
            \draw ( 3, 4) to ( 4, 5);
            \draw (-2, 1) to ( 0, 1);
            \draw (-2, 1) to (-3, 2);
            \draw (-3, 2) to (-2, 3);
            \draw (-2, 3) to ( 0, 3);
            \draw (-2, 3) to (-3, 4);
            \draw (-3, 4) to (-2, 5);
            \draw (-2, 5) to ( 0, 5);

        \end{tikzpicture}
        \caption{An impossible facet configuration in the boundary of
          an inscribed zonotope.\label{fig:four_hexagons}}
    \end{center}
    \end{figure}
    Considering the interior angle sum of each of the hexagons gives
    $4\alpha_1 + 2\beta_1 = 4\pi$ and $4\alpha_2 + 2\beta_2 = 4\pi$,
    but considering the angles at the two vertices in the center gives
    $\beta_1 + 2 \alpha_2 < 2\pi$ and $\beta_2 + 2 \alpha_1 < 2\pi$, a
    contradiction.
\end{proof}
\newcommand{\roots}{\Phi}%

\begin{rem}
    It remains an open question which of the arrangements $\Arr$ in the two
    infinite families are (non-strongly and non-virtually) $Q$-inscribable.
    Based on experiments, we conjecture, that $\Arr$ can always be $Q$-inscribed
    for $Q = \diag(a, a, t)$ with $0 <
    t \ll a$.
\end{rem}

\subsection{Algebraic computations and sporadic simplicial  arrangements}
\label{sec:sporadic}
For an arrangement $\Arr = \{z_i^\perp : i=1,\dots,n\}$, 
$\lambda = (\lambda_1,\dots,\lambda_n) \in (\Ri)^n$, and $Q \in \R^{d \times d}$,
it is a simple matter of linear algebra to check if $Z_\lambda = \sum_i
\lambda_i [-z_i,z_i]$ is $Q$-inscribed: We may use
Lemma~\ref{lem:edge_midpoint_orthogonal} to conclude that $Z_\lambda$ is
$Q$-inscribed if and only if 
\[
    0 \ = \ \inner{z_{j(\tau)}, c(e_\tau)}_Q \ = \ \sum_{i=1}^n \tau_i
    \lambda_i \inner{z_{j(\tau)},z_i}_Q
\]
for all edges $\tau \in E(\TG)$. This gives a linear subspace $\{ Q  :
Z_\lambda \text{ is $Q$-inscribed} \} \subset \R^{d \times d}$. Now to test if
$Z_\lambda$ is inscribed into an ellipsoid, it suffices to test the linear
subspace contains a positive definite matrix, which can be done by
\emph{semidefinite programming}.

Conversely, for a fixed $Q$, Theorem~\ref{thm:compute_by_2flats} may be
adapted to test if there is $\lambda \in \R^n$ such that $Z_\lambda$ is
(virtually) $Q$-inscribed. Let $L = (z_{i_1},\dots,z_{i_k})$ be an ordered
codimension-$2$ flat of $\Arr$ and write $\sR_L(Q) \in \R^{k \times k}$ for
the skew-Gram matrix with respect to $Q$:
\begin{equation}\label{eqn:QskewGram}
    \sR_L(Q)_{ij} = -\sR_L(Q)_{ji} = \inner{z_i,z_j}_Q
\end{equation}
for $1 \le i < j \le k$ and $\sR_L(Q)_{ii} = 0$. Then $\Arr$ is strongly
$Q$-inscribable if and only if there is $\lambda \in \R^n_{>0}$ with
$\sR_L(Q)\lambda_L = 0$ for all ordered codimension-$2$ flats $L$ of $\Arr$.

\begin{rem} \label{rem:algorithm}
    Note that it is quite simple to enumerate all ordered codimension-$2$
    flats: Given $z_1,\dots,z_n \in \R^d$, we can assume that there are $c,w
    \in \R^d$ such that $\inner{c,z_i} = 1$ for $i=1,\dots,n$ and
    $\inner{w,z_i} \neq \inner{w,z_j}$ for $i\neq j$. The codimension-$2$ flat
    of $\Arr$ correspond to $2$-dimensional subspaces spanned by subsets of
    $z_1,\dots,z_n$. The total order induced by $w$ induces a cyclic order on
    each $2$-dimensional subspace. For given $Q$, this give a simple algorithm
    to set up the linear programming feasibility problem of finding $\lambda
    \in \R^n$ with $\lambda > 0$ and $\sR_L(Q) \lambda_L = 0$ for all ordered
    codimension-$2$ flats $L$.
\end{rem}

\newcommand\ideal[1]{\langle {#1} \rangle}%
\newcommand\Jideal{\mathcal{J}_{\Arr}}%
Let $S \defeq \C[Q_{ij}, \lambda_k : 1 \leq i \leq j \leq n, 1 \leq k \leq n]$. The
ideal
\[
    \Jideal \ \defeq \ \ideal{
        \sR_L(Q) \lambda_L  :
    L \in \flats_{d-2}(\Arr)} : (\lambda_1 \cdot \lambda_2 \cdot \ldots \cdot
    \lambda_n \cdot \det Q)^\infty \ \subseteq \ S
\]
contains the Zariski closure of the collection of pairs $(\lambda,Q) \in
(\Ri)^n \times \GL(\R^d)$ such that $Z_\lambda$ is $Q$-inscribed.
The saturation with respect to $\lambda_1 \cdot \lambda_2 \cdot
\ldots \cdot \lambda_n \cdot \det Q$ ensures that the closure takes
place over $(\Ri)^n \times \GL(\R^d)$. Standard facts from
computational algebra now give the following simple criterion for
non-inscribability.

\begin{prop}\label{prop:strongly_groebner_criterion}
    Let $\Arr$ be a simplicial arrangement. If $\Jideal = \ideal{1}$, then
    there is no non-singular $Q$ such that $\Arr$ is strongly virtually
    $Q$-inscribable.
\end{prop}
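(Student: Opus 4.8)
The plan is to unwind the definitions so that ``$\Arr$ is strongly virtually $Q$-inscribable for some non-singular $Q$'' becomes the assertion that a certain affine variety is non-empty, and then to invoke the triviality that the zero locus of $\ideal{1}$ is empty.

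The first step is the algebraic reformulation: for a non-singular symmetric matrix $Q \in \R^{d\times d}$, the arrangement $\Arr$ is strongly virtually $Q$-inscribable if and only if there is $\lambda \in (\Ri)^n$ with $\sR_L(Q)\,\lambda_L = 0$ for every $L \in \flats_{d-2}(\Arr)$. I would obtain this exactly as in Section~\ref{sec:repr}: a $2$-face $F$ of $Z_\lambda$ with $\aff_0(F)^\perp = L = (z_{i_1},\dots,z_{i_k})$ is a translate of $\lambda_{i_1}[-z_{i_1},z_{i_1}] + \cdots + \lambda_{i_k}[-z_{i_k},z_{i_k}]$, so by the $\inner{\cdot,\cdot}_Q$-version of Lemma~\ref{lem:edge_midpoint_orthogonal} together with Theorem~\ref{thm:inspc_eq_ker_R}, the virtual zonotope $Z_\lambda$ is $Q$-inscribed precisely when each of these $2$-faces is $Q$-inscribed, i.e.\ precisely when $\sR_L(Q)\lambda_L = 0$ for all $L$, with $\sR_L(Q)$ the skew-Gram matrix of~\eqref{eqn:QskewGram}. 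The requirement $\lambda \in (\Ri)^n$ (every coordinate non-zero), rather than $\lambda \in \Rpp^n$, is exactly what implements the ``virtual'' relaxation while still forcing every hyperplane of $\Arr$ to occur. The only part of this step that needs genuine checking rather than bookkeeping is that Lemma~\ref{lem:edge_midpoint_orthogonal} and Theorem~\ref{thm:inspc_eq_ker_R} remain valid over an arbitrary non-degenerate symmetric bilinear form: their proofs use only bilinearity and symmetry, never positive-definiteness, so they transfer verbatim. This is where I expect the real work to lie.

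Granting the reformulation, I argue by contradiction. Suppose $\Arr$ is strongly virtually $Q$-inscribable for a non-singular symmetric $Q$, and pick $\lambda \in (\Ri)^n$ as above. The real pair $(\lambda, Q)$ determines a point $p$ of the complex affine space with coordinate ring $S = \C[Q_{ij},\lambda_k]$, and by the reformulation $p$ lies in the zero set of the unsaturated ideal $I_0 \defeq \ideal{\sR_L(Q)\lambda_L : L \in \flats_{d-2}(\Arr)}$. Moreover $f(p) \neq 0$ for $f \defeq \lambda_1\cdots\lambda_n\cdot\det Q$, since each $\lambda_k \neq 0$ and $Q$ is non-singular. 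By definition of saturation, every $g \in \Jideal = I_0 : f^\infty$ satisfies $f^N g \in I_0$ for some $N\ge 0$, hence $f(p)^N g(p) = 0$, hence $g(p) = 0$ because $f(p) \neq 0$; thus $p$ lies in the zero set $V(\Jideal)$ and in particular $V(\Jideal) \neq \emptyset$.

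Finally, $\Jideal = \ideal{1}$ means $1 \in \Jideal$, so the constant polynomial $1$ would vanish at every point of $V(\Jideal)$, forcing $V(\Jideal) = \emptyset$ and contradicting the previous paragraph. Hence no non-singular $Q$ makes $\Arr$ strongly virtually $Q$-inscribable. In short, once ``$Q$-inscribed'' is pinned down as the vanishing of all the $\sR_L(Q)\lambda_L$ (the delicate step), the rest is the standard ``saturate to discard the components inside $V(f)$, then quote the weak Nullstellensatz'' routine.
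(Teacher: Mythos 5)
Your proof is correct and follows essentially the same route the paper takes (and spells out): the generators $\sR_L(Q)\lambda_L$ cut out the realization locus, a genuine strongly virtually $Q$-inscribed pair $(\lambda,Q)$ with all $\lambda_i\neq 0$ and $\det Q\neq 0$ survives the saturation by $\lambda_1\cdots\lambda_n\det Q$, and hence $\Jideal$ cannot be the unit ideal. The only stylistic difference is that your final step needs no Nullstellensatz at all — evaluating $1$ at the point already gives the contradiction — so the closing appeal to the weak Nullstellensatz is harmless but superfluous.
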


We applied this criterion to the 95 sporadic simplicial arrangements of the
Gr\"unbaum--Cuntz catalog. Except for the arrangements stemming from
restrictions of reflection arrangements only the arrangements $\Arr_3(10,1)$,
$\Arr_3(12,1)$, $\Arr_3(15,4)$, $\Arr_3(16,4)$, $\Arr_3(17,1)$, and
$\Arr_3(21,2)$ 
were not ruled out. In these $6$ cases the dimension of the ideal
$\Jideal$ is $2$. Since $\Jideal$ is homogeneous in $Q$ and $\lambda$
 it follows that there is a unique $\lambda$ and $Q$
up to scaling such that $Z_\lambda$ is $Q$-inscribed. In the above
cases it turns out that $\lambda$ and $Q$ are independent of each
other and determined by linear equations; see
Example~\ref{ex:comput_A10_1} below.  For $\Arr_3(10,1)$,
$\Arr_3(12,1)$, $\Arr_3(15,4)$, $\Arr_3(17,1)$
the matrix $Q$ is indefinite. For the two arrangements $\Arr_3(16,4)$,
$\Arr_3(21,2)$ the unique $\lambda$ had negative entries. Thus, while these
arrangements
posses a virtual inscribed zonotope, there exists no non-virtual inscribed
zonotope with these hyperplane arrangements as normal fan. 

Using the projective uniqueness of the arrangements in the Gr\"unbaum--Cuntz
catalog, the approach outlined above gives a computational proof of the
following result.

\begin{prop}\label{prop:3d_is_restriction}
    The only combinatorial types of strongly inscribable arrangements in the
    Gr\"unbaum--Cuntz catalog are restrictions of reflection arrangements.

    Moreover, assuming Conjecture~\ref{conj:GC}, an irreducible rank-$3$
    arrangement is
    strongly inscribable if and only if it is combinatorially isomorphic to a
    restriction of a reflection arrangement.
\end{prop}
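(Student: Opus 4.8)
The plan is to first reduce the question of strong inscribability (up to combinatorial isomorphism) for a catalog arrangement to the purely linear-algebraic question of strong $Q$-inscribability of one fixed realization, and then to settle that question for every member of the Gr\"unbaum--Cuntz catalog. For the reduction I would invoke projective uniqueness: by Proposition~\ref{prop:proj_unique} every $\Arr$ in the catalog is projectively unique, so any $\Arr' \cong \Arr$ is of the form $\Arr' = g\Arr$ with $g \in \GL(\R^3)$, and by Lemma~\ref{lem:innerprod} such an $\Arr'$ is strongly inscribable precisely when $\Arr$ is strongly $Q$-inscribable for the positive definite matrix $Q = g^tg$. Thus the combinatorial type of $\Arr$ is strongly inscribable if and only if there exist $\lambda \in \R^n_{>0}$ and a positive definite $Q$ with $\sR_L(Q)\lambda_L = 0$ for every ordered codimension-$2$ flat $L$ of $\Arr$, where $\sR_L(Q)$ is the skew-Gram matrix of~\eqref{eqn:QskewGram}.

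Next I would split the catalog into the two infinite families and the $95$ sporadic arrangements. For $\Arr_3(2n,1)$ and $\Arr_3(4m+1,1)$, Proposition~\ref{prop:infinite_families} already isolates exactly $\Arr_3(6,1)$, $\Arr_3(8,1)$, $\Arr_3(9,1)$, $\Arr_3(13,1)$ as strongly inscribable, and these four appear in Figure~\ref{fig:restrictions} and Figure~\ref{fig:inscribed_zonotopes} as restrictions of reflection arrangements. For each sporadic arrangement I would enumerate its ordered codimension-$2$ flats as in Remark~\ref{rem:algorithm}, assemble the saturated ideal $\Jideal \subseteq S$, and compute a Gr\"obner basis. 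Whenever $\Jideal = \ideal{1}$, Proposition~\ref{prop:strongly_groebner_criterion} rules out even virtual $Q$-inscribability with nonsingular $Q$, hence strong inscribability. I expect this to dispose of every sporadic arrangement except the ones that are restrictions of reflection arrangements (strongly inscribable by Theorem~\ref{thm:restr_insc}) and a short list of exceptions $\Arr_3(10,1)$, $\Arr_3(12,1)$, $\Arr_3(15,4)$, $\Arr_3(16,4)$, $\Arr_3(17,1)$, $\Arr_3(21,2)$ for which $\dim \Jideal = 2$.

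For those six I would exploit that $\Jideal$ is homogeneous separately in the $Q_{ij}$ and in the $\lambda_k$: a two-dimensional variety that is homogeneous in each of these two blocks consists of a single point up to independent rescaling of $\lambda$ and of $Q$, so there is an essentially unique candidate pair $(\lambda, Q)$, and in fact it is cut out by linear equations (the content of Example~\ref{ex:comput_A10_1}). I would then compute the signature of $Q$ and the signs of the entries of $\lambda$: for $\Arr_3(10,1)$, $\Arr_3(12,1)$, $\Arr_3(15,4)$, $\Arr_3(17,1)$ the matrix $Q$ is indefinite, so $E_Q$ is not an ellipsoid, and for $\Arr_3(16,4)$ and $\Arr_3(21,2)$ the unique $\lambda$ has a negative coordinate, so only a \emph{virtual} inscribed zonotope exists. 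Either way none of the six is strongly inscribable, which completes the catalog verification and proves the first assertion. For the ``moreover'': granting Conjecture~\ref{conj:GC}, an irreducible strongly inscribable rank-$3$ arrangement is simplicial by Theorem~\ref{thm:insc_arr_are_simplicial}, hence combinatorially isomorphic to a catalog member, hence---by the first assertion---to a restriction of a reflection arrangement; the converse is Theorem~\ref{thm:restr_insc} applied to the fact that reflection arrangements are strongly inscribable, combined with the projective uniqueness of simplicial rank-$3$ arrangements (Corollary~\ref{cor:all_proj_unique}).

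The main obstacle is computational. The ideal $\Jideal$ lives in a polynomial ring with $\binom{d+1}{2}+n$ variables and carries one vector-valued relation per codimension-$2$ flat, and the saturation by $\lambda_1\cdots\lambda_n\cdot\det Q$---which is exactly what makes the criterion sound, by confining attention to $(\Ri)^n \times \GL(\R^d)$---is the expensive step, and it has to be trusted to return $\ideal{1}$ reliably in the many cases where it does. The other delicate point is the exact analysis of the surviving six arrangements: to decide definiteness of $Q$ and positivity of $\lambda$ one wants the unique solution to be rational, or defined over an explicit small number field, so that the sign determinations can be made symbolically rather than by floating-point approximation.
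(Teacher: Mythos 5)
Your proposal is correct and follows essentially the same route as the paper: reduce to strong $Q$-inscribability via projective uniqueness and Lemma~\ref{lem:innerprod}, dispatch the infinite families with Proposition~\ref{prop:infinite_families}, eliminate the sporadic cases by the Gr\"obner-basis criterion of Proposition~\ref{prop:strongly_groebner_criterion}, and for the six survivors use homogeneity of $\Jideal$ in $Q$ and $\lambda$ to pin down a unique candidate pair, which fails either by indefiniteness of $Q$ or negativity of $\lambda$. The deduction of the ``moreover'' from simpliciality (Theorem~\ref{thm:insc_arr_are_simplicial}) plus Conjecture~\ref{conj:GC} likewise matches the paper.
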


In addition to restrictions of reflection arrangements, there are $2$
further known simplicial arrangements of rank $4$, denoted $\Arr_4(27, 1)$
and $\Arr_4(28, 1)$, that arise as subarrangements of $H_4$;
see~\cite{Geis_RP3}. For $\Arr_4(27, 1)$, the ideal $\Jideal$ is
trivial, for $\Arr_4(28,1)$ the ideal $\Jideal$ has dimension $2$ but
the matrix $Q$ is indefinite.  Alternatively, restrictions of both
arrangements are simplicial of rank $3$ and contained in the
Gr\"unbaum--Cuntz catalog, see
Figure~\ref{fig:restrictions_sqrt5}. Both arrangements have
non-strongly inscribable restrictions and Theorem~\ref{thm:restr_insc}
together with Proposition~\ref{prop:3d_is_restriction} refute strong
inscribability.
\begin{figure}
\begin{tikzpicture}[xscale=1.7, yscale=1.5]
    \definecolor{color0}{RGB}{255,255,255}
    \definecolor{color1}{RGB}{202,240,248}
    \definecolor{color2}{RGB}{173,232,244}
    \definecolor{color3}{RGB}{144,224,239}
    \definecolor{color4}{RGB}{ 72,202,228}
    \definecolor{color5}{RGB}{  0,180,216}
    \definecolor{color6}{RGB}{  0,150,199}
    \definecolor{color7}{RGB}{  0,119,182}
    \definecolor{color8}{RGB}{  2, 62,138}
    \definecolor{color9}{RGB}{  3,  4, 94}
    \definecolor{color10}{RGB}{  3,  4, 64}
    \definecolor{colorX}{RGB}{130,130,130}

    \tikzset{box/.style={draw, text width=2em, minimum height=1.5em, font=\tiny, align=center}}
    \tikzset{c1/.style={fill=color0}}
    \tikzset{c2/.style={fill=color1}}
    \tikzset{c3/.style={fill=color2}}
    \tikzset{c4/.style={fill=color3}}
    \tikzset{c5/.style={fill=color4}}
    \tikzset{c6/.style={fill=color5}}
    \tikzset{c7/.style={fill=color6}}
    \tikzset{c8/.style={fill=color7}}
    \tikzset{c9/.style={fill=color8, text=white}}
    \tikzset{c10/.style={fill=color9, text=white}}
    \tikzset{c12/.style={fill=color10, text=white}}
    \tikzset{cX/.style={fill=colorX}}

    \newcommand{\xbox}[5]{%
      \def\XargOne{#1}\def\XargTwo{#2}\def\Xnonsense{-1}
      \def\Xcolor{\ifx\XargOne\empty\else{\ifx\XargOne\Xnonsense cX\else c#1\fi}\fi}
      \node[box, \Xcolor] (#3) at (#4) { #5 };%
      \ifx\XargOne\empty\else
      \node[anchor = north east, inner sep = 2pt, \Xcolor, fill=none] (h#3) at (#3.north east) {\miniscule #2};
      \node[anchor = south east, inner sep = 2pt, \Xcolor, fill=none] (l#3) at (#3.south east) {\miniscule #1};
      \fi%
    }

    \node at (-2, 4.7){\bf Rank};
    \node at (-2, 4){\bf 4};
    \node at (-2, 3){\bf 3};
    \node at (-2, 2){\bf 2};
    \node at (-2, 1){\bf 1};
    
    \xbox{}{}{a1_1_1}{2, 1}{ $1, 1$ }
    \xbox{}{}{a2_6_1}{1, 2}{ $6, 1$ }
    \xbox{}{}{a2_5_1}{0, 2}{ $5, 1$ }
    \xbox{}{}{a2_7_1}{2, 2}{ $7, 1$ }
    \xbox{}{}{a2_10_1}{4, 2}{$10, 1$ }
    \xbox{}{}{a2_8_1}{3, 2}{$8, 1$ }
    \xbox{}{}{a2_4_1}{-1, 2}{$4, 1$ }

    \xbox{2}{2}{a3_10_1}{-1, 3}{ $10, 1$ }
    \xbox{2}{2}{a3_12_1}{0, 3}{ $12, 1$ }
    \xbox{-1}{-1}{a3_14_4}{1, 3}{ $14, 4$ }
    \xbox{2}{2}{a3_16_4}{3, 3}{ $16, 4$ }
    \xbox{2}{1}{a3_15_4}{2, 3}{ $15, 4$ }
    \xbox{2}{2}{a3_17_1}{4, 3}{ $17, 1$ }
    \xbox{2}{1}{a3_21_2}{5, 3}{ $21, 2$ }

    \xbox{-1}{1}{a4_27_1}{2, 4}{ $27,1$ }
    \xbox{2}{2}{a4_28_1}{3, 4}{ $28,1$ }
    
    \draw (a3_14_4.north) -- (a4_27_1.south);
    \draw (a3_16_4.north) -- (a4_27_1.south);
    \draw (a3_16_4.north) -- (a4_28_1.south);
    \draw (a3_15_4.north) -- (a4_28_1.south);
    \draw (a3_21_2.north) -- (a4_28_1.south);

    \draw (a2_6_1.north) -- (a3_14_4.south);
    \draw (a2_6_1.north) -- (a3_16_4.south);
    \draw (a2_6_1.north) -- (a3_15_4.south);
    \draw (a2_5_1.north) -- (a3_14_4.south);
    \draw (a2_7_1.north) -- (a3_14_4.south);
    \draw (a2_7_1.north) -- (a3_16_4.south);
    \draw (a2_7_1.north) -- (a3_15_4.south);
    \draw (a2_10_1.north) -- (a3_16_4.south);
    \draw (a2_10_1.north) -- (a3_21_2.south);
    \draw (a2_8_1.north) -- (a3_15_4.south);
    \draw (a2_8_1.north) -- (a3_21_2.south);
    \draw (a2_4_1.north) -- (a3_10_1.south);
    \draw (a2_5_1.north) -- (a3_10_1.south);
    \draw (a2_4_1.north) -- (a3_12_1.south);
    \draw (a2_5_1.north) -- (a3_12_1.south);
    \draw (a2_6_1.north) -- (a3_12_1.south);
    \draw (a2_6_1.north) -- (a3_17_1.south);
    \draw (a2_8_1.north) -- (a3_17_1.south);
    
    \draw (a1_1_1.north) -- (a2_6_1.south);
    \draw (a1_1_1.north) -- (a2_5_1.south);
    \draw (a1_1_1.north) -- (a2_7_1.south);
    \draw (a1_1_1.north) -- (a2_10_1.south);
    \draw (a1_1_1.north) -- (a2_8_1.south);

\end{tikzpicture}
\caption{Hasse diagram of the restrictions of $\Arr_4(27,1)$ and the
  non-inscribable arrangements for which $\Jideal \neq \ideal{1}$. For
  the labeling, we refer to Figure~\ref{fig:restrictions}.}
\label{fig:restrictions_sqrt5}
\end{figure}

\begin{cor}
    No arrangement combinatorially isomorphic to $\Arr_4(27,1)$ or
    $\Arr_4(28,1)$ is strongly inscribable.
\end{cor}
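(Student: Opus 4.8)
The plan is to reduce both cases to the rank-$3$ classification in Proposition~\ref{prop:3d_is_restriction} by passing to a rank-$3$ restriction. Suppose toward a contradiction that $\Arr$ is an arrangement combinatorially isomorphic to $\Arr_4(27,1)$ (the case $\Arr_4(28,1)$ is entirely analogous) and that $\Arr$ is strongly inscribable. A combinatorial isomorphism carries the lattice of flats of $\Arr_4(27,1)$ to that of $\Arr$, and, since the combinatorial type of a restriction is determined by the lattice of flats together with the chosen flat, $\Arr$ admits a rank-$3$ restriction that is combinatorially isomorphic to a rank-$3$ restriction of $\Arr_4(27,1)$. By Figure~\ref{fig:restrictions_sqrt5} we may take this restriction to be $\Arr_3(14,4)$, which belongs to the Gr\"unbaum--Cuntz catalog; for $\Arr_4(28,1)$ one instead takes the restriction $\Arr_3(16,4)$ (or $\Arr_3(21,2)$). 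By Lemma~\ref{lem:irreducible_closed_under_restrictions} these restrictions are again irreducible, so they are essential rank-$3$ arrangements as claimed.

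By Theorem~\ref{thm:restr_insc} a restriction of a strongly inscribable arrangement is again strongly inscribable; hence there would exist a strongly inscribable arrangement combinatorially isomorphic to $\Arr_3(14,4)$ (respectively $\Arr_3(16,4)$ or $\Arr_3(21,2)$). However, these catalog arrangements are not combinatorially isomorphic to restrictions of reflection arrangements: for $\Arr_3(14,4)$ the ideal $\Jideal$ computed in Section~\ref{sec:sporadic} equals $\ideal{1}$, so by Proposition~\ref{prop:strongly_groebner_criterion} no realization is strongly virtually $Q$-inscribable for any nonsingular $Q$; for $\Arr_3(16,4)$ and $\Arr_3(21,2)$ the unique candidate $\lambda$ has a negative entry, so there is no non-virtual inscribed zonotope on that arrangement. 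Since restrictions of reflection arrangements $\Arr_W$ are strongly inscribable (Theorem~\ref{thm:restr_insc}), none of these rank-$3$ arrangements can be a restriction of a reflection arrangement, and we reach a contradiction with Proposition~\ref{prop:3d_is_restriction}.

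The only points requiring care are bookkeeping ones: that restriction commutes with combinatorial isomorphism, which is immediate since restriction is a lattice-theoretic operation, and the identification in Figure~\ref{fig:restrictions_sqrt5} of at least one rank-$3$ restriction of each of $\Arr_4(27,1)$ and $\Arr_4(28,1)$ with a catalog arrangement that is not a restriction of a reflection arrangement. The substantive input — the failure of strong inscribability for those rank-$3$ arrangements — is precisely the Gr\"obner-basis computation of Section~\ref{sec:sporadic}, which I would cite rather than reproduce. I would also stress that this argument is unconditional: it does not invoke Conjecture~\ref{conj:GC}, and it sidesteps the need for projective uniqueness of the rank-$4$ arrangements $\Arr_4(27,1)$ and $\Arr_4(28,1)$ — which is exactly why one argues through restrictions instead of applying Proposition~\ref{prop:strongly_groebner_criterion} to $\Arr_4(27,1)$ directly.
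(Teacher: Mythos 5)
Your argument is correct and is essentially the paper's own: the text preceding the corollary deduces it from exactly this combination of Theorem~\ref{thm:restr_insc}, the non-strong-inscribability of the rank-$3$ restrictions in Figure~\ref{fig:restrictions_sqrt5}, and Proposition~\ref{prop:3d_is_restriction}. (The paper also records a direct Gr\"obner computation of $\Jideal$ for the rank-$4$ arrangements themselves, but, as you observe, the restriction route is what yields the statement for the whole combinatorial isomorphism class without invoking projective uniqueness in rank $4$.)
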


We implemented this algorithm in \textsc{Sage}~\cite{SAGE} and our code is
part of the arXiv submission. Normal vectors (roots) to all known simplicial
arrangements of rank $3$ are found in~\cite{CuntzEliaLabbe} and as a
\textsc{Sage} database~\cite{CuntzEliaLabbe-SAGE} by the same authors.  Roots
for the restrictions can be found in~\cite{CuntzHeckenberger},
while~\cite{Geis_RP3} contains roots for the rank-$4$ examples $\Arr_4(27, 1)$
and $\Arr_4(28, 1)$.  Example~\ref{ex:comput_A10_1} shows the computation for
$\Arr_3(10,1)$.

\begin{example}\label{ex:comput_A10_1}
    The arrangement $\Arr_3(10,1)$ is an arrangement in $\R^3$ with $10$
    hyperplanes given by normals
    \[
        (z_1,\dots,z_{10}) \ = \ 
        \begin{pmatrix}
            2\tau + 1 &  2\tau + 2 &  1 &  \tau + 1 &  2\tau &  \tau + 1 &  1 &  0 &  1 &  \tau + 1\\
2\tau &  2\tau + 1 &  1 &  \tau + 1 &  2\tau &  \tau + 1 &  1 &  1 &  0 &  \tau\\
\tau &  \tau + 1 &  1 &  \tau &  \tau &  1 &  0 &  0 &  0 &  \tau\\
        \end{pmatrix}
    \]
    where $\tau = \frac{\sqrt{5} + 1}{2}$ is the golden ratio. The
    $16$ ordered codimension-$2$ flats are

    \[
        \begin{aligned}
            &(z_8,  z_3), (z_9,  z_6), (z_{10}, z_5), (z_4,  z_1), (z_7,  z_2),
             (z_5,  z_1, z_9), (z_8,  z_6, z_1), (z_7,  z_1, z_{10}), (z_9,  z_{10},z_3),\\
            &(z_8,  z_5, z_2), (z_4,  z_2, z_9), (z_8,  z_7, z_9), (z_8,  z_4, z_{10}),
             (z_6,  z_2, z_{10}), (z_3,  z_2, z_1), (z_7,  z_6, z_5, z_4, z_3)
        \end{aligned}
    \]
    The ideal $\Jideal$ is generated by the polynomials
    \[
        \begin{aligned}
            &\lambda_9 - \lambda_{10}, \, 
    \lambda_8 - \lambda_{10}, \, 
    \lambda_7 + (\tau + 1) \lambda_{10}, \, 
    \lambda_6 + (\tau + 1) \lambda_{10}, \, 
    \lambda_5 + (\tau + 1) \lambda_{10},\\
            &\lambda_4 + (\tau + 1) \lambda_{10}, \, 
    \lambda_3 + (\tau + 1) \lambda_{10}, \, 
    \lambda_2 - \lambda_{10}, \, 
    \lambda_1 - \lambda_{10},\\
           &10 Q_{23} - (\tau - 3)  Q_{33}, \,
            10 Q_{13} + (\tau + 2)   Q_{33}, \,
            10 Q_{22} + (2 \tau - 1) Q_{33},\\
           &10 Q_{12} + (-\tau - 2)  Q_{33}, \,
            10 Q_{11} + (2 \tau - 1) Q_{33}
        \end{aligned}
    \]

    Setting $\lambda_{10} = s$ and $Q_{33} = t$, this yields
    \[
        \lambda = s( 1, 1, - (\tau + 1), - (\tau + 1), - (\tau + 1), - (\tau + 1), - (\tau + 1), 1, 1, 1) \\
    \]
    and
    \[
        Q \ = \ 
        \frac{t}{10}
        \begin{pmatrix}
            -2 \tau + 1  & \tau + 2   & -\tau -2 \\
            \tau + 2   & -2 \tau + 1  & \tau - 3  \\
            -\tau -2   & \tau - 3   & 10 \\
        \end{pmatrix} \, .
    \]
    It is straightforward to verify that $Q$ is never positive-definite and $\lambda \not\in \Rpp^{10}$.
\end{example}

As an implementation detail, we compute $\Jideal$ by adding the
equations $\sR_L \lambda_L = 0$ step by step, where we order the flats
$L$ by the number of hyperplanes in which they are contained, and
reduce to a Gr\"obner basis. Thus, we start with flats contained in
precisely two hyperplanes, which are especially restrictive, as they
do not involve any of the $\lambda_i$'s (compare
Example~\ref{ex:small_tRs}). It helps tremendously to saturate with
$\lambda_1 \cdot \lambda_2 \cdot \ldots \cdot \lambda_n$ after each
step to cut down the computational cost.  This allowed us to inspect
the ideals not only for all the sporadic examples in dimension $3$ but
for all restrictions of sporadic reflection arrangements of all ranks up to
combinatorial symmetry.  The computation of a Gr\"obner basis for
$\Jideal$ for the largest arrangement $E_8$, took about $20$ minutes on a
contemporary laptop. With a modest level of parallelization,
the remaining examples can be checked. The computation finishes long before
that for $E_8$. This allowed us to compute the dimension
of $\Jideal$ of all sporadic reflection arrangements and their
restrictions, again up to combinatorial isomorphism. The dimensions
are collected in Figure~\ref{fig:restrictions}.

The same ideas yield a criterion for the non-existence of a $Q$-inscribed belt
polytope for a given arrangement $\Arr$.  We appeal to
Corollary~\ref{cor:pfaffian}: $\Arr$ has a (virtual) belt polytope $P$ that is
$Q$-inscribed if and only if $R_L(Q)$ is singular for all ordered
codimension-$2$ flats $L \in \flats_{d-2}(\Arr)$. This gives rise to the ideal
\[
    \Jideal' \ \defeq \ \langle \pfaff R_L(Q) : L \in \flats_{d-2}(\Arr)
    \rangle : (\det Q)^\infty \ \subseteq \ \C[Q_{ij} : 1 \le i \le j \le d] \, .
\]
Similarly to Proposition~\ref{prop:strongly_groebner_criterion}, we get
\begin{prop}\label{prop:groebner_criterion}
    Let $\Arr$ be a simplicial arrangement. If $\Jideal' = \ideal{1}$,
    then there is no non-singular $Q$ such that $\Arr$ is virtually
    $Q$-inscribable.
\end{prop}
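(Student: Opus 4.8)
The plan is to mirror the proof of Proposition~\ref{prop:strongly_groebner_criterion}, replacing the linear system that governs strong inscribability by the single Pfaffian condition that governs inscribability of belt polytopes. The input is the $Q$-analogue of Corollary~\ref{cor:pfaffian} recorded just above the statement: for a fixed non-singular symmetric $Q$, the arrangement $\Arr$ admits a virtual $Q$-inscribed belt polytope if and only if $\sR_L(Q)$ is singular for every codimension-$2$ flat $L \in \flats_{d-2}(\Arr)$. If $L$ is contained in an odd number of hyperplanes this holds automatically, since a skew-symmetric matrix of odd order is singular; if $L$ is contained in an even number of hyperplanes it is equivalent to $\pfaff \sR_L(Q) = 0$. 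Since a (virtual) polytope $P$ with $\Fan(P) = \Fan(\Arr)$ is a (virtual) belt polytope, ``$\Arr$ is virtually $Q$-inscribable for \emph{some} non-singular $Q$'' holds precisely when the real locus
\[
    X \ \defeq \ \{\, Q \text{ real symmetric} : \det Q \neq 0 \text{ and } \pfaff \sR_L(Q) = 0 \text{ for every } L \in \flats_{d-2}(\Arr) \text{ contained in an even number of hyperplanes} \,\}
\]
is non-empty.

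I would then pass to the complex picture. Put $I \defeq \ideal{\pfaff \sR_L(Q) : L \in \flats_{d-2}(\Arr)} \subseteq \C[Q_{ij} : 1 \le i \le j \le d]$, so that $\Jideal' = I : (\det Q)^\infty$. By the standard description of ideal saturation, the vanishing locus of $\Jideal'$ in $\C^{\binom{d+1}{2}}$ is the Zariski closure of $V(I) \setminus V(\det Q)$. If $\Jideal' = \ideal{1}$, the weak Nullstellensatz forces this closure, and hence $V(I) \setminus V(\det Q)$ itself, to be empty. Under the inclusion $\R \hookrightarrow \C$ we have $X \subseteq V(I) \setminus V(\det Q)$, so $X = \emptyset$; by the previous paragraph, no non-singular $Q$ renders $\Arr$ virtually $Q$-inscribable, which is exactly the assertion.

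This argument is essentially bookkeeping, so there is no genuine obstacle; the only points requiring a moment's care are that the generators of $\Jideal'$ are the Pfaffians of the \emph{even} codimension-$2$ flats only (the odd ones contribute no equation and are unconditionally virtually inscribable, consistent with Corollary~\ref{cor:pfaffian}), together with the two routine facts that $V(I : f^\infty) = \overline{V(I) \setminus V(f)}$ over an algebraically closed field and that a non-singular real symmetric matrix is in particular a non-singular complex one. Note that positive-definiteness of $Q$, which is what makes $E_Q$ a genuine ellipsoid, never enters, since the statement only quantifies over non-singular $Q$.
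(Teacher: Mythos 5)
Your argument is correct and follows the same route the paper intends: the $Q$-analogue of Corollary~\ref{cor:pfaffian} reduces virtual $Q$-inscribability to the vanishing of the Pfaffians $\pfaff \sR_L(Q)$, and $\Jideal' = \ideal{1}$ means $(\det Q)^N$ lies in the Pfaffian ideal for some $N$, so every common zero of the Pfaffians is singular. The paper leaves exactly this bookkeeping implicit (``similarly to Proposition~\ref{prop:strongly_groebner_criterion}''), so your write-up is just a more explicit version of the same proof.
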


Note that the existence of $\lambda \in \R^n$ with $R_L(Q)\lambda_L = 0$ for
all codimension-$2$ flats $L$ is a much stronger condition than $\pfaff R_L(Q)
= 0$. Moreover, the dimension of $\Jideal'$ is hard to interpret.

\begin{rem}
    We computed $\Jideal'$ for the known simplicial arrangements which
    are not part of an infinite family. The computations show that in
    $77$ examples we have $\Jideal' \neq \ideal{1}$. In all of these
    cases, $\Jideal'$ describes a linear subspace. With the help of
    semidefinite programming we could certify that in all but two
    cases, $\Arr_3(14, 3)$ and $\Arr_3(15, 4)$, there is a positive
    definite matrix $Q$, such that $\Arr$ is virtually
    $Q$-inscribable.  We did not check, whether one can find a $Q$
    such that $\Arr$ is (non-virtually) $Q$-inscribable. The
    computations show that there are inscribable arrangements which
    are not strongly inscribable. Moreover, there exist inscribable
    arrangements $\Arr$ such that $\Arr^L$ is not inscribable,
    contrary to Theorem~\ref{thm:restr_insc}.
\end{rem}

\begin{rem}
    If $\Arr$ is a strongly inscribed arrangement and $H \in \Arr$,
    then Theorem~\ref{thm:restr_insc} implies that there is a linear
    projection $\ZInCone(\Arr) \to \ZInCone(\Arr^H)$. This projection is
    in general not onto, as the following example shows: Let
    $\Arr = \Arr_4(22, 1)$ and $H$ in $\Arr$ such that
    $\Arr^H \cong \Arr_3(13,2)$ (compare
    Figure~\ref{fig:restrictions}). 
    Our computations show that $\dim \ZInCone(\Arr) = 1$, but
    $\dim \ZInCone(\Arr^H) = 2$.
\end{rem}

\subsection{Restrictions of arrangements of type $A,B,D$}\label{sec:ABD}

The algorithm of the last section also yields the possible bilinear forms $Q$
for which the known sporadic simplicial arrangements of ranks $\le 8$ are
strongly inscribable.  This included the reflection arrangements
$F_4$, $E_6$, $E_7$, $E_8$, $H_3$ and $H_4$ as well as their restrictions. In
this section, we explicitly determine the bilinear forms $Q$ for which the
reflection arrangements of type $A_n, B_n, D_n$ as well as their restrictions
are inscribable. Restrictions of reflection arrangements of type $A$ are
combinatorially, and hence projectively, of type $A$. For the arrangements of
type $B$ and $D$, the restrictions are well understood and captured by a
family of simplicial arrangements $\D_{n,s}$ for $0 \le s \le n$; see
Figure~\ref{fig:restrictions}.

\newcommand\Ones{\mathbf{J}}%
\subsubsection*{Inscribed arrangements of type $A$}\label{sec:A_n}
The reflection arrangement of type $A_n$ is canonically realized in $\R^{n+1}$
by the $\binom{n+1}{2}$ hyperplanes 
\[
    H_{ij} \ = \ \{ x \in \R^{n+1} : x_i \ = \ x_j \}
\]
for $1 \le i < j \le n+1$. In \cite[Proposition~6.73]{OrlikTerao}, it is shown
that the restriction of $A_n$ to any hyperplane is combinatorially isomorphic
to $A_{n-1}$.  Since $A_3$ is projectively unique, it follows from
Lemma~\ref{lem:moduli_size} that every $A_n$ is projectively unique.  Hence it
suffices to determine the possible inner products for a fixed realization of
$A_n$ for each $n$. The above realization is not essential. Restricting to the
hyperplane $\{x : x_{n+1} = 0 \} \cong \R^n$ gives the realization
\[
    \Arr_n \defeq \{ f_{ij}^\perp : 1 \le i < j \le n+1 \} \, ,
\]
where we set $f_{ij} \defeq e_i - e_j$ for $1 \le i < j \le n$ and $f_{in+1}
\defeq e_i$. Let $\Ones_n \in \R^{n \times n}$ be the matrix of all ones.

\begin{thm}
    For $n \ge 3$, $\Arr_n$ is virtually $Q$-inscribable if and only if 
    \[
        Q \ = \ \diag(a_1, \dots, a_n) + a_{n+1} \Ones_n
    \]
    for some $a_1, \dots, a_n, a_{n+1} \in \R$. Moreover, if $Z_{\lambda} =
    \sum_{i<j} \lambda_{ij} [-f_{ij},f_{ij}]$ is a $Q$-inscribed zonotope,
    then $a_1,\dots,a_n,a_{n+1} > 0$ and $\lambda_{ij} = \frac{t}{a_ia_j}$ for
    $1 \le i < j \le n+1$ and a unique $t \in \Rpp$.
\end{thm}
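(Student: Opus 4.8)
The plan is to feed the explicit combinatorics of the braid arrangement into the $Q$-inscribability criteria of Section~\ref{sec:inner_product}. Recall that the codimension-$2$ flats of the type-$A_n$ arrangement $\Arr_n$ correspond to the partitions of $\{1,\dots,n+1\}$ into $n-1$ blocks, so the block sizes are either $(3,1,\dots,1)$ or $(2,2,1,\dots,1)$: in the first case the flat lies in the three hyperplanes $f_{xy}^\perp,f_{xz}^\perp,f_{yz}^\perp$ attached to a triple $\{x,y,z\}$, in the second in the two hyperplanes $f_{xy}^\perp,f_{zw}^\perp$ attached to two disjoint pairs (writing $f_{i,n+1}=e_i$). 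By the $Q$-analogue of Corollary~\ref{cor:pfaffian} recalled before Proposition~\ref{prop:groebner_criterion}, $\Arr_n$ is virtually $Q$-inscribable if and only if $\sR_L(Q)$ is singular for every codimension-$2$ flat $L$. The triple flats contribute $3\times3$ skew-symmetric matrices, which are automatically singular; a pair flat contributes exactly the condition $\inner{f_{xy},f_{zw}}_Q=0$.

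First I would expand $\inner{f_{xy},f_{zw}}_Q=0$ over all disjoint pairs. When no index is $n+1$ this reads $Q_{xz}-Q_{xw}-Q_{yz}+Q_{yw}=0$, and when one index equals $n+1$ it reads $Q_{xz}-Q_{xw}=0$. The latter family, which is available because $n\ge3$, forces all off-diagonal entries in a fixed row of $Q$ to coincide; symmetry of $Q$ then forces every off-diagonal entry to a single value $a_{n+1}$, and the first family becomes vacuous. Setting $a_i\defeq Q_{ii}-a_{n+1}$ yields $Q=\diag(a_1,\dots,a_n)+a_{n+1}\Ones_n$. Conversely, for such a $Q$ one checks directly that $\inner{f_{xy},f_{zw}}_Q=0$ for all disjoint pairs, so all $\sR_L(Q)$ are singular and $\Arr_n$ is virtually $Q$-inscribable. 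This settles the ``if and only if''.

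For the ``moreover'', suppose $Z_\lambda$ is $Q$-inscribed. Then $Q$ has the above form, and by the $Q$-adaptation of Theorem~\ref{thm:compute_by_2flats} we have $\sR_L(Q)\lambda_L=0$ for every codimension-$2$ flat $L$. Pair flats impose nothing since $\sR_L(Q)=0$ there. For a triple $\{x,y,z\}$ with $x<y<z$, the relation $f_{xz}=f_{xy}+f_{yz}$ shows $(f_{xy},f_{xz},f_{yz})$ is a cyclic ordering of the three roots, so $\sR_L(Q)$ is the $3\times3$ skew-Gram matrix of this tuple; a short computation from the form of $Q$ gives $\inner{f_{xy},f_{xz}}_Q=a_x$, $\inner{f_{xy},f_{yz}}_Q=-a_y$, $\inner{f_{xz},f_{yz}}_Q=a_z$ (with the convention that the $n+1$ slot carries $a_{n+1}$). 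Reading off $\sR_L(Q)\lambda_L=0$ (cf.\ Example~\ref{ex:small_tRs}) yields the relations $a_xa_y\lambda_{xy}=a_xa_z\lambda_{xz}=a_ya_z\lambda_{yz}$, and, when not all of $a_x,a_y,a_z$ vanish, that $(\lambda_{xy},\lambda_{xz},\lambda_{yz})$ is proportional to $(a_z,a_y,a_x)$.

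Finally I would globalize. The relations above show $a_pa_q\lambda_{pq}$ takes one value on any two pairs lying in a common triple, and the triples connect all $\binom{n+1}{2}$ pairs (directly, or by a path of length two, using $n+1\ge4$), so $a_pa_q\lambda_{pq}=t$ for all pairs. Since $\lambda>0$: if some $a_i=0$ then $t=0$, hence $a_pa_q=0$ for all pairs, hence at most one $a$ is nonzero and $Q$ has rank $\le1$; then $E_Q$ is empty or a union of two parallel hyperplanes, which cannot contain the vertex set of the full-dimensional zonotope $Z_\lambda$ (a vertex lies strictly inside the slab) — a contradiction. So all $a_i\ne0$; then the proportionality with positive $\lambda$ forces the $a_i$ to share a sign, and a negative sign would make $Q=\diag(a_i)+a_{n+1}\Ones_n$ negative definite with $E_Q=\emptyset$, again impossible. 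Hence $a_1,\dots,a_{n+1}>0$, $Q$ is positive definite, $t=a_pa_q\lambda_{pq}>0$ is uniquely determined, and $\lambda_{ij}=t/(a_ia_j)$. I expect the only genuinely delicate point to be the cyclic-order and sign bookkeeping in invoking the skew-Gram description for the triple flats (and the brief argument ruling out the degenerate, low-rank case for $Q$); everything else is the partition combinatorics of braid-arrangement flats together with the rank-$2$ and $Q$-inscribability results already established.
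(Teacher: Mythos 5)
Your proposal is correct and follows essentially the same route as the paper: classify the codimension-$2$ flats of $\Arr_n$ into disjoint-pair and triple types, use the Pfaffian/singularity conditions on the pair flats (with one index equal to $n+1$) to pin down $Q=\diag(a_1,\dots,a_n)+a_{n+1}\Ones_n$, and then read off $\lambda_{ij}=t/(a_ia_j)$ from the kernels of the $3\times3$ skew-Gram matrices of the triple flats. Your treatment is in fact slightly more careful than the paper's at the degenerate points (explicitly excluding $a_i=0$ and the negative-definite case via $E_Q=\emptyset$), but the underlying argument is the same.
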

\begin{proof}
    The arrangement $\Arr_n$ is virtually $Q$-inscribable if and only if
    $\pfaff R_L(Q) = 0$ for every ordered codimension-$2$ flat of $\Arr_n$.
    It follows from general theory (\cite{Humphreys}) or simply by inspection
    that every ordered codimension-$2$ flat is the intersection of $2$ or $3$
    hyperplanes and of the form
    \begin{compactenum}[\rm(1)]
    \item $L = (f_{ij},f_{kl})$ for $\{i,j\} \cap \{k,l\} = \emptyset$, or
    \item $L = (f_{ij},f_{ik},f_{jk})$ for $i < j < k$.
    \end{compactenum}

    For flats of type (2), the matrix $R_L(Q)$ has odd order and hence $\pfaff
    R_L(Q) \equiv 0$. For flats of type (1), we compute
    \[
        R_L(Q) 
        \ = \ 
        \begin{pmatrix}
            0 & f_{ij}^tQ f_{kl}\\
            -f_{ij}^tQ f_{kl} & 0 \\
        \end{pmatrix}
    \]
    For $j = n+1$, we thus get $0 = \pfaff R_L(Q) = Q_{ik} - Q_{il}$ and hence
    all off-diagonal entries of $Q$ are equal to some $a_{n+1} \in \R$.  This
    implies $Q - a_{n+1} \Ones_n = \diag(a_1,\dots,a_n)$. Conversely,
    $f_{ij}^tQ f_{kl} =0$ for all flats of type (1) whenever $Q$ is of this
    form.  This shows the first claim.

    Assume that $Z_{\lambda} = \sum_{i<j} \lambda_{ij} [-f_{ij},f_{ij}]$ is a
    $Q$-inscribed zonotope. In particular $\lambda_{ij} > 0$ for all $i < j$.
    Consider the ordered flat $L = (f_{ij},f_{ik},f_{jk})$ with $k=n+1$. Then
    \[
        R_L(Q) 
        \ = \ 
        \begin{pmatrix}
            0 & a_i & -a_j \\
            -a_i & 0 & a_{n+1} \\
             a_j & -a_{n+1} & 0 \\
        \end{pmatrix}
    \]
    and $R_L(Q)\lambda_L = 0$ if and only if $\lambda_L =
    (\lambda_{ij},\lambda_{in+1},\lambda_{jn+1}) =  s_L (a_{n+1},a_j,a_i)$ for
    some $s_L \in \R$. Since $\lambda > 0$, we conclude that all $a_i$ have
    the same sign. By inspecting all
    flats of this type, we infer that there is a $t \in \R$ with
    $\lambda_{ij} = \frac{t}{a_ia_j}$ for all $1 \le i < j \le n+1$. Since
    $V(Z_\lambda) \subseteq E_Q = \{ x : \inner{x,x}_Q = 1 \}$, the scaling
    parameter $t$ is positive and unique.
\end{proof}

Based on these computations, we conclude:
\begin{cor}
    Let $\Arr = \Arr_n$, $n \geq 3$. Then $\dim \Jideal = n + 2$ and
    $\dim \Jideal' = n+1$.
\end{cor}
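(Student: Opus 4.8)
The plan is to read off both numbers from the classification of $Q$-inscribable realizations of $\Arr_n$ obtained in the preceding theorem, combined with the standard identity $V(\mathcal I : f^\infty) = \overline{V(\mathcal I)\setminus V(f)}$ for saturations over $\C$. Applied to the two ideals at hand this gives $V(\Jideal) = \overline G$ and $V(\Jideal') = \overline{G'}$, where
\[
    G \ \defeq \ \bigl\{ (\lambda,Q) : \sR_L(Q)\lambda_L = 0 \text{ for all } L\in\flats_{n-2}(\Arr_n),\ \lambda_{ij}\neq 0\ \forall\, i<j,\ \det Q\neq 0 \bigr\}
\]
and $G' \defeq \{ Q : \pfaff\sR_L(Q) = 0 \text{ for all } L\in\flats_{n-2}(\Arr_n),\ \det Q\neq 0\}$. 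Since $\dim\Jideal = \dim V(\Jideal)$, and likewise for $\Jideal'$, it suffices to show $\dim\overline G = n+2$ and $\dim\overline{G'} = n+1$.

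For $\Jideal'$, recall from the proof of the preceding theorem that every ordered codimension-$2$ flat of $\Arr_n$ is of type $(f_{ij},f_{kl})$ with $\{i,j\}\cap\{k,l\}=\emptyset$ or of type $(f_{ij},f_{ik},f_{jk})$ with $i<j<k$; the latter give $3\times 3$ skew-Gram matrices, whose Pfaffians vanish identically, so $\Jideal'$ is generated before saturation by the linear forms $\pfaff\sR_{(f_{ij},f_{kl})}(Q) = f_{ij}^{t}Qf_{kl}$. That proof identifies the common zero locus of these forms as the linear subspace $\mathcal Q_0 \defeq \{\diag(a_1,\dots,a_n) + a_{n+1}\Ones_n : a_1,\dots,a_{n+1}\in\C\}$ of dimension $n+1$; this is the only place where $n\ge 3$ is needed, since three distinct indices are required to force all off-diagonal entries of $Q$ to coincide. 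An ideal generated by linear forms is prime and equal to the vanishing ideal of the subspace it defines, and $\det Q$ does not vanish identically on $\mathcal Q_0$, so saturation changes nothing: $\Jideal' = I(\mathcal Q_0)$ and $\dim\Jideal' = n+1$.

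For $\Jideal$, on $G$ the relations attached to a flat $(f_{ij},f_{kl})$ read $\lambda_{kl}\cdot f_{ij}^{t}Qf_{kl} = 0$ and $\lambda_{ij}\cdot f_{ij}^{t}Qf_{kl} = 0$, and since all $\lambda_{ij}\neq 0$ they force $Q = \diag(a_1,\dots,a_n) + a_{n+1}\Ones_n\in\mathcal Q_0$. The computation in the proof of the preceding theorem then applies essentially verbatim — with $\det Q\neq 0$ guaranteeing that the relevant $3\times 3$ skew matrices have rank $2$, and the hypothesis $\lambda>0$ there weakened to $\lambda\neq 0$ — and shows first that $a_1,\dots,a_{n+1}$ are all nonzero and then that there is a scalar $t\neq 0$ with $\lambda_{ij} = t/(a_ia_j)$ for all $1\le i<j\le n+1$; conversely every such pair satisfies $\sR_L(Q)\lambda_L = 0$ for every $L$. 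Hence $G$ is the image of the nonempty open subset $U \defeq \{\det Q\neq 0\}$ of the torus $(\C^\times)^{n+2}$ (nonempty: $a_1=\dots=a_n=1$, $a_{n+1}=\eps$, $t=1$ works for small $\eps\neq 0$) under the morphism
\[
    \phi : (a_1,\dots,a_{n+1},t)\longmapsto \bigl(\,\bigl(t/(a_ia_j)\bigr)_{1\le i<j\le n+1},\ \diag(a_1,\dots,a_n)+a_{n+1}\Ones_n\,\bigr),
\]
and $\phi|_U$ is an isomorphism onto $G$, the inverse being $a_{n+1}=Q_{12}$, $a_i=Q_{ii}-Q_{12}$, $t=\lambda_{12}(Q_{11}-Q_{12})(Q_{22}-Q_{12})$. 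Thus $G$ is irreducible of dimension $n+2$ and $\dim\Jideal=\dim\overline G=n+2$.

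The delicate point — which I would flag as the main obstacle — is the assertion that on $G$ the relations coming from the type-$(f_{ij},f_{ik},f_{jk})$ flats genuinely collapse all of $\lambda$ onto the one-parameter family $\lambda_{ij}=t/(a_ia_j)$, with no residual freedom, so that no extra component of $V(\langle\sR_L(Q)\lambda_L\rangle)$ meeting $\{\lambda_{ij}\neq 0\ \forall i<j\}\cap\{\det Q\neq 0\}$ survives the saturation. This is precisely the ``inspecting all flats of this type'' step of the preceding proof; it is purely algebraic and transfers to $\C$ once positivity is dropped, so it requires no new work, and the explicit inverse of $\phi$ then upgrades ``$\lambda$ unique up to the scaling parameter $t$'' to the statement that $G$ is a torus chart of dimension exactly $n+2$.
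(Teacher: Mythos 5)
Your argument is correct and is exactly the route the paper intends: the corollary is stated with no separate proof ("Based on these computations, we conclude"), the computations in question being precisely the identification of the $Q$-locus as the $(n+1)$-dimensional space $\diag(a_1,\dots,a_n)+a_{n+1}\Ones_n$ and of $\lambda$ as the one-parameter family $\lambda_{ij}=t/(a_ia_j)$ from the preceding theorem. Your write-up just supplies the standard saturation-equals-closure bookkeeping and the explicit torus parametrization that the paper leaves implicit.
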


\subsubsection*{Inscribed arrangements from types $B$ and $D$}
For $0 \le s \le n$, define the arrangement in $\R^n$
\[
    \D_{n,s} \defeq \{e_1^\perp, \dots, e_s^\perp\} \cup \{(e_i \pm e_j)^\perp
    : 1 \leq i < j \leq n\}\,.
\]
For $s = 0$, $\D_{n,0}$ is the reflection arrangement of type $D_n$. For $s =
n$, $\D_{n,n}$ is the reflection arrangement of type $B_n$. In particular
$\D_{n,s}$ is an $s$-fold restriction of the arrangement of type $D_{n+s}$.
It is shown in Orlik-Terao~\cite[Chapter~6.4]{OrlikTerao} that the
restrictions of $\D_{n,0}$ are combinatorially isomorphic to $\D_{n-1,1}$, while
$B_n$ uniquely restricts to $B_{n-1}$. Moreover, for $0 < s < n$, the
restrictions of $\D_{n, s}$ is combinatorially of type $B_{n-1}$,
$\D_{n-1,s-1}$ (if $s > 1$), $\D_{n-1,s}$, or $\D_{n-1,s+1}$ (if $s < n-1$).
Figure~\ref{fig:restrictions} shows the Hasse diagram of the restrictions of
reflection arrangements. Since the arrangements $\D_{3,1}$, $\D_{3,2}$ and
$\D_{3,3}$ are projectively unique, we conclude by Lemma~\ref{lem:moduli_size}
that the same is true for all $\D_{n,s}$ with $n \ge 3$. We represent a
zonotope for $\D_{n,s}$ by
\[
    Z_\lambda \ = \ 
    \sum_{1\le i<j\le n} \lambda^-_{ij} [e_i-e_j,e_j-e_i] + 
    \sum_{1\le i<j\le n} \lambda^+_{ij} [-e_j-e_i,e_i+e_j] + 
    \sum_{k=1}^s \lambda_k [-e_k,e_k] 
\]
for $\lambda^+_{ij},\lambda^-_{ij},\lambda_k \in \R$ for $1 \le i < j \le n$
and $1 \le k \le s$.

\begin{thm}\label{thm:Q_Dns}
    Let $n \ge 4$ and $0 \le s \le n$ and $Q \in \R^{n \times n}$
    non-singular. Then $\D_{n,s}$ is virtually $Q$-inscribable if and
    only if there are $a_1, \dots, a_s, a \in \Ri$ with $a_i \neq a$ for
    ${i=1,\dots,s}$ such that $Q = \diag(a_1, \dots, a_s,a,\dots,a)$.
    Moreover, $Z_\lambda$ is $Q$-inscribed if and only if
    ${\lambda_{ij}^+ = \lambda_{ij}^- = \frac{t}{a_ia_j}}$ for $i < j$
    and $\lambda_k = \tfrac{t(a - a_k)}{a_k^2a} + \tfrac{t'}{a_k}$ for $k=1,\dots,s$
    for some $t, t' \in \Ri$, where $t' = 0$ if $s < n$.
\end{thm}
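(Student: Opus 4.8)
The plan is to mimic the proof of the type-$A$ theorem above: everything is governed by the ordered codimension-$2$ flats $L$ of $\D_{n,s}$ and their skew-Gram matrices $\sR_L(Q)$ from \eqref{eqn:QskewGram}. In the $Q$-versions of Corollary~\ref{cor:pfaffian} and Theorem~\ref{thm:compute_by_2flats} (Section~\ref{sec:inner_product}), $\D_{n,s}$ admits a virtual belt polytope inscribed in $E_Q$ only if $\pfaff \sR_L(Q)=0$ for every such $L$, and a zonotope $Z_\lambda$ is $Q$-inscribed exactly when $\sR_L(Q)\lambda_L=0$ for every $L$. So I would first enumerate the codimension-$2$ flats, then read off the constraints these impose on $Q$, and finally solve the linear systems $\sR_L(Q)\lambda_L=0$ for $\lambda$, concluding with Theorem~\ref{thm:bolker_inscr} for the converse.

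First I would classify the codimension-$2$ flats. Writing $U=L^\perp$ for the $2$-plane, the rank-$2$ parabolics of the underlying $B/D$ root system leave only the following cases: (a) $U=\langle e_i,e_j\rangle$, which contains $e_i-e_j,e_i+e_j$ and, in addition, $e_i$ (resp.\ $e_j$) when $i\le s$ (resp.\ $j\le s$) --- so $2$, $3$, or $4$ normals, the last a $B_2$-system $\{e_i,e_i+e_j,e_j,e_j-e_i\}$; (b) $A_2$-triples such as $\{e_i-e_j,e_i-e_k,e_j-e_k\}$ and their sum/mixed sign variants; (c) disjoint pairs built from $e_i\pm e_j$ and $e_k\pm e_l$ on a four-element index set; and (d) pairs $\{e_i,e_j\pm e_k\}$ with $i\le s$, $i\notin\{j,k\}$. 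Odd-size flats (case (b) and the $3$-normal case of (a)) impose nothing, since an odd skew matrix has vanishing Pfaffian. Among the even flats: because $n\ge 4$, the four pairs in (c) over a fixed four-element index set give, via Example~\ref{ex:small_tRs}, four equations whose $4\times4$ coefficient matrix is invertible, hence $Q_{ik}=Q_{il}=Q_{jk}=Q_{jl}=0$; since every off-diagonal slot lies in some four-element index set, $Q$ is diagonal. The pairs $\{e_i-e_j,e_i+e_j\}$ with $s<i<j$ then force $Q_{ii}=Q_{jj}$, so $Q_{s+1,s+1}=\dots=Q_{nn}=:a$; set $a_k:=Q_{kk}$. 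Conversely, for $Q=\diag(a_1,\dots,a_s,a,\dots,a)$ all even-flat Pfaffians vanish --- the (c), (d) ones because $Q$ is diagonal, the $\{e_i-e_j,e_i+e_j\}$ ones because those indices share a diagonal entry, and a short $2\times2$-block computation gives $\pfaff \sR_L(Q)=a_ia_j-a_ia_j=0$ for a $B_2$-flat --- which pins down the shape of $Q$.

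For the ``moreover'', I would solve $\sR_L(Q)\lambda_L=0$; write $a_m:=a$ for $m>s$. The $A_2$-triple $\{e_i-e_j,e_i-e_k,e_j-e_k\}$ has a one-dimensional kernel along $(a_k,a_j,a_i)$, so the quantity $a_ia_j\lambda^-_{ij}$ agrees on the three pairs of the triple; as the graph on $2$-element index sets (pairs adjacent when they share an index) is connected, this forces $\lambda^-_{ij}=t/(a_ia_j)$ for a single $t$, and the sum and mixed $A_2$-triples then force $\lambda^+_{ij}=\lambda^-_{ij}$ for all $i<j$. For $k\le s<j$ the $3$-normal flat $\{e_k,e_k-e_j,e_k+e_j\}$ re-gives $\lambda^+_{kj}=\lambda^-_{kj}$ and pins $\lambda_k=\tfrac{a-a_k}{a_k}\lambda^+_{kj}=\tfrac{t(a-a_k)}{a_k^2a}$; here $\lambda_k\ne0$ (needed for $Z_\lambda$ to realize the full fan $\D_{n,s}$) is equivalent to $a_k\ne a$, and such a flat exists for every $k$ precisely when $s<n$, in which case it forces $t'=0$ below. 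Finally the $B_2$-flats $\{e_i,e_i+e_j,e_j,e_j-e_i\}$ with $i<j\le s$ re-confirm $\lambda^+=\lambda^-$ and leave exactly the relation $a_i\lambda_i-t/a_i=a_j\lambda_j-t/a_j$, so $a_k\lambda_k-t/a_k$ is a constant; writing that constant as $t'-t/a$ yields $\lambda_k=\tfrac{t(a-a_k)}{a_k^2a}+\tfrac{t'}{a_k}$, with $t'$ free when $s=n$ (no flats of the previous type) and $t'=0$ otherwise. Conversely, plugging these formulas back in satisfies every flat equation, so each $2$-face of $Z_\lambda$ is inscribed and centrally symmetric, whence $Z_\lambda$ is inscribed by Theorem~\ref{thm:bolker_inscr}, and the remaining scaling freedom in $t,t'$ places $V(Z_\lambda)$ on $E_Q$. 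I expect the main obstacle to be the purely organizational one: keeping the cyclic orders --- and thus the signs in the $\sR_L(Q)$ --- coherent across all flat types, and correctly tracking how the flat set (and hence the appearance of $t'$ and the necessity of $a_i\ne a$) changes between the cases $s<n$ and $s=n$.
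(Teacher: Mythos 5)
Your proposal is correct and follows essentially the same route as the paper: the disjoint-pair flats $(e_i+\sigma e_j,\,e_k+\tau e_l)$ force $Q$ diagonal, the flats $(e_i-e_j,e_i+e_j)$ with $i>s$ equalize the trailing diagonal entries, the $A_2$-triples propagate $\lambda^{\pm}_{ij}=t/(a_ia_j)$, and the rank-$2$ flats containing some $e_k$ pin down $\lambda_k$ and the dichotomy between $t'$ free ($s=n$) and $t'=0$ ($s<n$). Your more systematic up-front classification of flat types and the connectivity phrasing of the $A_2$ step are cosmetic differences only.
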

\begin{proof}
    For $i<j$ and $k<l$ with $\{i,j\} \cap \{k,l\} = \emptyset$, $L = (e_i +
    \sigma e_j, e_k + \tau e_l)$ is an ordered codimension-$2$ flat for all
    $\sigma,\tau \in \PM$. If $\D_{n,s}$ is virtually $Q$-inscribable, then
    for all $\sigma,\tau \in \PM$
    \[
        0 \ = \ \pfaff \sR_L \ = \ (e_i + \sigma e_j)^t Q (e_k + \tau e_l) \,
        .
    \]
    This implies $Q_{ij} = 0$ for all $i \neq j$ and hence $Q =
    \diag(a_1,\dots,a_n)$ for some $a_1,\dots,a_n \in \R$. For $i < j$ with $i
    > s$, $L = (e_i-e_j,e_i+e_j)$ is an ordered codimension-$2$ flat and
    \[
        0 \ = \ \pfaff \sR_L \ = \ (e_i - e_j)^t Q (e_i + e_j)  \ = \ 
        Q_{ii} + Q_{ij} - Q_{ji} - Q_{jj} \ = \ a_i - a_j 
    \]
    shows that $a_{s+1} = \cdots = a_n = a$. 
    It is a routine calculation to check that $\pfaff R_L(Q) = 0$ for all
    codimension-$2$ flats if and only if $Q$ is of the stated form.

    For the second claim, observe that for the codimension-$2$ flat $L =
    (e_i-e_j, e_i - e_k ,e_j-e_k)$ the condition $R_L(Q)\lambda_L = 0$
    translates into
    \begin{align*}
        0 \ &= \  (e_i - e_j)^t Q \big( \lambda^-_{i k} (e_i - e_k) +
        \lambda^-_{j k} (e_j - e_k) \big)
      \ = \ \lambda^-_{ik} a_i - \lambda^-_{j k} a_j\,.\\
      0 \ &= \ (e_j - e_k)^t Q \big( \lambda^-_{i j} (e_i - e_j) + 
        \lambda^-_{i k} (e_i - e_k) \big)
      \ = \ \lambda^-_{i k} a_k - \lambda^-_{i j} a_j\,.
    \end{align*}
    Set $t \defeq a_1 a_2 \lambda^-_{1 2}$. The second equation
    now implies $\lambda^-_{1  k} = \frac{t}{a_1a_k}$ for all $1 < k \leq n$
    and the first in turn implies $\lambda^-_{j k} = \frac{t}{a_ja_k}$ for all
    $1 \leq j < k \leq n$.

    Likewise, the ordered codimension-$2$ flats $L = (e_i - e_j, e_i + e_k,
    e_j + e_k)$ give the condition $0 = \lambda^+_{i k} a_k - \lambda^-_{i j}
    a_j$ and thus $\lambda^+_{i k} = \frac{t}{a_ia_k}$ for all $1 \leq i < j <
    k \leq n$.  A similar calculation for $L = (e_i + e_j, e_i - e_k, e_j +
    e_k)$ shows that this also holds if $k = i + 1$.

    If $s < n$, then $L = (e_i + e_n, e_i, e_i - e_n)$ implies 
    $0   
    =  a_i\lambda_{i} - (a - a_i)\lambda^+_{i n}$.  Thus, $\lambda_{i} =
    \frac{t(a_i - a)}{a_i^2a}$, which is nonzero if and only if $a_i \neq a$. 

    Finally, if $s = n$, then the ordered codimension-$2$-flat $L = (e_j,  e_i
    + e_j, e_i, e_i - e_j)$ yields 
    $0 = a_i \lambda_{i} - a_j \lambda_{j} + (a_i - a_j) \lambda^+_{i j}$.
    Setting $t' \defeq a_1\lambda_{1} - \tfrac{t(a - a_1)}{a_1a}$ then gives for $i=1$
    \begin{align*}
      a_j \lambda_{j}
      \ &= \ a_1 \Big(\frac{t(a - a_1)}{a_1^2a} + \frac{t'}{a_1}\Big) + (a_1 - a_j)\frac{t}{a_1a_j}\\
      \ &= \ \frac{ta_ja-ta_1a_j}{a_1a_ja} + t' + \frac{ta_1a - ta_ja}{a_1a_ja} \ = \ \frac{t(a-a_j)}{a_ja} + t'\,,
    \end{align*}
    thus $\lambda_{j} = \tfrac{t(a-a_j)}{a_j^2a} + \tfrac{t'}{a_j}$, as required.  It is now easy
    (but tedious) to verify that $Z_\lambda$ is $Q$-inscribed for the stated
    choice of $\lambda$.
\end{proof}

\begin{rem}
    The proof does not cover the case $n = 3$, since no
    codimension-$2$ flats of the form $(e_i + \sigma e_j, e_k + \tau e_l)$
    exist. Instead, we determined $\Jideal$ and $\Jideal'$ with the computer
    as explained in the previous section.
\end{rem}

Again, we directly conclude:
\begin{cor}
    Let $\Arr = \D_{n,s}$, $n \geq 4$, $0 \leq s \leq n$. Then
    $\dim \Jideal = s + 2$ and $\dim \Jideal' = \min(s+1, n)$.
\end{cor}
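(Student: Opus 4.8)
The plan is to read both dimensions off Theorem~\ref{thm:Q_Dns} by translating its ``if and only if'' descriptions into counts of free parameters; no computation beyond that theorem is required.

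First I would make precise what the two ideals cut out. By construction $V(\Jideal')$ is the Zariski closure of the set of nonsingular $Q\in\C^{n\times n}$ with $\pfaff\sR_L(Q)=0$ for every ordered codimension-$2$ flat $L$, which by the remark preceding the definition of $\Jideal'$ is exactly the set of nonsingular $Q$ making $\D_{n,s}$ virtually $Q$-inscribable; and $V(\Jideal)$, inside the affine space with coordinates the $Q_{ij}$ and the $\lambda$-variables, is the Zariski closure of the set of pairs $(Q,\lambda)$ with $Q$ nonsingular, all $\lambda$-coordinates nonzero, and $\sR_L(Q)\lambda_L=0$ for all such $L$ — equivalently, the set of $(Q,\lambda)$ for which $Z_\lambda$ is a $Q$-inscribed zonotope. (Both loci are nonempty: $\D_{n,s}$ is an $s$-fold restriction of the reflection arrangement of type $D_{n+s}$, hence strongly inscribable by Theorem~\ref{thm:restr_insc}.) Since the proof of Theorem~\ref{thm:Q_Dns} uses no reality or positivity, its descriptions hold over $\C$ and describe these loci exactly.

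Then I would count parameters. For $\Jideal'$: the solution locus is the nonsingular part of $\{Q_{ij}=0\ (i\neq j),\ Q_{s+1,s+1}=\dots=Q_{n,n}\}$, a linear subspace of dimension $s+1$ when $s<n$ (parameters $a_1,\dots,a_s,a$) and of dimension $n$ when $s=n$ (parameters $a_1,\dots,a_n$, with the repeated ``$a$'' vacuous); in both cases this is $\min(s+1,n)$, and Zariski closure preserves dimension, giving $\dim\Jideal'=\min(s+1,n)$. For $\Jideal$: Theorem~\ref{thm:Q_Dns} parametrizes the solution locus by $Q=\diag(a_1,\dots,a_s,a,\dots,a)$ together with $\lambda^\pm_{ij}=t/(a_ia_j)$ and $\lambda_k=t(a-a_k)/(a_k^2a)+t'/a_k$, with $t'=0$ when $s<n$. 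For $s<n$ the free parameters are $a_1,\dots,a_s,a,t$, and the induced map to $(Q,\lambda)$-space is injective on its domain ($Q$ recovers $a_1,\dots,a_s,a$, and $\lambda^-_{12}$ recovers $t$), so the locus — hence $V(\Jideal)$ — is irreducible of dimension $s+2$. For $s=n$, writing $\lambda_k=t/a_k^2+u/a_k$ with $u\defeq t'-t/a$ shows the $\lambda_k$ depend on $(t,a,t')$ only through $(t,u)$; the effective parameters are then $a_1,\dots,a_n,t,u$, the map is again injective, and $\dim V(\Jideal)=n+2=s+2$. Thus $\dim\Jideal=s+2$ for all $0\le s\le n$.

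The step needing the most care is confirming that each solution locus is irreducible of exactly the stated dimension, with no spurious higher-dimensional component — but this is immediate here, since Theorem~\ref{thm:Q_Dns} characterizes each locus as the image of an affine parameter space under a morphism that becomes injective after the reduction above, so its dimension equals that of the source; one never has to examine $\Jideal$ or $\Jideal'$ as ideals beyond this. The only bookkeeping subtlety is the boundary case $s=n$, where the parametrization of Theorem~\ref{thm:Q_Dns} (through $a$ and $t'$ separately) is redundant and should be collapsed to $u=t'-t/a$ before counting.
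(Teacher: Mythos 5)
Your proposal is correct and is essentially the paper's own derivation: the paper presents this corollary as a direct consequence of Theorem~\ref{thm:Q_Dns}, reading $\dim\Jideal$ and $\dim\Jideal'$ off as the number of free parameters ($a_1,\dots,a_s,a,t$, plus $t'$ only when $s=n$, where $a$ becomes redundant) in the explicit description of the solution loci. Your additional care about the redundancy at $s=n$ (collapsing $a,t'$ to the single parameter $u=t'-t/a$) and about working over $\C$ is exactly the bookkeeping the paper leaves implicit.
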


\section{General quadrics and infinite arrangements}\label{sec:close}

We close with two brief remarks and some appealing pictures.

\subsection{General quadrics}
Our algebraic approach for checking $Q$-inscribability in
Section~\ref{sec:sporadic} showed that for the arrangements $\Arr_3(10,1)$,
$\Arr_3(12,1)$, $\Arr_3(15,4)$, $\Arr_3(16,4)$, $\Arr_3(17,1)$, and
$\Arr_3(21,2)$, the corresponding ideal $\Jideal$ had dimension $2$. Since
$\Jideal$ is homogeneous with respect to $Q$ as well as $\lambda$, this
implies that the bilinear form $Q$  and lengths $\lambda$ are unique if we
require that $\inner{v,v}_Q = 1$ for all vertices $v \in V(Z_\lambda)$.  In
all all but two cases, the bilinear form $Q$ was indefinite and the zonotope
$Z_\lambda$ virtual. The two arrangements $\Arr_3(16,4)$, $\Arr_3(21,2)$ only
have virtual zonotopes that are inscribed into an ellipsoid.
Figure~\ref{fig:hyperboloid} shows all six examples.

\begin{figure}[h]
    \captionsetup[subfigure]{labelformat=empty}
    \begin{center}
        \begin{subfigure}{0.15\textwidth}
            \includegraphics[width=\textwidth]{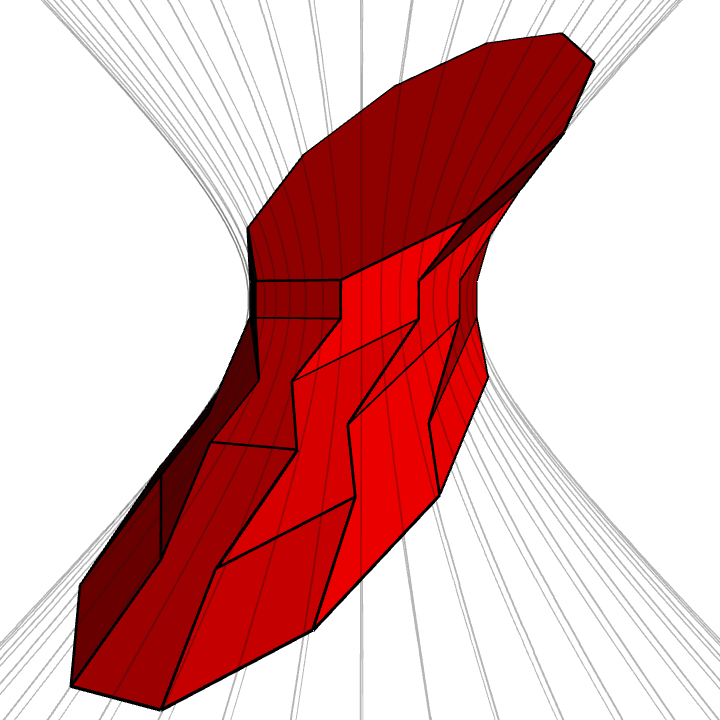}%
            \caption{$\Arr_3(10, 1)$}
        \end{subfigure}
        \begin{subfigure}{0.15\textwidth}
    \includegraphics[width=\textwidth]{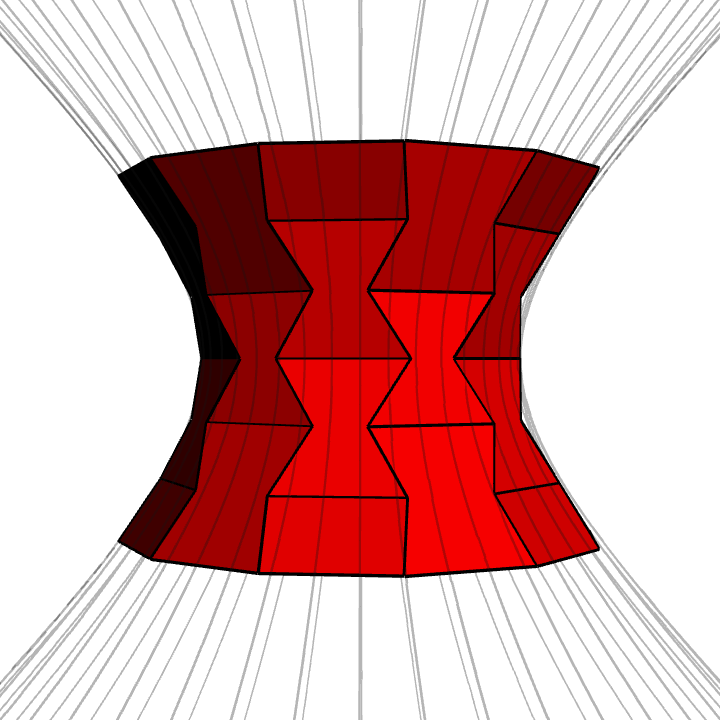}
            \caption{$\Arr_3(12, 1)$}
        \end{subfigure}
        \begin{subfigure}{0.15\textwidth}
    \includegraphics[width=\textwidth]{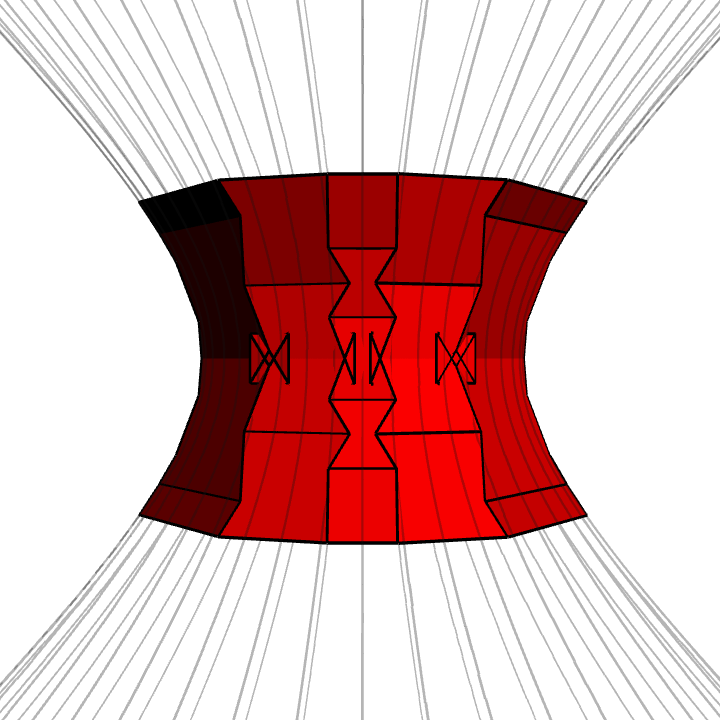}
            \caption{$\Arr_3(15, 4)$}
        \end{subfigure}
        \begin{subfigure}{0.15\textwidth}
    \includegraphics[width=\textwidth]{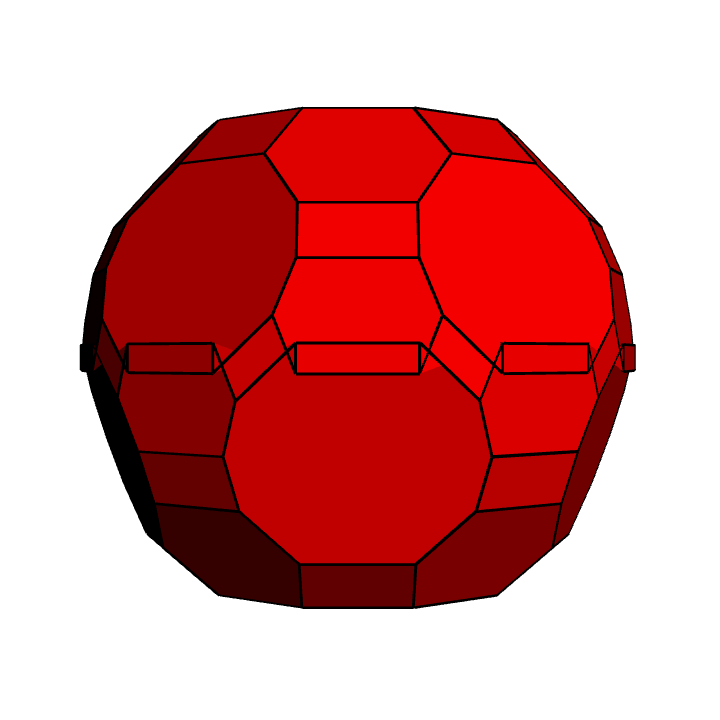}
            \caption{$\Arr_3(16, 4)$}
        \end{subfigure}
        \begin{subfigure}{0.15\textwidth}
    \includegraphics[width=\textwidth]{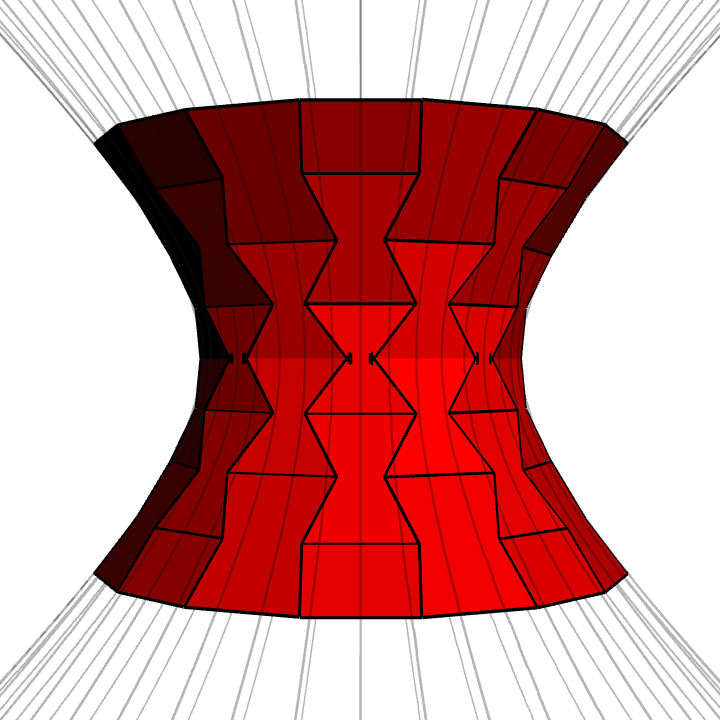}
            \caption{$\Arr_3(17, 1)$}
        \end{subfigure}
        \begin{subfigure}{0.15\textwidth}
    \includegraphics[width=\textwidth]{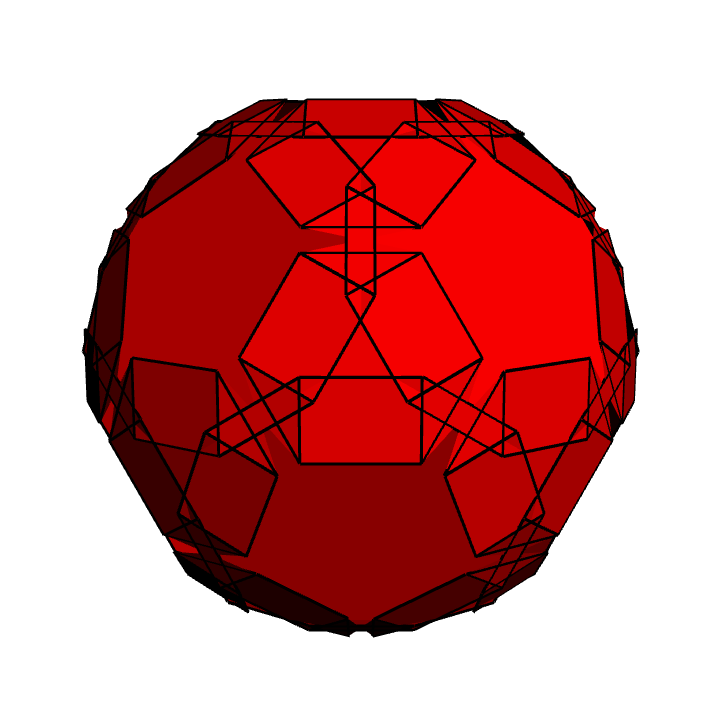}
            \caption{$\Arr_3(21, 2)$}
        \end{subfigure}%
        \caption{Virtual zonotopes for the six combinatorial types not
          excluded from Gr\"unbaums list by our program. They are
          inscribed into some quadric, not necessarily a sphere.
          }
          \label{fig:hyperboloid}
    \end{center}
\end{figure}

It is noteworthy that among all finite reflection arrangements, only the
arrangements of type $A_n$ and $B_n$ have inscribable realizations
into quadrics of different type. Figure~\ref{fig:hyperboloid} fuels the
question for a better geometric understanding of inscribable (virtual)
zonotopes. Also, we do not know if arrangements which are virtually
inscribable into other quadrics need to be simplicial.

\subsection{Infinite arrangements}

An \Def{affine reflection arrangement} $\Arr$ is the infinite collection of
affine hyperplanes 
\[
    H_{\alpha,k} \ = \ \{ x \in \R^d : \inner{\alpha,x} = k \} \, ,
\]
where $k \in \Z$ and $\alpha$ ranges over all roots in a crystallographic root
system $\Phi$. The associated affine Weyl group $W_a$ is the group of affine
transformations generated by the affine reflections $s_{\alpha,k}$ in
$H_{\alpha,k}$; see~\cite[Chapter 4]{Humphreys}. The localization of $\Arr$ at
any $0$-flat is the translate of a finite reflection arrangement. Hence the
orbit of a point $p$ under $W_a$ yields a tessellation of $\R^d$ into
inscribed belt polytopes.

For a suitable choice of $p$, the belt polytopes of the tessellation are
zonotopes.  Theorem~\ref{thm:restr_insc} now yields that the restriction of
$\Arr$ to a flat $L$ is a strongly inscribed infinite arrangement
and the tessellation into inscribed zonotopes is not induced by an affine
reflection arrangement. Figure~\ref{fig:zono_tiling} shows a $2$-dimensional
restriction of $\tilde{E}_8$. 
\begin{figure}[h]
    \begin{center}
         \begin{tikzpicture}
             \clip (0,0) rectangle (0.8\textwidth,0.5\textwidth);
             \begin{scope}[scale=1.1]
                 \input{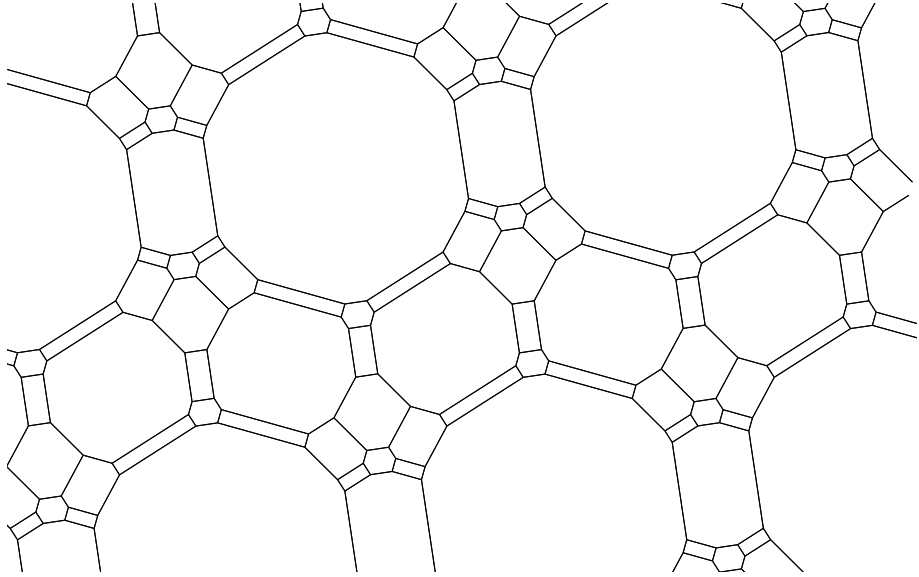}
             \end{scope}
        \end{tikzpicture}
        \caption{A tiling of $\R^2$ by inscribed zonogons.}
        \label{fig:zono_tiling}
    \end{center}
\end{figure}

It would be very interesting to investigate geometric and
combinatorial properties of tilings of space into inscribed zonotopes.

\newgeometry{bottom=2.5cm}
\begin{landscape}
    \begin{figure}
    \begin{center}
        \begin{tikzpicture}[xscale=1.1, yscale=1.5]
            \definecolor{color0}{RGB}{255,255,255}
            \definecolor{color1}{RGB}{220,243,248}
            \definecolor{color2}{RGB}{173,232,244}
            \definecolor{color3}{RGB}{144,224,239}
            \definecolor{color4}{RGB}{ 72,202,228}
            \definecolor{color5}{RGB}{  0,180,216}
            \definecolor{color6}{RGB}{  0,150,199}
            \definecolor{color7}{RGB}{  0,119,182}
            \definecolor{color8}{RGB}{  2, 62,138}
            \definecolor{color9}{RGB}{  3,  4, 94}
            \definecolor{color10}{RGB}{  3,  4, 64}

            \tikzset{box/.style={draw, text width=2em, minimum height=1.5em, font=\tiny, align=center}}
            \tikzset{c0/.style={fill=gray}}
            \tikzset{c1/.style={fill=color0}}
            \tikzset{c2/.style={fill=color1}}
            \tikzset{c3/.style={fill=color2}}
            \tikzset{c4/.style={fill=color3}}
            \tikzset{c5/.style={fill=color4}}
            \tikzset{c6/.style={fill=color5}}
            \tikzset{c7/.style={fill=color6}}
            \tikzset{c8/.style={fill=color7}}
            \tikzset{c9/.style={fill=color8, text=white}}
            \tikzset{c10/.style={fill=color9, text=white}}
            \tikzset{c12/.style={fill=color10, text=white}}

            \newcommand{\xbox}[5]{%
              \def\XargOne{#1}\def\XargTwo{#2}\def\Xnonsense{-1}
              \def\Xcolor{\ifx\XargOne\empty\else{\ifx\XargOne\Xnonsense cX\else c#1\fi}\fi}
              \node[box, \Xcolor] (#3) at (#4) { #5 };%
              \ifx\XargOne\empty\else
              \node[anchor = north east, inner sep = 2pt, \Xcolor, fill=none] (h#3) at (#3.north east) {\miniscule #2};
              \node[anchor = south east, inner sep = 2pt, \Xcolor, fill=none] (l#3) at (#3.south east) {\miniscule #1};
              \fi%
            }
            \node at (-1, 8.7){\bf Rank};
            \node at (-1, 8){\bf 8};
            \node at (-1, 7){\bf 7};
            \node at (-1, 6){\bf 6};
            \node at (-1, 5){\bf 5};
            \node at (-1, 4){\bf 4};
            \node at (-1, 3){\bf 3};
            \node at (-1, 2){\bf 2};
            \node at (-1, 1){\bf 1};

            \xbox{}{}{n43}{ 8, 1}{$A_1$}

            \xbox{}{}{n31}{ 4, 2}{ $A_2$ }
            \xbox{}{}{n30}{ 7, 2}{ $B_2$ }
            \xbox{}{}{n62}{ 8, 2}{ $5, 1$ }
            \xbox{}{}{n42}{11, 2}{ $6, 1$ }
            \xbox{}{}{n68}{14, 2}{ $8, 1$ }
            \xbox{}{}{n74}{15, 2}{ $10, 1$ }
            \xbox{}{}{n75}{16, 2}{ $12, 1$ }

            \xbox{5}{4}{n20}{ 0, 3}{ $A_3$ }
            \xbox{4}{3}{n29}{ 2, 3}{ $\D_{3,1}$ } 
            \xbox{4}{3}{n41}{ 3, 3}{ $\D_{3,2}$ } 
            \xbox{5}{3}{n19}{ 5, 3}{ $B_3$ }
            \xbox{3}{2}{n52}{ 7, 3}{ $10, 2$ } 
            \xbox{2}{1}{n53}{ 6, 3}{ $10, 3$ } 
            \xbox{3}{2}{n59}{ 8, 3}{ $11, 1$ } 
            \xbox{3}{2}{n60}{11, 3}{ $13, 1$ } 
            \xbox{3}{1}{n28}{10, 3}{ $13, 2$ } 
            \xbox{2}{1}{n61}{ 9, 3}{ $13, 3$ } 
            \xbox{2}{1}{n76}{12, 3}{ $H_3$ }   
            \xbox{2}{1}{n63}{13, 3}{ $16, 3$ } 
            \xbox{2}{1}{n65}{15, 3}{ $17, 2$ } 
            \xbox{2}{1}{n64}{14, 3}{ $17, 4$ } 
            \xbox{2}{1}{n66}{16, 3}{ $19, 1$ } 
            \xbox{2}{1}{n67}{17, 3}{ $19, 3$ } 
            \xbox{2}{1}{n73}{18, 3}{ $31, 1$ }
            
            \xbox{6}{5}{n18}{ 0, 4}{ $A_4$ }
            \xbox{2}{1}{n16}{ 1, 4}{ $D_4$ }
            \xbox{3}{2}{n27}{ 2, 4}{ $\D_{4,1}$ }
            \xbox{4}{3}{n40}{ 3, 4}{ $\D_{4,2}$ }
            \xbox{5}{4}{n51}{ 4, 4}{ $\D_{4,3}$ }
            \xbox{6}{4}{n17}{ 5, 4}{ $B_4$ }
            \xbox{3}{2}{n38}{ 7, 4}{ $15, 2$ }
            \xbox{2}{1}{n39}{ 6, 4}{ $17, 1$ }
            \xbox{3}{2}{n48}{ 8, 4}{ $18, 1$ }
            \xbox{2}{1}{n49}{ 9, 4}{ $21, 1$ }
            \xbox{2}{1}{n50}{10, 4}{ $22, 1$ }
            \xbox{3}{1}{n15}{11, 4}{ $F_4$ }
            \xbox{2}{1}{n54}{12, 4}{ $25, 1$ }
            \xbox{2}{1}{n55}{13, 4}{ $28, 2$ }
            \xbox{2}{1}{n56}{14, 4}{ $30, 1$ }
            \xbox{2}{1}{n57}{15, 4}{ $32, 1$ }
            \xbox{2}{1}{n58}{16, 4}{ $32, 2$ }
            \xbox{2}{1}{n72}{18, 4}{ $H_4$ }

            \xbox{7}{6}{n14}{ 0, 5}{ $A_5$ }
            \xbox{7}{5}{n13}{ 5, 5}{ $B_5$ }
            \xbox{2}{1}{n12}{ 1, 5}{ $D_5$ }
            \xbox{3}{2}{n26}{ 2, 5}{ $\D_{5,1}$ }
            \xbox{6}{5}{n47}{ 4, 5}{ $\D_{5,4}$ }
            \xbox{2}{1}{n25}{ 7, 5}{ $25, 1$ }
            \xbox{2}{1}{n35}{ 9, 5}{ $30, 1$ }
            \xbox{2}{1}{n36}{10, 5}{ $33, 1$ }
            \xbox{2}{1}{n44}{12, 5}{ $41, 1$ }
            \xbox{2}{1}{n45}{13, 5}{ $46, 1$ }
            \xbox{2}{1}{n46}{14, 5}{ $49, 1$ }

            \xbox{8}{7}{n11}{ 0, 6}{ $A_6$ }
            \xbox{2}{1}{n9}{ 1, 6}{ $D_6$ }
            \xbox{3}{2}{n24}{ 2, 6}{ $\D_{6,1}$ }
            \xbox{7}{6}{n34}{ 4, 6}{ $\D_{6,5}$ }
            \xbox{8}{6}{n10}{ 5, 6}{ $B_6$ }
            \xbox{2}{1}{n8}{ 7, 6}{ $E_6$ }
            \xbox{2}{1}{n23}{10, 6}{ $46, 1$ }
            \xbox{2}{1}{n32}{12, 6}{ $63, 1$ }
            \xbox{2}{1}{n33}{13, 6}{ $68, 1$ }

            \xbox{9}{8}{n7}{ 0, 7}{ $A_7$ }
            \xbox{2}{1}{n5}{ 1, 7}{ $D_7$ }
            \xbox{3}{2}{n22}{ 2, 7}{ $\D_{7,1}$ }
            \xbox{8}{7}{n69}{ 4, 7}{ $\D_{7,6}$ }
            \xbox{9}{7}{n6}{ 5, 7}{ $B_7$ }
            \xbox{2}{1}{n4}{10, 7}{ $E_7$ }
            \xbox{2}{1}{n21}{13, 7}{ $91, 1$ }

            \xbox{10}{9}{n1}{ 0, 8}{ $A_8$ }
            \xbox{2}{1}{n3}{ 1, 8}{ $D_8$ }
            \xbox{3}{2}{n70}{ 2, 8}{ $\D_{8,1}$ }
            \xbox{9}{8}{n71}{ 4, 8}{ $\D_{8,7}$ }
            \xbox{10}{8}{n2}{ 5, 8}{ $B_8$ }
            \xbox{2}{1}{n0}{13, 8}{ $E_8$ }

            \node at ( 3, 5) { $\dots$ };
            \node at ( 3, 6) { $\dots$ };
            \node at ( 3, 7) { $\dots$ };
            \node at ( 3, 8) { $\dots$ };

            \node at (0, 8.7){$\vdots$};
            \node at (1, 8.7){$\vdots$};
            \node at (2, 8.7){$\vdots$};
            \node at (4, 8.7){$\vdots$};
            \node at (5, 8.7){$\vdots$};

            \draw (n75.north) -- (n73.south);
            \draw (n74.north) -- (n73.south);
            \draw (n73.north) -- (n72.south);
            \draw ( n6.north) -- (n70.south);
            \draw (n22.north) -- (n70.south);
            \draw (n69.north) -- (n71.south);
            \draw ( n6.north) -- (n71.south);
            \draw ( n6.north) -- ( n2.south);
            \draw ( n7.north) -- ( n1.south);
            \draw (n10.north) -- (n22.south);
            \draw (n10.north) -- (n69.south);
            \draw (n10.north) -- ( n6.south);
            \draw (n11.north) -- ( n7.south);
            \draw (n13.north) -- (n24.south);
            \draw (n13.north) -- (n34.south);
            \draw (n13.north) -- (n10.south);
            \draw (n14.north) -- (n11.south);
            \draw (n15.north) -- (n44.south);
            \draw (n15.north) -- (n46.south);
            \draw (n15.north) -- (n36.south);
            \draw (n17.north) -- (n26.south);
            \draw (n17.north) -- (n13.south);
            \draw (n17.north) -- (n47.south);
            \draw (n18.north) -- (n14.south);
            \draw (n19.north) -- (n40.south);
            \draw (n19.north) -- (n48.south);
            \draw (n19.north) -- (n17.south);
            \draw (n19.north) -- (n50.south);
            \draw (n19.north) -- (n51.south);
            \draw (n19.north) -- (n27.south);
            \draw (n20.north) -- (n18.south);
            \draw (n21.north) -- ( n0.south);
            \draw (n22.north) -- ( n3.south);
            \draw (n23.north) -- ( n4.south);
            \draw (n24.north) -- ( n5.south);
            \draw (n24.north) -- (n22.south);
            \draw (n25.north) -- ( n8.south);
            \draw (n26.north) -- (n24.south);
            \draw (n26.north) -- ( n9.south);
            \draw (n27.north) -- (n26.south);
            \draw (n27.north) -- (n12.south);
            \draw (n28.north) -- (n15.south);
            \draw (n28.north) -- (n48.south);
            \draw (n28.north) -- (n49.south);
            \draw (n28.north) -- (n50.south);
            \draw (n28.north) -- (n54.south);
            \draw (n28.north) -- (n55.south);
            \draw (n28.north) -- (n57.south);
            \draw (n28.north) -- (n58.south);
            \draw (n29.north) -- (n40.south);
            \draw (n29.north) -- (n16.south);
            \draw (n29.north) -- (n27.south);
            \draw (n30.north) -- (n64.south);
            \draw (n30.north) -- (n65.south);
            \draw (n30.north) -- (n41.south);
            \draw (n30.north) -- (n60.south);
            \draw (n30.north) -- (n29.south);
            \draw (n30.north) -- (n19.south);
            \draw (n30.north) -- (n52.south);
            \draw (n30.north) -- (n53.south);
            \draw (n30.north) -- (n59.south);
            \draw (n30.north) -- (n28.south);
            \draw (n30.north) -- (n61.south);
            \draw (n31.north) -- (n41.south);
            \draw (n31.north) -- (n29.south);
            \draw (n31.north) -- (n20.south);
            \draw (n31.north) -- (n53.south);
            \draw (n32.north) -- (n21.south);
            \draw (n33.north) -- (n21.south);
            \draw (n34.north) -- (n69.south);
            \draw (n35.north) -- (n23.south);
            \draw (n36.north) -- (n23.south);
            \draw (n38.north) -- (n25.south);
            \draw (n39.north) -- (n25.south);
            \draw (n40.north) -- (n26.south);
            \draw (n41.north) -- (n38.south);
            \draw (n41.north) -- (n39.south);
            \draw (n41.north) -- (n40.south);
            \draw (n41.north) -- (n51.south);
            \draw (n41.north) -- (n27.south);
            \draw (n42.north) -- (n64.south);
            \draw (n42.north) -- (n65.south);
            \draw (n42.north) -- (n66.south);
            \draw (n42.north) -- (n67.south);
            \draw (n42.north) -- (n60.south);
            \draw (n42.north) -- (n52.south);
            \draw (n42.north) -- (n59.south);
            \draw (n42.north) -- (n28.south);
            \draw (n42.north) -- (n61.south);
            \draw (n42.north) -- (n63.south);
            \draw (n43.north) -- (n68.south);
            \draw (n43.north) -- (n42.south);
            \draw (n43.north) -- (n30.south);
            \draw (n43.north) -- (n62.south);
            \draw (n43.north) -- (n31.south);
            \draw (n44.north) -- (n32.south);
            \draw (n44.north) -- (n33.south);
            \draw (n45.north) -- (n32.south);
            \draw (n45.north) -- (n33.south);
            \draw (n46.north) -- (n33.south);
            \draw (n47.north) -- (n34.south);
            \draw (n48.north) -- (n35.south);
            \draw (n48.north) -- (n36.south);
            \draw (n49.north) -- (n35.south);
            \draw (n49.north) -- (n36.south);
            \draw (n50.north) -- (n36.south);
            \draw (n51.north) -- (n47.south);
            \draw (n52.north) -- (n48.south);
            \draw (n52.north) -- (n49.south);
            \draw (n52.north) -- (n38.south);
            \draw (n52.north) -- (n39.south);
            \draw (n53.north) -- (n39.south);
            \draw (n54.north) -- (n44.south);
            \draw (n54.north) -- (n45.south);
            \draw (n55.north) -- (n44.south);
            \draw (n55.north) -- (n45.south);
            \draw (n55.north) -- (n46.south);
            \draw (n56.north) -- (n45.south);
            \draw (n57.north) -- (n45.south);
            \draw (n57.north) -- (n46.south);
            \draw (n58.north) -- (n46.south);
            \draw (n59.north) -- (n48.south);
            \draw (n59.north) -- (n49.south);
            \draw (n59.north) -- (n50.south);
            \draw (n60.north) -- (n56.south);
            \draw (n60.north) -- (n49.south);
            \draw (n60.north) -- (n54.south);
            \draw (n60.north) -- (n55.south);
            \draw (n61.north) -- (n49.south);
            \draw (n61.north) -- (n50.south);
            \draw (n62.north) -- (n59.south);
            \draw (n62.north) -- (n53.south);
            \draw (n62.north) -- (n52.south);
            \draw (n62.north) -- (n61.south);
            \draw (n63.north) -- (n57.south);
            \draw (n63.north) -- (n54.south);
            \draw (n63.north) -- (n55.south);
            \draw (n64.north) -- (n56.south);
            \draw (n64.north) -- (n57.south);
            \draw (n64.north) -- (n55.south);
            \draw (n65.north) -- (n57.south);
            \draw (n65.north) -- (n58.south);
            \draw (n65.north) -- (n55.south);
            \draw (n66.north) -- (n56.south);
            \draw (n66.north) -- (n57.south);
            \draw (n67.north) -- (n57.south);
            \draw (n67.north) -- (n58.south);
            \draw (n68.north) -- (n64.south);
            \draw (n68.north) -- (n65.south);
            \draw (n68.north) -- (n66.south);
            \draw (n68.north) -- (n67.south);
            \draw (n68.north) -- (n63.south);
            \draw (n43.north) -- (n74.south);
            \draw (n43.north) -- (n75.south);
            \draw (n42.north) -- (n76.south);
            
            \draw (15, 6.5) rectangle (18, 8);
            \xbox{1}{1}{ex}{16, 7}{ $n, k$ };
            \node (hexp) at (17.5, 7.2) {\tiny $\dim \Jideal'$};
            \node (lexp) at (17.5, 6.8) {\tiny $\dim \Jideal$};
            \node[anchor=west, text width = 3cm] (nameexp) at (15, 7.7) {\tiny\baselineskip=-4pt Shorthand for $\Arr_r(n,k)$ where $r$ is the rank\par};
            \draw (hexp.west) -- (hex.east);
            \draw (lexp.west) -- (lex.east);

            \node[anchor=south, inner sep=0pt] (brace) at (ex.north) {$\overbrace{}\;$};
            
            
        \end{tikzpicture}
    \end{center}
    \caption{Hasse diagram of the restrictions of reflection
      arrangements and $\dim \Jideal$ and $\dim \Jideal'$ for all
      entries of rank greater or equal to $3$. The intensity of the
      blue shading increases with $\dim \Jideal$. }
    \label{fig:restrictions}
    \end{figure}
\end{landscape}
\restoregeometry

\bibliographystyle{siam}%
\bibliography{bibliography}%

\end{document}